\newcommand{\pointsize}{11pt}
   \renewcommand{\headrulewidth}{0pt} 
   \renewcommand{\headrulewidth}{0.4pt}
\numberwithin{figure}{chapter} 
\numberwithin{table}{chapter}
\numberwithin{equation}{chapter}
\numberwithin{section}{chapter}
\theoremstyle{plain}
	\newtheorem{theorem}{Theorem}[section]
	\newtheorem{lemma}[theorem]{Lemma}
	\newtheorem{corollary}[theorem]{Corollary}
	\newtheorem{proposition}[theorem]{Proposition}
\theoremstyle{definition}
	\newtheorem{definition}[theorem]{Definition}
	\newtheorem{example}[theorem]{Example}
\theoremstyle{remark}
	\newtheorem{remark}[theorem]{Remark}
\newcommand\strongof[1]{{#1}^+}
\newcommand \partitionof[1]{\widetilde{#1}}
\newcommand\reverse[1]{{#1}^*}
\newcommand{\lex}{\geq_{\text{lex}}}
\newcommand{\Sym}{\textsl{Sym}}
\newcommand{\Qsym}{\textsl{QSym}}
\newcommand{\xx}{\mathbf{x}}
\newcommand{\rsqschur}{\mathcal{RS}}
\newcommand{\csqschur}{\mathcal{CS}}
\newcommand{\dzatom}{\mathcal{A}}
\newcommand{\kone}{\mathcal{K}_1}
\newcommand{\ktwo}{\mathcal{K}_2}
\newcommand{\colform}{\text{colform}}
\DeclareMathOperator{\cont}{cont}
\DeclareMathOperator{\arm}{arm}
\DeclareMathOperator{\coinv}{coinv}
\DeclareMathOperator{\dg}{dg}
\DeclareMathOperator{\leg}{leg}
\DeclareMathOperator{\maj}{maj}
\DeclareMathOperator{\Des}{Des}
\DeclareMathOperator{\Inv}{Inv}
\DeclareMathOperator{\inv}{inv}
\DeclareMathOperator{\cotrip}{Coinvtrip}
\DeclareMathOperator{\invtrip}{Invtrip}
\DeclareMathOperator{\Stab}{Stab}
\newcommand{\D}{\widehat{H_n}}
\newcommand{\augfill}{\widehat{\sigma}}
\newcommand{\Q}{\mathbb{Q}}
\newcommand{\C}{\mathbb{C}}
\newcommand{\Z}{\mathbb{Z}}
\newcommand{\N}{\mathbb{N}}
\newcommand{\R}{\mathbb{R}}
\definecolor{gr90}{gray}{0.90}
\definecolor{gr75}{gray}{0.75}
\definecolor{gyblue}{cmyk}{0,0.5,0,0}
\newsavebox{\myonesquare}
\savebox{\myonesquare}{\textcolor{gr75}{\rule{18pt}{18pt}}}
\newsavebox{\mytwosquare}
\savebox{\mytwosquare}{\textcolor{gr75}{\rule{36pt}{18pt}}}
\newsavebox{\mythreesquare}
\savebox{\mythreesquare}{\textcolor{gr75}{\rule{54pt}{18pt}}}
\newsavebox{\myfoursquare}
\savebox{\myfoursquare}{\textcolor{gr75}{\rule{72pt}{18pt}}}
\newsavebox{\othertwosquare}
\savebox{\othertwosquare}{\textcolor{gr75}{\rule{36pt}{18pt}}}
\newcommand{\sqone}{\usebox{\myonesquare}}
\newcommand{\sqtwo}{\usebox{\mytwosquare}}
\newcommand{\sqthree}{\usebox{\mythreesquare}}
\newsavebox{\Jmyonesquare}
\savebox{\Jmyonesquare}{\textcolor{gr75}{\rule{14pt}{14pt}}}
\newsavebox{\Jmytwosquare}
\savebox{\Jmytwosquare}{\textcolor{gr75}{\rule{28pt}{14pt}}}
\newsavebox{\Jmythreesquare}
\savebox{\Jmythreesquare}{\textcolor{gr75}{\rule{42pt}{14pt}}}
\newsavebox{\Jmyfoursquare}
\savebox{\Jmyfoursquare}{\textcolor{gr75}{\rule{56pt}{14pt}}}
\newsavebox{\Jothertwosquare}
\savebox{\Jothertwosquare}{\textcolor{gr75}{\rule{28pt}{14pt}}}
\newcommand{\Sqone}{\usebox{\Jmyonesquare}}
\newcommand{\Sqtwo}{\usebox{\Jmytwosquare}}
\newcommand{\Sqthree}{\usebox{\Jmythreesquare}}
\newlength{\cellsize} \setlength{\cellsize}{18\unitlength}
\newsavebox{\cell}
\sbox{\cell}{\begin{picture}(18,18) \put(0,0){\line(1,0){18}}
\put(0,0){\line(0,1){18}} \put(18,0){\line(0,1){18}}
\put(0,18){\line(1,0){18}}
\end{picture}}
\newcommand\cellify[1]{\def\thearg{#1}\def\nothing{}%
\ifx\thearg\nothing \vrule width0pt height\cellsize depth0pt\else
\hbox to 0pt{\usebox{\cell} \hss}\fi%
\vbox to \cellsize{ \vss \hbox to \cellsize{\hss$#1$\hss} \vss}}
\newcommand\tableau[1]{\vtop{\let\\\cr
\baselineskip -16000pt \lineskiplimit 16000pt \lineskip 0pt
\ialign{&\cellify{##}\cr#1\crcr}}}
\newcommand\bigcellify[1]{\def\thearg{#1}\def\nothing{}%
\ifx\thearg\nothing
\vrule width0pt height\cellsize depth0pt\else
\hbox to 0pt{\usebox3\hss}\fi%
\vbox to 25\unitlength{
\vss
\hbox to 25\unitlength{\hss$#1$\hss}
\vss}}
\newcommand\Tableau[1]{\vtop{\let\\=\cr
\setlength\baselineskip{-16000pt}
\setlength\lineskiplimit{16000pt}
\setlength\lineskip{0pt}
\halign{&\bigcellify{##}\cr#1\crcr}}}
\newcommand\smallcellify[1]{\def\thearg{#1}\def\nothing{}%
\ifx\thearg\nothing
\vrule width0pt height\cellsize depth0pt\else
\hbox to 0pt{\usebox4\hss}\fi%
\vbox to 14\unitlength{
\vss
\hbox to 14\unitlength{\hss$#1$\hss}
\vss}}
\newcommand\smalltableau[1]{\vtop{\let\\=\cr
\setlength\baselineskip{-16000pt}
\setlength\lineskiplimit{16000pt}
\setlength\lineskip{0pt}
\halign{&\smallcellify{##}\cr#1\crcr}}}
\newcommand\bas[1]{\omit \vbox to \cellsize{ \vss \hbox to \cellsize{\hss$#1$\hss} \vss}}
\newcommand\smallbas[1]{\omit \vbox to 14\unitlength{ \vss \hbox to 14\unitlength{\hss$#1$\hss} \vss}}
\def\revddots{\mathinner{\mkern1mu\raise\p@
\vbox{\kern7\p@\hbox{.}}\mkern2mu
\raise4\p@\hbox{.}\mkern2mu\raise7\p@\hbox{.}\mkern1mu}}
\begin{document}
   \frontmatter

   \pagestyle{prelim}
   
   %
   \fancypagestyle{plain}{%
      \fancyhf{}
      \cfoot{-\thepage-}
   }%
\begin{center}
   \null\vfill
   \textbf{
   \begin{Large}
      Row-strict Quasisymmetric Schur Functions, Characterizations of Demazure Atoms, and Permuted Basement Nonsymmetric Macdonald Polynomials
      \end{Large}
   }
   \\
   \bigskip
   By \\
   \bigskip
   JEFFREY PAUL FERREIRA \\
   \bigskip
   B.S. (California State University, East Bay) 2006 \\
   M.A. (University of California, Davis) 2008 \\
   \bigskip
   DISSERTATION \\
   \bigskip
   Submitted in partial satisfaction of the requirements for the degree of \\
   \bigskip
   DOCTOR OF PHILOSOPHY \\
   \bigskip
   in \\
   \bigskip
   Mathematics \\
   \bigskip
   in the \\
   \bigskip
   OFFICE OF GRADUATE STUDIES \\
   \bigskip        
   of the \\
   \bigskip
   UNIVERSITY OF CALIFORNIA \\
   \bigskip
   DAVIS \\
   \bigskip
   Approved: \\
   \bigskip
   \bigskip
   \makebox[3in]{\hrulefill} \\
   Monica J. Vazirani \\
   \bigskip
   \bigskip
   \makebox[3in]{\hrulefill} \\
   Anne Schilling \\
   \bigskip
   \bigskip
   \makebox[3in]{\hrulefill} \\
   Jes\'{u}s De Loera\\
   \bigskip
   Committee in Charge \\
   \bigskip
   2011 \\
   \vfill
\end{center}

   \newpage
   
   %
   \doublespacing
   \tableofcontents
   \newpage
   
{\singlespacing
   \begin{flushright}
      Jeffrey Paul Ferreira \\
      December 2011 \\
      Mathematics \\
   \end{flushright}
}

\bigskip

\begin{center}
   Row-strict Quasisymmetric Schur Functions, Characterizations of Demazure Atoms, and Permuted Basement Nonsymmetric Macdonald Polynomials \\
\end{center}

\section*{Abstract}

We give a Littlewood-Richardson type rule for expanding the product of a row-strict quasisymmetric Schur function and a symmetric Schur function in terms of row-strict quasisymmetric Schur functions. This expansion follows from several new properties of an insertion algorithm defined by Mason and Remmel (2011) which inserts a positive integer into a row-strict composition tableau. We then apply this Littlewood-Richardson type rule to give a basis for the quotient of quasisymmetric functions by the ideal generated by symmetric functions with zero constant term.

We then discuss a family of polynomials called Demazure atoms.  We review the known characterizations of these polynomials and then present two new characterizations.  The first new characterization is a bijection between semi-standard augmented fillings and triangular arrays of nonnegative integers, which we call composition array patterns.  We also provide a bijection between composition array patterns with first row $\gamma$ and Gelfand-Tsetlin patterns whose first row is the partition $\lambda$ whose parts are the parts of $\gamma$ in weakly decreasing order.  The second new characterization shows that Demazure atoms are the polynomials obtained by summing the weights of all Lakshmibai-Seshadri paths which begin in a given direction.

Finally, we consider a family of polynomials called permuted basement nonsymmetric Macdonald polynomials which are obtained by permuting the basement of the combinatorial formula of Haglund, Haiman, and Loehr for nonsymmetric Macdonald polynomials.  We show that these permuted basement nonsymmetric Macdonald polynomials are the simultaneous eigenfunctions of a family of commuting operators in the double affine Hecke algebra.

   \newpage

   \section*{Acknowledgments}
   I first want to thank Monica Vazirani for the guidance she has given me over my graduate career.  I am incredibly grateful for her patient instruction which she always crafted to compliment my mathematical interests. Through years of meetings, emails, notes, and conversations she has put an incredibly positive effect on my research and helped me develop as a mathematician.  Also not to be forgotten are her words of encouragement, caring acknowledgment, and understanding of the times life's events made proving theorems challenging.

Thanks must also go to the other advisers who have made a lasting impact on my research.  Most notably I must thank Sarah Mason, who took considerable care instructing me on the results of her own research and allowed me to investigate particular avenues for myself.  Without Sarah's involvement in my graduate career and research I surely would be lacking much of the interesting combinatorics I now know and which are contained in this dissertation.  Thank you also to Jes\'{u}s De Loera for his tireless efforts at maintaining the UC Davis VIGRE grant, which I and the rest of the UC Davis math students benefited from.  Jes\'{u}s De Loera and Anne Schilling also provided excellent instruction in formal classes and informal seminars.  Over my years at UC Davis Fu Liu, Alex Woo, Brant Jones, Jason Bandlow, and Andrew Berget have all provided invaluable opportunities to learn mathematics.

Thank you to my peers at UC Davis, especially Tom Denton, Steve Pon, Chris Berg, Sonya Berg, Qiang Wang, and Robert Gysel who all entertained stimulating conversations with me over the years. Thanks also to my officemates Pat Dragon, Jason Hole, Sean O'Rourke, Katie O'Reilly, and Robert Gysel for being sounding boards for ideas ranging from analysis homework to research projects.  

The mathematics department staff at UC Davis are, by far, the most qualified individuals in their profession.  In particular I want to thank Celia Davis, Tina Denena, and Perry Gee for their efforts in helping me over the years.

Before entering graduate school, many people have had a significant impact on my mathematical schooling.  Among them are Carla Schick, Gerald Brody, Linda Henley, Wendy Struhl, Joseph Borzellino, Linda Patton, Todd Grundmeier, Dennis Eichhorn, and Russell and Karen Merris.

Thank you to my family, especially my mom and dad, for always encouraging me in my endeavors. Thank you to my brothers Matt and Dan and my sister Jenny for providing musical, culinary, zymological, literary, and political balance to my education. Finally, thank you to my wife Mariko for her support over my graduate career.

   \mainmatter
   
   \pagestyle{maintext}
   
   %
   \fancypagestyle{plain}{%
      \renewcommand{\headrulewidth}{0pt}
      \fancyhf{}
      \rhead{\thepage}
   }%
   
   \chapter{Introduction}
   \label{ch:IntroductionLabel}
  One of the most important bases of symmetric functions is the Schur basis $s_\lambda$, where $\lambda$ is a partition, which form an orthonormal $\mathbb{Z}$-basis for the algebra of symmetric functions. The Schur functions can be defined combinatorially as the generating functions of semi-standard Young tableaux. There are many classical results that demonstrate the importance of Schur functions.  For example they are characters of irreducible polynomial representations of $GL_n$ \cite{Fulton1997Young-tableaux} and the image of the irreducible characters of the symmetric group under the Frobenius characteristic map \cite{Stanley1999EC2}.

Two other bases for symmetric functions are the power sum symmetric functions $p_\lambda$ and the elementary symmetric functions $e_\lambda$.  Their relationships to Schur functions was studied in two classical results \cite{Stanley1999EC2}.  The Murnaghan-Nakayama rule expresses the product $s_\lambda p_r$ as a sum of Schur functions, and it is important to note that the coefficients appearing in the expansion are integers.  The Pieri rule expresses the product $s_\lambda e_n$ as a sum of Schur functions and even more noteworthy, the coefficients appearing in this expansion are non-negative integers.  These two results proved instrumental in determining the structure coefficients $c_{\lambda \mu}^\nu$, called Littlewood-Richardson coefficients, for the Schur basis, which can be defined by the equation
\begin{equation}\label{Schur}
s_\lambda s_\mu = \sum_{\nu} c_{\lambda \mu}^\nu s_\nu.
\end{equation}

The well-known Littlewood-Richardson rule gives a combinatorial description of the coefficients $c_{\lambda \mu}^\nu$ as the number of Littlewood-Richardson skew tableaux \cite{Fulton1997Young-tableaux}.  In addition to their definition (\ref{Schur}), the coefficients $c_{\lambda \mu}^\nu$ appear in representation theory as multiplicities of irreducible representations in certain induced representations of the symmetric group \cite{Sagan2001The-symmetric} and also as multiplicities of irreducible representations in the tensor product of two irreducible $GL_n$-modules \cite{Fulton1997Young-tableaux}. In geometry they appear as intersection numbers in the Schubert calculus on a Grassmannian \cite{Fulton1997Young-tableaux}. Note that each of these interpretations of Littlewood-Richardson coefficients give proofs that the coefficients $c_{\lambda \mu}^\nu$ are non-negative integers.

A more geometric combinatorial device used to study Schur functions are Gelfand-Tsetlin patterns, or GT patterns. These patterns are triangular arrays of integers with defining inequalities imposed upon the entries of the array \cite{Stanley1999EC2}. A fundamental fact concerning GT patterns is that these arrays are in bijection with semi-standard Young tableaux.  Originally defined in \cite{Gelfand-Finite} in connection with the study of irreducible representations of Lie algebras, GT patterns are now studied in a variety of contexts. GT patterns of shape $\lambda$ and weight $\mu$ form a polytope, called the Gelfand-Tsetlin polytope, with the property that the number of integral lattice points is the dimension of the $\mu$-weight space in the irreducible highest weight representation $V_\lambda$ of $\mathfrak{gl}_n(\C)$.  A study of the geometric properties of the GT polytope has been undertaken in a number of papers, including \cite{De-Loera2004Vertices-of}, \cite{Kiritchenko2010Gelfand}, and \cite{Kogan2005Toric}.  Because of the deep connection between representation theory and mathematical physics, it is not surprising that GT patterns also appear in the study of certain physical systems, for example see \cite{Strand1974Gelfand},  \cite{Temme1991The-structure}, and \cite{Warren2009Some-examples}.

GT patterns and the associated polytope also appear in the more general setting of the Berenstein-Zelevinsky polytope \cite{Berenstein1992Triple}. The BZ polytope associated to the triple $(\lambda, \mu, \nu)$ of partitions has the property that the number of integral lattice points is the Littlewood-Richardson coefficient $c_{\lambda, \mu}^\nu$.  In the new description of the BZ polytope given in \cite{Knutson1999The-honeycomb}, which the authors call the honeycomb model, the BZ polytope is presented in a way that is analogous to the GT polytope.  Specifically, the authors define so called rhombus inequalities on a triangular array of integers; see \cite{Buch2000The-saturation} for an exposition of these inequalities.  The honeycomb model given in \cite{Knutson1999The-honeycomb} was the main tool in the authors' proof of the Saturation Conjecture, which in our present context can be stated as $c_{\lambda, \mu}^\nu\neq 0$ if and only if $c_{n\lambda, n\mu}^{n\nu} \neq 0$ for some $n \in \Z_{>0}$.

The classical Littlewood-Richardson rule and the bijection of semi-standard Young tableaux with GT patterns are examples of two results on Schur functions that can be sought for generalizations of Schur functions. It is the purpose of this dissertation to present analogues of these results for various families of polynomials.   

\section{Quasisymmetric Functions}

Quasisymmetric functions are a natural generalization of symmetric functions and were defined by Gessel in \cite{Gessel_Multipartite}. Although Gessel discovered many properties of quasisymmetric functions, they had already appeared in earlier work of Stanley \cite{Stanley1972Ordered}.  Since their introduction, quasisymmetric functions have become of increasing importance.  They have appeared in such areas of mathematics as representation theory \cite{Hivert2000Hecke-algebras}, symmetric function theory \cite{Bergeron2000Noncommutative}, and combinatorial Hopf algebras \cite{Aguiar2006Combinatorial}.  Quasisymmetric functions also provide ample opportunity for combinatorial explorations of their properties, as seen in \cite{Bessenrodt2011Skew-quasischur}, \cite{Haglund2008Quasisymmetric}, and \cite{Haglund2009Refinements}.

In \cite{Haglund2008Quasisymmetric}, the authors define a new basis of the algebra $\Qsym$ of quasisymmetric functions called column-strict quasisymmetric Schur functions, denoted $\csqschur_\alpha$, where $\alpha$ is a sequence of positive integers called a strong composition. The functions $\csqschur_\alpha$ can be defined as generating functions for composition shaped tableaux, which are certain fillings with positive integers of strong composition shape $\alpha$ subject to three relations on the entries of the filling.  This definition parallels the combinatorial definition of Schur Functions, and in fact the Schur function $s_\lambda$ can be obtained by taking a certain sum of the functions $\csqschur_\alpha$. Continuing the parallel, the authors in \cite{Bessenrodt2011Skew-quasischur} give a more general definition of skew column-strict quasisymmetric Schur functions.

In \cite{Haglund2009Refinements} the authors give a Littlewood-Richardson type rule for expanding the product $\csqschur_\alpha s_\lambda$, where $s_\lambda$ is the symmetric Schur function, as a nonnegative integral sum of the functions $\csqschur_\beta$. The proof of the Littlewood-Richardson type rule in \cite{Haglund2009Refinements} utilizes an analogue of Schensted insertion on semi-standard Young tableaux, which is an algorithm in classical symmetric function theory which inserts a positive integer $b$ into a Young tableau $T$. This Littlewood-Richardson type rule was used in \cite{Lauve2010QSym} to show a certain subset of the functions $\csqschur_\alpha$ over a finite number of variables gives a basis of the coinvariant space for quasisymmetric polynomials, thus proving a conjecture of Bergeron and Reutenauer in \cite{Bergeron_Reutenauer}. 

In \cite{Mason2010A-Dual-Basis}, the authors provide a row-strict analogue of column-strict composition tableaux; specifically they interchange the roles of weak and strict in each of the three relations mentioned above.  See Definition \ref{def_RCT} below. One of these relations requires the fillings to decrease strictly across each row, thus the name row-strict composition tableaux. This definition produces the generating functions $\rsqschur_\alpha$, called row-strict quasisymmetric Schur functions, which the authors show are again a basis of $\Qsym$. Also contained in \cite{Mason2010A-Dual-Basis} is an insertion algorithm which inserts a positive integer $b$ into a row-strict composition tableau, producing a new row-strict composition tableau. This insertion procedure is presented in Definition \ref{RCT Insertion}.

In Section \ref{sec:algo} we establish several new properties of the insertion algorithm given in \cite{Mason2010A-Dual-Basis}. These properties lead directly to Theorem \ref{LRrule}, which is a Littlewood-Richardson type rule for expanding the product $\rsqschur_\alpha s_\lambda$ as a nonnegative integral sum of the function $\rsqschur_\beta$. Theorem \ref{LRrule} was inspired by \cite{Haglund2009Refinements} and the combinatorics of this rule share many similarities with the classical Littlewood-Richardson rule for multiplying two Schur functions, see \cite{Fulton1997Young-tableaux} for an example.  We then follow the work in \cite{Lauve2010QSym} and show in Corollary \ref{cor:coinvariant} that a certain subset of $\rsqschur_\alpha$ is a new basis for the coinvariant space for quasisymmetric functions.

\section{Demazure Atoms}

As mentioned above, the basis $\csqschur_\alpha$ of column-strict quasisymmetric Schur functions defined in \cite{Haglund2008Quasisymmetric} can be defined as generating functions for composition shaped tableaux.  Originally, the functions $\csqschur_\alpha$ were first defined as certain positive integral sums of functions called Demazure atoms. Demazure atoms first appeared in \cite{Lascoux1990Keys} under the name ``standard bases," and later were characterized as specializations of nonsymmetric Macdonald polynomials when $q=t=0$. The latter characterization can be deduced from the results in \cite{Sanderson2000On-the} for affine Lie type $A$ and \cite{Ion2003Nonsymmetric} for general affine Lie type; latter in \cite{Mason2009An-explicit} an explicit formulation was given for finite type $A$. Because the functions $\csqschur_\alpha$ are a positive integral sum of Demazure atoms, the Schur function $s_\lambda$ decompose as a positive integral sum of Demazure atoms. Demazure atoms are also related to Demazure characters and Schubert polynomials through the use of divided difference operators \cite{Reiner1995Key-polynomials}; see Section \ref{sec:divdiff}. The combinatorial construction of standard bases in \cite{Lascoux1990Keys} involved the computation of certain Young tableaux called left and right keys, which we give in Definition \ref{def:leftandrightkey}. The computation of keys and the polynomials related to them have seen uses in combinatorics and representation theory, for example  \cite{Aval2007Keys-and-alter}, \cite{Lenart2004A-unified}, \cite{Lenart2007On-the}, and \cite{Reiner1995Key-polynomials}.

Even with their characterizations given in \cite{Lascoux1990Keys} and \cite{Mason2009An-explicit}, little is known about the multiplicative structure of Demazure atoms. In an effort to advance the study of these objects, we present here two characterizations of Demazure atoms not previously used in the literature.  

The characterization given in \cite{Mason2009An-explicit} defines Demazure atoms as the generating functions for certain semi-standard augmented fillings.  Our first characterization of Demazure atoms, Theorem \ref{thm-CT-array}, is analogous to describing Schur functions as Gelfand-Tsetlin patterns.  Specifically, we present a bijection between certain triangular arrays, which we call composition array patterns, and semi-standard augmented fillings.  Then in Theorem \ref{thm:GTandCTbiject} we provide an explicit bijection between composition array patterns and GT patterns which completes a commutative diagram involving semi-standard augmented fillings, composition array patterns, Young tableaux, and GT patterns.

Our second characterization, Proposition \ref{prop:atomaspath},  presents Demazure atoms as certain Lakshmibai-Seshadri paths.  Lakshmibai-Seshadri paths, or LS-paths, form the foundation of Littelmann's path model, which is a combinatorial tool for computing multiplicities of a given weight in a highest weight representation of a symmetrizable Kac-Moody algebra \cite{Littelmann1995Paths-and-root} \cite{Littelmann1994A-Littlewood}.  Our characterization shows that Demazure atoms are the sum of the weights of all LS-paths beginning in a given direction.

\section{Nonsymmetric Macdonald Polynomials}   

Another family of functions which generalizes Schur functions are the nonsymmetric Macdonald polynomials $E_\gamma$, which are defined and studied in \cite{Opdam1995Harmonic}, \cite{Macdonald1996Affine-Hecke}, and \cite{Cherednik1995Nonsymmetric} among others. The $E_\gamma$ can be defined as eigenfunctions of certain operators $Y^\beta$ in the double affine Hecke algebra defined by Cherednik \cite{Cherednik1995Double-affine}, \cite{Cherednik1994Induced}, \cite{Cherednik2005Double-affine}.  

The $E_\gamma$ are a generalization of Schur functions in that through symmetrization of the $E_\gamma$ one recovers the symmetric Macdonald polynomials, which are a fundamental basis of the algebra of symmetric functions \cite{Macdonald1995Symmetric}. Upon different specializations of the parameters $q$ and $t$ appearing in symmetric Macdonald polynomials one can obtain Schur functions, Hall-Littlewood symmetric functions, Jack polynomials, monomial symmetric functions, and elementary symmetric functions. Many of these symmetric functions play key roles in representation theory, and in physics Jack polynomials are the eigenfunctions of the Schr\"{o}dinger operator for a quantum-mechanical system \cite{Kirillov1997Lectures-on}.

In \cite{Haglund2008A-combinatorial} the authors establish a combinatorial expression of nonsymmetric Macdonald polynomials $E_\gamma$ as a sum over combinatorial diagrams, where each term in the sum depends on certain statistics computed from the corresponding diagram. The proof of this combinatorial expansion relies on a recurrence developed by Knop \cite{Knop1997Integrality} and Sahi \cite{Sahi1996Interpolation} which is actually a special case of an intertwining formula developed by Cherednik \cite{Cherednik1997Intertwining}. More recently, the authors in \cite{Ram2011A-combinatorial} give combinatorial formulas for nonsymmetric Macdonald polynomials of arbitrary Lie type.

One avenue to pursue in seeking classical type results for nonsymmetric Macdonald polynomials is to seek Littlewood-Richardson type rules for multiplication.  In \cite{Baratta2009Pieri-type} the author develops Pieri type formulas for the expansion of the product of a nonsymmetric Macdonald polynomials with certain elementary symmetric functions in terms of nonsymmetric Macdonald polynomials.  In \cite{Yip2010A-Littlewood} the author gives a Littlewood-Richardson type rule for the expansion of the product of a nonsymmetric Macdonald Polynomials with a symmetric Macdonald polynomials in terms of symmetric, and also nonsymmetric, Macdonald polynomials.

Another avenue of research to pursue is to study the effects of specialization the parameters $q$ and $t$ appearing in formulas for $E_\gamma$.  This as been done by several authors.  In \cite{Sanderson2000On-the} the author establishes a connection between nonsymmetric Macdonald polynomials and the Demazure characters of $\widehat{\mathfrak{sl}}_n$. This result is then generalized in \cite{Ion2003Nonsymmetric} where the author establishes a similar connection, but for arbitrary type. In finite type $A$, as we previously mentioned, the author of \cite{Mason2009An-explicit} shows that specializing both parameters $q$ and $t$ of the polynomial $E_\gamma$ produces functions now called Demazure atoms.

Since the polynomials $E_\gamma$ are eigenfunctions for operators $Y^\beta$ in the double affine Hecke algebra, and can be constructed using intertwining formulas in the affine Hecke algebra, it is natural to ask how the functions $E_\gamma$ transform under the action of the Hecke algebra generator $T_i$. This question was answered in \cite{Haglund2010The-action}, and we present it here as Proposition \ref{prop:theaction}. For reasons that will become clear in Chapter \ref{ch:4ndChapterLabel}, the functions $T_\tau E_\gamma$, for arbitrary permutations $\tau$, are called permuted basement nonsymmetric functions. We apply Proposition \ref{prop:theaction} to explicitly describe the operators $Y_i^\tau$ which have the functions $T_\tau E_\gamma$ as their simultaneous eigenfunctions. This is done in Proposition \ref{prop:permbaseeigen}. We should note that one can also specialize the parameters $q$ and $t$ in $T_\tau E_\gamma$, and the resulting polynomials are the subject of interest in \cite{Haglund2011Properties}.

   \chapter[%
      Row-Strict Quasisymmetric Schur Functions
   ]{%
      Row-Strict Quasisymmetric Schur Functions
   }%
   \label{ch:2ndChapterLabel}
   In this chapter we discuss certain formal power series called row-strict quasisymmetric Schur functions.  Our first goal is to show that the product of a row-strict quasisymmetric Schur function and a symmetric Schur function decomposes into a positive sum of row-strict quasisymmetric Schur functions. This result is obtain in Theorem \ref{LRrule}. Our second goal is to show that the Littlewood-Richardson type rule of Theorem \ref{LRrule} gives us a way to construct a basis for the coinvariant space for quasisymmetric functions.

This chapter is organized as follows.  Section \ref{sec-definitions} reviews the definitions of symmetric and quasisymmetric functions, and also provides the definitions needed to define row-strict quasisymmetric Schur functions and to describe our Littlewood-Richardson type rule.  Section \ref{sec:algo} describes the insertion algorithm originally defined in \cite{Mason2010A-Dual-Basis}, and in this section several new properties of the algorithm are established.  Section \ref{sec:LRrule} states and proves the Littlewood-Richardson type rule, and in Section \ref{sec:coinv-space} we apply Theorem \ref{LRrule} to produce a basis for the coinvariant space for quasisymmetric functions.

\section{Definitions}{\label{sec-definitions}}

\subsection{Compositions and reverse lattice words}

A \emph{strong composition} with $k$ parts, denoted $\alpha=(\alpha_1, \ldots, \alpha_k)$, is a sequence of positive integers. A \emph{weak composition} $\gamma=(\gamma_1, \ldots, \gamma_k)$ is a sequence of nonnegative integers, and a \emph{partition} $\lambda=(\lambda_1, \ldots, \lambda_k)$ is a weakly decreasing sequence of nonnegative integers. Let  $\reverse{\lambda}:=(\lambda_k, \lambda_{k-1}, \ldots, \lambda_1)$ be the \emph{reverse of} $\lambda$, and let $\lambda^t$ denote the \emph{transpose of} $\lambda$. Denote by $\partitionof{\alpha}$ the unique partition obtained by placing the parts of $\alpha$ in weakly decreasing order.  Denote by $\strongof{\gamma}$ the unique strong composition obtained by removing the zero parts of $\gamma$. For any sequence $\beta=(\beta_1, \ldots, \beta_s)$ let $\ell(\beta):=s$ be the \emph{length of} $\beta$. For $\gamma$ and $\beta$ arbitrary (possibly weak) compositions of the same length $s$ we say $\gamma$ is \emph{contained in} $\beta$, denoted $\gamma \subseteq \beta$, if $\gamma_i \leq \beta_i$ for all $1 \leq i \leq s$.

For example $\alpha=(1,2,3,2)$ is a strong composition, $\gamma=(1,0,2,3,0,2)$ is a weak composition, and $\lambda=(3,2,2,1)$ is a partition.  Here, $\strongof{\gamma}=\alpha$ and $\partitionof{\alpha}=\lambda$. Also, $\reverse{\lambda}=(1,2,2,3)$ and $\lambda^t=(4,3,1)$.

A finite sequence $w=w_1w_2\cdots w_n$ of positive integers with largest part $m$ is called a \emph{reverse lattice word} if in every prefix of $w$ there are at least as many $i$'s as $(i-1)$'s for each $1< i \leq m$.  The \emph{content} of a word $w$ is the sequence $\cont(w)=(\cont(w)_1, \ldots, \cont(w)_m)$ where $\cont(w)_i$ equals the number of times $i$ appears in $w$.  A reverse lattice word is called \emph{regular} if $\cont(w)_1 \neq 0$.  Note that if $w$ is a regular reverse lattice word, then $\cont(w)=\reverse{\lambda}$ for some partition $\lambda$. For example $w=4433421$ is a regular reverse lattice word with largest part $4$, and $\cont(w)=(1,1,2,4)$.

\subsection{Symmetric and Quasisymmetric Functions}

Throughout this chapter, we will let $\xx :=(x_1, x_2, x_3, \ldots)$ be a countable set of indeterminates. A symmetric function is a formal power series $f(\xx)$ of bounded degree which is invariant under the action of the symmetric group on indices.  In this dissertation, we will work over the field of rational numbers $\Q$, but much of the theory discussed works equally as well over $\Z$. See  \cite{Stanley1999EC2} for many of the properties of symmetric functions.

\begin{definition}
A symmetric function $f(x_1, \ldots)$ is a formal power series of bounded degree with rational coefficients such that for each strong composition $\alpha=(\alpha_1, \ldots, \alpha_k)$, the coefficient of $x_1^{\alpha_1}\cdots x_k^{\alpha_k}$ is equal to the coefficient of $x_{i_1}^{\alpha_1}\cdots x_{i_k}^{\alpha_k}$ for all ordered sequences $(i_1,\ldots, i_k)$ of distinct positive integers. In other words, $f(x_1, x_2, \ldots)$ is symmetric if $f(x_{\tau(1)}, x_{\tau(2)}, \ldots) = f(x_1, x_2, \ldots)$ for any permutation $\tau$ of the positive integers.  
\end{definition}

\begin{example}
The function $f(\xx)=\sum_{i<j} x_ix_j$ is symmetric.
\end{example}

We will denote the $\Q$-algebra of symmetric functions by $\Sym$, however in the literature this algebra is usually denoted $\Lambda$.  As mentioned in the Chapter \ref{ch:IntroductionLabel}, symmetric functions are ubiquitous in mathematics.  More recently, the larger space of quasisymmetric functions have begun to play a similar role. See  \cite{Stanley1999EC2} for many of the properties of quasisymmetric functions.

\begin{definition}
A quasi-symmetric function $f(x_1, \ldots)$ is a formal power series of bounded degree with rational coefficients such that for each strong composition $\alpha $ $= (\alpha_1, $ $\ldots, \alpha_k)$, the coefficient of $x_1^{\alpha_1}\cdots x_k^{\alpha_k}$ is equal to the coefficient of $x_{i_1}^{\alpha_1}\cdots x_{i_k}^{\alpha_k}$ for all $i_1 < i_2 < \cdots < i_k$. 
\end{definition}

\begin{example}
The function $g(\xx)=\sum_{i < j} x_i^2x_j$ is quasisymmetric.
\end{example}

We will denote the $\Q$-algebra of quasisymmetric functions by $\Qsym$.  Notice that $\Sym \subseteq \Qsym$ since every symmetric function is also quasisymmetric.  In Section \ref{sec:coinv-space} we will be interested in the ideal generated by symmetric functions with zero constant term inside $\Qsym$.

\subsection{Diagrams and fillings}

To any sequence $\alpha$ of nonnegative integers we may associate a \emph{diagram}, also denoted $\alpha$, of left justified boxes with $\alpha_i$ boxes in the $i$th row from the top.  In the case $\alpha=\lambda$ is a partition, the diagram of $\lambda$ is the usual Ferrers diagram in the English convention.  Given a diagram $\alpha$, let $(i,j)$ denote the box in the $i$th row and $j$th column.

Given two sequences $\gamma$ and $\alpha$ of the same length $s$ such that $\gamma \subseteq \alpha$, define the \emph{skew diagram} $\alpha/\gamma$ to be the array of boxes that are in $\alpha$ and not in $\gamma$.  The boxes in $\gamma$ are called the \emph{skewed boxes.}  For each skew diagram in this chapter an extra column, called the \emph{$0$th column}, with $s$ boxes will be added strictly to the left of the first column.

A \emph{filling} $U$ of a diagram $\alpha$ is an assignment of positive integers to the boxes of $\alpha$.  Given a filling $U$ of $\alpha$, let $U(i,j)$ be the entry in the box $(i,j)$.  A \emph{reverse row-strict Young tableau}, or RRST, $T$ is a filling of partition shape $\lambda$ such that each row strictly decreases when read left to right and each column weakly decreases when read top to bottom.  Since we will only be concerned with RRST tableaux in this chapter, we will refer to these objects as \emph{tableaux} when no confusion will result.  If $\lambda$ is a partition with $\lambda_1=m$, then let $T_\lambda$ be the tableau of shape $\lambda$ which has the entire $i$th column filled with the entry $(m+1-i)$ for all $1\leq i \leq m$.

A filling $U$ of a skew diagram $\alpha/\gamma$ is an assignment of positive integers to the boxes that are in $\alpha$ and not in $\gamma$. We follow the convention that each box in the $0$th column and each skewed box is assigned a virtual $\infty$ symbol.  With this convention, an entry $U(i,j)$ may equal $\infty$. Given two boxes filled with $\infty$, if they are in the same row we define these entries to strictly decrease left to right, while two such boxes in the same column are defined to be equal. 

The \emph{column reading order} of a (possibly skew) diagram is the total order $<_{col}$ on its boxes where $(i,j) <_{col} (i^\prime, j^\prime)$ if $j<j^\prime$ or ($j=j^\prime$ and $i>i^\prime$).  This is the total order obtained by reading the boxes from bottom to top in each column, starting with the left-most column and working rightwards. If $\alpha$ is a diagram with $k$ rows and longest row length $m$, it will occasionally be convenient to define this order on all cells $(i,j)$, where $0\leq i \leq k$ and $1 \leq j \leq m+2$, regardless of whether the cell $(i,j)$ is a box in $\alpha$.  The \emph{column reading word} of a (possibly skew) filling $U$ is the sequence of integers $w_{col}(U)$ obtained by reading the entries of $U$ in column reading order, where we ignore entries from skewed boxes and entries in the $0$th column.

The following definition first appeared in \cite{Mason2010A-Dual-Basis}.

\begin{definition}{\label{def_RCT}}
Let $\alpha$ be a strong composition with $k$ parts and largest part size $m$.  A {\it{row-strict composition tableau}} (RCT) $U$ is a filling of the diagram $\alpha$ such that
\begin{enumerate}
\item The first column is weakly increasing when read top to bottom.
\item Each row strictly decreases when read left to right.
\item \textbf{Triple Rule:} Supplement $U$ with zeros added to the end of each row so that the resulting filling $\hat{U}$ is of rectangular shape $k \times m$.  Then for $1 \leq i_1 < i_2 \leq k$ and $2 \leq j \leq m$,
\begin{equation*}
\left( \hat{U}(i_2,j) \neq 0 \text{ and } \hat{U}(i_2,j)>\hat{U}(i_1,j) \right)\Rightarrow \hat{U}(i_2,j) \geq \hat{U}(i_1, j-1).
\end{equation*}
\end{enumerate}
\end{definition}

If we let $\hat{U}(i_2,j)=b$, $\hat{U}(i_1,j)=a$, and $\hat{U}(i_1, j-1)=c$, then the Triple Rule ($b\neq 0$ and $b>a$ implies $b \geq c$) can be pictured as
\[
\begin{array}{ccc} \vspace{6pt}
\tableau{   c & a   \\  & \bas{{\vdots}} }  
 \\
 \tableau{ & b}
\end{array}.
\]

In addition to the triples that satisfy Definition \ref{def_RCT}, we also have a notion of inversion triples. Inversion triples were originally introduced by Haglund, Haiman, and Loehr in \cite{Haglund2005A-combinatorial} and \cite{Haglund2008A-combinatorial} to describe a combinatorial formula for symmetric, and later nonsymmetric, Macdonald polynomials.  In the present context inversion triples are defined as follows. Let $\gamma$ be a (possibly weak) composition and let $\beta$ be a strong composition with $\gamma \subseteq \beta$.  Let $U$ be some arbitrary filling of $\beta/\gamma$.  A \emph{Type A triple} is a triple of entries
\begin{displaymath}
U(i_1, j-1)=c, \; U(i_1, j)=a, \; U(i_2, j)=b
\end{displaymath}
in $U$ with $\beta_{i_1} \geq \beta_{i_2}$ for some rows $i_1 < i_2$ and some column $j> 0$.  A \emph{Type B triple} is a triple of entries
\begin{displaymath}
U(i_1, j)=b, \; U(i_2, j)=c, \; U(i_2, j+1)=a
\end{displaymath}
in $U$ with $\beta_{i_1}<\beta_{i_2}$ for some rows $i_1<i_2$ and some column $j\geq 0$. A triple of either type A or B is said to be an \emph{inversion triple} if either $b \leq a < c$ or $a < c \leq b$.  Note that triples of either type may involve boxes from $\gamma$ or boxes in the $0$th column. Type A and Type B triples can be visualized as
\[
\begin{array}{cc} \vspace{6pt}
\text{Type A} & \text{Type B} \\

\begin{array}{ccc} \vspace{6pt}
\tableau{   c & a   \\  & \bas{{\vdots}} }  
 \\
 \tableau{ & b}
\end{array}
&
\begin{array}{ccc} \vspace{6pt}
\tableau{   b &    \\  \bas{{\vdots}} & }  
 \\
 \tableau{c & a}
\end{array}

\end{array}.
\]

Central to the main theorem of this paper is the following definition.

\begin{definition}{\label{LRskewCT}}
Let $\beta$ and $\alpha$ be strong compositions. Let $\gamma$ be some (possibly weak) composition satisfying $\strongof{\gamma}=\alpha$ and $\gamma \subseteq \beta$. A {\it{Littlewood-Richardson skew row-strict composition tableau}} $S$, or {\it{LR skew RCT}}, of shape $\beta/\alpha$ is a filling of a diagram of skew shape $\beta/\gamma$ such that: 
\begin{enumerate}
\item Each row strictly decreases when read left to right.
\item Every Type A and Type B triple is an inversion triple.
\item The column reading word of $S$, $w_{col}(S)$, is a regular reverse lattice word.
\end{enumerate}
\end{definition}

Note that in Definition \ref{LRskewCT} the shape of an LR skew RCT is $\beta/\alpha$ although we refer to a filling of $\beta/\gamma$.

\begin{example}{\label{UandS}}
Below is a RCT, $U$, of shape $(1,3,2,2)$, and a LR skew RCT, $S$, of shape $(1,2,3,1,5,3)/(1,3,2,2)$ with $w_{col}(S)=4433421$.

\begin{picture}(100,100)(0,0)
\put(100,40){U =}
\put(125,60){\tableau{ 1 \\
	      4 & 3 & 2 \\
	      5 & 4 \\
	      5 & 3
	      }}
\put(200, 40){S =}
\put(244,72){\sqone}
\put(244,36){\sqthree}
\put(244,0){\sqtwo}
\put(244,-18){\sqtwo}
\put(222,72){\tableau{ \bas{\infty} &\infty \\
    \bas{\infty} & 4& 3  \\
     \bas{\infty} & \infty & \infty & \infty  \\ 
    \bas{\infty} & 4  \\
    \bas{\infty} & \infty & \infty &4&2&1 \\ 
    \bas{\infty} & \infty & \infty  & 3
}}
\end{picture}
\end{example}

\subsection{Generating functons}

The content of any filling $U$ of partition or composition shape, denoted $\cont(U)$, is the content of its column reading word $w_{col}(U)$.  To any filling $U$ we may associate a monomial 
\begin{equation*}
\xx^U=\prod_{i \geq 1} x_i^{\cont(U)_i}.
\end{equation*}  

The algebra of symmetric functions $\Sym$ has the Schur functions $s_\lambda$ as a basis, where $\lambda$ ranges over all partitions.  The Schur function $s_\lambda$ can be defined in a number of ways.  In this dissertation it is advantageous to define $s_\lambda$ as the generating function of reverse row-strict tableaux of shape $\lambda^t$.  That is

\begin{displaymath}
s_\lambda = \sum \xx^T
\end{displaymath}
where the sum is over all reverse row-strict tableaux $T$ of shape $\lambda^t$. See \cite{Stanley1999EC2} for many of the properties of $s_\lambda$.

The generating function of row-strict composition tableaux of shape $\alpha$ are denoted $\rsqschur_\alpha$.  That is
\begin{displaymath}
\rsqschur_\alpha=\sum \xx^U
\end{displaymath}
where the sum is over all row-strict composition tableaux $U$ of shape $\alpha$.  The generating functions $\rsqschur_\alpha$ are called \emph{row-strict quasisymmetric Schur functions} and were originally defined in \cite{Mason2010A-Dual-Basis}.  In \cite{Mason2010A-Dual-Basis} the authors show $\rsqschur_\alpha$ are indeed quasisymmetric, and furthermore the collection of all $\rsqschur_\alpha$, as $\alpha$ ranges over all strong compositions, forms a basis of the algebra $\Qsym$ of quasisymmetric functions. The authors also show that the Schur function $s_\lambda$ decomposes into a positive sum of row-strict quasisymmetric Schur functions indexed by compositions that rearrange the transpose of $\lambda$.  Specifically,
\begin{equation*}
s_\lambda=\sum_{\partitionof{\alpha}=\lambda^t} \rsqschur_\alpha .
\end{equation*}

\section{Insertion algorithms}
\label{sec:algo}

Define a \emph{two-line array} $A$ by letting
\begin{equation*}
A=\left( \begin{matrix} i_1 & i_2 & \cdots & i_n \\
				   j_1 & j_2 & \cdots & j_n 	 \end{matrix} \right)
\end{equation*}
where $i_r, j_r$ are positive integers for $1 \leq r \leq n$, (a) $i_1 \geq i_2 \geq \cdots \geq i_n$, and (b) if $i_r=i_s$ and $r\leq s$ then $j_r \leq j_s$. Denote by $\widehat{A}$ the upper sequence $i_1, i_2, \ldots, i_n$ and denote by $\widecheck{A}$ the lower sequence $j_1, j_2, \ldots, j_n$.

The classical Robinson-Schensted-Knuth (RSK) correspondence gives a bijection between two-line arrays $A$ and pairs of (reverse row-strict) tableaux $(P,Q)$ of the same shape \cite{Fulton1997Young-tableaux}.  The basic operation of RSK is Schensted insertion on tableaux, which is an algorithm that inserts a positive integer into a tableau $T$ to produce a new tableau $T^\prime$.  In our setting, Schensted insertion is stated as

\begin{definition}
Given a tableau $T$ and $b$ a positive integer one can obtain $T^\prime :=b \rightarrow T$ by inserting $b$ as follows:
\begin{enumerate}
\item Let $\tilde{b}$ be the largest entry less than or equal to $b$ in the first row of $T$.  If no such $\tilde{b}$ exists, simply place $b$ at the end of the first row.
\item If $\tilde{b}$ does exist, replace (bump) $\tilde{b}$ with $b$ and proceed to insert $\tilde{b}$ into the second row using the method just described.
\end{enumerate}
\end{definition}

The RSK correspondence is the bijection obtained by inserting $\widecheck{A}$ in the empty tableau $\emptyset$ to obtain a tableau $P$ called the \emph{insertion tableau}, while simultaneously placing $\widehat{A}$ in the corresponding new boxes to obtain a tableau $Q$ called the \emph{recording tableau}.

\begin{example} Below is an example of the RSK correspondence on pairs of reverse row-strict Young tableaux.
\[
\begin{array}{c}
\begin{pmatrix}
\; \; \tableau{  4 & 3 & 2 & 1 \\ 4 & 3 \\ 2 }  & , &
\tableau{ 4 & 3 & 2 & 1 \\ 4 & 3 \\ 4 } \; \;
\end{pmatrix} 
\qquad \stackrel{RSK}{\Leftrightarrow} \qquad
\begin{pmatrix}
4 & 4 & 4 & 3 & 3 & 2 & 1 \\
2 & 4 & 4 & 3 & 3 & 2 & 1
\end{pmatrix} 
\end{array}
\]
\end{example}

The authors in \cite{Mason2010A-Dual-Basis} provide an analogous insertion algorithm on row-strict composition tableaux.

\begin{definition}{\label{RCT Insertion}}{({\it{RCT Insertion}})}
Let $U$ be a RCT with longest row of length $m$, and let $b$ be a positive integer.  One can obtain $U^\prime :=U \leftarrow b$ by inserting $b$ as follows. Scan the entries of $U$ in reverse column reading order, that is top to bottom in each column starting with the right-most column and working leftwards, starting with column $m+1$ subject to the conditions:
\begin{enumerate}
\item In column $m+1$, if the current position is at the end of a row of length $m$, and $b$ is strictly less than the last entry in that row, then place $b$ in this empty position and stop.  If no such position is found, continue scanning at the top of column $m$.  
\item 
\begin{enumerate}
\item Inductively, suppose some entry $b_j$ begins scanning at the top of column $j$.  In column $j$, if the current position is empty and is at the end of a row of length $j-1$, and $b_j$ is strictly less than the last entry in that row, then place $b_j$ in this empty position and stop.  
\item If a position in column $j$ is nonempty and contains $\tilde{b}_j \leq b_j$ such that $b_j$ is strictly less than the entry immediately to the left of $\tilde{b}_j$, then $b_j$ bumps $\tilde{b}_j$.  Continue scanning column $j$ with the entry $\tilde{b}_j$, bumping whenever possible using the criterion just described. After scanning the last entry in column $j$, begin scanning column $j-1$.
\end{enumerate}
\item If an entry $b_1$ is bumped into the first column, then place $b_1$ in a new row that appears after the lowest entry in the first column that is weakly less than $b_1$.
\end{enumerate}
\end{definition}

In \cite{Mason2010A-Dual-Basis} the authors show $U^\prime=U \leftarrow b$ is a row-strict composition tableau.  The algorithm of inserting $b$ into $U$ determines a set of boxes in $U^\prime$ called the \emph{insertion path of $b$} and denoted $I(b)$, which is the set of boxes in $U^\prime$ which contain an entry bumped during the algorithm.  Not that if some entry $b_j$ bumps an entry $\tilde{b}_j$ then $b_j \geq \tilde{b}_j$; thus the sequence of entries bumped during the algorithm is weakly decreasing. We call the row in $U^\prime$ in which the new box is ultimately added the \emph{row augmented by insertion.} If the new box has coordinates $(i,1)$, then for each $r>i$, row $r$ of $U^\prime$ is said to be the \emph{corresponding row} of row $(r-1)$ of $U$. 

\begin{example} The figure below gives an example of the RCT insertion algorithm, where row $4$ is the row augmented by insertion.  The italicized entries indicated the insertion path $I(4)$, which goes from top to bottom in each column and from right to left.

\begin{picture}(100,110)(0,-20)
\put(125,60){\tableau{ 1 \\
	       3 \\
	      4 & 3 & 2 \\
	      5 & 4 & 2\\
	      5 & 4
	      }}
\put(190, 20){$\leftarrow$}
\put(210, 20){$4$}
\put(230, 20){$=$}

\put(250,60){\tableau{
	1 \\
	3 \\
	4 & 3 & 2 \\
	{\textbf{\it{4}}} \\
	5 & {\textbf{\it{4}}} & 2 \\
	5 & {\textbf{\it{4}}} }}

\end{picture}

\end{example}

We establish several new lemmas concerning RCT insertion that are instrumental in proving the main theorem of this paper in Section \ref{sec:LRrule}.

\begin{lemma}{\label{rows}}
Let $U$ be a RCT and let $b$ be a positive integer.  Then each row of $U^\prime = U \leftarrow b$ contains at most one box from $I(b)$.
\end{lemma}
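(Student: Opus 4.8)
The plan is to trace the RCT insertion algorithm column by column and argue that within any single column the bumping moves are confined to a single position per row, while the transition between consecutive columns cannot ``re-enter'' a row already touched. First I would set up notation: suppose column $j$ receives an entry $b_j$ at its top (with $b_{m+1}:=b$ in column $m+1$), and scan downward. By Definition \ref{RCT Insertion}(2), the scan of column $j$ either terminates by placing an entry in an empty cell at the end of a row of length $j-1$, or it produces a single bumped entry $\tilde b_j$ which is then carried to column $j-1$. The key local observation is that in a fixed column $j$, the scan stops permanently the first time it bumps, because after bumping $\tilde b_j$ we ``continue scanning column $j$ with the entry $\tilde b_j$'' — but I claim $\tilde b_j$ can itself bump at most once more only if it strictly decreases, and more to the point, each row contributes at most one cell to column $j$, so the set of cells of column $j$ visited is a contiguous run and each lies in a distinct row. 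Thus the portion of $I(b)$ lying in column $j$ meets each row in at most one box.

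The main step is then the inter-column argument: I must show that the box of $I(b)$ in column $j-1$ (if any) lies in a row \emph{strictly below} — or at least distinct from — every row already contributing a box of $I(b)$ in columns $\geq j$. Here I would use that the sequence of bumped entries is weakly decreasing (noted after Definition \ref{RCT Insertion}) together with the row-strict decrease along rows (Definition \ref{def_RCT}(2)): when $b_j$ bumps $\tilde b_j=\hat U(i,j)$, we have $b_j$ strictly less than $\hat U(i,j-1)$, the entry immediately to its left in row $i$; this entry in row $i$, column $j-1$ is \emph{not} bumped, since the bumping test in column $j-1$ would require the incoming entry $\tilde b_j\le b_j<\hat U(i,j-1)$ to still be at least as large as the entry to the left of whatever it bumps, and the geometry of the insertion path (moving right-to-left, and the new row in case (3) being inserted \emph{below} the relevant first-column entry) forces the column-$(j-1)$ box of the path into a row other than $i$. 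Finally, the last box added — whether at the end of a shorter row via (2a)/(1) or as a brand-new row via (3) — is in a row not previously visited, since it is an empty cell (for (1),(2a)) or an entirely new row (for (3)).

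I expect the inter-column transition to be the main obstacle: one has to rule out the insertion path wiggling back up into a row it already passed through in an earlier (rightward) column. I would handle this by establishing, as an auxiliary claim, that the rows hit by $I(b)$, listed in the order columns are scanned (right to left), form a \emph{strictly increasing} sequence of row indices — this is a natural strengthening that makes the induction on columns go through cleanly and immediately yields the stated lemma. The proof of that auxiliary claim will lean on Definition \ref{def_RCT}(1) (first column weakly increasing) to control case (3), on Definition \ref{def_RCT}(2) to control the ``not bumped to the left'' phenomenon, and on the weak-decrease of the bumped-entry sequence to keep the comparisons consistent across columns.
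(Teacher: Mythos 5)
Your proposal hinges on the auxiliary claim that the rows hit by $I(b)$, read in the order the columns are scanned, form a strictly increasing sequence, and that claim is false. Take $U$ to be the RCT with row $1$ equal to $(3,1)$ and row $2$ equal to $(5,4,2)$ (one checks directly that the first column weakly increases, rows strictly decrease, and the Triple Rule holds), and insert $b=3$. Column $4$ yields no placement since $3\not<2$. Scanning column $3$, the entry $3$ bumps the $2$ out of $(2,3)$ since $2\leq 3<4=\hat U(2,2)$. The displaced $2$ then scans column $2$ from the top and bumps the $1$ out of $(1,2)$ since $1\leq 2<3=\hat U(1,1)$. The displaced $1$ is finally placed in a new first-column row. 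The insertion path therefore visits row $2$ in column $3$ and then row $1$ in column $2$: the row index drops when passing from column $3$ to column $2$. So the strengthening you propose is simply not a theorem, and the column-by-column induction you sketch cannot be carried out.

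What you actually need is weaker --- only that no row is visited twice --- and this follows at once by comparing the \emph{entries} rather than the row indices. Suppose $(i,j)$ and $(i,j')$ with $j'<j$ both lie in $I(b)$. The scan reaches $(i,j)$ before $(i,j')$, and the value in hand is weakly decreasing over the course of the algorithm, so the entry placed at $(i,j)$ satisfies $U'(i,j)\geq U'(i,j')$. On the other hand $U'=U\leftarrow b$ is a row-strict composition tableau, so $U'(i,j')>U'(i,j)$, a contradiction. This is precisely the paper's proof. Your middle paragraph does contain a correct local observation --- if $(i,j)\in I(b)$ because of a bump, the value carried into column $j-1$ is at most $\hat U(i,j)<\hat U(i,j-1)$, so the box $(i,j-1)$ immediately to the left cannot be bumped --- but that controls only column $j-1$, not columns $j'<j-1$, and the false monotonicity claim was your only mechanism for bridging that gap.
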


\begin{proof}
Suppose for a contradiction that some row $i$ in $U^\prime$ contains at least two boxes from the insertion path of $b$.  Consider two of these boxes, say in columns $j$ and $j^\prime$ such that (without loss of generality) $j^\prime<j$.  Let $U^\prime(i,j)=b_1$ and $U^\prime(i, j^\prime)=b_2$.  Since $b_1$ was bumped earlier in the algorithm than $b_2$, we must have $b_2 \leq b_1$. Since $b_1$ and $b_2$ are in the same row, and $b_2$ appears to the left of $b_1$,  this contradicts row-strictness of $U^\prime$.
\end{proof}

\begin{lemma}{\label{lastrow}} Let $U$ be a RCT and let $b$ be a positive integer.  Let $U^\prime = U \leftarrow b$ with row $i$ of $U^\prime$ the row augmented by insertion.  Then for all rows $r >i$ of $U^\prime$, the length of row $r$ is not equal to the length of row $i$.
\end{lemma}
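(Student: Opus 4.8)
I distinguish two cases according to the column of the new box: either it lies in column $1$, where it forms a brand-new row, or it lies in a column $j\geq 2$, where it is appended to a row of $U$ of length $j-1$. In both cases the engine is the Triple Rule of Definition~\ref{def_RCT} applied to $U$, combined with the observation that the value carried by the scan is weakly decreasing --- it changes only when a bump replaces it by a smaller-or-equal entry, and it is unchanged across column boundaries. I also use repeatedly that bumps never alter row lengths, so the lengths of rows of $U'$ can be read off from $U$.

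Consider first the case $j\geq 2$. If $j=m+1$ then every other row of $U'$ has length at most $m<j$, so there is nothing to prove; assume $2\leq j\leq m$, and suppose for contradiction that some row $r>i$ has length $j$ in $U'$. Since bumps preserve lengths and the new box lies in row $i\neq r$, row $r$ already has length $j$ in $U$, so $d:=U(r,j)$ is defined and $U(r,j+1)$ is empty. The box is placed at $(i,j)$ with some value $b_j<U(i,j-1)$ by the placement rule, and the Triple Rule on rows $i<r$ in column $j$ --- with $\hat{U}(i,j)=0$ while $\hat{U}(r,j)=d>0$ --- yields $d\geq U(i,j-1)>b_j$. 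Since the box is placed in column $j$, the scan placed nothing in columns $>j$, so it reaches the empty end-of-row cell $(r,j+1)$ and, not placing there, carries a value $\geq U(r,j)=d$ at that moment; and the value is still $\geq d$ when the scan enters column $j$, because if $j<m$ then any bump in column $j+1$ at a row $p>r$ resets the value to $U(p,j+1)\neq 0$, on which the Triple Rule on rows $r<p$ in column $j+1$ forces $U(p,j+1)\geq U(r,j)=d$, while if $j=m$ then column $j+1=m+1$ carries no bumps at all. Hence, in column $j$ the scan descends from a value $\geq d$ to the value $b_j<d$ with which the box is placed in row $i$, so some bump in column $j$, at a row $p<i$, first brings the value below $d$. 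Letting $v\geq d$ be the value just before that bump, the bumping condition gives $v<U(p,j-1)$ and the post-bump value is $U(p,j)<d$, so $d>U(p,j)$ and the Triple Rule on rows $p<r$ in column $j$ forces $d\geq U(p,j-1)$; thus $d\leq v<U(p,j-1)\leq d$, a contradiction.

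In the case $j=1$, the new row $i$ of $U'$ has length $1$ and is inserted directly below the bottom-most row $i_0$ of $U$ whose first entry is $\leq b_1$ (where $b_1$ is the value carried into column $1$; take $i_0=0$ if there is no such row). Then the rows of $U'$ below row $i$ are exactly the rows of $U$ of index $>i_0$, each of which has first entry $>b_1$. Suppose for contradiction that one of these, row $q>i_0$ of $U$, has length $1$; then $d:=U(q,1)>b_1$. Since the scan reached column $1$ it placed nothing in column $2$, so it reaches the empty end-of-row cell $(q,2)$ carrying a value $\geq U(q,1)=d$ there, yet it leaves column $2$ carrying $b_1<d$; hence some bump in column $2$, at a row $p>q$, first brings the value below $d$, producing a post-bump value $U(p,2)<d$ with $U(p,2)\neq 0$. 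But the Triple Rule on rows $q<p$ in column $2$ forces $U(p,2)\geq U(q,1)=d$, a contradiction. (The degenerate possibility $m=1$ is immediate: then the scan can reach column $1$ only if every first entry is $\leq b_1$, so the new row is the last row and nothing need be shown.)

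I expect the crux to be the middle portion of the $j\geq 2$ argument: controlling the scan value over the stretch running from the cell $(r,j+1)$ down to the point of placement, which passes through rows of column $j+1$ lying \emph{below} the augmented row --- rows about whose entries nothing is known a priori --- and then through the top of column $j$. The key idea is that each potential offending bump can be confronted with an appropriate instance of the Triple Rule: a bump in column $j+1$ is measured against the empty cell $(r,j+1)$, and a bump in column $j$ against the cell $(r,j)$, in the latter case also invoking $d\geq U(i,j-1)>b_j$ to close the chain $d\leq v<U(p,j-1)\leq d$. The remaining points --- that bumps preserve row lengths, and the relabelling of rows when $j=1$ --- are routine.
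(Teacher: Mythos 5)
Your proof is correct and follows essentially the same strategy as the paper's: track the value carried by the scan, observe that at the empty cell $(r,j+1)$ it must be at least $d = U(r,j)$ (else it would have been placed there), show via the Triple Rule in column $j+1$ that it remains $\geq d$ on entry to column $j$, and then locate the first bump in column $j$ that drops the carried value below $d$, which yields a Triple Rule violation against row $r$. The paper phrases this in terms of the insertion-path entries $U'(i_0,j) \geq \cdots \geq U'(i_t,j)$ and selects the straddling index $\ell$, but that $\ell$ is exactly your first offending bump $p$; likewise the paper's short $j=1$ dismissal ($y \geq U(r,1)$ forces the new row below row $r$) is the same contradiction you derive by following the scan from $(q,2)$ into column $1$.
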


\begin{proof}
Suppose for a contradiction that this is not the case.  Then there exists a row $r$ of $U^\prime$, $r>i$, whose length is equal to the length of row $i$.  Call this length $j$.  Since $i$ is the row in which the new cell was added then row $r$ in $U^\prime$ is the same as row $r$ in $U$, except in the case when the augmented row $i$ is of length $1$ in which case row $(r+1)$ of $U^\prime$ is the same as row $r$ of $U$.  Let $y$ be the entry that scans the top of the $j$th column.

We claim $y \geq U(r,j)$.  Suppose not.  Then $y<U(r,j)$.  When scanning column $j+1$ if the value $y$ was in hand at row $r$, we would have put $y$ in a new box with coordinates $(r, j+1)$.  Since this is not the case, $y$ was bumped from position $(s, j+1)$, $s>r$.  In this case $y=\hat{U}(s, j+1) >0=\hat{U}(r, j+1)$ with $y<\hat{U}(r,j)$.  This is a Triple Rule violation in $U$, thus $y\geq U(r,j)$.

If $j=1$ then since $y \geq U(r,j)$, $y$ would be inserted into a new row $i$ where $i>r$.  This is contrary to our assumption that the augmented row $i$ satisfies $r>i$. So we can assume $j>1$.

We must have $U(r,j)=U^\prime(r,j) \geq U(i, j-1)=U^\prime(i,j-1)$, or else $U$ would have a Triple Rule violation because $\hat{U}(i,j)=0$.  Since $U(i, j-1)=U^\prime(i, j-1)>U^\prime(i,j)$ we have $U^\prime(r,j) > U^\prime(i,j)$.

Consider now the portion of the insertion path in column $j$, say in rows $i_0 < i_1 < $ $\ldots <i_t = i$, where $y=U^\prime(i_0, j)$.  Since $y\geq U(r,j)=U^\prime(r,j) > U^\prime(i,j)$ and since the entries in the insertion path are weakly decreasing, there is some index $\ell$, $0\leq \ell < t$, such that 
\begin{equation}{\label{star1}}
U^\prime(i_\ell, j)\geq U^\prime(r,j) > U^\prime(i_{\ell+1}, j).
\end{equation}
Since rows strictly decrease, 
\begin{equation}{\label{star2}}
U^\prime(i_\ell, j-1) > U^\prime(i_\ell, j) \geq U^\prime(r,j).
\end{equation}
Further, note that
\begin{equation}{\label{star3}}
U(i_p, j)=U^\prime(i_{p+1}, j) \text{ for all } 0\leq p < t.
\end{equation}
Now combining (\ref{star1}),(\ref{star2}), and (\ref{star3}) we get in $U$ the inequalities 
\begin{equation}
{U(r,j)=U^\prime(r,j) > U^\prime(i_{\ell+1},j)= U(i_\ell, j)},
\end{equation}
but $U(r,j)=U^\prime(r,j)< U^\prime(i_\ell, j-1)=U(i_\ell, j-1)$, which is a Triple Rule violation in $U$.

Thus in all cases we obtain a contradiction.
\end{proof}

Consider the RCT obtained after $n$ successive insertions 
\begin{equation*}
U_n := (\cdots ((U \leftarrow b_1) \leftarrow b_2) \cdots) \leftarrow b_n
\end{equation*}
where the $b_i$ are arbitrary positive integers. Any row $i$ of $U_n$ will either consist entirely of boxes added during the successive insertions, or it will consist of some number of boxes from $U$ with some number of boxes added during the successive insertions.  In the former case row $i$ corresponds to some row $\hat{i}$ in each $U_j$ for $1 \leq k < j$, where $k$ is such that the insertion of $b_k$ adds a box in position $(\hat{i},1)$.  In the latter case row $i$ corresponds to some row $\hat{i}$ in each $U_j$ for all $0 \leq j \leq n$ where $U_0:=U$.  

As a direct consequence of Lemma \ref{lastrow} we have 

\begin{lemma}{\label{weakly longer}}
Consider $U_n$, the RCT obtained after $n$ successive insertions. Consider two rows $i$ and $i^\prime$ of $U_n$ such that $i < i^\prime$ and row $i$ is weakly longer than row $i^\prime$.  Suppose $b_{k_1}$ adds a box in position $(\hat{i},1)$ and $b_{k_2}$ adds a box in position $(\hat{i^\prime},1)$. Then $k_1<k_2$ and the corresponding row $\hat{i}$ is weakly longer than the corresponding row $\hat{i^\prime}$ in each $U_j$ for $j\geq k_2$.  
\end{lemma}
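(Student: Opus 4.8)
The plan is to track each row of $U_n$ backwards through the successive insertions using the notion of \emph{corresponding rows}, and to reduce the entire statement to Lemma \ref{lastrow} together with two elementary facts about RCT insertion, both immediate from Definition \ref{RCT Insertion}. The facts are: (i) a single insertion places exactly one new cell, which either extends an existing row by one cell or forms a new row of length $1$ (the bumping steps change no row's length), so along $U_0:=U, U_1, \ldots, U_n$ the length of the corresponding copy of any fixed row is weakly increasing, and increases only at the step at which that row receives the new cell; and (ii) a single insertion never alters the relative vertical order of two rows already present, so any two rows that coexist in $U_a$ and $U_b$ with $a<b$ occur in the same order in both. Write $\hat{i}$ (resp.\ $\hat{i^\prime}$) for the row of $U_j$ that becomes row $i$ (resp.\ $i^\prime$) of $U_n$, defined for $j\geq k_1$ (resp.\ $j\geq k_2$), and let $p_j$ and $q_j$ be their lengths in $U_j$.

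The core of the proof is the monotonicity invariant $p_j\geq q_j$ for every $j$ with $\max(k_1,k_2)\leq j\leq n$. I would prove this by contradiction. It holds at $j=n$ by hypothesis. If it failed at some step, then since by (i) the sequences $(p_j)$ and $(q_j)$ are weakly increasing there would be a step $j_1\leq n$ with $p_{j_1-1}<q_{j_1-1}$ but $p_{j_1}\geq q_{j_1}$ (and $j_1-1\geq\max(k_1,k_2)$, so both rows are present at steps $j_1-1$ and $j_1$). Passing from $U_{j_1-1}$ to $U_{j_1}$ changes at most one existing row's length, and only by $+1$; a short case analysis then forces the new cell of $b_{j_1}$ to extend row $\hat{i}$, forces $q_{j_1-1}-p_{j_1-1}=1$, and hence forces $p_{j_1}=q_{j_1}$. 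But then the row augmented by insertion at step $j_1$ is $\hat{i}$, and $\hat{i^\prime}$ is --- by (ii), since $i<i^\prime$ in $U_n$ --- a row strictly below $\hat{i}$ of the same length $p_{j_1}$, contradicting Lemma \ref{lastrow}.

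Given the invariant, both conclusions follow quickly. Since one insertion adds only one new cell, $k_1\neq k_2$. If $k_2<k_1$, then at step $k_1$ the cell of $b_{k_1}$ is placed at $(\hat{i},1)$, so $\hat{i}$ is a new row of length $1$ and is the row augmented by insertion in $U_{k_1}$; by Lemma \ref{lastrow} every row strictly below it has length $\neq1$, hence $\geq2$, so in particular the already-present row $\hat{i^\prime}$, which lies below $\hat{i}$ by (ii), satisfies $q_{k_1}\geq2>1=p_{k_1}$, contradicting the invariant at $j=k_1=\max(k_1,k_2)$. Hence $k_1<k_2$, so $\max(k_1,k_2)=k_2$, and the invariant says precisely that the corresponding row $\hat{i}$ is weakly longer than the corresponding row $\hat{i^\prime}$ in each $U_j$ with $j\geq k_2$.

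The step I expect to require the most care is the proof of the invariant, and in particular the case analysis verifying that at the flip step $j_1$ the augmented row must be $\hat{i}$ and that the two lengths must then coincide; the surrounding bookkeeping of corresponding rows and the deduction that $k_1<k_2$ are routine once (i) and (ii) are in place. This is consistent with the text's assertion that the lemma is a direct consequence of Lemma \ref{lastrow}.
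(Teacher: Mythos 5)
Your proof is correct and takes essentially the same route as the paper's: both arguments locate a step at which the length of the corresponding row $\hat{i}$ increases by one to match that of $\hat{i^\prime}$, and then invoke Lemma \ref{lastrow} to derive a contradiction. Your version is more careful in two respects — you make the monotonicity of the length sequences and the preservation of vertical order explicit, and you give an explicit argument for $k_1<k_2$, which the paper's proof leaves implicit — so this is a somewhat fuller write-up of the same idea rather than a different approach.
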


\begin{proof}
Suppose for a contradiction that at some intermediate step $U_j$ row $\hat{i}$ is strictly shorter than $\hat{i^\prime}$.  Since row $i$ is weakly longer than row $i^\prime$ in $U_n$, we must have that for some $\ell$, $j <\ell$, the new box produced in the insertion of $b_\ell$ into $U_{\ell-1}$ is at the end of the corresponding row $\hat{i}$ and rows $\hat{i}$ and $\hat{i^\prime}$ have the same length.  This contradicts Lemma \ref{lastrow}.

\end{proof}

Lemma \ref{lastrow} allows us to invert the insertion process for RCT's.  More specifically, given a RCT $U^\prime$ of shape $\alpha^\prime$ we can obtain a RCT $U$ of shape $\alpha$, where $\alpha^\prime=(\alpha_1, \ldots, \alpha_i+1, \ldots, \alpha_l)$ or $\alpha^\prime=(\alpha_1, \ldots, \alpha_{i-1}, 1, \alpha_i, \ldots, \alpha_l)$, in the following way.  We can un-insert the last entry, call it $y$, in row $i$ of $\alpha^\prime$, where row $i$ is the lowest row of length $j=\alpha_i+1$ or $j=1$. Do so by scanning up columns from bottom to top and un-bumping entries $\tilde{y}$ weakly greater than $y$ whenever $y$ is strictly greater than the entry to the right of $\tilde{y}$.  After scanning a column, we move one column to the right and continue scanning bottom to top.  In the end we will have un-inserted an entry $k$ and have produced a RCT $U$ such that $U^\prime=U \leftarrow k$.

\subsection{Main Bumping Property}

As above, let $U$ be a RCT with $k$ rows and longest row length $m$.  Consider $U \leftarrow b \leftarrow c$ with $b \leq c$.  Let $b_j^i$ be the entry ``in hand" which scans the entry in the $i$th row and $j$th column of $U$ during the insertion of $b$ into $U$, where $b_j^0$ is the element that begins scanning at the top of column $j$, so $b_{m+1}^0:=b$.  If the insertion of $b$ stops in position $(i_b,j_b)$ then $b_j^i:=0$ for all positions $(i,j) <_{col} (i_b,j_b)$ in $U$.  Similarly, let $c_j^i$ be the entry ``in hand" which we compare against the entry in the $i$th row and $j$th column of $U \leftarrow b$ during the insertion of $c$ into $U \leftarrow b$, where $c_j^0$ is the element that begins scanning the top of column $j$.  Note that $c$ will begin scanning in column $m+2$, since the insertion of $b$ may end in column $m+1$.  But when $b \leq c$ we have $c_{m+1}^0=c$ regardless of where the insertion of $b$ ends. If the insertion of $c$ into $U \leftarrow b$ stops in position $(i_c, j_c)$ we let $c_j^i:=0$ for all positions $(i,j)<_{col} (i_c, j_c)$ in $U \leftarrow b$. 

Now consider $U \leftarrow b \leftarrow a$ with $b>a$.  Define $b_j^i$ as above.  Similarly, we can define $a_j^i$ to be the entry which scans the entry in the $i$th row and $j$th column of $U \leftarrow b$ during the insertion of $a$ into $U \leftarrow b$.  Define $a_j^0$ to be the entry that begins scanning at the top of the $j$th column.  Define $a_{m+2}^0:=a$.  If the insertion of $a$ stops in position $(i_a, j_a)$ then let $a_j^i:=0$ for all positions $(i,j)<_{col} (i_a,j_a)$ in $U \leftarrow b$.  

\begin{lemma}{\label{scanning values}}
Let $U$ be a RCT with $k$ rows and longest row length $m$.  Let $a<b\leq c$ be positive integers. Suppose the insertion of $b$ into $U$ creates a new box in position $(i_b, j_b)$ in $U \leftarrow b$. The scanning values $b_j^i, c_j^i, a_j^i$ have the following relations.
\begin{enumerate}
\item Consider $U \leftarrow b \leftarrow c$.
	\begin{enumerate}
	\item If $U\leftarrow b$ has the same number of rows as $U$, then $b_j^i \leq c_j^i$ for all $(i,j)$ such that $0 \leq i \leq i_b	$ when $j=j_b$ and $0 \leq i \leq k$ when $j_b < j <m+1$.
	\item If $U\leftarrow b$ has one more row than $U$, that is $j_b=1$, then 
	\[
	\begin{array}{ll}
	b_j^i \leq c_j^i & \text{ for all } 0 \leq i \leq i_b \text{ and } 1 \leq j \leq m+1, \\
	b_j^{i} \leq c_j^{i+1} & \text{ for all } i_b \leq i \leq k+1 \text{ and } 2 \leq j \leq m+1.
	\end{array}
	\]
	\end{enumerate}
\item Consider $U \leftarrow b \leftarrow a$.
	\begin{enumerate}
	\item If $U\leftarrow b$ has the same number of rows as $U$, then $b_j^i > a_{j+1}^i$ for all $(i,j)$ such that $0 \leq i \leq i_b$ when $j=j_b$ and $0 \leq i \leq k$ when $j_b < j \leq m$.
	\item If $U\leftarrow b$ has one more row than $U$, that is $j_b=1$, then 
	\[
	\begin{array}{ll}
	b_j^i > a_{j+1}^i & \text{ for all } 0 \leq i \leq i_b \text{ and } 1 \leq j \leq m, \\
	b_j^i > a_{j+1}^{i+1} & \text{ for all } i_b \leq i \leq k+1 \text{ and } 2 \leq j \leq m+1.
	\end{array}
	\]
	\end{enumerate}
\end{enumerate}
\end{lemma}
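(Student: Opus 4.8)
The plan is to prove all four sub-cases by running the two insertions \emph{in parallel} and tracking the scanning values by induction on the cell $(i,j)$, taken in the order in which RCT insertion visits cells (columns right to left, rows top to bottom within a column). Throughout I would use freely that $U$, $U\leftarrow b$, $U\leftarrow b\leftarrow c$ and $U\leftarrow b\leftarrow a$ are all RCT's, so that row-strictness and the Triple Rule hold at every stage, together with the bookkeeping facts that $U\leftarrow b$ agrees with $U$ off the insertion path $I(b)$, that at a cell of $I(b)$ the value displaced by the bump equals the new entry in hand, and that the sequence of values bumped during an insertion is weakly decreasing. Lemma \ref{rows} will be used to guarantee that $I(b)$ meets each row at most once, so that if $b$'s path passes through a cell $(i,j)$ it does not also pass through its left neighbour $(i,j-1)$, and comparisons against that neighbour may be made against $U$ directly.

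For case (1)(a) the base case is the hypothesis $b_{m+1}^0=b\le c=c_{m+1}^0$; a short separate check (if some row of length $m$ in $U$ had last entry exceeding $b$ then $b$ would have stopped in column $m+1$, forcing $j_b=m+1$) shows that when $j_b\le m$ neither insertion terminates in column $m+1$ and no bumping occurs there, so the induction reaches the first cell named in the statement with the correct inequality. For the inductive step at $(i,j)$, assuming $b_j^i\le c_j^i$, I would split on the behaviour of each insertion at $(i,j)$ (the cell is empty, or it is nonempty and the insertion either bumps there or passes over it, or it is where the insertion places its new box). Column transitions are free since $b_{j-1}^0=b_j^k$ and likewise for $c$. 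The cases requiring care are those in which exactly one insertion bumps: for instance if $b$ passes at $(i,j)$ but $c$ bumps, then $c$'s bump condition gives $c_j^i<(U\leftarrow b)(i,j-1)$, and since $b_j^i\le c_j^i$ this rules out $b$ having passed because of its left neighbour; as the cell is nonempty, $b$ passed because $(U\leftarrow b)(i,j)>b_j^i$, and then $b_j^{i+1}=b_j^i<(U\leftarrow b)(i,j)=c_j^{i+1}$; in the residual situation where $I(b)$ does touch $(i,j-1)$, weak monotonicity of bumped values together with $c_j^i<(U\leftarrow b)(i,j-1)$ forces $b_j^i\le c_j^i<b_j^i$, a contradiction, so that situation does not arise. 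The symmetric case ($b$ bumps, $c$ passes) and the cases where $b$ places its box at $(i_b,j_b)$ are handled the same way; each reduces to one line of row-strictness, with the Triple Rule excluding configurations in which a bumpable entry was skipped.

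Case (1)(b) is the same induction run until $b$ bumps into column $1$ and opens a new row $i_b$ of $U\leftarrow b$; from that row downward the rows of $U\leftarrow b$ are those of $U$ with index shifted up by one, and $c$ inherits the shift, which is exactly the source of the two displayed ranges. Cases (2)(a) and (2)(b) are the mirror statements for $a<b$: the only structural difference is that $a$'s scanning value in column $j+1$ is compared with $b$'s in column $j$ --- reflecting that a strictly smaller inserted entry bumps one column to the right --- and every inequality becomes strict. I would prove (2)(a) by the analogous parallel induction with base $b_{m+1}^0=b>a=a_{m+2}^0$, the same boundary analysis in columns $m+1$, $m+2$, and the observation that whenever $b$'s in-hand value has dropped very low (because it bumped a small entry out of $U\leftarrow b$), the later insertion of $a$ either bumps that same small-ish entry or has already stopped, so that $a_{j+1}^i$ is correspondingly small or zero; case (2)(b) then adds the new-row index shift of (1)(b).

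I expect the main obstacle to be the sheer size of the case analysis rather than any single hard idea: at each cell each of the two insertions may bump, pass, place its box, or be already stopped, the cell may be empty or at the end of a row, and in the $a<b$ cases the two insertions are never inspecting the same cell, so one must track carefully which entries of $U$ and of $U\leftarrow b$ are being compared. Making the list of cases genuinely exhaustive, and handling cleanly the column transitions together with the new-row split in parts (b), is where the real work lies; the content of each individual case is a one-line consequence of row-strictness, weak monotonicity of bumped values, or a Triple Rule violation.
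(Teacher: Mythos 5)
Your proposal is correct and takes essentially the same approach as the paper: a parallel induction on cells in scanning order (equivalently, the paper's double induction on column index then row index), with the inductive step handled by a four-way case split on whether each of the two insertions bumps at the cell under consideration, the only nontrivial sub-case being the one you spell out, and the (b) parts obtained by tracking the row-index shift introduced when $b$'s insertion opens a new row. One small inaccuracy in your sketch: the paper's argument for this lemma never invokes the Triple Rule — the nontrivial sub-cases close purely from row-strictness of $U$, $U\leftarrow b$, and the third tableau, together with the fact (from Lemma \ref{rows}) that the insertion path meets each row at most once, which is exactly the residual-situation check you describe.
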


\begin{remark}
Informally, Lemma \ref{scanning values} states that when doing consecutive insertions $U \leftarrow b \leftarrow c$ or $U \leftarrow b \leftarrow a$, the scanning values created by $b$ are weakly less than the scanning values created by $c$, and the scanning values of $b$ are strictly greater than the scanning values created by $a$.  Note that $b_j^i > a_{j+1}^i$ implies $b_j^i > a_j^i$ for $(i,j)$ satisfying the conditions of Lemma \ref{scanning values} part (2).
\end{remark}

\begin{proof}
\textbf{Proof of (1a):} Let $j=m+1$ and $i=0$.  Then clearly $b_{m+1}^0=b \leq c_{m+1}^0 =c$.  Now fix the column index $j>j_b$. Suppose by induction that $b_j^p \leq c_j^p$ for all $p\leq i$.  To show $b_j^{i+1} \leq c_j^{i+1}$ consider the following cases.

\emph{Case 1:} Suppose $b_j^i$ bumps the entry $b_j^{i+1}$ in position $(i, j)$ of $U$, and $c_j^i$ does not bump in position $(i,j)$ of $U^\prime$.  In this case, $b_j^{i+1} \leq b_j^i \leq c_j^i = c_j^{i+1}$.

\emph{Case 2:}  Suppose $b_j^i$ bumps the entry $b_j^{i+1}$ in position $(i, j)$ of $U$, and $c_j^i$ bumps the entry $c_j^{i+1}=b_j^i$ in position $(i,j)$ of $U^\prime$.  Then $b_j^{i+1} \leq b_j^i =c_j^{i+1}$.

\emph{Case 3:} Suppose neither $b_j^i$ nor $c_j^i$ bump in position $(i,j)$ of their respective RCT.  Then $b_j^{i+1}=b_j^i \leq c_j^i = c_j^{i+1}$.

\emph{Case 4:} Suppose $b_j^i$ does not bump in position $(i,j)$ of $U$, but $c_j^i$ bumps $c_j^{i+1}$ in position $(i,j)$ of $U^\prime$.  Consider the following diagram which depicts row $i$ and columns $j-1$ and $j$ in each of $U$, $U \leftarrow b$, and $U \leftarrow b \leftarrow c$.
\[
\begin{array}{ccc}
U & U \leftarrow b & U \leftarrow b \leftarrow c \\
\Tableau{ d & c_j^{i+1}} & \Tableau{ \tilde{d} & c_j^{i+1} } & \Tableau{ \tilde{d} & c_j^i }
\end{array}
\]

If $d$ is bumped by $\tilde{d}$ during the insertion of $b$, then $d \leq \tilde{d} \leq b_{j-1}^0 \leq b_j^i \leq c_j^i$ which contradicts row strictness of $U \leftarrow b \leftarrow c$.  So assume $d$ does not get bumped by $\tilde{d}$, that is $d=\tilde{d}$.  We get $d= \tilde{d} > c_j^i \geq b_j^i$ and since $b_j^i$ does not bump we must have $b_j^i < c_j^{i+1}$.  But then $b_j^i =b_j^{i+1} < c_j^{i+1}$.

The argument above shows that for fixed $j$, $b_j^i \leq c_j^i$ for all $0 \leq i \leq k$.  But this immediately implies $b_{j-1}^0 \leq c_{j-1}^0$ and thus we have $b_j^i \leq c_j^i$ for all $(i,j)$ indicated in the lemma.

\textbf{Proof of (1b):} Notice that row $i+1$ in $U^\prime$ will correspond to row $i$ in $U$ for all $i_b \leq i \leq k+1$.  Since row $i_b$ has only one box in it, then $c_j^{i_b}=c_j^{i_b+1}$ for $3\leq j \leq m+1$.  So assume $j$ is fixed such that $3 \leq j \leq m+1$.  The proof for part (1a) establishes $b_j^i \leq c_j^i$ for $0 \leq i \leq i_b$, which immediately implies $b_j^{i_b} \leq c_j^{i_b+1}$.  

Now suppose by induction that $b_j^p \leq c_j^{p+1}$ for all $p$ such that $i_b\leq p \leq i$ for some $i$.  We establish $b_j^{i+1} \leq c_j^{i+2}$ by considering the following cases.

\emph{Case 1:} Suppose $b_j^{i}$ bumps the entry $b_j^{i+1}$ in position $(i, j)$ of $U$, and $c_j^{i+1}$ does not bump in position $(i+1,j)$ of $U^\prime$.  Then $b_j^{i+1} \leq b_j^{i} \leq c_j^{i+1} =c_j^{i+2}$.

\emph{Case 2:} Suppose $b_j^{i}$ bumps the entry $b_j^{i+1}$ in position $(i, j)$ of $U$, and $c_j^{i+1}$ bumps the entry $c_j^{i+2}=b_j^{i}$ in position $(i+1,j)$ of $U^\prime$.  Then $b_j^{i+1} \leq b_j^i = c_j^{i+2}$.

\emph{Case 3:} Suppose neither $b_j^{i}$ does not bump in position $(i,j)$ of $U$ and $c_j^{i+1}$ does not bump in position $(i+1,j)$ of $U^\prime$.  Then $b_j^{i+1}=b_j^{i} \leq c_j^{i+1}=c_j^{i+2}$.

\emph{Case 4:} Suppose $b_j^{i}$ does not bump in position $(i,j)$ of $U$, but $c_j^{i+1}$ bumps $c_j^{i+2}$ in position $(i+1,j)$ of $U^\prime$. Consider the following diagram which depicts columns $j-1$ and $j$ and the labelled rows of $U$, $U \leftarrow b$, and $U\leftarrow b \leftarrow c$.
\[
\begin{array}{lccc}
 & U & U \leftarrow b & U \leftarrow b \leftarrow c \\
i& \Tableau{ d & c_j^{i+2}} & \vdots & \vdots  \\
i+1& \vdots & \Tableau{ \tilde{d} & c_j^{i+2} } & \Tableau{ \tilde{d} & c_j^{i+1} }
\end{array}
\]

If $d$ is bumped by $\tilde{d}$ during the insertion of $b$, then $d \leq \tilde{d} \leq b_{j-1}^0 \leq b_j^{i} \leq c_j^{i+1}$ which contradicts row strictness of $U \leftarrow b \leftarrow c$.  So assume $d$ does not get bumped by $\tilde{d}$, that is $d=\tilde{d}$.  We get $d= \tilde{d} > c_j^{i+1} \geq b_j^{i}$ and since $b_j^{i}$ does not bump we must have $b_j^{i} < c_j^{i+2}$.  But then $b_j^{i+1} =b_j^{i} < c_j^{i+2}$.

When $j=2$, the above argument shows $b_2^i \leq c_2^i$ for all $0 \leq i \leq i_b$, which implies the insertion of $c$ cannot add a new box with entry $c_2^{i_b}$ in position $(i_b, 2)$ of $U^\prime$, otherwise $U^\prime(i_b,1) \leq b_2^{i_b} \leq c_2^{i_b}$.  So $c_2^{i_b}=c_2^{i_b+1}$ and the above argument shows $b_2^i \leq c_2^{i+1}$ for all $i_b \leq i \leq k+1$.

The case of $j=2$ implies $b_1^0 \leq c_1^0$.  The definition of insertion implies $b_1^i=b_1^0$ for all $0 \leq i \leq i_b$, and $c_1^i=c_1^0$ for all $0 \leq i \leq i_b$.  Thus, the relations in part (1b) of the lemma follow. 

\textbf{Proof of (2a)} We have $b_{m+1}^0=b > a_{m+2}^0=a$ by assumption.  Now fix a column $j>j_b$ and assume by induction that $b_j^p > a_{j+1}^p$ for all $p\leq i$ for some $i$.  To show $b_j^{i+1} > a_{j+1}^{i+1}$ we consider the following cases.

\emph{Case 1:} Suppose $b_j^i$ bumps the entry $b_j^{i+1}$ in position $(i,j)$ of $U$, and $a_{j+1}^i$ bumps the entry $a_{j+1}^{i+1}$ in position $(i, j+1)$ of $U \leftarrow b$.  Then $U(i,j)=b_j^{i+1} > U(i,j+1)=a_{j+1}^{i+1}$ by row-strictness of $U$.

\emph{Case 2:} Suppose $b_j^i$ does not bump in position $(i,j)$ of $U$, and $a_{j+1}^i$ bumps the entry $a_{j+1}^{i+1}$ in position $(i, j+1)$ of $U \leftarrow b$.  Then $b_j^{i+1}=b_j^i > a_{j+1}^i \geq a_{j+1}^{i+1}$.

\emph{Case 3:} Suppose $b_j^i$ does not bump in position $(i,j)$ of $U$, and $a_{j+1}^i$ does not bump in position $(i, j+1)$ of $U\leftarrow b$.  Then $b_j^{i+1}=b_j^i > a_{j+1}^i = a_{j+1}^{i+1}$.

\emph{Case 4:} Suppose $b_j^i$ bumps the entry $b_j^{i+1}$ in position $(i,j)$ of $U$, and $a_{j+1}^i$ does not bump in position $(i, j+1)$ of $U\leftarrow b$. Let $U^\prime=U \leftarrow b$.  Then 
\begin{displaymath}
U^\prime(i,j)=b_j^i \geq b_j^{i+1}=U(i,j) > U(i,j+1)=U^\prime(i,j+1).
\end{displaymath}  
Because $U^\prime(i,j)=b_j^i>a_{j+1}^i$ and $a_{j+1}^i$ does not bump, we must have $a_{j+1}^i < U^\prime(i,j+1)$.  This implies $b_j^{i+1}>a_{j+1}^i=a_{j+1}^{i+1}$.

The argument above shows that for fixed $j$,  $b_j^i > a_{j+1}^i$ for all $0 \leq i \leq k$.  This implies $b_{j-1}^0 > a_j^0$, which then implies the relations in part (2a) of the lemma.

\textbf{Proof of (2b):} Notice that row $i+1$ in $U^\prime$ will correspond to row $i$ in $U$ for all $i_b \leq i \leq k+1$.  Since row $i_b$ has only one box in it, then $a_{j+1}^{i_b}=a_{j+1}^{i_b+1}$ for $2\leq j \leq m+1$.  So assume $j$ is fixed such that $2 \leq j \leq m+1$.  The proof for part (2a) establishes $b_j^i > a_{j+1}^i$ for $0 \leq i \leq i_b$, which immediately implies $b_j^{i_b} > a_{j+1}^{i_b+1}$.  

Now suppose by induction that $b_j^p > a_{j+1}^{p+1}$ for all $p$ such that $i_b\leq p \leq i$ for some $i$.  We establish $b_j^{i+1} > a_{j+1}^{i+2}$ by considering the following cases.

\emph{Case 1:} Suppose $b_j^i$ bumps the entry $b_j^{i+1}$ in position $(i,j)$ of $U$, and $a_{j+1}^{i+1}$ bumps the entry $a_{j+1}^{i+2}$ in position $(i+1, j+1)$ of $U \leftarrow b$.  Then $U(i,j)=b_j^{i+1}> U(i,j+1)=U^\prime(i+1, j+1)=a_j^{i+2}$. 

\emph{Case 2:}  Suppose $b_j^i$ does not bump in position $(i,j)$ of $U$, and $a_{j+1}^{i+1}$ bumps the entry $a_{j+1}^{i+2}$ in position $(i+1, j+1)$ of $U \leftarrow b$. Then $b_j^{i+1}=b_j^i>a_{j+1}^{i+1} \geq a_{j+1}^{i+2}$.

\emph{Case 3:} Suppose $b_j^i$ does not bump in position $(i,j)$ of $U$, and $a_{j+1}^{i+1}$ does not bump in position $(i+1, j+1)$ of $U\leftarrow b$. Then $b_j^{i+1}=b_j^i>a_{j+1}^{i+1} = a_{j+1}^{i+2}$.

\emph{Case 4:} Suppose $b_j^i$ bumps the entry $b_j^{i+1}$ in position $(i,j)$ of $U$, and $a_{j+1}^{i+1}$ does not bump in position $(i+1, j+1)$ of $U\leftarrow b$. Let $U^\prime=U \leftarrow b$. Then
\begin{displaymath}
U^\prime(i+1, j)=b_j^i \geq b_j^{i+1} = U(i,j) > U(i,j+1)=U^\prime(i+1, j+1).
\end{displaymath}
Because $U^\prime(i+1,j)=b_j^i>a_{j+1}^{i+1}$ and $a_{j+1}^{i+1}$ does not bump, we must have $a_{j+1}^{i+1} < $$U^\prime(i+1,j+1)$. This implies $b_j^{i+1}> a_{j+1}^{i+1}=a_{j+1}^{i+2}$.

In the case $j=1$, the definition of RCT insertion forces each scanning value $b_1^i=b_1^0$ for all rows $0 \leq i \leq i_b$.  Since $b_1^0 > a_2^0$ by the argument above, and since the entries bumped by $a_2^0$ in the second column get weakly smaller we have $b_1^i>a_2^i$ for all $0 \leq i \leq i_b$ as needed.
\end{proof}

We can apply Lemma \ref{scanning values} to prove the following proposition, which describes where new boxes are added after consecutive insertions. 

\begin{proposition}{\label{bumping}}
Let $U$ be a RCT with $k$ rows, longest row length $m$.  Let $a$, $b$, and $c$ be positive integers with $a<b\leq c$.  Consider successive insertions $U_1 := (U \leftarrow b)\leftarrow c$ and $U_2:=(U \leftarrow b) \leftarrow a$.  Let $B_a=(i_a, j_a), B_b=(i_b, j_b),$ and $B_c=(i_c, j_c)$ be the new boxes created after inserting $a,b,$ and $c$, respectively, into the appropriate RCT.  Let $i_1$ be a row in $U_1$ which contains a box $(i_1, j_1)$ from $I(b)$ and a box $(i_1, j_1^\prime)$ from $I(c)$.  Similarly, let $i_2$ be a row in $U_2$ which contains a box $(i_2, j_2)$ from $I(b)$ and a box $(i_2, j_2^\prime)$ from $I(a)$.   Then 
\begin{enumerate}
\item In $U_1$, $j_c \leq j_b$.  In $U_2$, $j_a > j_b$.  
\item In $U_1$, $j_1^\prime \leq j_1$.  In $U_2$, $j_2^\prime > j_2$.
\end{enumerate}
\end{proposition}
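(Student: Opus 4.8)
The plan is to derive everything from Lemma \ref{scanning values}, using two elementary features of the insertion procedure together with the structure of an RCT: the sequence of bumped (equivalently, in‑hand) values during a single insertion is weakly decreasing, and, because columns are scanned right to left, the in‑hand value that scans a cell is weakly smaller than the one that scanned a cell in the same row but further to the right. I will also use that once an insertion terminates at a cell $P$ its scanning value is $0$ at every cell strictly after $P$ in the reverse column‑reading scan order, so a positive scanning value at a cell certifies that the insertion has not yet stopped at or before it. Throughout I note that $I(b)\subseteq\{\text{columns}\ge j_b\}$, and similarly for $I(c),I(a)$, since an insertion never scans columns to the left of where it stops; in particular by Lemma \ref{rows} each row of $U'$ meets $I(b)$ (resp.\ $I(c)$, $I(a)$) in at most one cell, so the $j_1,j_1',j_2,j_2'$ in the statement are well defined.

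\textbf{Part (1).} For $j_c\le j_b$ in $U_1$: the value $b_{j_b}^{i_b}$ that the insertion of $b$ places is positive, so by Lemma \ref{scanning values}(1) the matching scanning value $c_{j_b}^{i_b}$ (with the obvious adjustment when $j_b=1$) is positive as well; hence the insertion of $c$ has not terminated by the time it scans $(i_b,j_b)$, forcing $j_c\le j_b$ (and $j_c=j_b=1$ in the new‑row case). For $j_a>j_b$ in $U_2$: in $U\leftarrow b$ the cell $(i_b,j_b+1)$ is empty, lies at the end of its (length‑$j_b$) row, and that row's last entry is $b_{j_b}^{i_b}$; Lemma \ref{scanning values}(2) gives $a_{j_b+1}^{i_b}<b_{j_b}^{i_b}$, so as soon as the insertion of $a$ scans $(i_b,j_b+1)$ it is forced to place its new box there (unless it has already placed it in a column $\ge j_b+1$), whence $j_a\ge j_b+1$. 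The boundary column $m+1$ and the shifted indexing caused by a new first‑column box are checked separately but identically.

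\textbf{Part (2).} Work in $U_1$ and fix a row $i_1$ carrying boxes $(i_1,j_1)\in I(b)$ and $(i_1,j_1')\in I(c)$; suppose for contradiction $j_1'>j_1$. By Lemma \ref{rows} the cell $(i_1,j_1')$ is untouched by $b$, so it carries the same entry in $U$ and in $U\leftarrow b$, and since $j_1\ge j_b$ we have $j_1'>j_b$. If $(i_1,j_1')=B_c$ then $j_c=j_1'>j_b$, contradicting Part (1). Otherwise $c$ bumps at $(i_1,j_1')$ while $b$ does not, and there are two cases according to which bumping condition fails for $b$. If $b$'s in‑hand value at $(i_1,j_1')$ is already at least the entry immediately to its left, then $b_{j_1'}^{i_1}\le c_{j_1'}^{i_1}$ (Lemma \ref{scanning values}) and weak‑decrease of in‑hand values give $c_{j_1'}^{i_1}\ge (U\leftarrow b)(i_1,j_1'-1)$, contradicting that $c$ bumps there. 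If instead $b$'s in‑hand value at $(i_1,j_1')$ is strictly below the entry $U(i_1,j_1')$, then chaining $U(i_1,j_1)\le b_{j_1}^{i_1}\le b_{j_1'}^{i_1}<U(i_1,j_1')$ forces $U(i_1,j_1)<U(i_1,j_1')$, contradicting the strict decrease of row $i_1$ of $U$. The argument in $U_2$ is the same in spirit: here $I(a)$ lies in columns $\ge j_a$, which by Part (1) are all $>j_b$; so if $j_2=j_b$ we are done, and if $j_2>j_b$ (then necessarily $j_b<j_2\le m$) the assumption $j_2'\le j_2$ is ruled out either by $b_{j_2}^{i_2}>a_{j_2}^{i_2}$ from Lemma \ref{scanning values}(2) and its Remark (when $j_2'=j_2$) or, after checking that the relevant cells of $U\leftarrow b$ still carry their $U$‑entries, by the same forced violation of row‑strictness of $U$ (when $j_2'<j_2$).

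\textbf{Main obstacle.} The crux is the second case of Part (2): the scanning‑value inequalities of Lemma \ref{scanning values} do not by themselves prevent the two insertion paths from crossing, and one genuinely has to feed them into a comparison that also exploits the monotonicity of the in‑hand value within a single insertion and the strict decrease of rows of the original tableau $U$. Getting this right requires careful tracking of which cells of $U\leftarrow b$ (and of $(U\leftarrow b)\leftarrow a$) still carry their original entries, together with the bookkeeping around the auxiliary columns $m+1$, $m+2$ and the index shift that occurs when $b$ or $c$ creates a new first‑column box; these are routine once the main comparison is in place.
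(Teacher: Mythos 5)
Your argument is correct and rests on the same pillars as the paper's: Lemma~\ref{scanning values}, the weak decrease of in-hand values along a scan, and row-strictness. Part~(1) is essentially identical to the paper's. The notable difference is in part~(2): the paper compares both placed entries to the \emph{column-top} in-hand values $b_{j_1}^0$, $c_{j_1}^0$ (resp.\ $a_{j_2'}^0$, $b_{j_2'}^0$), which produces a one-line chain
$U_1(i_1,j_1)\le b_{j_1}^0\le c_{j_1}^0\le U_1(i_1,j_1')$
(resp.\ $U_2(i_2,j_2')\le a_{j_2'}^0<b_{j_2'}^0\le U_2(i_2,j_2)$) contradicting row-strictness of the final tableau, with no case splitting. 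You instead case-split on which clause of the bumping rule failed for $b$ at $(i_1,j_1')$; this works, but is heavier, and for $U_2$ the phrase ``by the same forced violation of row-strictness of $U$'' is not quite accurate: there the two sub-cases behave differently from the $U_1$ situation (one must first observe that the analogue of your sub-case 2a cannot occur, using $b_{j_2'}^{i_2}\le b_{j_2}^{i_2}<U(i_2,j_2-1)\le U(i_2,j_2')$, after which the sub-case-2b chain gives a direct contradiction with the $a$-bump at $(i_2,j_2')$ rather than with row-strictness of $U$). The paper's chain through $a_{j_2'}^0<b_{j_2'}^0$ sidesteps all of this. You correctly flag the new-row index-shift bookkeeping; the paper glosses over it equally. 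So: same tools, correct, but the paper's organization of part~(2) is cleaner and your $U_2$, $j_2'<j_2$ step needs the extra observation above to be airtight.
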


\begin{remark} Informally, part (1) of Proposition \ref{bumping} states that if $a < b \leq c$, then in $U \leftarrow b \leftarrow c$ the new box created by $c$ is weakly left of the new box created by $b$, and in $U \leftarrow b \leftarrow a$ the new box created by $a$ is strictly right of the new box created by $b$. Part (2) of Proposition \ref{bumping} states that the insertion path of $c$ is weakly left of the insertion path of $b$, and the insertion path of $a$ is strictly right of the insertion path of $b$.
\end{remark}

\begin{proof}
\textbf{Proof of (1):} Lemma \ref{scanning values} part (1) shows that during the insertion of $c$ into $U \leftarrow b$, the scanning values $c_{j_b}^{i_b}$ is weakly greater than the entry occupying the box $B_b$, which forces the new box $B_c$ to be weakly left of $B_b$, that is $j_c\leq j_b$.  Lemma \ref{scanning values} part (2) show that during the insertion of $a$ into $U \leftarrow b$, the new box $B_a$ must occupy position $(i_b, j_b+1)$ if the insertion process reaches this position, implying that the new box $B_a$ is always strictly right of the box $B_b$, that is $j_a> j_b$.

\textbf{Proof of (2):} Suppose for a contradiction that there is a row $i_1$ of $U_1$ which contains a box $(i_1, j_1)$ from $I(b)$ and a box $(i_1, j_1^\prime)$ from $I(c)$, and that $j_1^\prime >j_1$.  Then 
\begin{displaymath}
U_1(i_1,j_1) \leq b_{j_1}^0 \leq c_{j_1}^0 \leq U_1(i_1, j_1^\prime)
\end{displaymath}  
which contradicts row-strictness in $U_1$.

Again, suppose for a contradiction that there is a row $i_2$ in $U_2$ which contains a box $(i_2, j_2)$ from $I(b)$ and a box $(i_2, j_2^\prime)$ from $I(a)$ and $j_2^\prime \leq j_2$.  If the boxes coincide, that is $j_2=j_2^\prime$, then $a_{j_2}^{i_2}$ bumped the entry $b_{j_2}^{i_2}$ in position $(i_2, j_2)$ of $U \leftarrow b$, and Lemma \ref{scanning values} shows $b_{j_2}^{i_2} > a_{j_2+1}^{i_2} \geq a_{j_2}^{i_2}$, which contradicts the definition of RCT insertion.  If $j_2^\prime < j_2$ then
\begin{displaymath}
U_2(i_2, j_2^\prime) \leq a_{j_2^\prime}^0< b_{j_2^\prime}^0 \leq U_2(i_2, j_2)
\end{displaymath}
where $a_{j_2^\prime}^0 < b_{j_2^\prime}^0$ is established by using Lemma \ref{scanning values}. But this contradicts row-strictness of $U_2$.
\end{proof}

The following lemma follows from Proposition \ref{bumping}.

\begin{lemma}{\label{readingorder}}
Consider the RCT obtained after $n$ successive insertions 
\begin{displaymath}
U_n := (\cdots ((U \leftarrow b_1) \leftarrow b_2) \cdots) \leftarrow b_n
\end{displaymath}
with $b_1\leq b_2 \leq \cdots \leq b_n$ positive integers. Let $B_1, B_2, \ldots, B_n$ be the corresponding new boxes.  Then in $U_n$, 
\begin{displaymath}
B_n <_{col} B_{n-1} <_{col} \cdots <_{col} B_1.
\end{displaymath}
\end{lemma}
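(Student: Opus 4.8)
The plan is to prove the chain $B_n <_{col} B_{n-1} <_{col} \cdots <_{col} B_1$ by induction on $n$, reducing at each step to a single pairwise comparison that is supplied by Proposition \ref{bumping}. The base case $n=1$ is vacuous, and $n=2$ is exactly the statement that $B_2 <_{col} B_1$, which I will extract directly from Proposition \ref{bumping}(1): applying that proposition with the roles $b \mapsto b_1$, $c \mapsto b_2$ (and any $a < b_1$, whose insertion is irrelevant here), we learn that in $(U \leftarrow b_1) \leftarrow b_2$ the new box $B_2 = (i_2, j_2)$ satisfies $j_2 \le j_1$. If $j_2 < j_1$ then $B_2 <_{col} B_1$ by definition of the column reading order. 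If $j_2 = j_1$, I need $i_2 > i_1$; this follows from Lemma \ref{lastrow} applied to the insertion of $b_2$ into $U \leftarrow b_1$, since $B_1$ is the last box of a row of $U \leftarrow b_1$ whose length equals that of the augmented row $i_2$, forcing $i_1$ to lie strictly above $i_2$ (the augmented row is the lowest row of its length). Hence $B_2 <_{col} B_1$.

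For the inductive step, suppose the chain holds after $n-1$ insertions into $U \leftarrow b_1$, i.e. in $U_n = ((U \leftarrow b_1) \leftarrow b_2) \cdots \leftarrow b_n$ we already know $B_n <_{col} B_{n-1} <_{col} \cdots <_{col} B_2$ as boxes of $U_n$ (here I apply the inductive hypothesis with starting tableau $U \leftarrow b_1$ and the weakly increasing sequence $b_2 \le \cdots \le b_n$; note the later insertions only add boxes and never remove any, so these inequalities persist in the final tableau). It then suffices to show $B_2 <_{col} B_1$ \emph{in $U_n$}. The subtlety is that $B_1$ and $B_2$ may shift rows under the subsequent insertions of $b_3, \ldots, b_n$, so I cannot simply quote the two-step case verbatim; I must track the relative position of $B_1$ and $B_2$ through the remaining insertions.

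The cleanest way I see to handle this is to argue directly in $U_n$ using row lengths and Lemma \ref{weakly longer}. After the insertion of $b_2$ we have $B_2 <_{col} B_1$, so either the column of $B_2$ is strictly left of the column of $B_1$, or they share a column with $B_2$ strictly below. In the first case, each later insertion $\leftarrow b_k$ can only add boxes, so the box originally holding $B_1$ never moves left of (and the box holding $B_2$ never moves right of) where it was relative to the other — more precisely, I track the two rows through the successive insertions using the ``corresponding row'' bookkeeping, and invoke Lemma \ref{weakly longer} to control when two rows have equal length, exactly as in the $n=2$ argument but one insertion at a time. In the shared-column case I again use Lemma \ref{lastrow}/Lemma \ref{weakly longer} to rule out the length coincidence that would let $B_1$'s row drop below $B_2$'s. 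I expect the main obstacle to be precisely this bookkeeping: making rigorous the claim that the column-order relation $B_2 <_{col} B_1$ is \emph{preserved} under an additional insertion into a weakly larger letter, which amounts to checking that a new box added by $\leftarrow b_k$ (with $b_k \ge b_2 \ge b_1$) cannot be inserted ``between'' $B_2$ and $B_1$ in a way that inverts their column order — and this is exactly where Proposition \ref{bumping}(1) (new box of the larger letter is weakly left) together with Lemma \ref{lastrow} does the work. Once that preservation lemma is in hand, the induction closes immediately.
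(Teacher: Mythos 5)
Your proof is correct, and it takes a genuinely different route from the paper. The paper argues directly about an arbitrary pair $B_{i_1}, B_{i_2}$ with $i_1 < i_2$: Proposition~\ref{bumping} gives that columns are weakly decreasing in the insertion order, and in the equal-column case the paper notes that all the intermediate boxes $B_k$ (for $i_1 < k < i_2$) also lie in that column and therefore cannot lengthen the row containing $B_{i_1}$, so when $B_{i_2}$ lands in a row of the same length Lemma~\ref{lastrow} gives a contradiction. Your induction replaces that ``the intermediate boxes do not interfere'' step by an inductive hypothesis applied to the starting tableau $U \leftarrow b_1$ with the remaining sequence $b_2 \leq \cdots \leq b_n$, reducing everything to the $n=2$ pairwise comparison plus a preservation argument. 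Your base case is exactly the paper's pairwise argument (Proposition~\ref{bumping} for $j_2 \leq j_1$, Lemma~\ref{lastrow} in the equal-column case), and that part is airtight.

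The one thing you over-engineer is the preservation step. You worry that later insertions might invert the relative position of $B_1$ and $B_2$ and call on Lemma~\ref{weakly longer} and Lemma~\ref{lastrow} to control this, but no such machinery is needed: RCT insertion never changes the column index of an existing box, and it only ever inserts new rows (never deleting or reordering old ones), so the relative vertical order of two fixed cells in the same column is invariant under further insertions. Hence $B_2 <_{col} B_1$ in $U_2$ immediately gives $B_2 <_{col} B_1$ in $U_n$, with nothing more to check. Your conclusion is correct, but the extra lemmas you gesture at in the inductive step are doing no work; dropping them would tighten the argument and would make your proof arguably cleaner than the paper's, since it avoids the paper's explicit bookkeeping about what the intermediate boxes $B_k$ can and cannot do to the row containing $B_{i_1}$.
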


\begin{proof}
Proposition \ref{bumping} implies the new boxes are added weakly right to left.  Let $i_1 <i_2$ and consider two boxes $B_{i_1}$ and $B_{i_2}$ in the same column.  Note that $B_{i_1}$ and $B_{i_2}$ cannot coincide.  Suppose for a contradiction that $B_{i_1}$ is (strictly) below $B_{i_2}$.  Suppose the row containing box $B_{i_1}$ has length $j$.  Once $B_{i_1}$ is added, the new boxes $B_k$  for $i_1<k<i_2$ cannot change the length of the row containing $B_{i_1}$.    Thus, when $B_{i_2}$ is added to the end of a row of length $j-1$ strictly above the row containing $B_{i_1}$, we contradict Lemma \ref{lastrow}.
\end{proof}

\subsection{Elementary Transformations}{\label{subsec:knuth}}

Knuth's contribution to the RSK algorithm included describing Schensted insertion in terms of two \emph{elementary transformations $\kone$ and $\ktwo$} which act on words $w$.  Let $a,b,$ and $c$ be positive integers.  Then
\begin{displaymath}
\begin{array}{lll}
\mathcal{K}_1: & bca \to bac & \text{ if } a<b \leq c \\
\mathcal{K}_2: & acb \to cab & \text{ if } a \leq b < c 
\end{array}.
\end{displaymath}

The relations $\kone, \ktwo,$ and their inverses $\kone^{-1}, \ktwo^{-1}$, act on words $w$ by transforming triples of consecutive letters.  Denote by $\stackrel{1}{\cong}$ the equivalence relation defined by using $\kone$ and $\kone^{-1}$.  That is, $w \stackrel{1}{\cong} w^\prime$ if and only if $w$ can be transformed into $w^\prime$ using a finite sequence of transformations $\kone$ or $\kone^{-1}$.

\begin{lemma}{\label{knuth}}
Let $U$ be a RCT and let $w$ and $w^\prime$ be two words such that $w \stackrel{1}{\cong} w^\prime$.  Then
\begin{displaymath}
U \leftarrow w = U \leftarrow w^\prime.
\end{displaymath}
\end{lemma}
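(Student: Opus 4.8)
The statement reduces immediately to the single-step case: it suffices to show that if $w'$ is obtained from $w$ by one application of $\kone$ or $\kone^{-1}$, then $U \leftarrow w = U \leftarrow w'$; the general case follows by induction on the number of elementary transformations used to pass from $w$ to $w'$. Moreover, since insertion is performed letter by letter and only the three consecutive letters being rearranged differ, it suffices to treat a word of length exactly three. So the whole lemma comes down to proving
\[
U \leftarrow b \leftarrow c \leftarrow a \;=\; U \leftarrow b \leftarrow a \leftarrow c \qquad\text{when } a < b \leq c,
\]
since the $\kone$ relation sends $bca \mapsto bac$. (I would also note that because $a<b$, the intermediate tableau $U\leftarrow b$ is the same in both expressions, so the real content is comparing the two orders in which $a$ and $c$ are inserted into $V := U \leftarrow b$, given that we already know $b$ was inserted first and $a<b\le c$.)

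First I would set up notation for the two computations. Write $V = U \leftarrow b$, and recall from the ``Main Bumping Property'' setup the scanning values: inserting $c$ then $a$ into $V$, versus inserting $a$ then $c$. The key external inputs are Lemma~\ref{scanning values} (the scanning values produced by a smaller inserted letter are strictly less, and those of a larger one weakly greater, in the appropriate positions) and Proposition~\ref{bumping} (the insertion path of $c$ lies weakly left of that of $b$, and the insertion path of $a$ lies strictly right of that of $b$; similarly for where new boxes land). The plan is to track, column by column in reverse column reading order, the pair of ``in hand'' values in the two parallel computations and show they bump the same cells and leave the same final filling. Concretely, one shows that in $(V \leftarrow c)\leftarrow a$ the insertion path of $c$ and the insertion path of $a$ are ``separated'' — $c$'s path is weakly left of $b$'s path, $a$'s path is strictly right of $b$'s path — so the two insertions into $V$ do not interfere, and symmetrically for $(V\leftarrow a)\leftarrow c$. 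Since neither insertion disturbs the cells visited by the other, the order can be swapped, and the resulting fillings coincide.

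The cleanest way to organize the case analysis is to compare, at each column $j$ and row $i$ scanned, the value in hand during the ``$c$ first'' computation against the value in hand during the ``$a$ first'' computation, exactly as in the proof of Lemma~\ref{scanning values}, and to verify that the entry placed in each cell of $V$ agrees. One runs through the four local configurations (both bump, neither bumps, only one bumps) as in that proof; in each case the inequalities $a<b\le c$ together with row-strictness of $V$ and the Triple Rule force the outcomes to match. Because the paths of $a$ and $c$ straddle the path of $b$ on opposite sides (strictly right vs. weakly left), there is no column where both an $a$-scan and a $c$-scan actively bump the same cell, which is what makes the two orders commute. Finally one checks the row in which the new box is created: by Proposition~\ref{bumping}(1), in both orders the new box from $c$ ends up weakly left of $b$'s new box and the new box from $a$ ends up strictly right of it, so the two resulting shapes, and then the two fillings, are identical.

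The main obstacle I anticipate is the bookkeeping when $a$ or $c$ is bumped all the way into the first column, creating a new row: this shifts row indices, and one must use the ``one more row'' cases (parts (1b) and (2b)) of Lemma~\ref{scanning values} to keep the correspondence between rows of the two computations aligned. The potential pitfall is an off-by-one mismatch in row labels between the ``$c$ first'' and ``$a$ first'' tableaux when exactly one of the two insertions adds a row; handling that carefully — essentially re-indexing via the ``corresponding row'' relation and invoking Lemma~\ref{lastrow}/Lemma~\ref{weakly longer} to know the new row's length is unambiguous — is where the argument needs the most care. Everything else is a finite check of the same flavor as the cases already carried out in the proof of Lemma~\ref{scanning values}.
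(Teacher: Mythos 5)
Your proposal matches the paper's proof: the same reduction to $U \leftarrow b \leftarrow c \leftarrow a = U \leftarrow b \leftarrow a \leftarrow c$ for $a < b \leq c$, and the same strategy of using Lemma~\ref{scanning values} and Proposition~\ref{bumping} to show the insertion paths of $a$ and $c$ straddle $b$'s path on opposite sides and therefore do not interfere, with the new-row reindexing handled via the ``one more row'' cases. This is essentially the argument the paper carries out, organized as a box-by-box induction along each of $I^b(c)$ and $I^b(a)$.
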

\begin{proof}
It suffices to show
\begin{displaymath}
U \leftarrow b \leftarrow c \leftarrow a = U \leftarrow b \leftarrow a \leftarrow c
\end{displaymath}
for positive integers $a < b \leq c$.

To distinguish the two sets of boxes bumped by the entry $c$ we will let $I^{b}(c)$ be the insertion path created when inserting $c$ into $U \leftarrow b$, and we will let $I^{ba}(c)$ be the insertion path created when inserting $c$ into $U \leftarrow b \leftarrow a$. Consider the insertion path $I^{b}(c)$.  By Proposition \ref{bumping} we know that in $U \leftarrow b \leftarrow a$ the insertion of $a$ cannot end in a new box in the first column.  Thus, for each box $(i,j)$ in $I^{b}(c)$ we may consider the corresponding box $(i,j)$ in $U \leftarrow b \leftarrow a$.

We will inductively show that the insertion path of $c$ when inserting into $U \leftarrow b \leftarrow a$ is the exact same set of boxes $I^b(c)$ and bumps exactly the same set of entries in these boxes.  Let $(i_{\max},j_{\max})$ be the largest box with respect to $<_{col}$ in $I^b(c)$ in $U \leftarrow b \leftarrow c$, that is, the box $(i_{\max},j_{\max})$ is the first box bumped in the insertion of $c$ into $U \leftarrow b$.  

To show the base case, we need to show that the entry $c$ bumps in box $(i_{\max},j_{\max})$ of $U \leftarrow b \leftarrow a \leftarrow c$, and bumps the same entry as $c$ bumped in box $(i_{\max},j_{\max})$ of $U \leftarrow b \leftarrow c$.  We establish the base case is three steps.

\emph{Step 1:} We claim the entry $c$ cannot bump in a box $(p,q)$ such that $(i_{\max},j_{\max}) <_{col} (p,q)$ in $U \leftarrow b \leftarrow a$.  Suppose for a contradiction that $c$ does bump in box $(p,q)$. Even if $j_{\max}=1$, the box $(p,q)$ must have $q \neq 1$ by the definition of RCT insertion; that is, if $j_{\max}=1$ and $q=1$ then $p=i_{\max}$. The only way $c$ can bump in box $(p,q)$ of $U \leftarrow b \leftarrow a$ and not the corresponding box of $U \leftarrow b$ is if the insertion of $a$ changed the entry in the box $(p, q-1)$ (or added a new box with coordinates $(p, q-1)$) which the scanning value $c$ in column $q$ compares to.  Informally, the insertion of $a$ into $U \leftarrow b$ made the entry in $(p, q-1)$ larger, thus allowing $c$ to bump in box $(p,q)$.  Thus, the box $(p,q-1)$ is in the insertion path of $a$ in $U \leftarrow b \leftarrow a$, which places the insertion path of $a$ strictly to the left of the insertion path of $c$ in $U \leftarrow b \leftarrow a \leftarrow c$, which contradictions Proposition \ref{bumping}.

\emph{Step 2:} We claim the box $(i_{\max},j_{\max})$ in $U \leftarrow b \leftarrow a $ must be in the path of $c$.  Suppose for a contradiction that the box $(i_{\max},j_{\max})$ is not in the path of $c$. These assumptions imply $j_{\max} \neq 1$. The only reason why $c$ would not bump in $(i_{\max}, j_{\max})$ when inserting $c$ into $U \leftarrow b \leftarrow a$ would be because $a$ had bumped in $(i_{\max}, j_{\max})$.  Informally, the insertion of $a$ into $U \leftarrow b$ made the entry in $(i_{\max}, j_{\max})$ too large for $c$ to bump. Let $d$ be the entry in position $(i_{\max},j_{\max}-1)$ of $U \leftarrow b$, $U \leftarrow b \leftarrow c$, and $U \leftarrow b \leftarrow a$. The value $d$ is the same in each of these three RCT because of our assumptions and Lemma \ref{rows}. Since $(i_{\max},j_{\max})$ is in the path of $c$ when inserting $c$ into $U \leftarrow b $, then $d > c$.  Similarly, $d > a_{j_{\max}}^{i_{\max}}$ where $a_{j_{\max}}^{i_{\max}}$ is the scanning value of $a$.  Since $(i_{\max},j_{\max})$ is not in the path of $c$ in $U \leftarrow b \leftarrow a  \leftarrow c$ by assumption, and since $d$ is still the entry in position $(i_{\max},j_{\max}-1)$ when $c$ scans it, we must have $c=c_{j_{\max}}^{i_{\max}} < a_{j_{\max}}^{i_{\max}}$, which contradicts Lemma \ref{scanning values}.

\emph{Step 3:} We claim the entry $c$ bumps the same value in box $(i_{\max},j_{\max})$ in both $U \leftarrow b  \leftarrow c$ and $U \leftarrow b  \leftarrow a  \leftarrow c$.  Clearly, this claim only needs to be checked when $j_{\max} \geq 2$. Suppose for a contradiction that $c$ bumps a different value in $(i_{\max},j_{\max})$ of $U\leftarrow b  \leftarrow a \leftarrow c$. Then this implies the box $(i_{\max},j_{\max})$ is in the path of $a$ and (by assumption) in the path of $c$. Consider the following diagram, which depicts boxes $(i_{\max},j_{\max}-1)$ and $(i_{\max},j_{\max})$.
\[
\begin{array}{ccccc}
U & U \leftarrow b & U \leftarrow b \leftarrow c & U \leftarrow b  \leftarrow a & U \leftarrow b \leftarrow a \leftarrow c \\
\Tableau{ d & y} & \Tableau{ \tilde{d} & \tilde{y} } & \Tableau{ \tilde{d} & c } & \Tableau{\tilde{d} & a_{j_{\max}}^{i_{\max}} } & \Tableau{\tilde{d} & c}
\end{array}
\]
Above, $\tilde{d}$ and $\tilde{y}$ denote the fact that at most one of $d$ or $y$ could have been bumped during the insertion of $b$ into $U$, but not both.  Using Lemma \ref{scanning values} we get $b_{j_{\max}}^{i_{\max}} > a_{j_{\max}+1}^{i_{\max}}$ (or $b_{j_{\max}}^{i_{\max}} > a_{j_{\max}+1}^{i_{\max}+1}$ in the appropriate rows if the insertion of $b$ into $U$ created a new row). Thus  $b_{j_{\max}}^{i_{\max}} > a_{j_{\max}}^{i_{\max}} \geq \tilde{y} \geq y$.  We can also establish $b_{j_{\max}}^{i_{\max}} \leq c_{j_{\max}}^{i_{\max}}=c$ (or $b_{j_{\max}}^{i_{\max}} \leq c_{j_{\max}}^{i_{\max}+1} \leq c_{j_{\max}}^{i_{\max}}=c$).  In the case $y=\tilde{y}$ then either $\tilde{d}=b_{j_{\max}-1}^{i_{\max}} \leq b_{j_{\max}}^{i_{\max}} \leq c < \tilde{d}$ which is a contradiction, or $\tilde{d}=d>c \geq b_{j_{\max}}^{i_{\max}}$.  In the case $\tilde{y}=b_{j_{\max}}^{i_{\max}}$ then $d=\tilde{d}> c \geq b_{j_{\max}}^{i_{\max}}$. In all cases we have $d>b_{j_{\max}}^{i_{\max}}$ and $b_{j_{\max}}^{i_{\max}} \geq y$, which implies $b_{j_{\max}}^{i_{\max}}$ must bump in position $(i_{\max},j_{\max})$.  This immediately implies (by way of Proposition \ref{bumping}) that $a$ cannot have the box $(i_{\max},j_{\max})$ in its insertion path.

This completes the base case.  To finish the proof we induct on the length of the path $I^b(c)$. Suppose by induction that the path of $c$ in $U \leftarrow b \leftarrow c$ is identical to the path of $c$ in $U \leftarrow b \leftarrow a \leftarrow c$, and the bumped entries are the same in both paths, up to some box $(i,j)$ where the path, or the value bumped, is different.  Under the inductive hypothesis the scanning values $c_s^r$ obtained when inserting $c$ into $U \leftarrow b$ are equal to the scanning values, also denoted $c_s^r$, obtained when inserting $c$ into $U \leftarrow b \leftarrow a$ up to the box $(i,j)$. We show $(i,j)$ in $U \leftarrow b \leftarrow a \leftarrow c$ is in the path of $c$ if and only if $(i,j)$ in $U\leftarrow b \leftarrow c$ is in the path of $c$, and $c_j^i$ bumps the same valued entry.  We do this in three steps which are identical to the three steps above.

\emph{Step 1:} We claim if $(i,j)$ is in the path of $c$ in $U \leftarrow b \leftarrow a \leftarrow c$, then $(i,j)$ is in the path of $c$ in $U \leftarrow b \leftarrow c$.  This is clearly true if $j=1$.  When $j \geq 2$ and if this were not the case, that is $(i,j)$ is not in the path of $c$ in $U \leftarrow b \leftarrow c$, then the entry $a$ must have $(i,j-1)$ in its insertion path in $U \leftarrow b \leftarrow a$, which places the path of $a$ strictly to the left of the path of $c$ in $U \leftarrow b \leftarrow a \leftarrow c$ which is a contradiction.

\emph{Step 2:} We further claim that if $(i,j)$ is in the path of $c$ in $U \leftarrow b \leftarrow c$ then $(i,j)$ is in the path of $c$ in $U \leftarrow b \leftarrow a \leftarrow c$. Again, this is clearly true if $j=1$.  When $j \geq 2$ and if this were not the case, that is $(i,j)$ is not in the path of $c$ in $U \leftarrow b \leftarrow a \leftarrow c$, then the entry $a$ must have bumped in position $(i,j)$. Let $d$ be the entry in box $(i,j-1)$ of $U \leftarrow b$, $U \leftarrow b \leftarrow c$, and $U \leftarrow b \leftarrow a$. The fact that $(i,j)$ is in the path of $c$ in $U \leftarrow b \leftarrow c$ implies $d > c_j^i$.  Similarly, $d > a_j^i$.  Under our assumptions $(i,j)$ is not in the path of $c$ in $U \leftarrow b \leftarrow a \leftarrow c$, which implies $c_j^i < a_j^i$ which contradicts Lemma \ref{scanning values}.

\emph{Step 3:} We claim the same value is bumped in box $(i,j)$ of $U\leftarrow b \leftarrow c$ and $U \leftarrow b \leftarrow a \leftarrow c$. Observe this step only needs to be checked if $j \geq 2$.  If the claim were false then we must have that both $a$ and $c$ have the box $(i,j)$ in their respective insertion paths in $U \leftarrow b \leftarrow a \leftarrow c$.  Consider the following diagram which depicts boxes $(i,j-1)$ and $(i,j)$.
\[
\begin{array}{ccccc}
U & U \leftarrow b & U \leftarrow b \leftarrow c & U \leftarrow b  \leftarrow a & U \leftarrow b \leftarrow a \leftarrow c \\
\Tableau{ d & y} & \Tableau{ \tilde{d} & \tilde{y} } & \Tableau{ \tilde{d} & c_j^i } & \Tableau{\tilde{d} & a_j^i } & \Tableau{\tilde{d} & c_j^i}
\end{array}
\]
Using Lemma \ref{scanning values} we get $b_j^i > a_{j+1}^i$ (or $b_j^i > a_{j+1}^{i+1}$ in the appropriate rows if the insertion of $b$ into $U$ created a new row). Thus  $b_j^i > a_j^i \geq \tilde{y} \geq y$.  We can also establish $b_j^i \leq c_j^i$ (or $b_j^i \leq c_j^{i+1} \leq c_j^i$).  In the case $y=\tilde{y}$ then either $\tilde{d}=b_{j-1}^i \leq b_j^i \leq c_j^i < \tilde{d}$ which is a contradiction, or $\tilde{d}=d>c_j^i \geq b_j^i$.  In the case $\tilde{y}=b_j^i$ then $d=\tilde{d}> c_j^i \geq b_j^i$. In all cases we have $d>b_j^i$ and $b_j^i \geq y$, which implies $b_j^i$ must bump in position $(i,j)$.  This immediately implies (by way of Proposition \ref{bumping}) that $a$ cannot have the box $(i,j)$ in its bumping path.

Thus the path of $c$ in $U \leftarrow b \leftarrow c$, $I^b(c)$, is exactly the same set of boxes as the path of $c$ in $U \leftarrow b \leftarrow a \leftarrow c$, $I^{ba}(c)$. In both cases the same valued entries are bumped.

Now consider the insertion path $I^b(a)$ of $a$ in $U \leftarrow b \leftarrow a$.  By Proposition \ref{bumping} the insertion of $c$ into $U \leftarrow b$ may create a new box in the first column.  Despite this we may still consider the boxes in $U\leftarrow b \leftarrow c$ that correspond to the boxes in $I^b(a)$ since any particular box $(i,j)$ in $I^b(a)$ corresponds to the box $(i,j)$ in $U \leftarrow b \leftarrow c$ if row $i$ is above the new row created by $c$, or $(i,j)$ in $I^b(a)$ corresponds to $(i+1, j)$ in $U \leftarrow b \leftarrow c$ if row $i$ is weakly below the new row created by $c$.  With this in mind we will denote by $\widehat{(i,j)}$ the box in $U\leftarrow b \leftarrow c$ that corresponds to the box $(i,j)$ in $I^b(a)$.

Note that in both $U \leftarrow b \leftarrow a$ and $U \leftarrow b \leftarrow c \leftarrow a$ the path of $a$ cannot contain a box in the first column of the respective RCT.

We will inductively show that the path of $a$ in both $U \leftarrow b \leftarrow a$ and $U \leftarrow b \leftarrow c \leftarrow a$ consists of the the same (corresponding) boxes and the entries bumped in each path are equal entry by entry. Let $(i_{\max},j_{\max})$ be the largest box in $I^b(a)$ with respect to $<_{col}$. The base case can be established in three steps.

\emph{Step 1:} The entry $a$ cannot bump before the box $\widehat{(i_{\max},j_{\max})}$.  Suppose for a contradiction that $a$ bumped in some box $(p,q)$ with $\widehat{(i_{\max},j_{\max})}<_{col}(p,q)$. This implies that the box $(p, q-1)$ is in the path of $c$ in $U \leftarrow b \leftarrow c \leftarrow a$. 

Assume $q-1 \geq 2$.  Consider the diagram below, which depicts boxes $(p, q-2), (p, q-1)$, and $(p,q)$.
\[
\begin{array}{ccccc}
U & U \leftarrow b & U \leftarrow b \leftarrow a & U \leftarrow b  \leftarrow c & U \leftarrow b \leftarrow c \leftarrow a \\ 
\Tableau{ z & d & y} & \Tableau{ \tilde{z} & \tilde{d} & \tilde{y} } & \Tableau{ \hat{z} & \hat{d} & \tilde{y} } & \Tableau{\tilde{z} & c_{q-1}^p & \tilde{y} } & \Tableau{\tilde{z} & c_{q-1}^p & a}
\end{array}
\]
where either $\hat{z}$ or $\hat{d}$ could equal $a$ (but not both). We then get the inequalities $a \geq \tilde{y}$ which forces $a \geq \tilde{d}$. By Lemma \ref{scanning values} we know $b_{q-1}^p > a$, which implies $b_{q-1}^p > \tilde{d}\geq d$.  Thus the box $(p, q-1)$ is not in the path of $b$ and $\tilde{d}=d$.  On the other hand we see $\tilde{z} > c_{q-1}^p \geq b_{q-1}^p$, and since the path of $c$ cannot be strictly right of the path of $b$ we also see the box $(p, q-2)$ is not in the path of $b$ and thus $z=\tilde{z}$.  In the end we get the relations $z> b_{q-1}^p$ and $b_{q-1}^p> d$ which implies the box $(p, q-1)$ is in the path of $b$ and is a contradiction to the previously established condition on the box $(p, q-1)$.

Now we can assume $q-1=1$.  In this case the box $(p, q-1)=(p,1)$ is still in the path of $c$ and the position $(p,q)=(p, 2)$ is empty during the insertion of $a$.  With our assumptions that $(p,2)$ is in the path of $a$ in $U \leftarrow b \leftarrow c \leftarrow a$ and $\widehat{(i_{\max},j_{\max})} <_{col} (p,q)$ this forces $j_{\max}=2$ and the insertion of $b$ must have created a new box in the first column, say in position $(r, 1)$ with $r<p$.  This means position $(r, 2)$ is empty during the insertion of $a$ and by Lemma \ref{scanning values}, $b_q^r > a$ and thus $a$ must insert in position $(r,2)$. Which means $a$ cannot have $(p,q)$ in its path.

\emph{Step 2:} We claim the entry $a$ must bump in box $\widehat{(i_{\max},j_{\max})}$. Suppose for a contradiction that $a$ does not bump in box $\widehat{(i_{\max},j_{\max})}$ during the insertion of $a$ into $U \leftarrow b \leftarrow c$.  If $a$ does not bump in box $\widehat{(i_{\max},j_{\max})}$ then we must have $\widehat{(i_{\max},j_{\max})}$ in the path of $c$. As indicated above, $j_{\max} \neq 1$.

Consider the following diagram which depicts boxes $(i_{\max},j_{\max}-1)$ and $(i_{\max},j_{\max})$ and the corresponding boxes $\widehat{(i_{\max},j_{\max}-1)}$ and $\widehat{(i_{\max},j_{\max})}$.
\[
\begin{array}{ccccc}
U&U \leftarrow b & U \leftarrow b \leftarrow a & U \leftarrow b  \leftarrow c & U \leftarrow b \leftarrow c \leftarrow a \\
 \Tableau{ d&y}&\Tableau{ d & y } & \Tableau{ d & a } & \Tableau{ d & c_{j_{\max}}^{i_{\max}} } & \Tableau{d & c_{j_{\max}}^{i_{\max}}}
\end{array}
\]
where neither box $(i_{\max},j_{\max})$ nor $(i_{\max},j_{\max}-1)$ can be in the path of $b$ since the box $(i_{\max},j_{\max})$ is in the path of $a$ and $\widehat{(i_{\max},j_{\max})}$ is in the path of $c$. From our assumptions we get the inequalities $d > a \geq y$ and $d > c_{j_{\max}}^{i_{\max}}>a$.  Now consider the scanning values obtained during the insertion of $b$.  Lemma \ref{scanning values} implies $b_{j_{\max}}^{i_{\max}} > a_{j_{\max}+1}^{i_{\max}}=a \geq y$ and $d > c_{j_{\max}}^{i_{\max}} \geq b_{j_{\max}}^{i_{\max}}$.  These inequalities force the box $(i_{\max},j_{\max})$ to be in the path of $b$, which contradicts properties previously established. This implies $a$ must have $\widehat{(i_{\max},j_{\max})}$ in its insertion path in $U \leftarrow b \leftarrow c \leftarrow a$.

\emph{Step 3:} The entry $a$ bumps the same entry in box $(i_{\max},j_{\max})$ in $U \leftarrow b \leftarrow a$ as $a$ bumps in box $\widehat{(i_{\max},j_{\max})}$ in $U \leftarrow b \leftarrow c \leftarrow a$.  Suppose for a contradiction that $a$ bumps a different entry in box $\widehat{(i_{\max},j_{\max})}$ during the insertion of $a$ into $U \leftarrow b \leftarrow c$.  This implies $\widehat{(i_{\max},j_{\max})}$ is in the path of $c$ (and by assumption in the path of $a$). But this contradicts Proposition \ref{bumping}, as the path of $a$ must be strictly rightly right of the path of $c$.

Now induct on the boxes in $I^b(a)$. Suppose by induction that the path of $a$ in $U \leftarrow b \leftarrow a$ is identical to the path of $a$ in $U \leftarrow b \leftarrow c \leftarrow a$, and the bumped entries are the same in both paths, up to some box $(i,j)$ where the path, or the value bumped, is different.  Under the inductive hypothesis the scanning values $a_s^r$ obtained when inserting $a$ into $U \leftarrow b$ are equal to the scanning values, also denoted $a_s^r$, obtained when inserting $a$ into $U \leftarrow b \leftarrow c$ up to the box $(i,j)$. We show $(i,j)$ in $U \leftarrow b \leftarrow a \leftarrow c$ is in the path of $a$ if and only if $\widehat{(i,j)}$ in $U\leftarrow b \leftarrow c$ is in the path of $a$, and $a_j^i$ bumps the same valued entry.  We do this in three steps which are identical to the three steps above.

\emph{Step 1:} If $\widehat{(i,j)}$ is in the path of $a$ in $U \leftarrow b \leftarrow c \leftarrow a$ then $(i,j)$ must be in the path of $a$ in $U \leftarrow b \leftarrow a$.  If this were not the case then the box $\widehat{(i,j-1)}$ is in the path of $c$.

Assume $j-1 \geq 2$.  Consider the diagram below, which depicts boxes $(i, j-2), (i, j-1)$, and $(i,j)$ and the corresponding boxes $\widehat{(i,j-2)}, \widehat{(i,j-1)}, \widehat{(i,j)}$.
\[
\begin{array}{ccccc}
U & U \leftarrow b & U \leftarrow b \leftarrow a & U \leftarrow b  \leftarrow c & U \leftarrow b \leftarrow c \leftarrow a \\
\Tableau{ z & d & y} & \Tableau{ \tilde{z} & \tilde{d} & \tilde{y} } & \Tableau{ \hat{z} & \hat{d} & \tilde{y} } & \Tableau{\tilde{z} & c_{j-1}^i & \tilde{y} } & \Tableau{\tilde{z} & c_{j-1}^i & a_j^i}
\end{array}
\]
where either $\hat{z}$ or $\hat{d}$ could equal $a_j^i$ (but not both). We then get the inequalities $a_j^i \geq \tilde{y}$ which forces $a_j^i \geq \tilde{d}$. By Lemma \ref{scanning values} we know $b_{j-1}^i> a_j^i$, which implies $b_{j-1}^i > \tilde{d}\geq d$.  Thus the box $(i, j-1)$ is not in the path of $b$ and $\tilde{d}=d$.  On the other hand we see $\tilde{z} > c_{j-1}^i \geq b_{j-1}^i$, and since the path of $c$ cannot be strictly right of the path of $b$ we also see the box $(i, j-2)$ is not in the path of $b$ and thus $z=\tilde{z}$.  In the end we get the relations $z> b_{j-1}^i$ and $b_{j-1}^i> d$ which implies the box $(i, j-1)$ is in the path of $b$ and is a contradiction to the previously established condition on the box $(i, j-1)$.

Now we can assume $j-1=1$.  In this case the box $\widehat{(i, j-1)}=\widehat{(i,1)}$ is still in the path of $c$ and the position $\widehat{(i,j)}=\widehat{(i, 2)}$ is empty during the insertion of $a$.  With our assumption that $\widehat{(i,j)}=\widehat{(i,2)}$ is in the path of $a$ in $U \leftarrow b \leftarrow c \leftarrow a$ this implies the insertion of $b$ must have created a new box in the first column, say in position $(r, 1)$ with $r<i$.  This means position $(r, 2)$ is empty during the insertion of $a$ and by Lemma \ref{scanning values}, $b_j^r > a_j^i$ and thus $a_j^i$ must insert in position $(r,2)$. Which means $a$ cannot have $\widehat{(i,j)}=\widehat{(i,2)}$ in its path which is clearly a contradiction.

\emph{Step 2:} If $(i,j)$ is in the path of $a$ in $U \leftarrow b \leftarrow a$, then the box $\widehat{(i,j)}$ is in the path of $a$ in $U \leftarrow b \leftarrow c \leftarrow a$. As stated above, $j\neq 1$. Suppose for a contradiction that $\widehat{(i,j)}$ is not in the path of $a$. Then the box $\widehat{(i,j)}$ is in the path of $c$. As above consider the following diagram which depicts boxes $(i,j-1)$ and $(i,j)$ and the corresponding boxes $\widehat{(i,j-1)}$ and $\widehat{(i,j)}$.
\[
\begin{array}{ccccc}
U&U \leftarrow b & U \leftarrow b \leftarrow a & U \leftarrow b  \leftarrow c & U \leftarrow b \leftarrow c \leftarrow a \\
 \Tableau{ d&y}&\Tableau{ d & y } & \Tableau{ d & a_j^i } & \Tableau{ d & c_j^i } & \Tableau{d & c_j^i}
\end{array}
\]
where neither box $(i,j)$ nor $(i,j-1)$ can be in the path of $b$ since the box $(i,j)$ is in the path of $a$ and $\widehat{(i,j)}$ is in the path of $c$. From our assumptions we get the inequalities $d > a_j^i \geq y$ and $d > c_j^i>a_j^i$.  Now consider the scanning values obtained during the insertion of $b$.  Lemma \ref{scanning values} implies $b_j^i > a_{j+1}^i\geq a_j^i \geq y$ and $d > c_j^i \geq b_j^i$.  These inequalities force the box $(i,j)$ to be in the path of $b$, which contradicts properties previously established. This implies $a$ must have $\widehat{(i,j)}$ in its insertion path in $U \leftarrow b \leftarrow c \leftarrow a$.

\emph{Step 3:} The values bumped by $a_j^i$ is the same in both $U \leftarrow b \leftarrow a$ and $U\leftarrow b \leftarrow c\leftarrow a$.  If this were not the case, then both $a$ and $c$ would have the box $\widehat{(i,j)}$ in their respective paths, which violates Proposition \ref{bumping}.

\end{proof}

\begin{remark}{\label{knuth-remark}}
In addition to showing $U \leftarrow b \leftarrow c \leftarrow a = U \leftarrow b \leftarrow a \leftarrow c$, the proof above shows that the new boxes added by $a, b,$ and $c$ occupy the same corresponding positions in each of $U \leftarrow b \leftarrow c \leftarrow a$ and $U \leftarrow b \leftarrow a \leftarrow c$.
\end{remark}

\section{A Littlewood-Richardson Type Rule}{\label{sec:LRrule}}

In this section we state and prove the main result of this chapter, which is

\begin{theorem}{\label{LRrule}}
Let $s_\lambda$ be the Schur function indexed by the partition $\lambda$, and let $\rsqschur_\alpha$ be the row-strict quasisymmetric Schur function indexed by the strong composition $\alpha$. We have
\begin{equation}{\label{LR-equation}}
\rsqschur_\alpha \cdot s_\lambda = \sum_{\beta} C_{\alpha, \lambda}^\beta \rsqschur_\beta
\end{equation}
where $C_{\alpha, \lambda}^\beta$ is the number of Littlewood-Richardson skew RCT of shape $\beta/\alpha$ and content $\reverse{\lambda}$.
\end{theorem}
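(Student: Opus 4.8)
The plan is to deduce the identity from a weight-preserving bijection. Expanding both sides through the generating-function definitions, the left-hand side equals $\sum_{(U,T)}\xx^{U}\xx^{T}$, where $U$ ranges over all RCT of shape $\alpha$ and $T$ over all reverse row-strict tableaux of shape $\lambda^t$, while the right-hand side equals $\sum_{\beta}C_{\alpha,\lambda}^{\beta}\sum_{U'}\xx^{U'}$, where $U'$ ranges over all RCT of shape $\beta$. So I would construct a bijection $\Phi\colon(U,T)\mapsto(U',S)$ from the pairs $(U,T)$ just described onto the pairs in which $U'$ is an RCT of some shape $\beta$ and $S$ is an LR skew RCT (Definition~\ref{LRskewCT}) of shape $\beta/\alpha$ and content $\reverse{\lambda}$, subject to $\cont(U')=\cont(U)+\cont(T)$. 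Granting such a $\Phi$, summing $\xx^{U'}=\xx^{U}\xx^{T}$ over all pairs collapses the right-hand sum to $\sum_\beta C_{\alpha,\lambda}^\beta\rsqschur_\beta$, which is the theorem.

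For the forward map I would use RCT insertion with recording. Given $(U,T)$, set $U':=U\leftarrow w_{col}(T)$ using Definition~\ref{RCT Insertion}; by \cite{Mason2010A-Dual-Basis} this is an RCT of some shape $\beta$. The cells of the original $U$ occupy a subdiagram $\gamma\subseteq\beta$ with $\strongof{\gamma}=\alpha$ (new rows may appear in the interior of $U$), and the $|\lambda|$ newly created cells form $\beta/\gamma$, one per letter of $w_{col}(T)$. I would let $S$ be the filling of $\beta/\gamma$ that records, in the cell created by a given letter, the index of the column of $T$ carrying that letter, after relabeling the columns of $\lambda^t$ by $\ell(\lambda),\ell(\lambda)-1,\dots,1$ from left to right; since column $q$ of $\lambda^t$ has $\lambda_q$ cells this gives $\cont(S)=\reverse{\lambda}$ and the weight identity. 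Then I would check the three conditions of Definition~\ref{LRskewCT}: strictness of the rows of $S$ and the requirement that every Type~A and Type~B triple be an inversion triple follow from the facts that the new cells of a single column-batch lie in distinct rows and distinct columns and are linearly ordered by $<_{col}$ (Lemma~\ref{readingorder}, together with Lemmas~\ref{rows},~\ref{lastrow},~\ref{weakly longer}), and that a batch inserted later—coming from a column of $T$ of larger index, hence carrying strictly smaller entries—creates cells weakly to the right of the earlier batches (Proposition~\ref{bumping}); the condition that $w_{col}(S)$ be a regular reverse lattice word I would prove by induction on the number $\ell(\lambda)$ of column-batches, using Proposition~\ref{bumping} and Lemma~\ref{readingorder} to track how the $q$th batch interleaves, in column reading order, with the first $q-1$ batches.

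For the inverse map, given $(U',S)$ with $U'$ an RCT of shape $\beta$ and $S$ an LR skew RCT of shape $\beta/\alpha$ and content $\reverse{\lambda}$, I would recover the unique $\gamma$ with $\strongof{\gamma}=\alpha$ for which $S$ is a filling of $\beta/\gamma$, and then un-insert the cells of $\beta/\gamma$ one at a time in the reverse of the creation order encoded by $S$: the cells sharing a label form a batch whose internal $<_{col}$-order is forced by Lemma~\ref{readingorder}, and the batches come off from smallest label to largest. Each un-insertion is legitimate by the inversion procedure justified just after Lemma~\ref{lastrow}, and returns one letter. Grouping the letters returned by the label-$j$ batch into column $\ell(\lambda)+1-j$ produces a filling $T$; here the hypothesis that $w_{col}(S)$ is a regular reverse lattice word of content $\reverse{\lambda}$ is exactly what forces the batch sizes to form a partition $\lambda_1\ge\lambda_2\ge\cdots$ and $T$ to be weakly decreasing down columns, while the un-insertion analogue of row-strictness forces the rows of $T$ to strictly decrease, so $T$ is a reverse row-strict tableau of shape $\lambda^t$ and the residual tableau $U$ is an RCT of shape $\alpha$. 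To see that $\Phi$ and this construction are mutually inverse, the one nontrivial point is that re-inserting $w_{col}(T)$ into $U$ returns both $U'$ and the same recording $S$; I would handle this by observing that the un-insertion order used above and the order prescribed by $w_{col}(T)$ differ by $\kone$-elementary transformations, which by Lemma~\ref{knuth} and Remark~\ref{knuth-remark} change neither $U\leftarrow(\cdot)$ nor the positions of the new cells.

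The hard part will be the reverse-lattice-word condition on $w_{col}(S)$ (equivalently, in the inverse direction, showing that un-inserting along the prescribed batch structure reassembles a genuine reverse row-strict tableau of the partition-transpose shape $\lambda^t$): the scanning-value estimates of Lemma~\ref{scanning values} and their consequences in Proposition~\ref{bumping} give tight local control over where each new cell lands relative to the cells created so far, but converting this into the global prefix inequalities defining a reverse lattice word is the delicate bookkeeping, and is the step where the flexibility afforded by the Knuth-equivalence Lemma~\ref{knuth}—replacing $w_{col}(T)$ by a more convenient $\kone$-equivalent word without altering $U'$ or the recording—should be most valuable.
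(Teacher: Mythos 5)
Your proposal is essentially the paper's own argument: the paper constructs the same bijection by encoding $T$ as the RSK two-line array for the pair $(T,T_{\lambda^t})$ and inserting the lower line into $U$ while recording the upper line, which coincides with your direct insertion of $w_{col}(T)$ with column-index recording, since that array's lower line is precisely $w_{col}(T)$ and its upper line is precisely your relabeled column index. The paper then verifies the three LR conditions and inverts exactly along the lines you sketch (row-strictness of $S$ and the triple conditions by case analysis via Lemmas~\ref{rows},~\ref{lastrow},~\ref{weakly longer}; the reverse-lattice-word condition by $\kone$-rearranging the $(i,i-1)$-batches using Lemma~\ref{knuth} and Remark~\ref{knuth-remark} and then invoking Proposition~\ref{bumping}; un-insertion batch-by-label for the inverse), so the ``delicate bookkeeping'' you flag is resolved in the paper precisely by the $\kone$-replacement you anticipate.
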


\begin{proof}
It suffices to give a bijection $\rho$ between pairs $[U, T]$ and $[V,S]$ where $U$ is a RCT of shape $\alpha$, $T$ a tableau of shape $\lambda^t$, $V$ is a RCT of shape $\beta$, and $S$ is a LR skew RCT of shape $\beta/\alpha$ and content $\reverse{\lambda}$. Throughout this proof, $\lambda_1=m$.

Given a pair $[U,T]$, produce a pair $\rho([U,T])=[V,S]$ in the following way.  First use the classical RSK algorithm to produce a two-line array $A$ corresponding to the pair $(T, T_{\lambda^t})$. Next, successively insert $\widecheck{A}$ into $U$ while simultaneously placing the entries of $\widehat{A}$ into the corresponding new boxes of a skew shape with original shape $\alpha/\alpha$. This clearly produces a RCT $V$ of some shape $\beta$ and a skew filling $S$ of shape $\beta/\gamma$ where $\strongof{\gamma}=\alpha$ and the content of $S$ is $\reverse{\lambda}$.

To show that the skew filling $S$ is indeed a LR skew RCT, first note that since $\widehat{A}$ is weakly decreasing, no row of $S$ will have any instance of entries that strictly increase when read left to right.  Since $A$ is a two-line array, if $i_r=i_s$ for $r \leq s$ then $j_r\leq j_s$.  Lemma \ref{scanning values} then implies that each row of $S$ has distinct entries. Thus, the rows of $S$ strictly decrease when read left to right.

Consider the portion of $A$ where $\widehat{A}$ takes the value $i$.  The corresponding entries in $\widecheck{A}$, when read from left to right, are the entries appearing in the $(m-i+1)$st column of $T$ read from bottom to top. Now consider a different portion of the two-line array $A$ where $\widehat{A}$ takes values $i$ and $i-1$. For the moment, let this portion of $A$ be denoted $A(i,i-1)$ and suppose the number of $i$'s is $r_i$ and the number of $(i-1)$'s is $r_{i-1}$, where $r_i\geq r_{i-1}$ since $\widehat{A}$ is a regular reverse lattice word.  We will let the Knuth transformation $\kone$ act on $A(i,i-1)$ by letting $\kone$ act on $\widecheck{A}(i,i-1)$ and by considering each vertical pair as a bi-letter. We will apply a sequence $\tau$ of transformations $\kone$ to $A(i,i-1)$ until $\tau[\widehat{A}(i,i-1)]$ consists of $r_i-r_{i-1}$ number of $i$'s followed by $r_{i-1}$ pairs of the form $(i,i-1)$. Such a sequence $\tau$ exists because the entry in row $r_{i-1}-k+1$ (for $1\leq k \leq r_{i-1}$) and column $m-i+2$ of $T$ is strictly less than each entry in column $m-i+1$ which appears weakly higher in $T$. If we replace $A(i,i-1)$ with $\tau[A(i,i-1)]$ in $A$ to obtain some array $B$, then Lemma \ref{knuth} and Remark \ref{knuth-remark} imply $$U \leftarrow \widecheck{B} = U \leftarrow \widecheck{A} =V,$$ and the corresponding new box created by any entry $j$ in $\widecheck{B}$ is in the same position as the new box created by the same entry $j$ in $\widecheck{A}$. The advantage of replacing $A$ with $B$ is that now Proposition \ref{bumping} can be applied to each of the $r_{i-1}$ pairs $(i,i-1)$ and their corresponding entries in $\widecheck{B}$ to imply that in any prefix of the column word of $S$, the number of $i$'s will be at least the number of $(i-1)$'s. Hence $w_{col}(S)$ is a regular reverse lattice word.

Next we check that each Type A and Type B triple in $S$ is an inversion triple.  Below are the eight possible configurations of Type A triples in the skew filling $S$.

\begin{picture}(100,100)(-10,-5)

\put(0, 50){\smalltableau{c&a}}
\put(21, 36){\vdots}
\put(0, 18){\smalltableau{& b}}

\put(50, 50){\smalltableau{ c & a}}
\put(71, 36){\vdots}
\put(64, 18){\Sqone}
\put(64, 18){\smalltableau{ \infty}}

\put(100, 50){\Sqone}
\put(100,50){\smalltableau{\infty}}
\put(114, 50){\smalltableau{a}}
\put(121, 36){\vdots}
\put(100, 18){\smalltableau{& b}}

\put(150, 50){\smalltableau{c}}
\put(164, 50){\Sqone}
\put(164, 50){\smalltableau{\infty}}
\put(171, 36){\vdots}
\put(150, 18){\smalltableau{&b}}

\put(200, 50){\Sqtwo}
\put(200, 50){\smalltableau{\infty & \infty}}
\put(221, 36){\vdots}
\put(200, 18){\smalltableau{& b}}

\put(250, 50){\smalltableau{c}}
\put(264, 50){\Sqone}
\put(264, 50){\smalltableau{\infty}}
\put(271, 36){\vdots}
\put(264, 18){\Sqone}
\put(264, 18){\smalltableau{\infty}}

\put(300, 50){\Sqone}
\put(300, 50){\smalltableau{\infty}}
\put(314, 50){\smalltableau{a}}
\put(321, 36){\vdots}
\put(314, 18){\Sqone}
\put(314, 18){\smalltableau{\infty}}

\put(350,50){\Sqtwo}
\put(371, 36){\vdots}
\put(364, 18){\Sqone}
\put(350,50){\smalltableau{ \infty & \infty}}
\put(364,18){\smalltableau{\infty}}

\end{picture}

For each arrangement in the figure above, the higher row is weakly longer than the lower row because we only consider Type A triples for now.  Note that the fourth and sixth arrangements cannot exist in $S$ by Definition \ref{RCT Insertion}, and the seventh arrangement cannot exist in $S$ by Lemma \ref{weakly longer}. We can check the remaining arrangements to prove each are inversion triples.  For the first arrangement $c$ must clearly be added before $a$ and Lemma \ref{weakly longer} implies $a$ is added before $b$, which forces the relation $c > a \geq b$. The second arrangement is always an inversion triple.  In the third arrangement Lemma \ref{weakly longer} implies $a$ must have been added before $b$, hence this arrangement is an inversion triple. The fifth and eighth arrangements are always inversion triples. 

Below are the eight possible arrangements of Type B triples in the skew filling $S$.

\begin{picture}(100,100)(-10,-5)

\put(0, 50){\smalltableau{b}}
\put(7, 36){\vdots}
\put(0, 18){\smalltableau{c& a}}

\put(50, 50){\smalltableau{ b}}
\put(57, 36){\vdots}
\put(64, 18){\Sqone}
\put(50, 18){\smalltableau{ c&\infty}}

\put(100, 50){\Sqone}
\put(100,50){\smalltableau{\infty}}
\put(107, 36){\vdots}
\put(100, 18){\smalltableau{c& a}}

\put(150, 50){\smalltableau{b}}
\put(157, 36){\vdots}
\put(150, 18){\Sqone}
\put(150, 18){\smalltableau{ \infty & a}}

\put(200, 50){\Sqone}
\put(200, 50){\smalltableau{ \infty}}
\put(207, 36){\vdots}
\put(200, 18){\Sqone}
\put(200, 18){\smalltableau{ \infty & a}}

\put(250, 50){\smalltableau{b}}
\put(257, 36){\vdots}
\put(250, 18){\Sqtwo}
\put(250, 18){\smalltableau{\infty&\infty}}

\put(300, 50){\Sqone}
\put(300, 50){\smalltableau{\infty}}
\put(307, 36){\vdots}
\put(314, 18){\Sqone}
\put(300, 18){\smalltableau{c & \infty}}

\put(350,50){\Sqone}
\put(350, 50){\smalltableau{\infty}}
\put(357, 36){\vdots}
\put(350, 18){\Sqtwo}
\put(350,18){\smalltableau{ \infty & \infty}}

\end{picture}

In each of the arrangements above the higher row is strictly shorter than the lower row. Note that the second and seventh arrangement cannot exist in $S$ by Definition \ref{RCT Insertion}.  For the first arrangement Lemma \ref{lastrow} implies the boxes must have been added in the order $b$, $c$, $a$ or $c$, $a$, $b$, giving the relations $b \geq c > a$ or $c > a \geq b$. The third arrangement is always an inversion triple.  For the fourth arrangement, Lemma \ref{lastrow} implies $a$ must have been added before $b$, hence this arrangement is an inversion triple.  The fifth, sixth, and eighth arrangements are always inversion triples.

Thus, we have shown the skew filling $S$ is indeed a LR skew RCT of shape $\beta/\alpha$ and content $\reverse{\lambda}$.

Given a pair $[V,S]$, produce a pair $\rho^{-1}([V,S])=[U, T]$ in the following way.  We can un-insert entries from $V$ by using $S$ as a sort of road map. Specifically, un-insert the entry in $V$ whose box is in the same position as the first occurrence (in the column reading order) of the value $1$ in $S$. This produces a pair $(1, j)$ which, when arranged as a vertical bi-letter, is the last entry of a two-line array.  Next, proceed inductively by, at the $i$th step, un-inserting each entry of $V$ which corresponds to each occurrence of the value $i$ in $S$. The row-stirctness of $S$, combined with the triple conditions imposed on $S$, ensure that after each un-insertion from $V$ the resulting figure is an RCT.

What remains after un-inserting the entries is an RCT $U$ of shape $\alpha$ since $S$ had shape $\beta/\alpha$.  The two line array produced is a valid two-line array $A$ by virtue of $w_{col}(S)$ being a regular reverse lattice word.  By RSK, $A$ corresponds to a pair $(T, T_{\lambda^t})$.  Thus we have a pair $[U, T]$ where $U$ is an RCT of shape $\alpha$ and $T$ is a tableau of shape $\lambda^t$.

\end{proof}

Figure \ref{fig-LRexample} gives an example of the bijection $\rho$ given in the proof of Theorem \ref{LRrule}.

\begin{figure}[ht]
\begin{center}
\[
\begin{array}{c}
\begin{pmatrix}
\;\tableau{  1 \\ 4 & 3 & 2 \\ 5 & 4 \\ 5 & 3 }  & , &
\tableau{  4 & 3 & 2 & 1 \\ 4 & 3 \\ 2} \; \vspace{6pt}\\ U & & T
\end{pmatrix} 
\qquad \stackrel{RSK}{\Leftrightarrow}
\vspace{12pt} \\

\begin{CD}
\begin{pmatrix} 
\smalltableau{  1 \\ 4 & 3 & 2 \\ 5 & 4 \\ 5 & 3  } &,&
\begin{picture}(110,40)
$
\begin{pmatrix}
4 & 4 & 4 & 3 & 3 & 2 & 1 \\
2 & 4 & 4 & 3 & 3 & 2 & 1
\end{pmatrix} 
$
\end{picture} 
\vspace{6pt} \\  U & & (T, T_{\lambda^t})
\end{pmatrix} @>\rho>>
\begin{pmatrix}
\smalltableau{
   1 \\
   3 & 2 \\
   4 & 3 & 2 \\
   4  \\ 
   5 & 4 & 3 & 2 & 1\\
   5 & 4 & 3   }
 &,&
\begin{picture}(80,40)(230,72.5)
\put(240,72){\Sqone}
\put(240,44){\Sqthree}
\put(240,16){\Sqtwo}
\put(240,2){\Sqtwo}
\put(222,72){\smalltableau{ \smallbas{\infty} &\infty \\
    \smallbas{\infty} & 4& 3  \\
     \smallbas{\infty} & \infty & \infty & \infty  \\ 
    \smallbas{\infty} & 4  \\
    \smallbas{\infty} & \infty & \infty &4&2&1 \\ 
    \smallbas{\infty} & \infty & \infty  & 3
}}
\end{picture}
\vspace{6pt} \\  V & & S
\end{pmatrix} 
\end{CD} 
\end{array}
\]
\caption{Example for a term of $\rsqschur_{(1,3,2,2)}\cdot s_{(3,2,1,1)}$}
\label{fig-LRexample}
\end{center}
\end{figure}

\section{A Basis for the Coinvariant Space for Quasisymmetric Functions}{\label{sec:coinv-space}}

In this section we follow \cite{Lauve2010QSym}, with one main exception.  In \cite{Lauve2010QSym} the authors are concerned with symmetric and quasisymmetric polynomials where the number of variables $n$ is finite and is at least the length of the compositions and partitions indexing the quasisymmetric and symmetric polynomials, respectively. We will continue to work with formal power series because the basis of row-strict quasisymmetric functions is not linearly independent when the number of variables $n$ is less than the degree of the function. For example, if we restrict to three variables one can easily compute $\rsqschur_{(3,2)}(x_1, x_2, x_3)=\rsqschur_{(1,3,1)}(x_1, x_2, x_3)$.  If $\alpha$ is any sequence of nonnegative integers, in particular a composition or a partition, then the problem of linear dependencies arising after restriction to a finite number of variables can be remedied by requiring $n\geq |\alpha|$, where $|\alpha|:=\sum_{k=1}^{\ell(\alpha)} \alpha_k$ is the size of alpha. If $|\alpha|=d$, we call $\alpha$ a composition (or partition) of $d$.

In this section we will consider $\Qsym$ as a module over $\Sym$. Throughout, let $\Qsym_d$ be quasisymmetric functions of homogeneous degree $d$.  Then $\Qsym=\bigoplus_{d \geq 0} \Qsym_d$. Let $(\mathcal{E})$ be the ideal in $\Qsym$ generated by the elementary symmetric functions of degree $d>0$.

The following definitions first appear in \cite{Bergeron_Reutenauer}.  Let $\alpha$ be a strong composition with largest part $m$.  We call $\alpha$ \emph{inverting} if and only if for each $1 < i \leq m$, there exists a pair of indices $s$ and $t$ with $s<t$ such that $\alpha_s =i$ and $\alpha_t=i-1$.  For example $\alpha=(1,3,1,2,3,1,2)$ is inverting but $\beta=(1,2,1,2,3,1,3)$ is not inverting.  Any composition $\alpha$ can be factored uniquely as 
\begin{equation*}{}
\alpha=(\alpha^\prime, k^{i_k}, \ldots, 2^{i_2}, 1^{i_1}) \; ,\qquad i_j \geq 1
\end{equation*}
where the prefix $\alpha^\prime$ has no parts of size $1, 2, \ldots, k$.  We call a composition $\alpha$ \emph{pure} if and only if $k$ is even.  Note that if the last part of $\alpha$ is not $1$, then $k=0$ and hence is pure.  As an example, $\alpha=(5,4,3,5,2,1,1)$ is pure with $k=2$ but $\beta=(5,4,3,5,1)$ is not pure because $k=1$.

Define $B$ to be the set of pure and inverting compositions and 
\begin{equation*}
\mathfrak{C}_d:=\{ s_\lambda \rsqschur_\alpha \mid |\lambda|+|\alpha|=d, \alpha \in B \}. 
\end{equation*}
Then we have

\begin{proposition}{\label{prop:coinvariant}}
The collection $\mathfrak{C}:=\bigsqcup_{d \geq 0} \mathfrak{C}_d$ is a basis for $\Qsym$.
\end{proposition}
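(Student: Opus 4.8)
The plan is to transport the column-strict argument of Lauve and Mason \cite{Lauve2010QSym} (which itself rests on the column-strict Littlewood--Richardson rule of \cite{Haglund2009Refinements}) to the row-strict setting, with Theorem~\ref{LRrule} as the key input. Since $\Qsym=\bigoplus_{d\ge 0}\Qsym_d$ is graded, each $\Qsym_d$ is finite dimensional with basis $\{\rsqschur_\beta : |\beta|=d\}$, and $\mathfrak{C}=\bigsqcup_d\mathfrak{C}_d$ is graded accordingly, it suffices to fix $d$ and prove that $\mathfrak{C}_d$ is a basis of $\Qsym_d$. I would do this by exhibiting a unitriangular change of basis between $\mathfrak{C}_d$ and $\{\rsqschur_\beta : |\beta|=d\}$.

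First I would fix a suitable total order $\preceq$ on strong compositions of $d$ (refining dominance of the underlying partitions $\partitionof{\beta}$, with ties broken lexicographically) with respect to which the expansion of Theorem~\ref{LRrule} is triangular. Concretely, for a pure inverting composition $\alpha$ and a partition $\lambda$ I would single out a distinguished strong composition $\beta=\beta(\alpha,\lambda)$ of $d=|\alpha|+|\lambda|$ by constructing an explicit ``superstandard'' LR skew RCT of shape $\beta/\alpha$ and content $\reverse{\lambda}$; checking that this is the \emph{unique} LR skew RCT of that shape and content gives $C_{\alpha,\lambda}^{\beta(\alpha,\lambda)}=1$, while a weight/dominance comparison shows $C_{\alpha,\lambda}^{\gamma}=0$ unless $\gamma\preceq\beta(\alpha,\lambda)$. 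Thus
\[
s_\lambda\,\rsqschur_\alpha \;=\; \rsqschur_{\beta(\alpha,\lambda)} \;+\; \sum_{\gamma\prec\beta(\alpha,\lambda)} C_{\alpha,\lambda}^{\gamma}\,\rsqschur_{\gamma}.
\]

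The decisive point is then that $(\alpha,\lambda)\mapsto\beta(\alpha,\lambda)$ restricts to a bijection from pairs (pure inverting composition, partition) onto all strong compositions, degree by degree. This is the combinatorial content behind the Bergeron--Reutenauer \cite{Bergeron_Reutenauer} canonical factorization $\alpha=(\alpha',k^{i_k},\ldots,2^{i_2},1^{i_1})$ recalled above: the ``inverting'' hypothesis is what forces $\beta(\alpha,\lambda)$ not to agree with a leading term arising from some $(\alpha',\lambda')$ with $|\alpha'|<|\alpha|$ (this yields linear independence of $\mathfrak{C}_d$), and the parity condition defining ``pure'' — which ultimately reflects a sign in the relevant relation among the $\rsqschur_\alpha$ obtained from Theorem~\ref{LRrule} by multiplying by $e_k=s_{(1^k)}$, mirroring the role of the elementary symmetric functions in \cite{Bergeron_Reutenauer} — is what makes the cross-section exactly the right size, so that $\mathfrak{C}_d$ spans. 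Granting the bijection, the matrix expressing $\mathfrak{C}_d$ in the basis $\{\rsqschur_\beta : |\beta|=d\}$ is square and, after sorting rows and columns by $\preceq$, unitriangular; hence it is invertible, $\mathfrak{C}_d$ is a basis of $\Qsym_d$, and assembling over $d$ gives that $\mathfrak{C}$ is a basis of $\Qsym$.

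The main obstacle is carrying out these two combinatorial steps in the row-strict world: pinning down the superstandard LR skew RCT $\beta(\alpha,\lambda)$ and proving uniqueness and triangularity directly from the triple conditions of Definition~\ref{LRskewCT}, and then verifying that the ``pure and inverting'' conditions really do select exactly one preimage per composition in the presence of the parity constraint. As a consistency check — and as an alternative route to the spanning half once independence is established — one can confirm the count directly: if $b_d$ denotes the number of pure inverting compositions of $d$, the generating-function identity $\bigl(\sum_{d\ge 0}b_d\,x^d\bigr)\prod_{i\ge 1}(1-x^i)=\frac{1-x}{1-2x}$, combined with the partition generating function, gives $\#\mathfrak{C}_d=2^{d-1}=\dim\Qsym_d$.
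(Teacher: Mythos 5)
Your proposal follows essentially the same route as the paper: fix the degree, order compositions (the paper uses the lexrev order, a lexicographic order refining the partition rearrangement with ties broken by reverse-lex), exhibit a distinguished leading LR skew RCT (the paper's super filling $SU(\lambda,\alpha)$), prove it is unique so the leading coefficient is $1$, and invoke the bijection $\phi\colon PB_d\to C_d$ — which the paper imports directly from \cite{Lauve2010QSym} rather than re-deriving — to conclude the transition matrix is square and uni-uppertriangular. Your speculation about re-proving the pure/inverting bijection via relations from $e_k=s_{(1^k)}$ is unnecessary (and not how the paper proceeds), and your generating-function dimension count is a correct sanity check not in the paper, but otherwise the structure matches.
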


Before we prove this result, we need to establish some preliminaries.  As in  \cite{Lauve2010QSym}, define
\[
\begin{array}{rcl}
PB_d&:=&\{ (\lambda, \alpha) \mid \lambda \text{ a partition, } \alpha \in B, |\lambda|+|\alpha|=d \} \\
C_d &:=& \{ \beta \mid \beta \text{ a composition, } |\beta|=d \}.
\end{array}
\]

Define a map $\phi \colon PB_d \rightarrow C_d$ as follows.  Let $(\lambda, \alpha) \in PB_d$.  Then $\phi((\lambda,\alpha))$ is the composition obtained by adding $\lambda^t_i$ (the $i$th part of $\lambda^t$) to the $i$th largest part of $\alpha$ for all $1 \leq i \leq \ell(\lambda)$.  If $\alpha_j=\alpha_k$ for some $j <k$, then we consider $\alpha_k$ larger than $\alpha_j$.  In the case where $\ell(\alpha) < \ell(\lambda^t)$, append $\ell(\lambda^t)-\ell(\alpha)$ zeros to $\alpha$ to make the two sequences the same length. See Figure \ref{fig:mapphi} for an example.

\begin{figure}[ht]{\label{fig:mapphi}}
\[
\begin{array}{rcl}
\lambda &=& (3,3,2,2,2,1,1) \\
\alpha &=& (1,3,1,2,3,1,2) \\
\phi((\lambda,\alpha)) &=& (1, 3+5, 1, 2, 3+7, 1, 2+2) =(1,8, 1,2,10,1,4)
\end{array}
\]
\caption{$\phi \colon PB_{27} \to C_{27}$}
\end{figure}

The following proposition was given in \cite{Lauve2010QSym} in the case where the lengths of each sequence in question were at most $n$, their motivation being to work with symmetric and quasisymmetric polynomials in variables $x_1, x_2, \ldots, x_n$. Since the authors' proof is given for arbitrary $n$ and $d$, the proposition holds in our situation for the infinite set of indeterminates $\xx=(x_1, x_2, \ldots)$.

\begin{proposition}(\cite{Lauve2010QSym})
The map $\phi$ is a bijection between $PB_d$ and $C_d$.
\end{proposition}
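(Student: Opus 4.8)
The plan is to exhibit an explicit two-sided inverse $\psi\colon C_d\to PB_d$, which simultaneously yields injectivity and surjectivity. The starting point is a structural observation about $\phi$. Since $\lambda^t$ is weakly decreasing and $\phi$ adds $\lambda^t_i$ to the $i$th largest part of $\alpha$ (appending zeros at the end of $\alpha$ when $\ell(\lambda^t)>\ell(\alpha)$), the permutation $\sigma$ that sorts the parts of $\alpha$ into weakly decreasing order---with ties broken so that a later part counts as larger, exactly as in the definition of $\phi$---also sorts the parts of $\beta:=\phi((\lambda,\alpha))$, and $\beta_{\sigma(i)}=\alpha_{\sigma(i)}+\lambda^t_i$ for every $i$. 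Consequently $\partitionof{\beta}=\partitionof{\alpha}+\lambda^t$ as partitions (padding $\partitionof{\alpha}$ by zeros to the common length), the position occupied by each part of $\beta$ is forced by the positions in $\alpha$, and the zeros appended to $\alpha$ reappear as the last positions of $\beta$, where they form a weakly increasing run consisting of the smallest parts of $\beta$.

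Because of this, recovering $(\lambda,\alpha)$ from $\beta$ reduces to recovering the single partition $\rho:=\lambda^t$: once $\rho$ is known one puts $\nu:=\partitionof{\beta}-\rho$, places the parts of $\nu$ at the positions occupied by the parts of $\beta$ under $\sigma$ (respecting the later-is-larger convention), and deletes the zero parts to obtain a composition $\alpha$. Thus the proposition is equivalent to the assertion that \emph{for each $\beta\in C_d$ there is exactly one partition $\rho$ with $\partitionof{\beta}-\rho$ a partition for which this recipe produces a pair $(\rho^t,\alpha)$ with $\alpha$ pure and inverting and $\phi((\rho^t,\alpha))=\beta$.}

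To prove that assertion I would read off the admissible $\rho$ from the tail of $\beta$. The requirement $\phi((\rho^t,\alpha))=\beta$, through the observation above, is restrictive: the parts of $\beta$ coming from zeros appended to $\alpha$ must be exactly a weakly increasing suffix of $\beta$ made of its smallest parts, which fixes $\ell(\alpha)$ and the corresponding entries of $\rho$, and within the remaining freedom the requirements that $\alpha$ be inverting and pure cut down to a single choice. I expect that stripping too little leaves $\alpha$ non-inverting, while among the strippings that do make $\alpha$ inverting all but one give the wrong parity of the index $k$ in the staircase factorization of $\alpha$ and so fail purity. For instance, when $\beta=(2,1,1)$ the candidate $\rho=(1,1,1)$ yields $\alpha=(1)$, which is inverting but not pure since $k=1$, and the candidate $\rho=(2,1,1)$ yields $\alpha=()$ but then $\phi$ returns $(1,1,2)\neq\beta$, whereas $\rho=()$ yields $\alpha=(2,1,1)$, which is pure and inverting. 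Since the recipe of the previous paragraph undoes ``add $\lambda^t_i$ to the $i$th largest part'' by design, once existence and uniqueness of the admissible $\rho$ are proved we obtain $\psi\circ\phi=\mathrm{id}$ and $\phi\circ\psi=\mathrm{id}$ at once, so $\phi$ is a bijection.

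The step I expect to be the main obstacle is exactly this bookkeeping: verifying that ``place the parts of $\nu$ at the positions of $\beta$ and delete zeros'' is a genuine inverse of ``add $\lambda^t_i$ to the $i$th largest part of $\alpha$ with zero-padding''---tracking the later-is-larger tie rule and the fact that appended zeros of $\alpha$ can only reappear as an increasing tail of smallest parts of $\beta$ (so, for example, a preimage with $\alpha=()$ forces $\beta$ weakly increasing)---and showing that the ``pure'' parity condition isolates a unique $\rho$. This is precisely the content of the cited result of Lauve and Mason, whose proof is carried out for arbitrary length and degree and so transfers verbatim to the present setting over the infinite alphabet $\xx$.
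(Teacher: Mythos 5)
The paper does not actually prove this proposition; it cites Lauve--Mason and merely remarks that since their argument works for arbitrary $n$ and $d$, it transfers to infinitely many variables. So there is no internal proof to compare against, and your proposal must be judged on its own terms plus its fidelity to what the citation supplies.

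Your structural observations about $\phi$ are correct: since $\lambda^t$ is weakly decreasing, the sorting permutation of $\alpha$ (with the later-is-larger tie rule) also sorts $\beta=\phi((\lambda,\alpha))$, and one checks directly that no new ties in $\beta$ can appear across positions that were strictly ordered in $\alpha$; the appended-zero positions end up carrying the values $\lambda^t_{\ell(\lambda^t)},\ldots,\lambda^t_{\ell(\alpha)+1}$ left to right, which is a weakly increasing run strictly smaller than any position coming from an original part of $\alpha$; hence recovering $(\lambda,\alpha)$ from $\beta$ does reduce to recovering the single partition $\rho=\lambda^t$, and your recipe $\nu=\partitionof{\beta}-\rho$, placed by positions and stripped of zeros, does invert the ``add $\lambda^t_i$ to the $i$th largest part'' step whenever the recovered data is consistent. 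Your worked example at $\beta=(2,1,1)$ is also correct.

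The genuine gap is that the heart of the matter---that for every $\beta$ exactly one partition $\rho$ with $\partitionof{\beta}-\rho$ a partition produces a pure, inverting $\alpha$ mapping to $\beta$---is asserted as an expectation (``I expect that stripping too little leaves $\alpha$ non-inverting\ldots all but one give the wrong parity of $k$'') rather than argued. This is not a small bookkeeping step but the entire content of the proposition: one must show, for instance, that the ``inverting'' condition pins down how far one may strip the weakly increasing tail (so that for each residual value $i$ an $i$ still precedes an $i-1$), and then that among the admissible strippings the parity of $k$ alternates so that exactly one is pure. Without that argument the sketch establishes only that \emph{if} a unique admissible $\rho$ exists then $\phi$ is a bijection, which is circular. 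You acknowledge this by deferring to the cited paper, which is also what the present paper does, so your proposal is consistent with the paper's treatment; but as a standalone proof it would need the uniqueness-of-$\rho$ lemma to be carried out.
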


As in \cite{Lauve2010QSym} we will need the so called \emph{lexrev} order on compositions. First recall the lexicographic order $\lex$ on partitions of $n$, which states that $\lambda \lex \mu$ if and only if for some $k$, $\lambda_i=\mu_i$ for all $1 \leq i <k$ and $\lambda_k > \mu_k$.  Note that $\lex$ is defined on compositions as well. For compositions $\alpha$ and $\beta$ of $n$ we say $\alpha$ is greater than $\beta$ in the lexrev order, and write $\alpha \succeq \beta$ if and only if
\begin{enumerate}
\item $\partitionof{\alpha} \lex \partitionof{\beta},$ or
\item $\partitionof{\alpha}=\partitionof{\beta}$ and $\reverse{\alpha}$ is lexicographically greater than $\reverse{\beta}$.
\end{enumerate}

For example, if we take the eight compositions of $4$, we see
\begin{equation*}
(4) \succeq (1,3) \succeq (3,1) \succeq (2,2) \succeq (1,1,2) \succeq (1,2,1) \succeq (2,1,1) \succeq (1,1,1,1).
\end{equation*}

To prove Proposition \ref{prop:coinvariant} we will compute the leading term of $s_\lambda \rsqschur_\alpha$ relative to the lexrev order.  To do this, we will need a distinguished Littlewood-Richardson skew RCT of shape $\phi((\lambda, \alpha))/\alpha$ which we call, as in \cite{Lauve2010QSym}, the \emph{super filling} and denote it by $SU(\lambda, \alpha)$. We construct $SU(\lambda,\alpha)$ as follows.  

Let $\alpha$ be a strong composition and $\lambda$ a partition.  If $\ell(\lambda^t) > \ell(\alpha)$ append zeros to the end of $\alpha$ so that the resulting composition has the same length as $\lambda^t$. If $\lambda^t$ and $\alpha$ satisfy $\ell(\lambda^t) \leq \ell(\alpha)$ then no action is needed.   As in the definition of LR skew RCT, the cells of $\alpha$ will be filled with virtual $\infty$ symbols with the convention that given two boxes filled with $\infty$, if they are in the same row we define these entries to strictly decrease left to right, while two such boxes in the same column are defined to be equal. Recall that for an arbitrary partition $\mu$ with largest part $m$, $T_\mu$ is the (reverse row-strict) tableau that has the entire $i$th column filled with the entry $(m+1-i)$ for all $1 \leq i \leq m$. Now append the $i$th row of $T_{\lambda^t}$ to the $i$th longest row of $\alpha$.  If two rows of $\alpha$ are the same length, then the lower row is considered longer.

For example, if $\alpha=(1,3,1,2,3,1,2)$ and $\lambda=(3,3,2,2,2,1,1)$ then $SU(\lambda, \alpha)$ is
\[
\begin{picture}(100,130)(39,0)
\put(0,105){\sqone}
\put(0,87){\sqthree}
\put(0,69){\sqone}
\put(0, 51){\sqtwo}
\put(0, 33){\sqthree}
\put(0, 15){\sqone}
\put(0,-3){\sqtwo}
\put(0,105){\tableau{ \infty \\
		\infty & \infty & \infty &7 &6 &5 &4 & 3\\
		\infty \\
		\infty & \infty\\
		\infty & \infty & \infty & 7 & 6 & 5 & 4 & 3 & 2 & 1 \\
		\infty \\
		\infty &\infty & 7 & 6}}
\end{picture}
\]

\begin{lemma}{\label{lem:super}}
The super filling $SU(\lambda, \alpha)$ is a LR skew RCT of shape $\phi((\lambda, \alpha))/\alpha$ and content $\reverse{\lambda}$.
\end{lemma}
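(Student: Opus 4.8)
The plan is to verify directly that $SU(\lambda,\alpha)$ meets the three requirements of Definition \ref{LRskewCT} and has the asserted shape and content. Throughout, write $m := \lambda^t_1$ for the largest entry of $T_{\lambda^t}$ (equivalently, the number of nonzero parts of $\lambda$), write $\gamma_i$ for the length of the $i$th row of $\alpha$ (with $\alpha$ padded by zero rows when $\ell(\alpha) < \ell(\lambda^t)$), and write $r_i$ for the rank of row $i$ of $\alpha$ in the length ordering in which, among rows of equal length, the lower is declared longer — this is the ordering used both in the construction of $SU(\lambda,\alpha)$ and in the definition of $\phi$. The first thing I would record is the explicit form of the non-virtual entries: the cell $(i,j)$ with $j > \gamma_i$ carries the entry $m+1-(j-\gamma_i)$, and such a cell is present exactly when $j - \gamma_i \le \lambda^t_{r_i}$. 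It follows at once that row $i$ has length $\gamma_i + \lambda^t_{r_i} = \phi((\lambda,\alpha))_i$, so the shape is $\phi((\lambda,\alpha))/\alpha$; and the value $v$ appears in row $i$ if and only if $m+1-v \le \lambda^t_{r_i}$, i.e.\ if and only if $\lambda_{m+1-v} \ge r_i$, so $v$ appears exactly once in each of the $\lambda_{m+1-v}$ longest rows of $\alpha$ and $\cont(w_{col}(SU(\lambda,\alpha))) = (\lambda_m, \ldots, \lambda_1) = \reverse{\lambda}$. Since $\lambda_m \ge 1$, the column reading word is regular.

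Condition (1) of Definition \ref{LRskewCT} is immediate: in the virtual $\infty$-prefix of a row the stated convention makes the entries strictly decrease from left to right; the appended portion of row $i$ reads $m, m-1, m-2, \ldots$ and hence strictly decreases; and $\infty$ exceeds every finite value, so the transition from the last skewed box to the first appended box is strictly decreasing as well.

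For condition (3), fix $v$ with $2 \le v \le m$. In any row that contains an occurrence of $v-1$, that occurrence sits in the column immediately to the right of an occurrence of $v$ in the same row — the latter is present because $\lambda_{m+2-v} \le \lambda_{m+1-v}$, $\lambda$ being a partition. Since the column reading order exhausts one column before beginning the next, the occurrence of $v$ in such a row is read before the occurrence of $v-1$ in it. Consequently, for any prefix $W$ of $w_{col}(SU(\lambda,\alpha))$, the rule sending each row contributing a $v-1$ to $W$ to the $v$ in that same row is an injection into the set of $v$'s in $W$; hence $W$ contains at least as many $v$'s as $(v-1)$'s, which is exactly the reverse lattice condition.

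Condition (2), that every Type A and every Type B triple is an inversion triple, is the case-heavy step and the one I expect to demand the most care. It is enough to examine the eight Type A and eight Type B configurations depicted in the proof of Theorem \ref{LRrule}. The all-finite configurations are automatic, since consecutive appended entries of a row differ by exactly $1$: in a Type A (resp.\ Type B) triple this makes the ``$c$''-entry one more than the ``$a$''-entry, so the required disjunction ``$b \le a$ or $b \ge c$'' holds for every integer $b$. Of the mixed configurations, several simply cannot occur in $SU(\lambda,\alpha)$: a finite entry is never followed within its row by an $\infty$ (the skewed boxes form a prefix of each row), ruling out Type A configurations four and six and Type B configurations two and seven; and in Type A configurations two and seven and Type B configuration three the $\infty$-pattern forces $\gamma_{i_1} < \gamma_{i_2}$ (resp.\ $\gamma_{i_1} > \gamma_{i_2}$), hence $r_{i_1} > r_{i_2}$ (resp.\ $r_{i_1} < r_{i_2}$), hence $\lambda^t_{r_{i_1}} \le \lambda^t_{r_{i_2}}$ (resp.\ $\ge$), and therefore $\phi((\lambda,\alpha))_{i_1} < \phi((\lambda,\alpha))_{i_2}$ (resp.\ $>$), contradicting the length inequality built into that triple's type. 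For each surviving mixed configuration I would check the inversion condition directly, using that the first appended entry of any row equals $m$, that two $\infty$'s in a common row strictly decrease from left to right, and that two $\infty$'s in a common column are equal; in every case one of ``$b \le a < c$'' or ``$a < c \le b$'' holds, the boundary cases (triples meeting the $0$th column or a padded zero row) included. The obstacle here is purely organizational — correctly determining which configurations survive and applying the $\infty$-conventions without slip — and once conditions (1)--(3) together with the shape and content are established, Definition \ref{LRskewCT} yields the lemma.
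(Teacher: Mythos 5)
Your proposal is correct and follows essentially the same route as the paper's proof: verify the shape and content by explicit formula, check row-strictness directly, rule out impossible triple configurations and show the survivors are inversion triples, and establish the reverse lattice condition by observing that in each row the entry $v$ lies immediately left of $v-1$ and hence precedes it in column reading order.

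Two small remarks on the comparison. First, you actually \emph{sharpen} the paper's argument for two cases: you prove that Type A configuration two (finite $c,a$ over a skewed $\infty$) and Type B configuration three (skewed $\infty$ over finite $c,a$) cannot occur at all, by the observation that the skewed-box pattern forces $\gamma_{i_1}<\gamma_{i_2}$ (resp.\ $>$), hence $\lambda^t_{r_{i_1}}\le\lambda^t_{r_{i_2}}$ (resp.\ $\ge$), hence the row-length inequality built into the triple's type fails. The paper instead notes those two are ``clearly inversion triples'' when they occur, which is also valid but less informative. Second, for the surviving mixed configurations (Type A three, five, eight; Type B four, five, six, eight) you state the plan — use that the first appended entry is $m$, that $a=c-1$ whenever both are finite appended entries, and that the two $\infty$-conventions settle the rest — rather than carrying out each check. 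Those checks are genuinely routine and the paper is nearly as terse (``clearly an inversion triple'' in several cases, a two-line argument for Type A configuration three and Type B configuration four), so there is no gap in substance, but in a final write-up you should execute them; in particular Type A configuration three ($\infty$, $a$ over $b$ with $b$ finite) does need the sub-case split $\gamma_{i_1}=\gamma_{i_2}$ vs.\ $\gamma_{i_1}>\gamma_{i_2}$ that gives $a=b$ vs.\ $a>b$.
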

\begin{proof}
By construction $SU(\lambda, \alpha)$ has shape $\phi((\lambda, \alpha))/\alpha$ and content $\reverse{\lambda}$.  Clearly, the entries in the rows of $SU(\lambda, \alpha)$ which are not $\infty$ strictly decrease. Thus we only have to show that every Type A and Type B triple is an inversion triple, and that the column reading word is a regular reverse lattice word.  Throughout this proof, let $\beta$ be the shape of $SU(\lambda, \alpha)$.

Consider a Type A triple in rows $i_1$ and $i_2$, where $i_1 < i_2$ and $\beta_{i_1} \geq \beta_{i_2}$. Below are the only possible configurations of Type A triples in $SU(\lambda, \alpha)$:

\begin{picture}(100,100)(-10,-5)

\put(0, 50){\smalltableau{c&a}}
\put(21, 36){\vdots}
\put(0, 18){\smalltableau{& b}}

\put(50, 50){\smalltableau{ c & a}}
\put(71, 36){\vdots}
\put(64, 18){\Sqone}
\put(64, 18){\smalltableau{ \infty}}

\put(100, 50){\Sqone}
\put(100,50){\smalltableau{\infty}}
\put(114, 50){\smalltableau{a}}
\put(121, 36){\vdots}
\put(100, 18){\smalltableau{& b}}

\put(150, 50){\smalltableau{c}}
\put(164, 50){\Sqone}
\put(164, 50){\smalltableau{\infty}}
\put(171, 36){\vdots}
\put(150, 18){\smalltableau{&b}}

\put(200, 50){\Sqtwo}
\put(200, 50){\smalltableau{\infty & \infty}}
\put(221, 36){\vdots}
\put(200, 18){\smalltableau{& b}}

\put(250, 50){\smalltableau{c}}
\put(264, 50){\Sqone}
\put(264, 50){\smalltableau{\infty}}
\put(271, 36){\vdots}
\put(264, 18){\Sqone}
\put(264, 18){\smalltableau{\infty}}

\put(300, 50){\Sqone}
\put(300, 50){\smalltableau{\infty}}
\put(314, 50){\smalltableau{a}}
\put(321, 36){\vdots}
\put(314, 18){\Sqone}
\put(314, 18){\smalltableau{\infty}}

\put(350,50){\Sqtwo}
\put(371, 36){\vdots}
\put(364, 18){\Sqone}
\put(350,50){\smalltableau{ \infty & \infty}}
\put(364,18){\smalltableau{\infty}}

\end{picture}

Note that the fourth and sixth arrangements are not possible in our construction.  We see that in the seventh arrangement we must have $\alpha_{i_1} < \alpha_{i_2}$, which means we append cells first to the row $i_2$ and then append weakly fewer cells to row $i_1$.  Thus it is impossible to have $\beta_{i_1} \geq \beta_{i_2}$, so the seventh arrangement is impossible.

Next we check the remaining arrangements are all inversion triples. In each arrangement, $a=c-1$, so that in the first arrangement we must have either $b \geq c >a$ or $c>a \geq b$. The second arrangement is clearly an inversion triple. In the third arrangement if $\alpha_{i_1}=\alpha_{i_2}$ then $a=b$ and we have an inversion triple.  If on the other hand $\alpha_{i_1} >\alpha_{i_2}$, then $a >b$ and we have an inversion triple. (As was previously mentioned, it is impossible for $\alpha_{i_1} < \alpha_{i_2}$.)  The fifth and eight arrangements are clearly inversion triples.

Consider a Type B triple in rows $i_1$ and $i_2$, where $i_1 < i_2$ and $\beta_{i_1} < \beta_{i_2}$. Below are the eight possible arrangements of Type B triples in $SU(\lambda, \alpha)$:

\begin{picture}(100,100)(-10,-5)

\put(0, 50){\smalltableau{b}}
\put(7, 36){\vdots}
\put(0, 18){\smalltableau{c& a}}

\put(50, 50){\smalltableau{ b}}
\put(57, 36){\vdots}
\put(64, 18){\Sqone}
\put(50, 18){\smalltableau{ c&\infty}}

\put(100, 50){\Sqone}
\put(100,50){\smalltableau{\infty}}
\put(107, 36){\vdots}
\put(100, 18){\smalltableau{c& a}}

\put(150, 50){\smalltableau{b}}
\put(157, 36){\vdots}
\put(150, 18){\Sqone}
\put(150, 18){\smalltableau{ \infty & a}}

\put(200, 50){\Sqone}
\put(200, 50){\smalltableau{ \infty}}
\put(207, 36){\vdots}
\put(200, 18){\Sqone}
\put(200, 18){\smalltableau{ \infty & a}}

\put(250, 50){\smalltableau{b}}
\put(257, 36){\vdots}
\put(250, 18){\Sqtwo}
\put(250, 18){\smalltableau{\infty&\infty}}

\put(300, 50){\Sqone}
\put(300, 50){\smalltableau{\infty}}
\put(307, 36){\vdots}
\put(314, 18){\Sqone}
\put(300, 18){\smalltableau{c & \infty}}

\put(350,50){\Sqone}
\put(350, 50){\smalltableau{\infty}}
\put(357, 36){\vdots}
\put(350, 18){\Sqtwo}
\put(350,18){\smalltableau{ \infty & \infty}}

\end{picture}

Note the second and seventh arrangements are impossible in our construction. In each arrangement we again have $a=c-1$, so in the first arrangement we must have either $b \geq c > a$ or $c >a \geq b$.  The third arrangement is clearly an inversion triple. In the fourth arrangement cells were appended to row $i_2$ before row $i_1$, so $b<a$ and we have an inversion triple. The fifth, sixth, and eighth arrangements are clearly inversion triples.

The last property to check is that the column reading word $w_{col}(SU(\lambda, \alpha))$ is a regular reverse lattice word.  Clearly, the first entry in $w_{col}(SU(\lambda, \alpha))$ is $m$, which is the largest part of $\lambda^t$.  By construction the entries in the rows of $SU(\lambda, \alpha)$ are $m, m-1, m-2, \ldots$, so prior to reading any entry $j<m$ in row $i$, one must have read the entry $j+1$ that is immediately to the left of $j$ in row $i$. Thus the column reading word is a reverse lattice word.  Also, the column reading word contains at least one $1$ by construction.  Therefore the column reading word is a regular reverse lattice word.
\end{proof}

\begin{proof}[Proof of Proposition \ref{prop:coinvariant}]
Order the compositions of $d$ by the lexrev order.  This puts an order on the basis elements $\rsqschur_\beta$ of $\Qsym_d$.  Order the elements of $\mathfrak{C}_d$ by mapping the pair $(\lambda, \alpha)$ under $\phi$ to a composition of $d$. We claim that the leading term in the row-strict quasisymmetric Schur expansion of $s_\lambda \rsqschur_\alpha$ is the function $\rsqschur_{\phi((\lambda, \alpha))}$.

First notice that $\rsqschur_{\phi((\lambda, \alpha))}$ is a term in the expansion of $s_\lambda \rsqschur_\alpha$, since by Lemma \ref{lem:super} the super filling $SU(\lambda, \alpha)$ is a LR skew RCT of shape $\phi((\lambda,\alpha))/\alpha$ and content $\reverse{\lambda}$. Thus it suffices to show that the composition $\beta=\phi((\lambda,\alpha))$ is the largest composition appearing in the expansion of $s_\lambda \rsqschur_\alpha$ with respect to the lexrev order.

Given $\lambda$ and $\alpha$, to construct the largest possible composition that might appear in the expansion of $s_\lambda \rsqschur_\alpha$ we first have to append as many cells as possible to the longest row of $\alpha$, taking into account that if two rows of $\alpha$ have equal length then the lower row is considered longer.  The last entry in this row must be $1$ since the column reading word is a regular reverse lattice word.  The entries of this row must also strictly decrease from left to right and the maximum entry in the row must be at most $L=\ell(\lambda)$, because the content of the filling is $\reverse{\lambda}$.  Therefore, to append as many cells as possible to the longest row of $\alpha$, one must append $\lambda_1^t$ cells filled with $L, L-1, \ldots, 2,1$.

Similarly, to append the maximum possible number of cells to the $i$th longest part of $\alpha$ (again, insuring we have the largest possible composition in lexrev order), one must append $\lambda_i^t$ cells filled with entries $L, L-1, \ldots, j$ where $j$ is the minimum positive entry in $\reverse{\lambda}-(i^{\ell(\lambda)})$.  If $\ell(\lambda^t) > \ell(\alpha)$, then append the extra parts of $\lambda^t$ from least to greatest and from top to bottom, after the last row of $\alpha$.  The resulting shape, which is largest in the lexrev order by construction, is precisely $\phi((\lambda, \alpha))$.

Since $\phi$ is a bijection and we have a triangular decomposition of the elements in $\mathfrak{C}_d$ in terms of the basis $\rsqschur_\beta$, we therefore have $\mathfrak{C}_d$ is a basis for $\Qsym_d$.  Notice that in constructing the largest possible composition $\beta$, the entries filling the shape $\phi((\lambda, \alpha))$ were uniquely determined.  Thus the super filling $SU(\lambda, \alpha)$ is the only LR skew RCT of that shape. Therefore, the transition matrix between $\mathfrak{C}_d$ and the basis $\rsqschur_\beta$ is uni-uppertriangular.

If we now union over the degree $d$, we have the required basis for $\Qsym$.
\end{proof}

From \cite{Lauve2010QSym} and \cite{Mason2011qsym-emails} we can established that the only row-strict quasisymmetric Schur functions $\rsqschur_\beta$ that appear in $\Sym$ are those where $\beta$ is of rectangular shape.  The only rectangles which are inverting are those of the form $(1^k)$, but these rectangles are not pure. Thus as a consequence of Proposition \ref{prop:coinvariant} we have the following.

\begin{corollary}{\label{cor:coinvariant}}
The set $\{\rsqschur_\alpha \mid \alpha \text{ pure and inverting}\}$ is a basis for $\Qsym/(\mathcal{E})$.
\end{corollary}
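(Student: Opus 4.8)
The plan is to read the corollary off directly from Proposition \ref{prop:coinvariant}, by recognizing the ideal $(\mathcal{E})$ as exactly the span of those basis elements $s_\lambda\rsqschur_\alpha$ of $\mathfrak{C}$ for which $\lambda$ is a \emph{nonempty} partition. First I would unpack the $\Sym$-module content of Proposition \ref{prop:coinvariant}: since $\{s_\lambda : \lambda \text{ a partition}\}$ is a $\Q$-basis of $\Sym$ and $\{s_\lambda\rsqschur_\alpha : \lambda \text{ a partition},\ \alpha\in B\}$ is a $\Q$-basis of $\Qsym$, for each fixed $\alpha\in B$ the $\Sym$-linear map $\Sym\to\Qsym$, $f\mapsto f\cdot\rsqschur_\alpha$, carries the basis $\{s_\lambda\}_\lambda$ bijectively onto the linearly independent set $\{s_\lambda\rsqschur_\alpha\}_\lambda$; hence it is injective, $\Sym\cdot\rsqschur_\alpha\cong\Sym$ as $\Sym$-modules, and
\[
\Qsym \;=\; \bigoplus_{\alpha\in B}\Sym\cdot\rsqschur_\alpha .
\]

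Next I would identify $(\mathcal{E})$. Write $\Sym^{+}$ for the ideal of $\Sym$ generated by $e_1,e_2,\ldots$; by the fundamental theorem of symmetric functions $\Sym=\Q[e_1,e_2,\ldots]$, so $\Sym^{+}$ is precisely the set of symmetric functions with zero constant term and $\Sym^{+}=\sum_{d>0}e_d\,\Sym$. Consequently $(\mathcal{E})=\sum_{d>0}e_d\,\Qsym=\Qsym\cdot\Sym^{+}=\Sym^{+}\cdot\Qsym$, using $\Qsym\cdot\Sym=\Qsym$. Feeding this into the displayed decomposition gives $(\mathcal{E})=\bigoplus_{\alpha\in B}\Sym^{+}\cdot\rsqschur_\alpha$, so
\[
\Qsym/(\mathcal{E}) \;=\; \bigoplus_{\alpha\in B}\bigl(\Sym/\Sym^{+}\bigr)\cdot\rsqschur_\alpha .
\]
Since $\Sym/\Sym^{+}\cong\Q$ via the constant-term map, each summand is one-dimensional and spanned by the image of $\rsqschur_\alpha$; hence $\{\rsqschur_\alpha : \alpha\in B\}$ descends to a $\Q$-basis of $\Qsym/(\mathcal{E})$. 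As $B$ is by definition the set of pure and inverting compositions, this is exactly the asserted basis. The observation recorded just before the statement---that $\rsqschur_\beta$ lies in $\Sym$ only for rectangular $\beta$, and that no pure inverting composition is rectangular---serves as a consistency check: were some $\rsqschur_\alpha$ with $\alpha\in B$ a symmetric function it would have positive degree, hence lie in $\Sym^{+}\subseteq(\mathcal{E})$, and be killed in the quotient; the decomposition above already precludes this.

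I do not expect a genuinely hard step here: Proposition \ref{prop:coinvariant}, which itself rests on the Littlewood-Richardson rule of Theorem \ref{LRrule}, carries all the weight, and the rest is formal. The only points requiring a little care are the equality $(\mathcal{E})=\Sym^{+}\cdot\Qsym$ (using $\Sym=\Q[e_1,e_2,\ldots]$ and $\Qsym\cdot\Sym=\Qsym$) and the assertion that $\Sym^{+}\cdot\rsqschur_\alpha$ is precisely the positive-degree part of the free summand $\Sym\cdot\rsqschur_\alpha$, which follows because $\Sym$ is an integral domain, so multiplication by the nonzero element $\rsqschur_\alpha$ is injective on $\Sym$. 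One could alternatively bypass the module language: verify directly that $(\mathcal{E})$ is the $\Q$-span of $\{s_\lambda\rsqschur_\alpha : \lambda\neq\emptyset,\ \alpha\in B\}$---the inclusion $\supseteq$ since $s_\lambda\in\Sym^{+}\subseteq(\mathcal{E})$ for $\lambda\neq\emptyset$, and $\subseteq$ since every generator $e_d\cdot(s_\mu\rsqschur_\alpha)$ of $(\mathcal{E})$ expands, $e_d s_\mu$ being symmetric and homogeneous of degree $d+|\mu|>0$, into a combination of $s_\rho\rsqschur_\alpha$ with $\rho\neq\emptyset$---and then read the complementary basis off $\mathfrak{C}$ directly.
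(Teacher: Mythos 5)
Your proof is correct and follows the same overall strategy as the paper: read the corollary off from Proposition \ref{prop:coinvariant} by viewing $\Qsym$ as a free $\Sym$-module with basis $\{\rsqschur_\alpha : \alpha \in B\}$, then note $(\mathcal{E})=\Sym^{+}\cdot\Qsym$ and pass to the quotient summand by summand. Where the paper compresses this into a sentence and then appeals to the rectangular-shape observation from \cite{Lauve2010QSym} and \cite{Mason2011qsym-emails}, you make the $\Sym$-module bookkeeping explicit and correctly identify that observation for what it is: a consistency check ensuring no $\rsqschur_\alpha$ with $\alpha\in B$ is already killed in the quotient, rather than a load-bearing step (freeness already forces $\rsqschur_\alpha\notin(\mathcal{E})$). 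Your alternative direct verification in the last paragraph is also sound and is essentially the argument that underlies Lauve--Mason's version of this result.
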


    \chapter[%
      Demazure Atoms
   ]{%
      Characterizations of Demazure Atoms
   }%
   \label{ch:3ndChapterLabel}
  In this Chapter we present six equivalent characterizations of Demazure atoms, four of which are known in the literature and two of which are new. The chapter is organized as follows.  In Section \ref{sec:divdiff} we record two characterizations of Demazure atoms first given in \cite{Lascoux1990Keys}, one of which is Definition \ref{def:atoms} which we use later in the chapter.   In Section \ref{sec:colstrictcomp} we present the characterization of Demazure atoms given in \cite{Mason2009An-explicit}, and we also record a bijection from \cite{Haglund2008Quasisymmetric} between semi-standard augmented fillings and certain column-strict composition tableaux; this bijection yields yet another characterization. We use this bijection in Section \ref{sec:comparray} to give our first new characterization of Demazure atoms. Specifically, we present Definition \ref{def:CTarraypattern}, which is a new Gelfand-Tsetlin type triangular array of nonnegative integers which we call composition array patterns. We show in Theorem \ref{thm-CT-array} that there is a bijection between certain column-strict composition tableaux and composition array patterns.  In Theorem \ref{thm:GTandCTbiject} we give a bijection $\Theta$ between composition array patterns of shape $\gamma$ and Gelfand-Tsetlin patterns of shape $\partitionof{\gamma}:=\lambda$. Finally, in Section \ref{sec:LSpaths} we use Lakshmibai-Seshadri paths to give our second characterization of Demazure atoms.

Throughout this chapter, all Young tableaux will follow the English convention.  Recall that a \emph{word} $w=w_1w_2\cdots w_n$ is any finite sequence of positive integers.  We will frequently use the Bruhat order on permutations of the symmetric group $S_n$ on $n$ letters \cite{Humphreys1990Reflection}. Let $\alpha_i$ be the simple root which has a $1$ in position $i$ and a $-1$ in position $i+1$ for $1 \leq i \leq n-1$.

\section{Keys and Divided Difference Operators}{\label{sec:divdiff}}

Demazure atoms first appeared in \cite{Lascoux1990Keys} under the name ``standard bases." In this section we present two characterizations of Demazure atoms from \cite{Lascoux1990Keys}.

\subsection{Divided Difference Operators}

Fix $n \in \Z_{\geq 0}$.  Let $P=\Z[x_1, x_2, \ldots, x_n]$ and let $S_n$ be the symmetric group on $[n]$.  For $1 \leq i < n$ define linear operators on $P$ by the formulas
\begin{equation}
\partial_i = \frac{1-s_i}{x_i-x_{i+1}}, \; \text{ and } \; \pi_i=\partial_i x_i.
\end{equation}
These operators obey the following relations.

\begin{align}{\label{relations}}
\partial_i^2&= 0 \notag \\
\partial_i \partial_j &= \partial_j \partial_i \; \; \text{ for } |i-j|>1 \notag \\
\partial_i \partial_{i+1} \partial_{i} &= \partial_{i+1} \partial_i \partial_{i+1} \\
\pi_i^2 &= \pi_i  \notag \\
\pi_i \pi_j &= \pi_j \pi_i \; \; \text{ for } |i-j|>1  \notag \\
\pi_i \pi_{i+1} \pi_i &= \pi_{i+1} \pi_i \pi_{i+1} \notag
\end{align}
The relations $\pi_i \pi_j = \pi_j \pi_i$ for $|i-j|>1$ and $\pi_i \pi_{i+1} \pi_{i} = \pi_{i+1} \pi_i \pi_{i+1}$ together are called \emph{braid relations}.

Let $\tau \in S_n$ and let $\tau= s_{i_1} \cdots s_{i_k}$ be a reduced word of $\tau$.  Define
\begin{equation}{\label{eq:demazure}}
\pi_\tau = \pi_{i_1} \cdots \pi_{i_k}.
\end{equation}
Due to \ref{relations} and the fact that any two reduced words are connected by a sequence of Coxeter relations, (\ref{eq:demazure}) is well defined.  The operator $\pi_\tau$ is called a \emph{Demazure operator.}

Given any partition $\lambda$ of length at most $n$, we can append zeros to the end of $\lambda$ so that the resulting sequence has length equal to $n$. We may then define $\xx^\lambda:=x_1^{\lambda_1}\cdots x_n^{\lambda_n}$; we briefly note that $\xx^\beta$, for any composition $\beta$, is defined similarly.  The polynomial defined by $\pi_\tau(\xx^\lambda)$ is the (type $A$) \emph{Demazure character} corresponding to the dominant weight $\lambda$ and permutation $\tau$.

For a fixed partition $\lambda$, the set of monomials in the Demazure characters $\pi_\tau(\xx^\lambda)$ and $\pi_\omega(\xx^\lambda)$ may in general intersect nontrivially.  This fact is clear from the definition of $\pi_\tau$, since if $\omega < \tau$ in Bruhat order then the set of monomials in $\pi_\omega(\xx^\lambda)$ are a subset of the monomials in $\pi_\tau(\xx^\lambda)$.  For example when $n=3$ and $\lambda=(2,1)$,
\begin{align*}
\pi_1\pi_2(x_1^2x_2) &=x_1^2x_2+x_1^2x_3+x_1x_2^2+x_1x_2x_3+x_2^2x_3 \\
\pi_2(x_1^2x_2) &= x_1^2 x_2 +x_1^2x_3.
\end{align*}
We see that the monomial $x_1^2x_3$, which has weight $(2,0,1)$, appears in both $\pi_1\pi_2(x_1^2x_2)$ and $\pi_2(x_1^2x_2)$ even though the weight space with weight $(2,0,1)$ is one dimensional.  

This motivates replacing the operator $\pi_i$ with $\bar{\pi}_i := \pi_i -1$. The operators $\bar{\pi}_i$ satisfy $\bar{\pi}_i^2=-\bar{\pi}_i$ and the braid relations, so $\bar{\pi}_\tau$ is still well defined.  This leads to the following definition.

\begin{definition}{\label{def:atoms}}
The polynomials $\bar{\pi}_\tau(\xx^\lambda)$, for $\lambda$ a partition and $\tau$ a permutation, are called \emph{Demazure atoms.}
\end{definition}

\begin{remark}
Let $\Stab(\lambda)$ be the stabilizer of $\lambda$ under the action of $S_n$ on the parts of $\lambda$, where $\ell(\lambda) \leq n$.  If $s_i \in \Stab(\lambda)$ then $\bar{\pi}_i(\xx^\lambda)=0$. There is a unique element $\tau$ of minimal length in $S_n/\Stab(\lambda)$ taking $\lambda$ to the weak composition $\gamma= \tau(\lambda)$.  Henceforth, we will implicitly assume we are always taking $\tau \in S_n/\Stab(\lambda)$ a minimal length coset representative and we will denote $\dzatom_{\tau(\lambda)}(x_1, \ldots, x_n) := \bar{\pi}_\tau(\xx^\lambda) $.
\end{remark}

Given a monomial $\xx^\beta$ we can describe the action of $\bar{\pi}_i$ as follows.
\begin{align}{\label{barpiaction}}
\bar{\pi}_i(\xx^\beta) &= \xx^{\beta-\alpha_i}+\xx^{\beta-2\alpha_i} + \cdots + \xx^{\beta-k\alpha_i} &&\text{ if } k=\beta_i-\beta_{i+1} >0,  \notag \\
\bar{\pi}_i(\xx^\beta) &= 0  && \text{ if } \beta_i=\beta_{i+1}, \\
\bar{\pi}_i(\xx^\beta) &= -(\xx^\beta+ \xx^{\beta + \alpha_i} \cdots + \xx^{\beta + (k-1)\alpha_i} ) &&\text{ if } k=\beta_{i+1}-\beta_i >0. \notag
\end{align}
Because the Demazure atom is defined as $\bar{\pi}_\tau(\xx^\lambda)$ where $\lambda$ is a partition, using the description of the action of $\bar{\pi}_i$ in (\ref{barpiaction}) we can see that Demazure atoms have positive integral coefficients. This is also apparent by using the equivalent definition of Demazure atoms in Lemma \ref{lem:atomaskey} below. 

The following lemma will be used in Section \ref{sec:LSpaths}. Let $[\xx^\beta]f$ be the coefficient of $\xx^\beta$ in the polynomial $f$.  If $[\xx^\beta]f \neq 0$ then we will say the monomial $\xx^\beta$ \emph{is in} the polynomial $f$.  In the case of Demazure atoms, if $\xx^\beta$ is in $\bar{\pi}_\tau(\xx^\lambda)$ then $[\xx^\beta]\bar{\pi}_\tau(\xx^\lambda)=c>0$.

\begin{lemma}{\label{lem:monomials}}
Let $\tau$ be a fixed permutation and let $\gamma$ be a fixed weak composition.  Let $i$ be such that $\ell(s_i \tau) > \ell(\tau)$.  We have $[\xx^\gamma]\bar{\pi}_{s_i \tau}(\xx^\lambda)=c > 0$ if and only if each of the following conditions are met.
\begin{enumerate}
\item $[\xx^\gamma]\bar{\pi}_i(\xx^\beta)=1$ for some $\xx^\beta$ in $\bar{\pi}_\tau(\xx^\lambda)$. Hence $\gamma=\beta-t\alpha_i$ for some $0 < t \leq (\beta_i-\beta_{i+1})$,
\item $\sum [\xx^\beta] \bar{\pi}_\tau(\xx^\lambda) - \sum [\xx^\mu]\bar{\pi}_\tau(\xx^\lambda) = c$.
\end{enumerate} 
where the first sum is over all $\beta$ such that $\xx^\beta$ is in $\bar{\pi}_\tau(\xx^\lambda)$ and $[\xx^\gamma]\bar{\pi}_i(\xx^\beta)=1$, and the second sum is over all  $\mu$ such that $\xx^\mu$ is in $\bar{\pi}_\tau(\xx^\lambda)$ and $\mu$ has the form 
\begin{itemize}
\item if $\gamma_i \geq \gamma_{i+1}$, then $\mu=s_i(\gamma)-r\alpha_i$ for some $0< r$, 
\item  if $\gamma_i < \gamma_{i+1}$, then $\mu=\gamma-r\alpha_i$ for some $0 \leq r$.
\end{itemize}
\end{lemma}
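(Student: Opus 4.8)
The plan is to analyze the action of $\bar\pi_{s_i\tau} = \bar\pi_i \circ \bar\pi_\tau$ on $\xx^\lambda$ by first writing $\bar\pi_\tau(\xx^\lambda) = \sum_\delta d_\delta\, \xx^\delta$ with each $d_\delta > 0$ (positivity of atom coefficients, from the discussion after \eqref{barpiaction}), and then applying $\bar\pi_i$ term by term using the explicit formulas in \eqref{barpiaction}. The key observation is that for a fixed target composition $\gamma$, the monomial $\xx^\gamma$ can be produced from $\xx^\delta$ under $\bar\pi_i$ in two essentially different ways: with coefficient $+1$ when $\delta$ lies ``above'' $\gamma$ along the $\alpha_i$-string and $\delta_i > \delta_{i+1}$ (the first case of \eqref{barpiaction}), and with coefficient $-1$ when $\delta$ lies ``at or below'' $\gamma$ in the third case of \eqref{barpiaction}. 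So first I would set up the bookkeeping: group the monomials $\xx^\delta$ appearing in $\bar\pi_\tau(\xx^\lambda)$ by their $\alpha_i$-string (i.e. by the value $\delta_i + \delta_{i+1}$ and the other coordinates), and within each string record which $\delta$'s contribute $+1$ to $[\xx^\gamma]$ and which contribute $-1$.

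Next I would make precise the two contributing sets. A monomial $\xx^\delta$ in $\bar\pi_\tau(\xx^\lambda)$ contributes $+1$ to $[\xx^\gamma]\bar\pi_i(\xx^\delta)$ exactly when $\delta_i > \delta_{i+1}$ and $\gamma = \delta - t\alpha_i$ for some $0 < t \le \delta_i - \delta_{i+1}$; equivalently $\gamma$ sits strictly below $\delta$ on the string. This is condition (1), and it forces $\gamma$ to have the stated form $\beta - t\alpha_i$. A monomial $\xx^\mu$ contributes $-1$ to $[\xx^\gamma]\bar\pi_i(\xx^\mu)$ exactly when $\mu_{i+1} > \mu_i$ and $\gamma = \mu + r\alpha_i$ for some $0 \le r < \mu_{i+1} - \mu_i$, i.e. $\gamma$ sits weakly above $\mu$ on the string with $\gamma$ still on the ``lower half.'' Translating ``$\gamma = \mu + r\alpha_i$ with $\mu_{i+1}>\mu_i$'' into a description purely in terms of $\gamma$ gives exactly the two bulleted cases: if $\gamma_i \ge \gamma_{i+1}$ then the reflection $s_i(\gamma)$ is the point on the string symmetric to $\gamma$, and $\mu = s_i(\gamma) - r\alpha_i$ for some $r > 0$ runs through the monomials strictly below $s_i(\gamma)$; if $\gamma_i < \gamma_{i+1}$ then $\gamma$ itself is already below the center, and $\mu = \gamma - r\alpha_i$ for $r \ge 0$ runs through $\gamma$ and everything below it. Here I should double-check the boundary case $r = 0$ versus $r > 0$ against \eqref{barpiaction} — that's the one spot where off-by-one errors lurk — and confirm that the hypothesis $\ell(s_i\tau) > \ell(\tau)$ is what guarantees no cancellation makes $c \le 0$, i.e. that the resulting polynomial is genuinely the atom $\dzatom_{s_i\tau(\lambda)}$ and hence has nonnegative coefficients.

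Granting all this, summing over all monomials of $\bar\pi_\tau(\xx^\lambda)$ gives
\[
[\xx^\gamma]\bar\pi_{s_i\tau}(\xx^\lambda) \;=\; \sum_{\beta} [\xx^\beta]\bar\pi_\tau(\xx^\lambda) \;-\; \sum_{\mu} [\xx^\mu]\bar\pi_\tau(\xx^\lambda),
\]
with the sums ranging over exactly the index sets described in (1) and in the two bullets, which is condition (2). For the forward direction: if $[\xx^\gamma]\bar\pi_{s_i\tau}(\xx^\lambda) = c > 0$, then in particular the $+1$-contributions are nonempty (since the $-1$-contributions alone cannot produce a positive coefficient), which forces (1), and then the displayed identity is (2). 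For the reverse direction: if (1) and (2) hold, the same identity gives $[\xx^\gamma]\bar\pi_{s_i\tau}(\xx^\lambda) = c > 0$ directly. I expect the main obstacle to be the careful translation of the $-1$-contribution set from the ``$\mu$'' parametrization into the intrinsic ``$\gamma$'' conditions with the correct ranges of $r$ — in particular handling the dichotomy $\gamma_i \ge \gamma_{i+1}$ versus $\gamma_i < \gamma_{i+1}$ and verifying that in each case the monomials $\xx^\mu$ in question all actually occur in $\bar\pi_\tau(\xx^\lambda)$ (they need not all be present, but the sum in (2) only ranges over those that are, so this is really just making sure the statement is self-consistent). A clean way to organize the whole argument is to fix one $\alpha_i$-string $S$, write $\bar\pi_\tau(\xx^\lambda)|_S = \sum_{k} e_k \xx^{\nu + k\alpha_i}$ over the string, apply \eqref{barpiaction} to each term, and read off the coefficient of $\xx^\gamma$ as a signed sum of the $e_k$; then sum over strings.
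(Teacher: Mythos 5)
Your proposal is correct and follows essentially the same route as the paper: write $\bar\pi_\tau(\xx^\lambda)$ as a positive sum of monomials, apply $\bar\pi_i$ term by term using the three cases of \eqref{barpiaction}, classify which $\xx^\delta$ contribute $+1$ to $[\xx^\gamma]$ (those with $\delta_i>\delta_{i+1}$, giving condition (1)) and which contribute $-1$ (those with $\delta_{i+1}>\delta_i$), and then translate the latter into the stated $\mu=s_i(\gamma)-r\alpha_i$ or $\mu=\gamma-r\alpha_i$ forms according to the sign of $\gamma_i-\gamma_{i+1}$. Your explicit check of the range of $r$ in each bullet — the place you correctly flag as where off-by-one errors lurk — is a useful verification that the paper states more tersely, but the argument is the same.
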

\begin{proof}
Suppose $\xx^\gamma$ is in $\bar{\pi}_{s_i \tau}(\xx^\lambda)$ with some coefficient $c$, where $c$ is necessarily positive.  By the definition of $\bar{\pi}_i$ and the description of its action given in (\ref{barpiaction}), we must have $\gamma=\beta-t\alpha_i$ for some $\beta$ such that $\beta_i > \beta_{i+1}$ and $\xx^\beta$ in $\bar{\pi}_\tau(\xx^\lambda)$.  Clearly, $0 < t \leq (\beta_i - \beta_{i+1})$.

To show (2), first suppose $\gamma_i \geq \gamma_{i+1}$.  Clearly $\sum [\xx^\beta]\bar{\pi}_\tau(\xx^\lambda) \geq c$, where the sum is over all $\beta$ such that $\xx^\beta$ is in $\bar{\pi}_\tau(\xx^\lambda)$ and $[\xx^\gamma]\bar{\pi}_i(\xx^\beta)=1$. The only monomials $\xx^\mu$ in $\bar{\pi}_\tau (\xx^\lambda)$ which have $[\xx^\gamma]\bar{\pi}_i(\xx^\mu)=-1$ are those monomials with $\mu= s_i(\gamma)-r\alpha_i$ and $0<r$.  Similarly, if $\gamma_i < \gamma_{i+1}$, the only monomials $\xx^\mu$ in $\bar{\pi}_\tau (\xx^\lambda)$ which have $[\xx^\gamma]\bar{\pi}_i(\xx^\mu)=-1$ are those monomials with $\mu= \gamma-r\alpha_i$ and $0\leq r$. Thus if $[\xx^\gamma]\bar{\pi}_{s_i \tau}(\xx^\lambda)=c$, we must have $\sum [\xx^\beta] \bar{\pi}_\tau(\xx^\lambda) - \sum [\xx^\mu]\bar{\pi}_\tau(\xx^\lambda) = c$ in both cases.

Conversely, suppose we have (1) and (2) of the lemma.  Since each monomial $\xx^\beta$ in the sum $\sum [\xx^\beta] \bar{\pi}_\tau(\xx^\lambda)$ satisfies $[\xx^\gamma]\bar{\pi}_i(\xx^\beta)=1$, and each monomial $\xx^\mu$ in the sum $\sum [\xx^\mu]\bar{\pi}_\tau(\xx^\lambda)$ satisfies $[\xx^\gamma]\bar{\pi}_i(\xx^\mu) = -1$, we conclude $[\xx^\gamma]\bar{\pi}_{s_i \tau}(\xx^\lambda)=c$.
\end{proof}

\begin{remark}
Intuitively, Lemma \ref{lem:monomials} says that $\xx^\gamma$ is in $\bar{\pi}_{s_i \tau}(\xx^\lambda)$ if and only if $\xx^\gamma$ is in $\bar{\pi}_i(\xx^\beta)$ for some $\xx^\beta$ in $\bar{\pi}_\tau(\xx^\lambda)$, and $\xx^\gamma$ is not cancelled by terms coming from $\bar{\pi}_i(\xx^\mu)$ for some $\xx^\mu$ in $\bar{\pi}_\tau(\xx^\lambda)$.
\end{remark}

\subsection{Keys} 
For the remainder of this section we will abandon the convention from Chapter \ref{ch:2ndChapterLabel} of using reverse tableaux.  Thus all Young tableaux in this section will have entries which weakly increase from left to right in its rows, and strictly increase from top to bottom in its columns. We will continue to use the English convention of drawing tableaux.  

In \cite{Lascoux1990Keys}, the authors use the notion of a \emph{key}, which is a Young tableau whose sets of column entries are ordered by containment, to give an alternate characterization of Demazure atoms $\dzatom_{\tau(\lambda)}$. The definitions we present here follow \cite{Lascoux1990Keys} and \cite{Reiner1995Key-polynomials}.

\begin{definition}
A \emph{key} is a semi-standard Young tableau such that the set of entries in the $(j+1)$st column form a subset of the set of entries in the $j$th column, for all $j$.
\end{definition}

There is an obvious bijection between weak compositions and keys given by $\gamma=(\gamma_1, \ldots, \gamma_n) \to key(\gamma)$, where $key(\gamma)$ is the key of shape $\partitionof{\gamma}$ whose first $\gamma_j$ columns contain the letter $j$, for all $j$. For example, if $\gamma=(1,0,3,2,0,1)$, then
\begin{equation*}
key(\gamma)=\tableau{1 & 3 & 3 \\ 3 & 4 \\ 4 \\ 6} \; .
\end{equation*}
The inverse of this map is given by $T \to \cont(T)$, sending a key $T$ to its content.

Knuth equivalence plays a crucial role in this section, so we reprise its definition from Section \ref{subsec:knuth}.

\begin{definition}
Let $a,b,$ and $c$ be positive integers.  Then
\begin{displaymath}
\begin{array}{lll}
\mathcal{K}_1: & bca \to bac & \text{ if } a<b \leq c \\
\mathcal{K}_2: & acb \to cab & \text{ if } a \leq b < c 
\end{array}.
\end{displaymath}
\end{definition}

The relations $\kone, \ktwo,$ and their inverses $\kone^{-1}, \ktwo^{-1}$, act on words $w$ by transforming triples of consecutive letters.  We will say two words $w$ and $w^\prime$ are \emph{Knuth equivalent}, and write $w \cong w^\prime$, if and only if $w$ can be transformed to $w^\prime$ through a sequence of transformations using only $\kone, \ktwo, \kone^{-1},$ and $\ktwo^{-1}$.

Let $T$ be a Young tableau and let $w_{col}(T)$ be the \emph{column reading word} of $T$, which is obtained by reading the entries of $T$ in each column from bottom to top and from left to right.  We will occasionally write $w_{col}(T)=v^{(1)}v^{(2)} \cdots$ where each $v^{(j)}$ is the strictly decreasing word comprising the $j$th column of $T$. In general any word which strictly decreases will be called a \emph{column word.} Similarly, let $w_{row}(T)$ be the \emph{row reading word} of $T$, which is obtained by reading the entries of $T$ in each row from left to right and from bottom to top.  We will write $w_{row}(T)=\cdots u^{(2)}u^{(1)}$ where each $u^{(i)}$ is the weakly increasing word comprising $i$th row of $T$. In general any word which weakly increases will be called a \emph{row word}. A standard fact about Knuth equivalence \cite{Stanley1999EC2} is that there is a unique word $v$ in each Knuth equivalence class such that $v=w_{col}(T)$ for some Young tableau $T$.  

Let $w$ be a word.  The \emph{column word factorization} of $w$, written $w=w^{(1)}w^{(2)}\cdots$, is the factorization where each $w^{(j)}$ is a maximal column word.  Denote by $\colform(w)$ the \emph{column form} of a word $w$, which is the composition whose $j$th part is the length $w^{(j)}$.  For example if $w=134214$, then $w=1\cdot 3 \cdot 421 \cdot 4$ is its column word factorization, and $\colform(w)=(1,1,3,1)$.

If a word $w$ is equivalent to $w_{col}(T)$ we will write $w \cong T$.  Let $w$ be an arbitrary word such that $w \cong T$ where the shape of $T$ is $\lambda$. The word $w$ is called \emph{column-frank} if $\partitionof{\colform(w)}=\lambda^t$; that is, if $\colform(w)$ is a rearrangement of the parts of $\lambda^t$.

\begin{definition}{\label{def:leftandrightkey}}(\cite{Reiner1995Key-polynomials})
Let $T$ be a Young tableau of shape $\lambda$.  The \emph{right key} of $T$, denoted $K_+(T)$, is the key of shape $\lambda$ whose $j$th column is given by the last column word of any column-frank word $v$ such that $v \cong T$ and $\colform(v)=(\ldots, \lambda_j^t)$.
\end{definition}

\begin{example}
Let $\lambda=(2,2,1,1)$.  Then $\lambda^t=(4,2)$. If 
\begin{equation*}
T=\tableau{1 & 2 \\ 2 & 4 \\ 3 \\ 5}
\end{equation*}
then $w_{col}(T)=532142$.  To compute $K_+(T)$ we can use the words $v_1=5321 \cdot 42$ and $v_2=32 \cdot 5421$.  Then
\begin{equation*}
K_+(T)=\tableau{1 & 2 \\ 2 & 4 \\ 4 \\ 5} \; .
\end{equation*}
\end{example}

Let $\sigma$ be a permutation written in one-line notation and let $\lambda$ be a partition.  There exists a key $K(\sigma, \lambda)$ associated to $\sigma$ and $\lambda$ which is defined by setting the $j$th column of $K(\sigma, \lambda)$ to be the first $\lambda_j$ letters of $\sigma$ in increasing order. 

Our first characterization of Demazure atoms first appeared in \cite{Lascoux1990Keys}.  Let $\sigma \in S_n/\Stab(\lambda)$ be a minimal length coset representative such that $\sigma(\lambda)=\gamma$.

\begin{lemma}{\label{lem:atomaskey}}(\cite{Lascoux1990Keys})
The Demazure atom $\dzatom_{\tau(\lambda)}=\bar{\pi}_\tau(\xx^\lambda)$ is the sum of the weights of all Young tableaux whose right key is equal to $K(\sigma, \lambda^t)$.
\end{lemma}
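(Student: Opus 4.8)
The plan is to prove the identity $\dzatom_{\tau(\lambda)} = \bar{\pi}_\tau(\xx^\lambda) = \sum \xx^T$, where the sum ranges over all (semistandard) Young tableaux $T$ with $K_+(T) = K(\sigma, \lambda^t)$, by induction on the length $\ell(\tau)$. The base case $\tau = \mathrm{id}$ is the observation that $\bar{\pi}_{\mathrm{id}}(\xx^\lambda) = \xx^\lambda$ is a single monomial, and the only tableau whose right key equals the column-superstandard key $K(\mathrm{id}, \lambda^t)$ — the key whose $j$th column reads $1, 2, \ldots, \lambda_j^t$ from top to bottom — is $T_\lambda$ itself (the unique tableau of shape $\lambda$ equal to its own right key), which has weight $\xx^\lambda$. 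For the inductive step, write $\tau = s_i \tau'$ with $\ell(\tau') < \ell(\tau)$, so that $\bar{\pi}_\tau(\xx^\lambda) = \bar{\pi}_i \bar{\pi}_{\tau'}(\xx^\lambda)$, and assume the result holds for $\tau'$, i.e. $\bar{\pi}_{\tau'}(\xx^\lambda) = \sum_{K_+(T) = K(\sigma', \lambda^t)} \xx^T$, where $\sigma'$ is the minimal coset representative with $\sigma'(\lambda) = \tau'(\lambda)$.

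The heart of the argument is to match the monomial-level action of $\bar{\pi}_i$, described by the three cases of (\ref{barpiaction}) and packaged in Lemma \ref{lem:monomials}, against a combinatorial operation on the set of tableaux with a given right key. The key combinatorial fact I would invoke (this is classical for Demazure/key polynomials and is implicit in \cite{Lascoux1990Keys} and \cite{Reiner1995Key-polynomials}) is: if $K$ is a key with $\cont(K) = \gamma$ and $s_i$ raises the length of the corresponding minimal coset representative, then the tableaux $T$ with $K_+(T) = s_i \cdot K$ are obtained from the tableaux $T'$ with $K_+(T') = K$ by applying, column by column, the elementary "jeu-de-taquin-free" $i/(i{+}1)$ swapping operation that realizes the crystal raising/lowering structure — and the generating-function bookkeeping of which $\xx^\gamma$ survive and with what multiplicity is exactly the inclusion-exclusion recorded in Lemma \ref{lem:monomials}(2). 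Concretely, I would first show that $K(\sigma,\lambda^t)$ and $K(\sigma',\lambda^t)$ differ precisely by swapping the roles of $i$ and $i+1$ among their column entries in the way dictated by $s_i$ acting on the content composition, using that $\sigma = s_i\sigma'$ and the definition of $K(\sigma,\lambda^t)$ via "first $\lambda^t_j$ letters of $\sigma$ in increasing order." Then I would show that $T \mapsto$ (swap $i \leftrightarrow i+1$ appropriately in $T$, via the column-word / Knuth-equivalence description of $K_+$ in Definition \ref{def:leftandrightkey}) sends $\{T : K_+(T) = K(\sigma',\lambda^t)\}$ onto the multiset of tableaux needed, with weights transforming exactly as $\xx^\beta \mapsto \xx^{\beta - t\alpha_i}$.

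The main obstacle I anticipate is establishing that the right-key operation commutes correctly with the $i,i{+}1$-swap and that no over- or under-counting occurs — equivalently, proving the precise statement that $\{\xx^T : K_+(T) = K(\sigma,\lambda^t)\}$, as a multiset of monomials, equals $\bar{\pi}_i$ applied to $\{\xx^T : K_+(T) = K(\sigma',\lambda^t)\}$. This is where Lemma \ref{lem:monomials} does the real work: it reduces the problem to checking (1) that every $\xx^\gamma$ produced lies in $\bar{\pi}_i(\xx^\beta)$ for some $\beta$ in the previous set, and (2) the cancellation count $\sum[\xx^\beta] - \sum[\xx^\mu] = c$. For (1) one uses that right key and content behave predictably under the crystal operators; for (2) one must carefully pair up tableaux in the $i,i{+}1$ "ladder" — the descending and ascending strings in (\ref{barpiaction}) — and verify the net multiplicity. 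I expect this pairing to require a short but delicate case analysis splitting on whether $\gamma_i \geq \gamma_{i+1}$ or $\gamma_i < \gamma_{i+1}$, mirroring the two bullet cases in Lemma \ref{lem:monomials}. Once this commutation lemma is in hand, the induction closes immediately, and the statement follows by taking $\sigma$ to be the minimal coset representative with $\sigma(\lambda) = \tau(\lambda) = \gamma$.
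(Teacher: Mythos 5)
You are attempting to supply a proof for a result the paper itself does not prove: Lemma \ref{lem:atomaskey} is stated with a citation to \cite{Lascoux1990Keys} and carries no proof in the text, so there is no internal argument to compare against. Your template---induct on $\ell(\tau)$, read off the monomial-level recursion for $\bar{\pi}_{s_i\tau}(\xx^\lambda)$ from Lemma \ref{lem:monomials}, and match it to a combinatorial recursion on the tableaux appearing on the right-hand side---mirrors the strategy the paper does carry out for Proposition \ref{prop:atomaspath} in Section \ref{sec:LSpaths}, so the overall plan is reasonable, and the base case is correct.

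There is, however, a genuine gap at the center. The ``key combinatorial fact'' you invoke---that the tableaux with right key $K(s_i\sigma',\lambda^t)$ arise from those with right key $K(\sigma',\lambda^t)$ via an $i/(i{+}1)$ crystal operation whose bookkeeping matches $\bar{\pi}_i$---is, up to unwinding, the lemma itself, and you support it only by calling it ``classical'' and ``implicit'' in the very references that prove the lemma. Lemma \ref{lem:monomials} does none of this combinatorial work: it is a purely algebraic statement about coefficients of $\bar{\pi}_{s_i\tau}(\xx^\lambda)$, and it carries signed contributions (the third case of (\ref{barpiaction}) is negative), whereas the right-hand side of the lemma is a sign-free sum over tableaux. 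Reconciling the two requires a precise, proved statement of how $K_+$ interacts with the type-$A$ crystal operators along the $i$-string through a tableau $T$---for instance that the right key takes only the two values $K_+(T)$ and $s_iK_+(T)$ along the string, with the switch occurring at a position governed by whether $s_i$ lengthens the key's coset representative---followed by a pairing argument along $i$-strings that reproduces the cancellation in Lemma \ref{lem:monomials}(2). Nothing in the paper's surrounding machinery sets this up: Definition \ref{def:leftandrightkey} only recalls the column-frank description of $K_+$, and no crystal structure on tableaux appears anywhere in Chapter \ref{ch:3ndChapterLabel}. Without proving that compatibility you have reduced the lemma to another unproved lemma of comparable depth, which is precisely where the real content of Lascoux and Sch\"utzenberger's plactic argument lies.
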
   

\section{Nonsymmetric Macdonald Polynomials at $q=t=0$ and Column-Strict Composition Tableaux}{{\label{sec:colstrictcomp}}}

Following \cite{Ion2003Nonsymmetric} and \cite{Sanderson2000On-the}, in \cite{Mason2009An-explicit} the author shows that Demazure atoms $\dzatom_{\tau(\lambda)}$ are specialized nonsymmetric Macdonald polynomials, where the author in \cite{Mason2009An-explicit} uses a version of the nonsymmetric Macdonald polynomials studied in \cite{Marshall1999Symmetric}. See Chapter \ref{ch:4ndChapterLabel} for details on nonsymmetric Macdonald polynomials.

\begin{theorem}(\cite{Mason2009An-explicit}){\label{thm:atommac}}
The Demazure atom $\dzatom_\gamma(x_1, \ldots, x_n)=\bar{\pi}_\tau(\xx^\lambda)$ is equal to the nonsymmetric Macdonald polynomial $E_\gamma(x_1, \ldots, x_n; 0,0)$ at $q=t=0$, where $\tau(\lambda)=\gamma$.
\end{theorem}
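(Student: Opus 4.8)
The plan is to obtain the identity from the combinatorial formula of Haglund, Haiman and Loehr \cite{Haglund2008A-combinatorial} for the nonsymmetric Macdonald polynomial $E_\gamma(x_1,\dots,x_n;q,t)$ --- in the normalization of Marshall \cite{Marshall1999Symmetric} used in \cite{Mason2009An-explicit} --- which writes $E_\gamma$ as a sum over non-attacking fillings $\sigma$ of the augmented diagram of $\gamma$, each weighted by $\xx^\sigma q^{\maj(\sigma)} t^{\coinv(\sigma)}$ times a product of factors $\tfrac{1-t}{1-q^{\leg(u)+1}t^{\arm(u)+1}}$ over the non-basement cells $u$ at which $\sigma$ differs from the cell directly below it. First I would set $q=t=0$: every such factor collapses to $\tfrac{1-0}{1-0}=1$ (the exponents $\leg(u)+1$ and $\arm(u)+1$ are positive, so no $0^0$ ambiguity arises), and $q^{\maj(\sigma)}t^{\coinv(\sigma)}$ vanishes unless $\maj(\sigma)=\coinv(\sigma)=0$. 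Hence $E_\gamma(x_1,\dots,x_n;0,0)=\sum_\sigma\xx^\sigma$, the sum running over the non-attacking fillings of the augmented diagram of $\gamma$ that have no descent ($\maj=0$) and in which every triple is an inversion triple ($\coinv=0$).

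The next step is to recognize this set of fillings. I would check, directly against the definitions, that ``non-attacking, no descent, every triple an inversion triple'' is precisely the list of axioms defining a semi-standard augmented filling of shape $\gamma$ (the objects reviewed in Section \ref{sec:colstrictcomp}); this amounts to translating the descent statistic and the coinversion-triple statistic of \cite{Haglund2008A-combinatorial} into those axioms. Granting this, $E_\gamma(x_1,\dots,x_n;0,0)$ is the generating function $G_\gamma$ of semi-standard augmented fillings of shape $\gamma$, and it remains to prove $G_\gamma=\bar{\pi}_\tau(\xx^\lambda)$. I would do this by induction, matching $G_\gamma$ to the recursion for Demazure atoms that follows from Definition \ref{def:atoms}: namely $\dzatom_\lambda=\xx^\lambda$ for a partition $\lambda$, and $\dzatom_{s_i\gamma}=\bar{\pi}_i\dzatom_\gamma$ whenever $\gamma_i>\gamma_{i+1}$ (which holds because $\bar{\pi}_{s_i\tau}=\bar{\pi}_i\bar{\pi}_\tau$ with $\ell(s_i\tau)=\ell(\tau)+1$ in that case, for the minimal coset representative $\tau$). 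The base case is that the unique semi-standard augmented filling of a dominant shape $\lambda$ has weight $\xx^\lambda$; the inductive step is a sign-reversing involution on augmented fillings realizing the action of $\bar{\pi}_i$ described in (\ref{barpiaction}). Alternatively, one can bypass the involution and invoke the right-key characterization of Lemma \ref{lem:atomaskey} by exhibiting a weight-preserving bijection between semi-standard augmented fillings of shape $\gamma$ and the column-strict Young tableaux whose right key equals $K(\tau,\lambda^t)$ --- rectify the non-basement columns and check that the basement together with the no-descent condition forces the right key.

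I expect the combinatorial identification in the middle paragraph to be the main obstacle: correctly matching the descent and coinversion-triple statistics with the axioms for semi-standard augmented fillings, and then either building the involution that realizes $\bar{\pi}_i$ or pinning down the right key of a rectified augmented filling. One must also be careful about normalization conventions --- which coordinate of $\gamma$ is ``dominant,'' and which version of $E_\gamma$ the formula of \cite{Haglund2008A-combinatorial} computes --- in order to make contact with the $\bar{\pi}_\tau(\xx^\lambda)$ of Definition \ref{def:atoms}; this is the point where the affine-type results of \cite{Sanderson2000On-the} and \cite{Ion2003Nonsymmetric} enter and where one verifies that setting both Macdonald parameters to $0$ is the correct specialization. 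A purely algebraic route is also available --- match the Knop--Sahi recursion for $E_\gamma$ against the recursion for $\dzatom_\gamma$ above, using that the Demazure--Lusztig operator $T_i$ of the double affine Hecke algebra degenerates to $\bar{\pi}_i$ at $t=0$ and the affine raising operator degenerates at $q=0$ to multiplication by a variable followed by specializing another variable to $0$ --- but it requires carefully tracking the normalizing scalars in the $q,t\to 0$ limit.
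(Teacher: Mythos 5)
The proposal is essentially correct, and your second (``alternative'') route --- specializing the Haglund--Haiman--Loehr formula at $q=t=0$ so that the surviving non-attacking, descent-free, all-inversion-triple fillings are exactly the semi-standard augmented fillings, and then exhibiting a weight-preserving bijection from those fillings to the semi-standard Young tableaux whose right key is $K(\sigma,\lambda^t)$ so that Lemma \ref{lem:atomaskey} finishes the job --- is precisely the proof strategy the dissertation attributes to \cite{Mason2009An-explicit} in the two sentences immediately following the theorem statement; the dissertation itself supplies no proof. Your primary route via the recursion $\dzatom_{s_i\gamma}=\bar{\pi}_i\dzatom_\gamma$ and a sign-reversing involution realizing (\ref{barpiaction}) on augmented fillings is a genuinely different argument, closer in spirit to the affine-type proofs of \cite{Sanderson2000On-the} and \cite{Ion2003Nonsymmetric}; it is not in the cited sources and would need to be built from scratch, but it would circumvent the right-key bookkeeping. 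One caution for the bijective route that you already anticipated: the rectification map $\theta$ of Section \ref{sec:comparray} sends an SSAF of shape $\gamma$ to a reverse column-strict tableau of shape $\lambda=\partitionof{\gamma}$, while Lemma \ref{lem:atomaskey} is phrased in terms of ordinary semi-standard Young tableaux of shape $\lambda^t$; reconciling the transpose, the entry reversal $i\mapsto n+1-i$, and Marshall's normalization of which rearrangement of $\gamma$ the combinatorial formula actually computes is exactly the bookkeeping where the proof in \cite{Mason2009An-explicit} spends most of its effort.
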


The proof of this theorem relied first on classifying the fillings of diagrams in the combinatorial formula of $E_\gamma(\xx;0,0)$ as so-called \emph{semi-standard augmented fillings,} or SSAF. Then the author provided certain bijections to Young tableaux to establish Theorem \ref{thm:atommac}. These semi-standard augmented fillings of shape $\gamma$ are defined as non-attacking augmented fillings of $\widehat{\dg}(\gamma)$ such that there are no descents and every triple of Type I and Type II is an inversion triple.  The reader can find the definitions of these terms in Section \ref{ch4sec:combdef}. 

In a later work \cite{Haglund2008Quasisymmetric} the authors give a bijection between semi-standard augmented fillings and \emph{column-strict composition tableaux.}  The reader should compare the following definition to Definition \ref{def_RCT}.

\begin{definition}{\label{def:CT}}
Let $\alpha$ be a strong composition with $k$ parts and largest part size $m$.  A {\it{column-strict composition tableau}} (CT) $U$ is a filling of the diagram $\alpha$ such that
\begin{enumerate}
\item The first column is strictly increasing when read top to bottom.
\item Each row weakly decreases when read left to right.
\item \textbf{Triple Rule:} Supplement $U$ with zeros added to the end of each row so that the resulting filling $\hat{U}$ is of rectangular shape $k \times m$.  Then for $1 \leq i_1 < i_2 \leq k$ and $2 \leq j \leq m$,
\begin{equation*}
\left( \hat{U}(i_2,j) \neq 0 \text{ and } \hat{U}(i_2,j)\geq \hat{U}(i_1,j) \right)\Rightarrow \hat{U}(i_2,j) > \hat{U}(i_1, j-1).
\end{equation*}
\end{enumerate}
\end{definition}

If we let $\hat{U}(i_2,j)=b$, $\hat{U}(i_1,j)=a$, and $\hat{U}(i_1, j-1)=c$, then the Triple Rule ($b\neq 0$ and $b \geq a$ implies $b > c$) can be pictured as

\[
\begin{array}{ccc} \vspace{6pt}
\tableau{   c & a   \\  & \bas{{\vdots}} }  
 \\
 \tableau{ & b}
\end{array}.
\]

\begin{lemma}(\cite{Haglund2008Quasisymmetric}){\label{lem:ssafasCT}}
There exists a weight preserving bijection between column-strict composition tableaux of shape $\alpha$ and semi-standard augmented fillings of shape  $\gamma$ such that $\strongof{\gamma}=\alpha$.
\end{lemma}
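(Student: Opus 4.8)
The plan is to construct the bijection explicitly and piecewise, matching the combinatorial data of a semi-standard augmented filling (SSAF) of shape $\gamma$ with that of a column-strict composition tableau (CT) of shape $\strongof{\gamma}=\alpha$. The key observation is that a CT of shape $\alpha$ is obtained from an SSAF of shape $\gamma$ by deleting all the empty rows (those corresponding to zero parts of $\gamma$) and also deleting the augmenting $0$th column (the ``basement'' in Macdonald-polynomial language); conversely, one re-inserts empty rows in the positions dictated by $\gamma$ and prepends the basement column. So the map is essentially ``forget the positions of the zero parts.'' First I would set up the correspondence at the level of underlying diagrams: given the weak composition $\gamma$, the rows of $\widehat{\dg}(\gamma)$ with positive length, read from top to bottom, are placed in bijection with the rows of the diagram $\alpha$ in the same top-to-bottom order; an entry in cell $(i,j)$ of the SSAF (for $j\ge 1$, ignoring the basement cell $(i,0)$) is sent to the corresponding cell of the CT. This is manifestly weight preserving since the multiset of entries is untouched.

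Next I would verify that the three defining conditions of a CT (Definition \ref{def:CT}) are exactly the translations of the SSAF conditions (non-attacking, no descents, every Type~I/Type~II triple is an inversion triple) once the empty rows and basement are removed. The first-column-strictly-increasing condition of a CT comes from the basement being filled with $1,2,\dots$ together with the non-attacking/no-descent conditions forcing first-column entries to strictly increase down the positive-length rows; the weakly-decreasing-rows condition of a CT is the ``no descents'' condition of the SSAF restricted to cells in columns $\ge 1$; and the Triple Rule of a CT is precisely the inversion-triple condition on Type~I triples, with the deletion of empty rows being harmless because an empty row contributes a $0$ in the supplemented rectangle $\hat U$, which the Triple Rule hypothesis $\hat U(i_2,j)\ne 0$ already excludes — and because removing a zero-length row from $\gamma$ cannot create or destroy any triple relation among the positive rows. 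I would also need to check in the reverse direction that, given a CT $U$ of shape $\alpha$ and a choice of $\gamma$ with $\strongof{\gamma}=\alpha$, re-inserting the empty rows and the basement produces a genuine non-attacking augmented filling with no descents and only inversion triples; the non-attacking condition in particular needs the basement to be $1,2,3,\dots$ so that no two equal entries end up attacking across the re-inserted structure, and this is exactly how the SSAF basement is defined.

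The two constructions are visibly inverse to each other — deleting then re-inserting the empty rows and basement is the identity, in either order — so bijectivity is immediate once well-definedness in both directions is established, and weight preservation is built in from the start.

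**The main obstacle** I expect is purely bookkeeping: carefully matching each clause of Definition \ref{def:CT} against the corresponding SSAF axiom (whose definitions are deferred to Section \ref{ch4sec:combdef}), and in particular handling the role of the basement/$0$th column and the empty rows in the triple conditions so that nothing is lost or spuriously introduced when rows are deleted or re-inserted. There is no deep idea here — the content is entirely in the correctness of this translation dictionary — so the real work is being precise about the indexing of rows and columns and about which triples survive the deletion of zero-length rows. Since this bijection is attributed to \cite{Haglund2008Quasisymmetric}, I would expect the proof to either cite that reference for the detailed verification or reproduce the dictionary above in the notation of Section \ref{ch4sec:combdef}.
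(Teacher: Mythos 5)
The paper does not actually carry out a proof of this lemma---it simply cites \cite{Haglund2008Quasisymmetric} and immediately records the sharpening that SSAFs of shape $\gamma$ correspond to CTs of shape $\strongof{\gamma}$ whose first column is exactly $\mathcal{F}(\gamma)$. Your general plan (delete the basement and the empty columns of $\widehat{\dg}(\gamma)$, translate the SSAF axioms into the three CT conditions) is the right one and is indeed how the cited reference proceeds. But your description of the map as ``forget the positions of the zero parts'' is misleading and points to the one substantive gap in your write-up: nothing is forgotten. For the map to be a bijection onto \emph{all} CTs of shape $\alpha$ (union over all $\gamma$ with $\strongof{\gamma}=\alpha$), you must be able to reconstruct $\gamma$ from the CT alone, and in the reverse direction you phrase things as ``given a CT $U$ \emph{and a choice of} $\gamma$,'' which is exactly what a bijection cannot afford.

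The missing ingredient is the fact that in an SSAF with identity basement, the entry in cell $(i,1)$ directly above the basement cell $(i,0)$ is not merely bounded above by $i$ (which follows from no-descents) but is forced to \emph{equal} $i$ whenever $\gamma_i>0$; this requires the non-attacking and triple conditions and is not a corollary of ``the first column strictly increases.'' It is precisely this fact that makes the deleted CT have first column equal to $\mathcal{F}(\gamma)$, and hence lets one recover $\gamma$ from $U$: the nonzero positions of $\gamma$ are the entries of $U$'s first column, and $\gamma_{r_k}=\alpha_k$ where $r_1<\cdots<r_{\ell(\alpha)}$ are those entries. Your proposal should state and use this fact explicitly; as written, it establishes well-definedness of the forward map but neither its injectivity nor the well-definedness of its inverse. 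Everything else (weight preservation, translating no-descents into weakly decreasing rows, translating the Type I inversion-triple condition into the CT Triple Rule, the observation that empty columns contribute harmless zeros in $\hat U$) is sound.
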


\begin{example} The following pair consisting of a column-strict composition tableau $U$ of shape $(1,3,2,2)$ and a semi-standard augmented filling $V$ of shape $(1,0,3,0,0,2,2)$ illustrates the bijection whose existence is claimed in Lemma \ref{lem:ssafasCT}.
\begin{equation*}
U = \tableau{1 \\
	           3 & 2 & 2 \\
	           6 & 4 \\
	           7 & 7}
\; \Leftrightarrow \;
V = \tableau{ \bas{1} & 1 \\
		     \bas{2} \\
		     \bas{3} & 3 & 2 & 2 \\
		     \bas{4} \\
		     \bas{5} \\
		     \bas{6} & 6 & 4 \\
		     \bas{7} & 7 & 7}
		     \; .
\end{equation*}
\end{example}

Given a composition $\gamma$, define the \emph{foundation} of $\gamma$ as the set
\begin{equation}
\mathcal{F}(\gamma)=\{ i \mid \gamma_i >0\}.
\end{equation}
The proof of Lemma \ref{lem:ssafasCT} in \cite{Haglund2008Quasisymmetric} shows that semi-standard augmented fillings of shape $\gamma$ are in bijection with column-strict composition tableaux whose first column consists of the entries $\mathcal{F}(\gamma)$. Now we can give another characterization of $\dzatom_{\tau(\gamma)}$.

\begin{lemma}(\cite{Haglund2008Quasisymmetric}){\label{lem:atomasCT}}
Let $\tau(\lambda)=\gamma$. The Demazure atom is given by
\begin{equation}
\dzatom_{\gamma} = \sum_{U} \xx^U
\end{equation}
where the sum is over all column-strict composition tableaux $U$ of shape $\strongof{\gamma}$ whose first column is $\mathcal{F}(\gamma)$.

\end{lemma}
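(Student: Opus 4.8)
The plan is to deduce Lemma~\ref{lem:atomasCT} by combining the two previously established results that bracket it: Theorem~\ref{thm:atommac}, which identifies $\dzatom_\gamma$ with the specialized nonsymmetric Macdonald polynomial $E_\gamma(x_1,\ldots,x_n;0,0)$, and Lemma~\ref{lem:ssafasCT} together with its refinement (stated in the text just above) that semi-standard augmented fillings of shape $\gamma$ correspond bijectively, and weight-preservingly, to column-strict composition tableaux of shape $\strongof{\gamma}$ whose first column is exactly $\mathcal{F}(\gamma)$. Since Theorem~\ref{thm:atommac} already tells us $\dzatom_\gamma$ is the generating function $\sum_{F}\xx^{F}$ over the SSAF $F$ of shape $\gamma$ appearing in the Haglund--Haiman--Loehr formula at $q=t=0$, the lemma follows by transporting that sum across the bijection of Lemma~\ref{lem:ssafasCT}.

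Concretely, the steps I would carry out are as follows. First, recall from the discussion following Theorem~\ref{thm:atommac} that at $q=t=0$ every term in the HHL combinatorial formula for $E_\gamma$ has coefficient $1$ and the surviving diagrams are precisely the semi-standard augmented fillings of $\widehat{\dg}(\gamma)$; hence $\dzatom_\gamma=\sum_{F}\xx^{\cont(F)}$, the sum over all SSAF $F$ of shape $\gamma$. Second, invoke the refined form of Lemma~\ref{lem:ssafasCT}: the bijection there sends an SSAF $F$ of shape $\gamma$ to a column-strict composition tableau $U$ of shape $\strongof{\gamma}$ whose first column is $\mathcal{F}(\gamma)$, and it preserves weight, i.e.\ $\cont(F)=\cont(U)$, so $\xx^{F}=\xx^{U}$. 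Third, substitute: summing $\xx^{U}$ over the image of the bijection is the same as summing $\xx^{F}$ over all SSAF of shape $\gamma$, and the image is exactly the set of CT of shape $\strongof{\gamma}$ with first column $\mathcal{F}(\gamma)$. This yields
\[
\dzatom_\gamma=\sum_{F}\xx^{F}=\sum_{U}\xx^{U},
\]
the latter sum over all such $U$, which is the claimed identity.

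The only real content to check is that the bijection of Lemma~\ref{lem:ssafasCT} does indeed restrict, when one fixes the weak composition $\gamma$ rather than merely its underlying strong composition $\alpha=\strongof{\gamma}$, to a bijection onto those CT whose first column is precisely $\mathcal{F}(\gamma)$ — and that this is weight-preserving. This is exactly what the sentence preceding the lemma asserts is shown in \cite{Haglund2008Quasisymmetric}, so I would cite it rather than reprove it; the main (mild) obstacle is simply making the bookkeeping between the ambient shape $\widehat{\dg}(\gamma)$ and the strong-composition shape $\strongof{\gamma}$ explicit enough that the first-column condition is visibly the image of "shape $\gamma$" under the bijection. Everything else is a one-line transport of the generating function across a weight-preserving bijection, so there is no genuinely hard step once Theorem~\ref{thm:atommac} and Lemma~\ref{lem:ssafasCT} are in hand.
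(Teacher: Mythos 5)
The paper states this lemma as a citation to \cite{Haglund2008Quasisymmetric} and gives no proof of its own, but the derivation you describe is exactly what the surrounding text is set up to deliver: Theorem~\ref{thm:atommac} identifies $\dzatom_\gamma$ with the SSAF generating function, and the sentence just before the lemma records the refined, weight-preserving bijection between SSAFs of shape $\gamma$ and CTs of shape $\strongof{\gamma}$ with first column $\mathcal{F}(\gamma)$. Your proposal is correct and matches the argument the paper implicitly relies on.
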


\section{Composition Array Patterns}{\label{sec:comparray}}

In this section we first recall the classical bijection between Young tableaux and Gelfand Tsetlin patterns. We then give a characterization of $\dzatom_{\tau(\lambda)}$ which parallels the construction in classical symmetric function theory of GT-patterns. In this section we return to our previous convention of using reverse Young tableaux.

\subsection{Definitions}

Let $\lambda$ and $\mu$ be partitions such that $\mu \subseteq \lambda$, that is $\ell(\mu) \leq \ell(\lambda)$ and $\mu_i < \lambda_i$ for all $1 \leq i \leq \ell(\mu)$.  A reverse column-strict Young tableaux of shape $\lambda/\mu$ is a filling of the boxes of the skew diagram $\lambda/\mu$ such that each row weakly decreases and each column strictly decreases.  

A \emph{Gelfand-Tsetlin pattern} of shape $\lambda$ is a triangular array $GT_n^\lambda=(x_{i,j})$ of positive integers such that $x_{i,j} \geq x_{i+1, j} \geq x_{i, j+1}$ for all $i$ and $j$ satisfying $1 \leq i \leq n-1$ and $1 \leq j \leq n-i$, where we use the indexing convention depicted below: 

\[
\begin{matrix}
x_{1,1} &        & \cdots &        & \cdots &           & \cdots &           & x_{1,n} \\
       & \ddots &        & \ddots &        & \revddots &        & \revddots &        \\
       &        & x_{n-2,1} &        & x_{n-2,2} &           & x_{n-2,3} &           &        \\
       &        &        & x_{n-1,1} &        & x_{n-1,2}    &        &           &        \\
       &        &        &        & x_{n,1} &           &        &           &
\end{matrix}.
\]

A standard fact \cite{Stanley1999EC2} from combinatorics is that GT-patterns of shape $\lambda$ are in bijection with semi-standard Young tableaux.  This bijection, which we will denote by $\varphi$, can be described within our definitions as follows.  Given a GT-pattern $GT_n^\lambda$ the reverse column-strict Young tableau $\varphi(GT_n^\lambda)$ is given as follows. Given a Ferrers diagram of shape $\lambda$, fill the boxes of the skew shape 
\begin{displaymath}
(x_{i,1}, x_{i, 2}, \ldots, x_{i, n-i+1})/(x_{i+1, 1}, x_{i+1,2}, \ldots, x_{i+1, n-i})
\end{displaymath}
with the entry $i$. Given a reverse column-strict Young tableau $T$ of shape $\lambda$, the $i$th row of the array $\varphi^{-1}(T)$ will be the shape of the resulting diagram when one deletes all boxes from $T$ filled with entries strictly less than $i$. 

Let $\alpha$ and $\beta$ be (strong or weak) compositions. Recall that we say $\alpha \subset \beta$ if $\ell(\alpha) \leq \ell(\beta)$ and $\alpha_i \leq \beta_i$ for $1\leq i \leq \ell(\alpha)$. Let $\reverse{\alpha}$ be the reversal of $\alpha$.  We write $\alpha \Subset \beta$ if $\reverse{\alpha} \subset \reverse{\beta}$.

The following definition of skew composition shaped diagrams was first given in \cite{Bessenrodt2011Skew-quasischur}. Given compositions $\alpha \Subset \beta$, the diagram of skew composition shape $\beta // \alpha$ are the boxes that are in $\beta$ but not in $\alpha$ when the diagram of $\alpha$ is positioned in the lower left corner of the diagram of $\beta$.

Next we will define the Gelfand-Tsetlin type triangular arrays which are in bijection to composition tableaux.

\begin{definition}{\label{def:CTarraypattern}}
Let $\gamma$ be a weak composition of length $n$.  A \emph{composition array pattern} of shape $\gamma$ is a triangular array $X_n^\gamma=(x_{i,j})$ of nonnegative integers satisfying:
\begin{enumerate}
\item $\gamma_i = x_{1,i}$,
\item $x_{i,j} \leq x_{i-1, j+1}$, and
\item for all $1\leq i \leq n-1$ and for all $0\leq r < s \leq n-i$ we have
\[
\begin{array}{ccc}
(x_{i+1,r} \geq x_{i+1,s} \text{ and } x_{i+1,r} \geq x_{i, s+1}) & \text{ or } & (x_{i+1,r} < x_{i+1,s} < x_{i, s+1} \text{ and } x_{i, r+1} < x_{i+1,s})
\end{array}
\]
where we understand $x_{j,0}=0$ for all $j$.
\end{enumerate}

\end{definition}

Our first new characterization of Demazure atoms is given in the following theorem. The bijection $\psi$ of Theorem \ref{thm-CT-array} is illustrated in Figure \ref{fig:ssafasarray}.

\begin{theorem}\label{thm-CT-array}
The set of column-strict composition tableaux of shape $\strongof{\gamma}=\alpha$ with first column $\mathcal{F}(\gamma)$ are in bijection with the set of composition arrays of shape $\gamma$.
\end{theorem}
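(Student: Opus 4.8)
The plan is to construct the bijection $\psi$ explicitly and then verify that it carries column-strict composition tableaux with first column $\mathcal{F}(\gamma)$ to composition array patterns of shape $\gamma$ and vice versa. Given a column-strict composition tableau $U$ of shape $\alpha = \strongof{\gamma}$ whose first column is $\mathcal{F}(\gamma)$, I will define $X_n^\gamma = (x_{i,j})$ in a manner parallel to the classical tableau-to-GT-pattern map $\varphi$: the first row records $\gamma$ itself (so $x_{1,i} = \gamma_i$), and the $i$th row of the array records the row-lengths of the diagram obtained by deleting from $U$ all entries strictly less than $i$. Because the first column of $U$ is exactly the foundation $\mathcal{F}(\gamma)$, the entry $i$ appears in column $1$ of $U$ if and only if $\gamma_i > 0$, and this is what makes the bookkeeping of zero parts of $\gamma$ work out — the rows of $\gamma$ that are empty stay empty at every stage. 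The inverse map reconstructs $U$ row by row from $X_n^\gamma$: row $i$ of $U$ has length $x_{1,i}$, and an entry $j$ is placed in a box exactly when that box lies in row $i$ of $\gamma$ but not in the shape recorded by row $j+1$ of the (suitably reindexed) array — this is the composition-shape analogue of filling the skew shape $(x_{i,1},\ldots)/(x_{i+1,1},\ldots)$ with $i$'s, using the $\Subset$ and $//$ notions introduced above.

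The key steps, in order, are: (1) check that $\psi(U)$ satisfies condition (1) of Definition \ref{def:CTarraypattern}, which is immediate from the construction since row $1$ records the shape; (2) check condition (2), $x_{i,j} \le x_{i-1,j+1}$, which should follow from column-strictness of $U$ (an entry strictly less than $i$ cannot sit directly below an entry that is $\ge i$ in a way that violates the nesting, much as strict column decrease forces the nesting of successive GT rows); (3) check condition (3), the disjunctive triple inequality, which should be the translation of the Triple Rule in Definition \ref{def:CT} together with the weak decrease along rows; (4) conversely, starting from an arbitrary composition array pattern, verify that the reconstructed filling $U$ has strictly increasing first column (forced by condition (2) applied down the left edge together with the fact that entries of the array are nonnegative integers and strictly grow where a new row of $\gamma$ is nonzero), weakly decreasing rows, and satisfies the Triple Rule; (5) check that the two maps are mutually inverse and weight-preserving, i.e. $\cont(U)$ matches the multiset of entries encoded by $X_n^\gamma$.

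The main obstacle I expect is step (3): unwinding the disjunctive condition (3) of Definition \ref{def:CTarraypattern} and matching it exactly against the Triple Rule of Definition \ref{def:CT}. The GT-pattern case has the clean totally-ordered nesting $x_{i,j} \ge x_{i+1,j} \ge x_{i,j+1}$, but composition tableaux are only partially controlled — rows need not be ordered by length — so the array inequalities split into the two alternatives ($x_{i+1,r} \ge x_{i+1,s}$ and $x_{i+1,r} \ge x_{i,s+1}$) versus ($x_{i+1,r} < x_{i+1,s} < x_{i,s+1}$ and $x_{i,r+1} < x_{i+1,s}$), and I will need to show carefully that these two cases correspond precisely to the two ways a triple in $U$ can avoid violating the Triple Rule, namely the entry below being small enough, or the entry below being large but then forced to dominate the entry to the upper-left. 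I would handle this by fixing a pair of rows $i_1 < i_2$ of $U$ and a column $j$, translating each of $\hat U(i_1,j-1)$, $\hat U(i_1,j)$, $\hat U(i_2,j)$ into statements about which array rows their values "reach," and then a direct case analysis on whether $\hat U(i_2,j) \ge \hat U(i_1,j)$. The remaining verifications (the first-column condition, weak decrease, weight preservation, and bijectivity) are routine once the correspondence between ``entry $\ge i$ in box $(r,c)$ of $U$'' and ``$c \le x_{i,\text{(appropriate index)}}$'' is pinned down, so I would state them and dispatch them quickly.
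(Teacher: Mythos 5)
Your proposed map $\psi$ is, up to direction and an indexing shift, exactly the paper's: row~$i$ of the array records the lengths of the rows of $U$ after restricting to entries $\geq i$, and the paper's $x_{i,j}$ is precisely the number of entries $\geq i$ in row $i+j-1$ of $U$ (the leading zero entries coming from rows $1,\dots,i-1$ of $U$ being implicitly dropped to make the array triangular). The overall plan --- row~$1$ is $\gamma$, condition~(3) is the translation of the Triple Rule, the inverse reconstructs $U$ by filling composition-skew shapes --- matches the paper's proof, and the case analysis you flag as the main obstacle is indeed where all the work is.

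Two slips are worth fixing before you execute the plan. First, condition~(2), $x_{i,j} \leq x_{i-1,j+1}$, compares counts in the \emph{same} row of $U$ (the array indices $i+j-1$ and $(i-1)+(j+1)-1$ coincide): entries $\geq i$ versus entries $\geq i-1$ in row $i+j-1$. So it is automatic from the definition of $\psi$, not a consequence of column-strictness; what it buys you in the inverse direction is that the boxes to be filled with each value $i$ in a given row of $U$ form an interval of nonnegative length, i.e.\ rows of the reconstructed filling weakly decrease. Second, and more importantly, strict increase of the first column does \emph{not} follow from condition~(2) ``applied down the left edge.'' The paper derives it from condition~(3): if some value $i$ occurred at $(i_1,1)$ and $(i_2,1)$ with $i_1 < i_2$, then $x_{i+1,\,i_1-i} = x_{i+1,\,i_2-i} = 0$ while $x_{i,\,i_2-i+1} > 0$, and neither branch of the disjunction in Definition~\ref{def:CTarraypattern}(3) can hold. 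If you try to get strictness from condition~(2) alone you will get stuck, since that inequality holds for an arbitrary filling.

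With those corrections, the plan is sound and follows the paper's route.
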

\begin{proof}
We need to construct a bijection $\psi$ mapping from composition arrays of shape $\gamma$ to composition tableaux of shape $\strongof{\gamma}=\alpha$ with first column $\mathcal{F}(\gamma)$. First we describe $\psi$.  Let $X_n$ be a composition array of shape $\gamma$. First create a diagram of shape $x_{n,1}$ and fill each box with the entry $n$.  Then fill the boxes of $(x_{n-1,1}, x_{n-1,2})//(x_n)$ with the entry $n-1$.  Continuing inductively, fill the boxes of $(x_{i,1}, \ldots, x_{i,n-i+1})//(x_{i+1,1}, \ldots, x_{i+1,n-i})$ with $i$.  This clearly creates a filling $U=\psi(X_n)$ of composition shape $\gamma$, where $\strongof{\gamma}=\alpha$.  Since the entries of the array $X_n$ satisfy $x_{i,j} \leq x_{i-1,j+1}$, then by construction the entries in the rows of $U$ weakly decrease from left to right.  It is also clear that the first column of $U$ weakly increases top to bottom.

To see that the first column actually strictly increases, we need to show that for any $i$ there is at most one occurrence of $i$ in the first column of $U$.  Suppose that this is not the case.  Then for some $i$ we have at least two occurrences in the first column, say is rows $i_1$ and $i_2$ with $i \leq i_1<i_2 \leq n$.  That means $x_{i+1, i_1-i}=0$, $x_{i, i_1-i+1} >0$, $x_{i+1, i_2-i}=0$, and $x_{i,i_2-i+1}>0$.  But this is clearly in violation of the defining inequalities since $x_{i+1,i_1-i}=x_{i+1,i_2-i}=0$ but $x_{i+1, i_1-i}=0<x_{i, i_2-i+1}$. Note that this argument also shows that the entry $U(i,1)$ in row $i$ and column $1$ must be either undefined (in which case $\gamma_i=0$) or $U(i,1)=i$. Thus the first column of $U$ is $\mathcal{F}(\gamma)$ and strictly increases.

Now we check that $U$ satisfies the Triple Rule for column-strict composition tableaux.  First we note that there can be at most one occurrence of $i$ in any column $j$.  To see this suppose it is not the case.  Then for some rows $i_1=i+{k_1}$ and $i_2=i+{k_2}$, with $k_1 < k_2$, we have $x_{i, k_1+1}\geq j$ and $x_{i, k_2+1} \geq j$ while $x_{i+1, k_1}<j$ and $x_{i+1, k_2} <j$.  These inequalities violate the defining inequalities of the array.  In detail: If $x_{i+1, k_1} \geq x_{i+1, k_2}$ we have $x_{i+1, k_1} < j \leq x_{i, k_2+1}$.  If $x_{i+1,k_1} < x_{i+1,k_2}$ (and necessarily $x_{i+1, k_2}<x_{i, k_2+1}$) we have $x_{i, k_1+1} \geq j > x_{i+1,k_2}$.  So this shows there is at most one occurrence of $i$ in any column.

Now we check the Triple Rule.  Append zeros to the end of each row of $U$, as in Definition \ref{def:CT}, and call this filling $\hat{U}$. Consider an entry $b=\hat{U}(i_2, j)$.  Every entry in the same column as $b$ is different than $b$, so the entry $a=\hat{U}(i_1,j)$ (which may be zero) satisfies $b \neq a$.  If $b < a$, then there is nothing to check.  If $b >a$, then we have relations $x_{b,i_2-b+1} >0$ and $x_{a, i_1-a+1}>0$ in the array.

Suppose that $c=\hat{U}(i_1, j-1)$ satisfies $c\geq b$.  In the case where $c=b$, then we have $x_{c, i_1-c+1}<x_{c,i_2-c+1}$, and $x_{c+1,i_1-c}< x_{c,i_1-c+1}$, and $x_{c+1, i_2-c} < x_{c, i_2-c+1}$.  The last two inequalities come from the fact that there is at least one entry $c$ in both rows $i_1$ and $i_2$. Considering row $c+1$ of $X_n$, assume we have $x_{c+1, i_1-c} \geq x_{c+1,i_2-c}$.  Because $X_n$ satisfies all the appropriate inequalities, we must have $x_{c+1,i_1-c}\geq x_{c,i_2-c+1}$ which clearly contradicts our assumptions that $x_{c+1,i_1-c}< x_{c,i_1-c+1} < x_{c,i_2-c+1}$.  Suppose instead we have $x_{c+1, i_1-c} < x_{c+1,i_2-c}$.  Note that $x_{c,i_1-c+1} =j-1$, and $x_{c+1,i_2-c} \leq j-1$.  Because $x_{c+1, i_1-c} < x_{c+1,i_2-c}$ we must have $x_{c, i_1-c+1}=j-1 < x_{c+1, i_2-c} \leq j-1$, which is a contradiction.

Now consider the case when $c>b$.  Thus in the array, $x_{c, i_1-c+1}>0$.  Since $c>b$ and appears in column $j-1$, we must have $x_{c, i_1-c+1} \geq x_{c, i_2-c+1}$.  By the definition of the array, we then have $x_{c, i_1-c+1}\geq x_{c-1,i_2-c+2}$.  Since $c>b$ and $a<b$ and $c$ and $a$ are adjacent in the diagram $\hat{U}$, we know then that $x_{c, i_1-c+1}=x_{c-1,i_1-c+2}$.  Thus $x_{c-1,i_1-c+2} \geq x_{c-1,i_2-c+2}$.  This in turn forces $x_{c-1,i_1-c+2} \geq  x_{c-2,i_2-c+3}$.  Again we have $x_{c-1, i_1-c+2}=x_{c-2,i_1-c+3}$.  Thus $x_{c-2,i_1-c+3} \geq x_{c-2,i_2-c+3}$.  Continuing in this way, we eventually see $x_{b+1,i_1-b}=x_{c, i_1-c+1} \geq x_{b+1, i_2-b}$, which forces $x_{b+1, i_1-b}=x_{c, i_1-c+1} \geq x_{b, i_2-b+1}$.  Thus it is impossible for an entry $b$ to be in column $j$ with $a<b$ while an entry $c>b$ is in column $j-1$.  This proves that the Triple Rule is satisfied.

Now to describe the inverse $\psi^{-1}$. Given a column-strict composition tableau $U$ of shape $\strongof{\gamma}=\alpha$ with first column $\mathcal{F}(\gamma)$, expand the diagram to the weak composition shape $\gamma$ according to the bijection in Lemma \ref{lem:ssafasCT}. Thus the diagram has shape $\gamma$, where $\gamma$ has length $n=$(maximum entry in $U$), and where $\{ i \mid \gamma_i >0\} = \{ \hat{U}(i,1) \mid 1\leq i \leq \ell(\alpha)\}$ and for $j \in \{i \mid \gamma_i >0\}$ we have $\gamma_j =\alpha_j$.

Now construct the array $X_n$ as follows.  The $n$th row of $X_n$ will be the number of boxes containing $n$ in the last row of $U$. Note that $n$ can only appear in the last row.  The $(n-1)$st row of $X_n$ will be the weak composition $(x_{n-1,1}, x_{n-1,2})$ where $x_{n-1,1}$ is the number of $(n-1)$'s in the $(n-1)$st row and 
\begin{displaymath}
x_{n-1,2} =  x_{n,1}+(\text{the number of entries $n-1$ in row $n$}).
\end{displaymath}
By construction $x_{n,1} \leq x_{n-1,2}$.  Because $U$ is a column-strict composition tableaux and thus satisfies the Triple Rule, we know that in the case $x_{n,1} < x_{n-1,2}$ we have $x_{n-1,1}<x_{n,1}$.  This is one of the defining inequalities for when $i=n-1$, $r=0$ and $s=1$.

Continuing inductively, suppose we have constructed rows $i+1, i+2, \ldots, n$ of the array $X_n$ and each of these rows satisfy the defining inequalities of a composition array pattern.  The $i$th row will then be the weak composition $(x_{i,1}, \ldots, x_{i, n-i+1})$, where 
\begin{displaymath}
x_{i,j}= x_{i+1, j-1}+(\text{the number of entries $i$ in row $i+j-1$}).
\end{displaymath}
We now need to show that row $i$ of $X_n$ satisfies the correct inequalities with row $i+1$. By construction, $x_{i+1,j} \leq x_{i,j+1}$.  Now assume we are in the case where, for $0 \leq r<s<n-i$, we have $x_{i+1,r} \geq x_{i+1,s}$. In terms of the filling $U$, this relation means that if we look at the truncated portions of rows $i+r$ and $i+s$ which contain entries $\geq i+1$, then the length of the truncated row $i+r$  is weakly greater than the length of the truncated row $i+s$.  Because $U$ satisfies the Triple Rule, we know that the number of $i$'s in row $i+s$ cannot exceed the length of the truncated row $i+r$.  Which is to say $x_{i+1,r}\geq x_{i,s+1}$.  

Suppose now we are in the case $x_{i+1,r} < x_{i+1,s} < x_{i, s+1}$.  In terms of the filling $U$, the relation $x_{i+1,r} < x_{i+1,s}$ mean that the length of the truncated row $i+r$ (again, consider only the portion of the row with entries $\geq i+1$) is strictly shorter than the length of the truncated row $i+s$.  The relation $x_{i+1,s} < x_{i, s+1}$ means there is at least one $i$ in row $i+s$.  Since $U$ satisfies the Triple Rule, the number of $i$'s in row $i+r$ is strictly bounded by the length of the truncated row $i+s$.  That is $x_{i,r+1}<x_{i+1,s}$.

Finally, we see that $\gamma_i=x_{1,i}$ by construction.  Thus the array $X_n$ satisfies all the defining relations for a composition array pattern.  Therefore we have the required bijection.
\end{proof}

\begin{remark}
If we take the set of all composition arrays of shape $\gamma$ where $\strongof{\gamma}=\alpha$ for some fixed strong composition $\alpha$, then Theorem \ref{thm-CT-array} can be viewed as a bijection between column-strict composition tableaux of shape $\alpha$ and composition arrays of shape $\gamma$ with $\strongof{\gamma}=\alpha$.
\end{remark}

\begin{figure}[ht]
\begin{center}
\begin{picture}(100,100)(53,0)
\put(-50,90){$\smalltableau{\smallbas{1} & 1 \\
		     \smallbas{2} \\
		     \smallbas{3} & 3 & 2 & 2 \\
		     \smallbas{4} \\
		     \smallbas{5} \\
		     \smallbas{6} & 6 & 4 \\
		     \smallbas{7} & 7 & 7}$}
\put(40, 50){$\leftrightarrow$}
\put(60, 50){$
\begin{matrix}
1 & {} & 0 & {} & 3 & {} & 0 & {} & 0 & {} & 2 & {} & 2 \\
{} & 0 & {} & 3 & {} & 0 & {} & 0 & {} & 2 & {} & 2 & {} \\
{} & {} & 1 & {} & 0 & {} & 0 & {} & 2 & {} & 2 & {} & {} \\
{} & {} & {} & 0 & {} & 0 & {} & 2 & {} & 2 & {} & {} & {} \\
{} & {} & {} & {} & 0 & {} & 1 & {} & 2 & {} & {} & {} & {} \\
{} & {} & {} & {} & {} & 1 & {} & 2 & {} & {} & {} & {} & {} \\
{} & {} & {} & {} & {} & {} & 2 & {} & {} & {} & {} & {} & {} 
\end{matrix}
$}
\end{picture}
\caption{Instance of the bijection from SSAF to composition array patterns}
\label{fig:ssafasarray}
\end{center}
\end{figure}
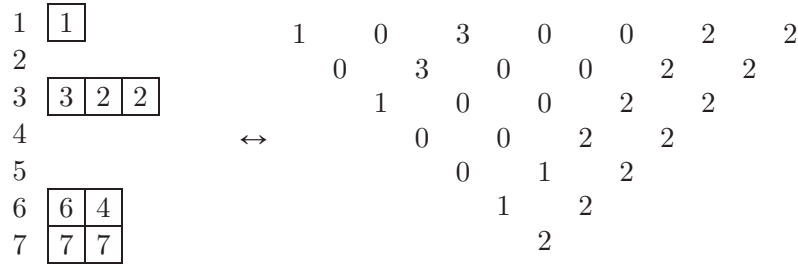

In \cite{Mason2006A-decomposition} the author describes a bijection, which we denote $\theta$, between semi-standard augmented fillings whose shape is a rearrangement of $\lambda$ and reverse column-strict Young tableaux of shape $\lambda$.  The latter are partition shaped diagrams filled with positive integers such that the entries in each row weakly decrease when read left to right, and the entries in each column strictly decrease when read top to bottom. 

In light of Lemma \ref{lem:ssafasCT} we can describe the bijection $\theta$ as follows. Given a column-strict composition tableaux $U$, the $i$th column of $\theta(U)$ is defined to be the $i$th column of $U$ in strictly decreasing order.  The inverse $\theta^{-1}$ is given as follows.  Given a reverse column-strict Young tableaux $T$, the first column of $\theta^{-1}(T)$ is the first column of $T$ in increasing order.  Then place the entries of the $i$th column of $T$, starting with the largest entry, in the highest row of the leftmost column whose rightmost entry is weakly greater.

\begin{example}
Below is an example of the bijection $\theta$.
\begin{equation*}
\tableau{1 \\ 
	      3 & 2 & 2 \\
	      6 & 4 \\
	      7 & 7 }
\qquad \stackrel{\theta}{\leftrightarrow} \qquad 
\tableau{ 7 & 7 & 2 \\
	       6 & 4 \\
	       3 & 2 \\
	       1}.
\end{equation*}
\end{example}

One natural question to ask is how to define $\theta$ on composition array patterns such that the image is a Gelfand-Tsetlin pattern.  Let us define the map $\Theta$ acting on composition array patterns to be the map which sorts the $i$th row of a composition array pattern $X_n^\gamma$ into weakly decreasing order. Also define $\tilde{\Theta}$ to be the map which transforms a GT-pattern $GT_n^\lambda$ into a new triangular array by the following procedure.  First, $\tilde{\Theta}$ fixes the $n$th row of $GT_n^\lambda$. Then, inductively, $\tilde{\Theta}$ takes the entries of the $i$th row of $GT_n^\lambda$  from least to greatest and places then as far to the right as possible, such that if the entry $b_j$ from row $i$ is placed at coordinates $(i,k)$, then the entry at coordinates $(i+1, k-1)$ is weakly less than $b_j$. If no such coordinates $(i,k)$ with $k>1$ exist, place $b_j$ in coordinate $(i,1)$.  Thus the entries from row $i$ of $GT_n^\lambda$ are, from least to greatest, placed as far to the right as possible so that the columns running in the north-east direction weakly increase.

The map $\tilde{\Theta}$ is well-defined for the following reason.  Consider two consecutive rows $i-1$ and $i$ of $GT_n^\lambda$.  In row $i-1$ index the entries from left to right so that $a_1\geq a_2 \geq a_3 \geq \cdots$.  Similarly, in row $i$ index the entries so that $b_1 \geq b_2 \geq b_3 \geq \cdots$.  Because $GT_n^\lambda$ is a GT pattern, we have by definition $a_j \geq b_j \geq a_{j+1}$. To show $\tilde{\Theta}$ is well-defined we need to show that after inductively constructing rows $n, n-1, \ldots, i$, and after placing $a_{n-i+2}, a_{n-i+1}, \ldots, a_{j+1}$ in the new array, the element $a_j$ in row $i-1$ of $GT_n^\lambda$ can be placed in a well-defined position in $\tilde{\Theta}(GT_n^\lambda)$. 

If we assume rows $n, n-1, \ldots, i$ of $\tilde{\Theta}(GT_n^\lambda)$ have been constructed, then we can place elements $a_{n-i+2}, a_{n-i+1}, \ldots, a_{l+1}$ in a well defined-position up to some entry $a_l$, where $a_l$ is the first entry in our inductive process in row $i-1$ which is strictly less than all the entries $x_{i,k-1}$ adjacent to each unoccupied coordinate $(i-1,k)$. If $a_l$ is as just described, we place $a_l$ in coordinates $(i-1,1)$. To show $\tilde{\Theta}$ is well-defined, it suffices to show inductively that after placing entries $a_{l-1}, a_{l-2}, \ldots, a_{j+1}$ in well-defined positions, the entry $a_j$, with $j<l$, satisfies $x_{i,k-1} \leq a_j$ for at least one unoccupied coordinate $(i-1,k)$.

To prove this claim, recall that $a_l$ is placed at the coordinate $(i-1,1)$ because $a_l$ is strictly less than the entries $x_{i,k-1}$ adjacent to each unoccupied coordinate $(i-1,k)$. If some $a_s$, with $s >l$, is adjacent to a $b_q$ with $q <l $, then there exists an unoccupied coordinate $(i-1, t)$, with $t > 1$, such that $x_{i, t-1}=b_p$ for some $p\geq l$.  Thus $a_l \geq b_l \geq b_p$, contradicting the fact that $a_l$ is strictly less than the entries $x_{i,k-1}$ adjacent to each unoccupied coordinate. Thus at the time $a_l$ is placed in coordinates $(i-1,1)$, all the remaining unoccupied coordinates $(i-1,k)$ have $x_{i,k-1}=b_q$ for $q <l$. 

We clearly have $|\{b_q \mid q < l \}|=|\{a_r \mid r < l\}|$.  At the time $a_l$ is placed in coordinate $(i-1,1)$ any entry $a_{l-r}$, where $1 \leq r \leq l-1$, will be greater than or equal to $r$ entries $x_{i,k-1}$ adjacent to the unoccupied coordinate $(i-1,k)$; namely $a_{l-r}\geq b_{l-r} \geq b_{l-r+1} \geq \cdots \geq b_{l-1}$. After inductively placing each entry $a_{l-r}$ at most one more of the coordinates adjacent to one of $b_{l-r}, b_{l-r+1}, \ldots, b_{l-1}$ gets occupied.  So at any time $j=l-r$, the entry $a_j$ is greater than or equal to at least one entry $x_{i,k-1}$ adjacent to an unoccupied coordinate. Thus at each step in our inductive process, there is always a well-defined position for each entry $a_j$, and so the map $\tilde{\Theta}$ is well-defined.

\begin{theorem}{\label{thm:GTandCTbiject}}
The map $\Theta$ is a bijection, with inverse $\Theta^{-1}=\tilde{\Theta}$, between composition array patterns of shape $\gamma$ and Gelfand-Tsetlin patterns of shape $\partitionof{\gamma}=\lambda$.  Moreover, the following diagram commutes:
\[
\begin{CD}
A @>\Theta>> B \\
@VV\psi V @VV\varphi V \\
C @>\theta>> D
\end{CD}
\]
where $A$ is  the set of composition array patterns of shape $\gamma$, $B$ is the set of Gelfand-Tsetlin patterns of shape $\partitionof{\gamma}=\lambda$, $C$ is the set of column-strict composition tableaux of shape $\strongof{\gamma}$ with first column $\mathcal{F}(\gamma)$, and $D$ is the set of reverse column-strict Young tableaux of shape $\partitionof{\gamma}=\lambda$.
\end{theorem}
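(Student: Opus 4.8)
The plan is to reduce the whole statement to the three bijections already available in the excerpt: $\psi$ between composition array patterns of shape $\gamma$ and column-strict composition tableaux of shape $\strongof{\gamma}$ with first column $\mathcal{F}(\gamma)$ (Theorem~\ref{thm-CT-array}), $\theta$ between the latter tableaux and reverse column-strict Young tableaux of shape $\partitionof{\gamma}$ (Lemma~\ref{lem:ssafasCT} composed with the bijection of \cite{Mason2006A-decomposition}), and the classical correspondence $\varphi$ between Gelfand--Tsetlin patterns and reverse column-strict Young tableaux. It then suffices to establish two things: the single identity $\varphi^{-1}\circ\theta\circ\psi=\Theta$, and the identity $\tilde{\Theta}=\Theta^{-1}$. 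Since $\psi$, $\theta$, $\varphi$ are bijections onto the sets named in the statement, the first identity shows at once that $\Theta$ is a bijection $A\to B$ (being a composite of bijections) and that the square commutes; the second identity is the remaining assertion.

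For $\varphi^{-1}\circ\theta\circ\psi=\Theta$ I would fix a composition array pattern $X=(x_{i,j})$ of shape $\gamma$, put $U=\psi(X)$ and $T=\theta(U)$, and compare the $i$th rows of $\varphi^{-1}(T)$ and of $\Theta(X)$ for each $i$. From the recursion defining $\psi^{-1}$ (equivalently, from the skew-shape construction of $\psi$) one reads off that $x_{i,j}$ is exactly the number of entries $\geq i$ in the $(i+j-1)$st row of $U$; since every row of $U$ weakly decreases, these entries form a prefix of the row, so the $i$th row of $X$, as a multiset, is the multiset of row lengths of the left-justified subdiagram $E_i(U)$ cut out by the cells of $U$ with entry $\geq i$. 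Sorting this multiset into weakly decreasing order yields the conjugate of the partition $\mu^{(i)}$ whose $c$th part is the number of cells of column $c$ of $U$ with entry $\geq i$. Now $\theta$ merely reorders each column of $U$ into decreasing order, so column $c$ of $T$ has the same number $\mu^{(i)}_c$ of entries $\geq i$; because $T$ is reverse column-strict, those entries are the top $\mu^{(i)}_c$ cells of the column and (rows of $T$ weakly decrease) form a genuine Young subdiagram with column lengths $\mu^{(i)}$, hence with row lengths $(\mu^{(i)})^t$. By the description of $\varphi^{-1}$, the $i$th row of $\varphi^{-1}(T)$ is precisely $(\mu^{(i)})^t$, which matches the $i$th row of $\Theta(X)$ computed above. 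Thus $\varphi^{-1}(\theta(\psi(X)))=\Theta(X)$ for all $X$, which proves commutativity of the square and bijectivity of $\Theta$.

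It remains to prove $\tilde{\Theta}=\Theta^{-1}$. Since $\Theta$ is now known to be a bijection, and since $\Theta\circ\tilde{\Theta}$ acts as the identity on $B$ for the trivial reason that $\tilde{\Theta}$ only rearranges the entries within each row while $\Theta$ re-sorts each row (rows of a Gelfand--Tsetlin pattern are already weakly decreasing), it is enough to check that $\tilde{\Theta}$ sends $B$ into $A$; equivalently, one shows $\tilde{\Theta}\circ\Theta=\mathrm{id}_A$, i.e.\ that in every composition array pattern $X$ each row is recovered from its sorted multiset by the greedy ``push every entry as far right as possible'' procedure defining $\tilde{\Theta}$, processing rows in the order $i=n,n-1,\dots,1$ so that row $i+1$ is already reconstructed. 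Condition~(2) of Definition~\ref{def:CTarraypattern} is exactly the requirement that in $X$ the diagonals running to the north-east weakly increase, so each row of $X$ is one admissible placement; what must be shown is that it is the unique \emph{rightmost} admissible one, and this is where the triple-type Condition~(3) enters: it precisely prevents any entry of row $i$ from being legally shifted further right than it sits in $X$ without violating one of the two alternatives of (3). Translating this obstruction into the language of the greedy algorithm (entries placed from smallest to largest into the rightmost admissible unoccupied cells) gives $\tilde{\Theta}(\Theta(X))=X$, hence $\tilde{\Theta}=\Theta^{-1}$.

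The middle paragraph is essentially a diagram chase once the description of $\psi$ as ``number of entries $\geq i$ per row'' is isolated, together with the standard fact that sorting the row lengths of a left-justified diagram produces the conjugate of its (already sorted) column lengths; I expect the genuine obstacle to be the last step, namely extracting from the rather opaque Condition~(3) the clean statement that the within-row order of a composition array pattern is the canonical rightmost-admissible one, so that $\tilde{\Theta}$ inverts the row-sorting $\Theta$. A convenient way to organize that step is to use the injectivity of $\Theta$ (immediate from $\Theta=\varphi^{-1}\circ\theta\circ\psi$), which reduces the task to verifying that $\tilde{\Theta}(G)$ satisfies Conditions~(1)--(3) of Definition~\ref{def:CTarraypattern} for $G$ in the image of $\Theta$.
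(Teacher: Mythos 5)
Your reduction of commutativity and the bijectivity of $\Theta$ to the three surrounding bijections is correct, and the middle paragraph gives a genuinely cleaner argument than the paper's. The key observation — that $x_{i,j}$ equals the number of entries $\geq i$ in row $i+j-1$ of $U=\psi(X)$, so that the sorted $i$th row of $X$ is the conjugate of the column-length vector $\mu^{(i)}$ of the cells of $U$ with entry $\geq i$, which $\theta$ preserves column-by-column and which $\varphi^{-1}$ reads off as row $i$ of the Gelfand--Tsetlin pattern — is exactly right, and it establishes $\Theta=\varphi^{-1}\circ\theta\circ\psi$ in one stroke. The paper instead proves $\Theta(A)\subseteq B$ by a long direct verification of the inequalities $a_j\geq b_j\geq a_{j+1}$ through many case splits, and then separately proves injectivity of $\Theta$ via the commuting square using a parity argument on the number of entries $\geq j$ in consecutive rows of the arrays. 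Your route collapses both into the same short computation, which is a real improvement.

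However, your final paragraph leaves the other half of the theorem with a genuine gap. You correctly reduce $\tilde{\Theta}=\Theta^{-1}$ to showing $\tilde{\Theta}(B)\subseteq A$ (equivalently $\tilde{\Theta}\circ\Theta=\mathrm{id}_A$), and correctly observe that $\Theta\circ\tilde{\Theta}=\mathrm{id}_B$ is trivial because $\tilde{\Theta}$ only permutes within rows. But the central claim — that Condition~(3) of Definition~\ref{def:CTarraypattern} ``precisely prevents any entry of row $i$ from being legally shifted further right than it sits in $X$,'' so that the greedy rightmost-admissible placement recovers $X$ — is asserted, not argued, and you yourself flag it as ``the genuine obstacle'' without filling it. This is precisely the content that occupies the bulk of the paper's proof: a careful verification, running through several applications of the pigeonhole principle and a chain of equalities forced along an alternating sequence $a_{t_1},b_{q_1},a_{t_2},\ldots$, that the greedy output $\tilde{\Theta}(G)$ satisfies each of the two alternatives in clause~(3). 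Until that verification is carried out, the proof is incomplete. Your closing suggestion — check directly that $\tilde{\Theta}(G)$ satisfies Conditions~(1)--(3) — is in fact the paper's approach and offers no shortcut; the remaining work is the same either way, and it is substantial.
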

\begin{proof}
Recall that $\Theta$ is the map which sorts the rows of $X_n^\gamma$ into weakly decreasing order. First we will show that, given a composition array pattern $X_n^\gamma$, the image $\Theta(X_n^\gamma)$ is a Gelfand-Tsetlin pattern of shape $\partitionof{\gamma}=\lambda$. 

Consider two consecutive rows $i-1$ and $i$ of $X_n^\gamma$.  In row $i-1$ let $a_j$ be the $j$th largest entry so that $a_1\geq a_2 \geq a_3 \geq \cdots$.  Similarly, in row $i$ let $b_j$ be the $j$th largest entry so that $b_1 \geq b_2 \geq b_3 \geq \cdots$.  To show that $\Theta(X_n^\gamma)$ is a GT-pattern we need to show $a_j \geq b_j  \geq a_{j+1}$ for all $1 \leq j \leq n-i$.

First we show $a_j \geq b_j$. For the case $j=1$, since $a_1$ is the largest entry of row $i-1$, if $a_1 < b_1$ then $b_1=x_{i,s_1} \leq x_{i-1,s_1 +1}$.  Thus $a_1 < b_1 \leq x_{i-1, s_1+1}$, which is a contradiction.  Now assume $j \geq 2$.  Consider $b_k$ for $1 \leq k <j$.  Let $b_k=x_{i,s_k}$.  Then if any $b_k=x_{i,s_k}$ has some $a_l=x_{i-1,s_k+1}$ with $l \geq j$ then $a_j \geq a_l \geq b_k \geq b_j$.  Thus we may assume 
\begin{displaymath}
\{x_{i-1,s_k+1} \mid x_{i, s_k}=b_k \text{ for some $1 \leq k <j$}\} = \{a_1, a_2, \ldots, a_{j-1} \}.
\end{displaymath}
Thus $b_j = x_{i, s_j}$ must have $a_l =x_{i-1, s_j+1}$ with $l \geq j$.  Thus $a_j \geq a_l \geq b_j$.

Next we will show $b_j \geq a_{j+1}$.  For the case $j=1$ we proceed as follows.  Suppose $b_1=x_{i, r}$ and $a_1=x_{i-1, s}$ for some $r < s$, that is $a_1$ is strictly right of $b_1$ in the composition array pattern $X_n^\gamma$.  Since $b_1 \geq x_{i, s-1}$ then we must have $b_1 \geq a_1$, and hence $b_1 \geq a_2$.  

Suppose instead $b_1=x_{i,r}$ and $a_1=x_{i-1, r+1}$, that is $a_1$ and $b_1$ are adjacent in $X_n^\gamma$.  If $a_2=x_{i-1,s}$  is strictly to the right of $a_1$ in row $i-1$, then since $b_1 \geq x_{i,s+1}$ we have $b_1 \geq a_2$.  If $a_2=x_{i-1,s}$ is strictly left of $a_1$ in row $i-1$, that is $s<r$, we have three cases. For the first case, suppose $x_{i,s-1} < b_1 < a_1$.  Then $b_1 > a_2$ by (3) of Definition \ref{def:CTarraypattern}. For the second case suppose $x_{i,s-1} < b_1=a_1$.  Then $b_1=a_1 \geq a_2$.  For the third case suppose $x_{i,s-1}=b_1$.  Then $b_1=x_{i,s-1} \geq a_1$. This $b_1 \geq a_2$.  For the case where $b_1=x_{i,r}$, $a_2=x_{i-1, r+1}$, and $a_1=x_{i-1,s}$ with $s<r$, we can repeat the previous three cases with the roles of $a_1$ and $a_2$ switched.

To finish showing $b_1\geq a_2$, we need to consider the case when both $a_1$ and $a_2$ are strictly left of $b_1$ in $X_n^\gamma$.  Let $b_1=x_{i,r}$.  We may assume that $a_1$ is to the left of $a_2$, and $a_2$ is to the left of $x_{i-1,r+1}$, because the case when $a_2$ is to the left of $a_1$, and $a_1$ is to the left of $x_{i-1,r+1}$ can be completed by reindexing the present case.  Let $a_1=x_{i-1, s_1}$ and $a_2=x_{i-1, s_2}$, with $s_1<s_2$.  We wish to consider the elements $x_{i, s_1-1}:=b_{k_1}$ and $x_{i, s_2-1}:=b_{k_2}$ in the following three cases.  For the first case, if $b_{k_1} \geq b_{k_2}$ then we have $b_{k_1} \geq a_2$.  Thus we have $b_1 \geq b_{k_1} \geq a_2$.  For the second case, suppose $b_{k_1} < b_{k_2} < a_2$.  Then $b_{k_2} > a_1$, from which it follows that $b_1 \geq b_{k_2} > a_1 \geq a_2$.  For the last case, if $b_{k_1} < b_{k_2}=a_2$ then clearly $b_1 \geq b_{k_2} \geq a_2$.

Next we show $b_j \geq a_{j+1}$ for $j\geq 2$.  Let 
\begin{displaymath}
\{x_{i, s_k} \mid a_l = x_{i-1, s_k+1} \text{ for some $1 \leq l \leq j+1$}  \}.
\end{displaymath}
Then by the pigeonhole principle, $x_{i, s_k}=b_k$ with $k > j$ for at least one element in this set.  If $b_k$ is to the right of $b_j$ in $X_n^\gamma$, then $b_j=x_{i, s_j} \geq b_k=x_{i,s_k}$ implies $b_j \geq a_l:=x_{i-1, s_k+1}$ where $1 \leq l \leq j+1$.  Thus $b_j \geq a_l \geq a_{j+1}$ as needed.  

If instead $b_k=x_{i,s_k}$ is to the left of $b_j=x_{i, s_j}$ in $X_n^\gamma$ then we have the following.  Let $a_r = x_{i-1,s_j+1}$ and assume $b_k < b_j < a_r$. Then $b_j > x_{i-1, s_k+1}=a_l$ where $1 \leq l \leq j+1$. This implies a contradiction in the case when $1 \leq l \leq j$.  In the case $l=j+1$ we have $b_j > a_{j+1}$ as needed.  

Next, assume $b_k < b_j = a_r$.  If $r \leq j+1$, then $b_j = a_r \geq a_{j+1}$.  If $r > j+1$, then at least two elements of the set
\begin{displaymath}
\{a_l \mid 1 \leq l \leq j+1 \},
\end{displaymath}
say $a_{l_1}:=x_{i-1, s_1+1}$ and $a_{l_2}:=x_{i-1,s_2+1}$, must have $x_{i, s_1}:=b_{k_1}$ and $x_{i, s_2}:=b_{k_2}$ with $k_1 > j$ and $k_2 > j$.  If either $b_{k_1}$ or $b_{k_2}$ appear to the right of $b_j$ in $X_n^\gamma$, then $b_j \geq b_{k_i}$ forces $b_j \geq a_{l_i} \geq a_{j+1}$.  Thus we may assume both $b_{k_1}$ and $b_{k_2}$ are to the left of $b_j$.  Let us first assume $b_{k_1}$ is left of $b_{k_2}$ is left of $b_j$.  With this arrangement, if $b_{k_1}  \geq b_{k_2}$ then $b_{k_1} \geq a_{l_2}$.  Since $b_j \geq b_{k_1}$ and $a_{l_2} \geq a_{j+1}$ we get $b_j \geq a_{j+1}$.  If instead $b_{k_1} < b_{k_2} < a_{l_2}$ then $b_{k_2} > a_{l_1}$.  Thus $b_j > a_{j+1}$.  Finally, if $b_{k_1} < b_{k_2}=a_{l_2}$ then $b_j \geq b_{k_2}=a_{l_2}\geq a_{j+1}$. The arrangement when $b_{k_2}$ is left of $b_{k_1}$ is left of $b_j$ is exactly analogous.

The last case to consider is when $b_k =b_j$.  Then we have $b_k \geq a_r$.  If $r \leq j+1$ then $b_j=b_k \geq a_r \geq a_{j+1}$.  Now assume $r > j+1$. As in the previous paragraph we must have elements $a_{l_1}$ and $a_{l_2}$, with $l_i \leq j+1$, and elements $b_{k_1}$ and $b_{k_2}$ diagonally adjacent to $a_{l_1}$ and $a_{l_2}$, with $k_i > j$.  If either $b_{k_1}$ or $b_{k_2}$ is right of $b_j$, then $b_j \geq b_{k_i}$ forces $b_j \geq a_{l_i} \geq a_{j+1}$.  Thus we may assume both $b_{k_1}$ and $b_{k_2}$ are left of $b_j$.  Examining the same cases as in the previous paragraph proves $b_j  \geq a_{j+1}$.

Thus $\Theta$ maps composition array patterns to Gelfand-Tsetlin patterns.  Next we show $\Theta$ is a bijection by showing $\tilde{\Theta}=\Theta^{-1}$.

First we show that $\tilde{\Theta}(GT_n^\lambda)$ is a composition array pattern. Relations (1) and (2) of Definition \ref{def:CTarraypattern} are satisfied by construction.  It remains to be shown that the two relations in (3) of Definition \ref{def:CTarraypattern} are satisfied.

Suppose $b_j:=x_{i+1, r} \geq b_k := x_{i+1,s}$ with $r<s$ in $\tilde{\Theta}(GT_n^\lambda)$.  By the definition of the map $\tilde{\Theta}$ we have $j <k$.  Define $a_q:=x_{i, r+1}$ and $a_p:=x_{i, s+1}$.  If $p \geq j+1$ then $b_j \geq a_{j+1} \geq a_p$, as needed.  If instead $p < j+1$, then $q< p$ by the definition of $\tilde{\Theta}$. Considering the sets
\begin{displaymath}
\{a_1, \ldots, a_{p-1}\} \qquad \text{ and } \qquad \{b_1, \ldots, b_{p-1}\}
\end{displaymath}
and noting that $a_q$ is already adjacent to $b_j$, we see that by the pigeonhole principle there must be at least one $a_{t_1} := x_{i, j_1}$, with $t_1 > p$, such that $b_{q_1}:= x_{i+1, j_1-1}$ has $q_1 < p$.  Considering now the two sets 
\begin{displaymath}
\{a_1, \ldots, a_{t_1-1}\} \qquad \text{ and } \qquad \{b_1, \ldots, b_{t_1-1} \}
\end{displaymath}
we again use the pigeonhole principle to get at least one $a_{t_2}:=x_{i,j_2}$, with $t_2>t_1$, such that $b_{q_2}:=x_{i+1,j_2-1}$ has $q_2 < t_1$.  We can continue in this way until we get some $a_{t_k}:=x_{i, j_k}$, with $t_k > j \geq t_{k-1}$, such that $b_{q_k}:=x_{i+1, j_k-1}$ has $q_k < t_{k-1} \leq j$.  These relations forces the equality $b_{q_1}=a_{t_k}$.  In particular, $b_j=a_p$ as needed. 

Now assume $x_{i+1,r} < x_{i+1,s} < x_{i, s+1}$.  Then we must show $x_{i+1,s}> x_{i, r+1}$.  Let $b_k=x_{i+1,r}$ and $b_j=x_{i+1,s}$.  Since $b_k < b_j$, we have $k>j$.  Let $x_{i,s+1}=a_p$ and let $x_{i,r+1}=a_q$. If $p \geq j+1$ then $b_j \geq a_{j+1} \geq a_p$ from the GT pattern.  But we also have $b_j < a_p$ by assumption which produces a contradiction.  Thus we may assume $p < j+1$.  If $q \geq k+1$ then $b_k \geq a_{k+1} \geq a_q$ from the GT pattern.  Since $b_k \leq a_q$ by assumption, we get $b_k=a_q$ and hence $b_j > a_q$ as needed.

Next consider when $p < j+1 \leq q \leq k$.  Then $b_j \geq a_{j+1} \geq a_q$ from the GT pattern.  Because $p<q$, in constructing $\tilde{\Theta}(GT_n^\lambda)$ the entry $a_q$ would have been placed before $a_p$. If $b_j=a_q$ then the entry $a_q$ should have been placed at coordinates $(i,s+1)$ instead of $(i,r+1)$, contradiction our assumptions.  Thus we have $b_j > a_q$ as needed.

We are now in the case when both $p$ and $q$ are both strictly less than $j+1$. If $p < q \leq j$ then 
\begin{displaymath}
a_p \geq a_q \geq a_j \geq b_j.
\end{displaymath}
Again we have, since $p < q$, the entry $a_q$ is placed in $\tilde{\Theta}(GT_n^\lambda)$ before $a_p$. Since $a_q \geq b_j$, the entry $a_q$ should have been placed in coordinates $(i,s+1)$, contradiction our assumptions.

The final case to consider is when $q < p \leq j$. Consider the two sets
\begin{displaymath}
\{a_1, \ldots, a_{p-1}\} \qquad \text{ and } \qquad \{b_1, \ldots, b_{p-1}\}
\end{displaymath}
and note that since $a_q$ is already adjacent to $b_k$ by assumption, the pigeonhole principle guarantees at least one entry $a_{t_1} := x_{i, j_1}$, with $t_1 > p$, such that $b_{q_1}:= x_{i+1, j_1-1}$ has $q_1 < p$.  Considering now the two sets 
\begin{displaymath}
\{a_1, \ldots, a_{t_1-1}\} \qquad \text{ and } \qquad \{b_1, \ldots, b_{t_1-1} \}.
\end{displaymath}
We can use the pigeonhole principle again to get at least one $a_{t_2}:=x_{i,j_2}$, with $t_2>t_1$, such that $b_{q_2}:=x_{i+1,j_2-1}$ has $q_2 < t_1$.  We can continue in this way until we get some $a_{t_k}:=x_{i, j_k}$, with $t_k > j \geq t_{k-1}$, such that $b_{q_k}:=x_{i+1, j_k-1}$ has $q_k < t_{k-1} \leq j$.  These relations forces the equality $b_{q_1}=a_{t_k}$. In particular, $b_j=a_p$, which is a contradiction.

Thus, the map $\tilde{\Theta}$ maps Gelfand-Tsetlin patterns to composition array patterns.  Clearly, the composition $\Theta \circ \tilde{\Theta}$ is the identity on GT patterns. Hence $\tilde{\Theta}$ is injective and $\Theta$ is surjective. To show $\Theta$ is injective, we will show $\theta=\varphi \circ \Theta \circ \psi^{-1}$ which will also show that the diagram in the statement of Theorem \ref{thm:GTandCTbiject} commutes.

Recall that the bijection $\theta$ mapping between column-strict composition tableaux and reverse column-strict Young tableaux sorts the column entries of a column-strict composition tableau $U$ into decreasing order to form a reverse column-strict Young tableau $\theta(U):=T$.   Hence $\theta$ preserves column entries in $U$ and $T$ \cite{Haglund2008Quasisymmetric}.  That is, a positive integer $i$ is in column $j$ of $U$ if and only if $i$ is in column $j$ of $\theta(U)=T$.  

By the definition of the map $\psi$, a positive integer $i$ is in column $j$ of a column-strict composition tableau if and only if there exists an entry $x_{i,k} \geq j$ in $\psi^{-1}(U)$ such that $x_{i+1,k-1}<j$.  Since an entry $i$ can appear at most once in any column of a column-strict composition tableau $U$, we can use the method of skew composition diagrams and $\psi^{-1}$ to see that if there exists an entry $x_{i,k} \geq j$ in $\psi^{-1}(U)$ such that $x_{i+1,k-1}<j$, then any other $x_{i,k^\prime} \geq j$ must have $x_{i+1, k^\prime -1} \geq j$.

The same can be said for reverse column-strict Young tableaux.  That is, a positive integer $i$ is in column $j$ of a reverse column-strict Young tableau $T$ if and only if there exists an entry $y_{i,l}$ of the GT pattern $\varphi^{-1}(T)$ with $y_{i,l} \geq j$ and $y_{i+1,l} <j$, where we understand $y_{i,n-i+2}=0$ for all $i$.

Let $U$ be a column-strict composition tableau.  By the reasoning above, to show $\theta=\varphi \circ \Theta \circ \psi^{-1}$, it is enough to show 
\begin{equation}{\label{eq:iinCT}}
x_{i,k} \geq j \text{ and } x_{i+1,k-1}<j \text{ for some $k$ in } \psi^{-1}(U)
\end{equation}
implies
\begin{equation}{\label{eq:iinYT}}
y_{i,l} \geq j \text{ and } y_{i+1,l}<j \text{ for some $l$ in } (\Theta \circ \psi^{-1}) (U),
\end{equation}
which in turn implies a positive integer $i$ appearing in column $j$ of $(\varphi \circ \Theta \circ \psi^{-1})(U)$. The fact that (\ref{eq:iinCT}) implies (\ref{eq:iinYT}) can be readily checked through a parity argument which we presently describe.

If we assume for some $k$ that $x_{i,k} \geq j$ and $x_{i+1,k-1} <j$ in $\psi^{-1}(U)$, then any other $x_{i,k^\prime}\geq j$ in $\psi^{-1}(U)$ must have $x_{i+1,k-1} \geq j$ (again, this is because the set of entries in any column $j$ of a column-strict composition tableau $U$ are distinct).  Thus we have 
\begin{displaymath}
|\{ x_{i,t} \geq j \mid x_{i,t} \text{ in } \psi^{-1}(U) \} | - |\{x_{i+1,t} \geq j \mid x_{i+1,t} \text{ in } \psi^{-1}(U) \} =1.
\end{displaymath}
Thus, in the GT pattern $(\Theta \circ \psi^{-1} )(U)$ where we sort the entries of $\psi^{-1}(U)$, the right most occurrence $y_{i,l}$ such that $y_{i,l} \geq j$ in $(\Theta \circ \psi^{-1}) (U)$ must have $y_{i+1,l}<j$. Thus there exists a positive integer $i$ in column $j$ of $(\phi \circ \Theta \circ \psi^{-1})(U)$.  

Thus we have shown $(\phi \circ \Theta \circ \psi^{-1})(U)$ is a reverse column-strict Young tableau whose column entries are exactly those of $U$.  Thus, $(\phi \circ \Theta \circ \psi^{-1})(U)=\theta(U)$.

\end{proof}

\section{Lakshmibai-Seshadri Paths}
{\label{sec:LSpaths}}

The following definitions hold for symmetrizable Kac-Moody algebras \cite{Littelmann1994A-Littlewood} \cite{Littelmann1995Paths-and-root}, but since we are interested in type $A$ objects, namely Demazure atoms, we will give all of our definitions in this specific case.

Fix a partition $\lambda$, and as above let $S_n$ denote the symmetric group on $[n]$ and $\Stab(\lambda)$ the stabilizer of $\lambda$. We can identify $S_n/\Stab(\lambda)$ with the subset of $S_n$ consisting of elements $\omega$ such that $\omega s_i > \omega$ for all simple reflections $s_i$ such that $\langle \lambda, \alpha_i \rangle =0$; these elements $\omega$ are exactly minimal length coset representatives for $S_n/\Stab(\lambda)$ \cite{Humphreys1990Reflection}. Let $X$ denote the weight lattice of $\mathfrak{sl}_{n+1}$ and define $X_\R := X \otimes_\Z \R$.  Define the pair $(\underline{\tau}; \underline{a})$ as
\begin{align}
\underline{\tau} &: \tau_1 > \cdots > \tau_r \\
\underline{a} &: 0=a_{0} < a_1 < \cdots < a_r =1
\end{align}
where $\underline{\tau}$ is a strictly decreasing (in Bruhat order) sequence of elements in $S_n/\Stab(\lambda)$ and $\underline{a}$ is a strictly increasing sequence of rational numbers.  

As in \cite{Littelmann1994A-Littlewood}, we will identify the pair $(\underline{\tau}; \underline{a})$ with a piecewise linear map $\pi \colon [0,1] \to X_\R$, and write $\pi=(\underline{\tau}, \underline{a})$, defined by
\begin{align}
&\pi(t) = \sum_{i=1}^{j-1} (a_i -a_{i-1}) \tau_i(\lambda) + (t-a_{j-1})\tau_{j}(\lambda)  \\
&\text{for } t \in [a_{j-1}, a_j], \text{ where } j=1, \ldots, r. \notag
\end{align}
We will call $\pi$ a \emph{rational path of shape} $\lambda$.  This map $\pi$ is a path in $X_\R$ which starts at $0$ and moves initially in the direction of $\tau_1(\lambda)$ for $a_1$ units, then moves in the direction of $\tau_{2}(\lambda)$ for $a_{2}-a_1$ units, and so on.  We will call $\pi(1)$ the \emph{weight} of $\pi$.

\begin{definition}
Let $\tau$ and $\sigma$ be in $S_n/\Stab(\lambda)$ with $\tau>\sigma$, and let $a \in \Q$ with $0 <a <1$.  An \emph{$a$-chain} for the pair $(\tau, \sigma)$ is a sequence of elements $\kappa_0, \kappa_1, \ldots, \kappa_s$ in $S_n/\Stab(\lambda)$ such that
\begin{enumerate}
\item $\tau=\kappa_0 > \kappa_1 > \cdots > \kappa_s=\sigma$,
\item $\ell(\kappa_i) = \ell(\kappa_{i-1})-1$, and
\item $a \langle \kappa_i(\lambda), \beta_i \rangle \in \Z$,
\end{enumerate}
where the $\beta_i$ are positive roots such that $\kappa_i=s_{\beta_i} \kappa_{i-1}$.
\end{definition}

We can now give the crucial definition in our final characterization of Demazure atoms.

\begin{definition}{\label{def:LSpath}}
The rational path $\pi=(\underline{\tau}; \underline{a})$ of shape $\lambda$ is called a \emph{Lakshmibai-Seshadri path} (LS-path) of shape $\lambda$ if for all $i$, $1 \leq i \leq r-1$, there exists an $a_i$-chain for the pair $(\tau_i, \tau_{i+1})$.
\end{definition}

Notice that by definition, if $\pi$ is a LS-path then $\pi(1)$ is in $X$. Denote by $\Pi(\lambda)$ the set of all LS-paths of shape $\lambda$. If $\pi=(\underline{\omega}; \underline{a})$ is an LS-path of shape $\lambda$ with $\underline{\omega}:\omega_1> \cdots > \omega_r$, we define
\begin{align}
\Pi_\tau(\lambda)&= \{ \pi=(\underline{\omega}; \underline{a}) \in \Pi(\lambda) \mid \tau \geq \omega_1 \}, \\
\widehat{\Pi}_\tau(\lambda) &= \{ \pi =(\underline{\omega}; \underline{a})\in \Pi(\lambda) \mid \tau = \omega_1 \}.
\end{align}
The subset $\Pi_\tau(\lambda)$ is the set of all LS-paths $\pi$ of shape $\lambda$ that begin in the direction $\omega_1(\lambda)$ where $\tau \geq \omega_1$, and the subset $\widehat{\Pi}_\tau(\lambda)$ is the subset of all LS-paths $\pi$ of shape $\lambda$ which begin in the direction $\tau(\lambda)$.

In \cite{Littelmann1994A-Littlewood} the author shows that the Demazure character $\pi_\tau(\xx^\lambda)$ is equal to $\sum \xx^{\pi(1)}$, where the sum is over all paths in $\Pi_\tau(\lambda)$.  In the remainder of this chapter we will prove the following, which is our last new characterization of Demazure atoms.

\begin{proposition}{\label{prop:atomaspath}}
With the notation above,
\begin{equation}
\bar{\pi}_\tau(\xx^\lambda)=\dzatom_{\tau(\lambda)}(x_1, \ldots, x_n) = \sum_{\pi \in \widehat{\Pi}_\tau(\lambda)} \xx^{\pi(1)}.
\end{equation}
That is, the Demazure atom $\bar{\pi}_\tau(\xx^\lambda)$ is the sum of the weights of all LS-paths beginning in the direction $\tau(\lambda)$.
\end{proposition}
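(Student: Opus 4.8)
The plan is to exploit the known identity of Littelmann, $\pi_\tau(\xx^\lambda) = \sum_{\pi \in \Pi_\tau(\lambda)} \xx^{\pi(1)}$, and to compare it with the recursive structure of the Demazure atoms encoded in Lemma \ref{lem:monomials}. First I would recall that $\Pi_\tau(\lambda) = \bigsqcup_{\sigma \leq \tau} \widehat{\Pi}_\sigma(\lambda)$, since every LS-path begins in a unique direction $\omega_1(\lambda)$ with $\omega_1 \in S_n/\Stab(\lambda)$, and the condition $\tau \geq \omega_1$ in $\Pi_\tau(\lambda)$ simply says $\omega_1$ ranges over all $\sigma \leq \tau$. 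Combining this with Littelmann's identity gives
\begin{equation*}
\pi_\tau(\xx^\lambda) = \sum_{\sigma \leq \tau} \sum_{\pi \in \widehat{\Pi}_\sigma(\lambda)} \xx^{\pi(1)}.
\end{equation*}
On the other side, by definition $\bar\pi_\tau = \pi_\tau - \text{(lower terms)}$; more precisely, one has the well-known decomposition $\pi_\tau(\xx^\lambda) = \sum_{\sigma \leq \tau} \bar\pi_\sigma(\xx^\lambda)$, which follows from $\bar\pi_i = \pi_i - 1$ together with the fact that in any reduced expression the operators $\pi_i$ ``accumulate'' all Bruhat-smaller terms (this is the standard Möbius-type inversion on the Bruhat order $[0,\tau]$, and can be proved by induction on $\ell(\tau)$ using $\pi_{s_i\tau} = \pi_i \pi_\tau$ when $\ell(s_i\tau) > \ell(\tau)$). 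Granting both expansions, Möbius inversion over the interval $[0,\tau]$ in Bruhat order — or simply induction on $\ell(\tau)$ — immediately yields $\bar\pi_\tau(\xx^\lambda) = \sum_{\pi \in \widehat{\Pi}_\tau(\lambda)} \xx^{\pi(1)}$, which is the claim.

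The one genuinely substantive point that must be checked is that $\pi_\tau(\xx^\lambda) = \sum_{\sigma \leq \tau}\bar\pi_\sigma(\xx^\lambda)$ holds as an identity of \emph{polynomials} — i.e., that there is no interference from the signs in $\bar\pi_i = \pi_i - 1$ beyond what the Bruhat order predicts. I would establish this by induction on $\ell(\tau)$: write $\tau = s_i\tau'$ with $\ell(\tau') < \ell(\tau)$, apply $\pi_i$ to the inductive expansion of $\pi_{\tau'}(\xx^\lambda)$, and use the relation $\pi_i \bar\pi_\sigma(\xx^\lambda) = \bar\pi_\sigma(\xx^\lambda) + \bar\pi_{s_i\sigma}(\xx^\lambda)$ when $\ell(s_i\sigma) > \ell(\sigma)$, and $\pi_i \bar\pi_\sigma(\xx^\lambda) = \bar\pi_\sigma(\xx^\lambda)$ when $\ell(s_i\sigma) < \ell(\sigma)$; the first of these is exactly the content of the action formulas in (\ref{barpiaction}) combined with Lemma \ref{lem:monomials}, and the second reflects $\pi_i^2 = \pi_i$. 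The lifting property of Bruhat order (that $\{\sigma : \sigma \leq s_i\tau'\}$ is obtained from $\{\sigma : \sigma \leq \tau'\}$ by closing under $\sigma \mapsto s_i\sigma$ where this increases length) then matches the two sides term by term.

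The main obstacle I anticipate is the bookkeeping in that induction: one must be careful that each $\bar\pi_\sigma(\xx^\lambda)$ for $\sigma \leq \tau$ appears with coefficient exactly $1$ and none with higher multiplicity, which is where the lifting property of Bruhat order (and the fact that reduced words are all connected via braid moves, so $\pi_\tau$ is well defined) does the real work. An alternative, perhaps cleaner, route would bypass Littelmann's character identity and instead set up a weight-preserving bijection directly between the LS-paths in $\widehat{\Pi}_\tau(\lambda)$ and the semi-standard Young tableaux whose right key equals $K(\sigma,\lambda^t)$ appearing in Lemma \ref{lem:atomaskey}; but matching the ``initial direction $\tau(\lambda)$'' condition on a path with the ``right key'' condition on a tableau seems to require essentially the same combinatorial input, so I would pursue the inversion argument above as the primary strategy and fall back on the key-theoretic comparison only if the polynomial identity $\pi_\tau = \sum_{\sigma \leq \tau}\bar\pi_\sigma$ proves awkward to nail down cleanly.
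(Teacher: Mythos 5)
Your strategy is genuinely different from, and arguably cleaner than, the paper's. The paper proves the statement by a direct induction on $\ell(\tau)$, at each step invoking Lemma~\ref{lem:monomials} together with Littelmann's root operators $e_{\alpha_i}, f_{\alpha_i}$, and carrying out a fairly delicate accounting of which paths in $\widehat{\Pi}_{s_i\tau}(\lambda)$ arise from $f_{\alpha_i}$-strings of paths in $\widehat{\Pi}_\tau(\lambda)$ and which monomial cancellations occur. You instead bypass the root operator analysis entirely: the decomposition $\Pi_\tau(\lambda)=\bigsqcup_{\sigma\leq\tau}\widehat{\Pi}_\sigma(\lambda)$ is immediate from the definitions in Section~\ref{sec:LSpaths}, Littelmann's identity $\pi_\tau(\xx^\lambda)=\sum_{\pi\in\Pi_\tau(\lambda)}\xx^{\pi(1)}$ is already cited in the paper, and the classical decomposition $\pi_\tau(\xx^\lambda)=\sum_{\sigma\leq\tau}\bar\pi_\sigma(\xx^\lambda)$ is essentially Lascoux--Sch\"utzenberger's motivation for calling $\bar\pi_\sigma(\xx^\lambda)$ ``atoms.'' Uniqueness of the decomposition over the Bruhat interval (M\"obius inversion, or just downward induction on $\ell(\tau)$) then finishes the argument. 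What this buys is a conceptual proof that factors the hard content into two quotable results; what the paper's direct proof buys is self-containment and, arguably, more insight into how the root operators realize the combinatorics of the atoms.

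There is, however, a concrete error in your sketch of the identity $\pi_\tau(\xx^\lambda)=\sum_{\sigma\leq\tau}\bar\pi_\sigma(\xx^\lambda)$, and as stated it would break the induction. You claim $\pi_i\bar\pi_\sigma(\xx^\lambda)=\bar\pi_\sigma(\xx^\lambda)$ when $\ell(s_i\sigma)<\ell(\sigma)$, attributing this to $\pi_i^2=\pi_i$. But writing $\sigma=s_i\sigma''$ reduced gives $\bar\pi_\sigma=\bar\pi_i\bar\pi_{\sigma''}$, and $\pi_i\bar\pi_i=\pi_i(\pi_i-1)=\pi_i^2-\pi_i=0$, so in fact $\pi_i\bar\pi_\sigma(\xx^\lambda)=0$ in this case. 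With your version, the term $\bar\pi_\sigma$ for $\sigma\leq\tau'$ with $\ell(s_i\sigma)<\ell(\sigma)$ would be counted once directly and once more as $\bar\pi_{s_i\sigma'}$ with $\sigma'=s_i\sigma\leq\tau'$ (note $s_i\sigma\leq\tau'$ follows from $\sigma\leq\tau'$ by lifting), so the coefficient comes out $2$ instead of $1$. With the correct relation $\pi_i\bar\pi_\sigma(\xx^\lambda)=0$ for $\ell(s_i\sigma)<\ell(\sigma)$, the inductive step produces exactly $\sum_{\sigma\leq\tau',\,s_i\sigma>\sigma}\left(\bar\pi_\sigma+\bar\pi_{s_i\sigma}\right)$, and the Bruhat lifting property shows this multiset is precisely $\{\bar\pi_\omega:\omega\leq s_i\tau'\}$ with multiplicity one, as required. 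Also, a minor mis-attribution: the relation $\pi_i\bar\pi_\sigma=\bar\pi_\sigma+\bar\pi_{s_i\sigma}$ for $\ell(s_i\sigma)>\ell(\sigma)$ is pure operator algebra from $\pi_i=1+\bar\pi_i$; it does not use the action formulas~(\ref{barpiaction}) or Lemma~\ref{lem:monomials} at all.
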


To prove Proposition \ref{prop:atomaspath} we first need to define the \emph{root operators} of Littelmann \cite{Littelmann1994A-Littlewood}. Let $\pi$ be an LS-path and for a fixed simple root $\alpha$, define the function $h_\alpha \colon [0,1] \to \R$ by $t \mapsto \langle \pi(t), \alpha \rangle$. Define $Q$ to be the absolute minimum attained by the function $h_\alpha$ and let $P=h_\alpha(1)-Q$.  Notice that $P \geq 0$ and since $h_\alpha(0)=0$ we must have $Q \leq 0$.  Since $\pi$ is a piecewise linear path, the minimum $Q$ must be attained at some value $t=a_i$ for some $i$, where $1 \leq i \leq r$.  Let $p$ be maximal such that $0 \leq p \leq r$ and $h_\alpha(a_p) =Q$ and let $q$ be minimal such that $0 \leq q \leq r$ and $h_\alpha(a_q)=Q$.

If $Q \leq -1$, let $y \leq p$ be maximal such that $h_\alpha(t) \geq Q+1$ for all $t \leq a_y$.  Similarly, if $P \geq 1$ let $x \geq p$ be minimal such that $h_\alpha(t) \geq Q+1$ for all $t \geq a_x$.  For any simple root $\alpha$, Littelmann defines operators $e_\alpha$ and $f_\alpha$ which act on LS-paths $\pi$.  The author, having defined $e_\alpha$ and $f_\alpha$ in a more general context, gives the following as a proposition in \cite{Littelmann1994A-Littlewood}.  Since we are in the finite type $A$ case, we present the following as our definition for the action of $e_\alpha$ and $f_\alpha$.

\begin{definition}{(\cite{Littelmann1994A-Littlewood} Proposition 4.2)}{\label{def:root}}
\begin{enumerate}
\item If $P>0$, then $f_\alpha(\pi)$ is the LS-path
\begin{align*}
&(\tau_1, \ldots, \tau_{p-1}, s_\alpha \tau_{p+1}, \ldots, s_\alpha \tau_x, \tau_{x+1}, \ldots, \tau_r ; a_0, \ldots, a_{p-1}, a_{p+1}, \ldots, a_r), \\
&\; \; \qquad \text{ if } h_\alpha(a_x) = Q+1 \text{ and } s_\alpha \tau_{p+1} =\tau_p ; \\
&(\tau_1, \ldots, \tau_{p}, s_\alpha \tau_{p+1}, \ldots, s_\alpha \tau_x, \tau_{x+1}, \ldots, \tau_r ; a_0, \ldots, a_r), \\
&\; \; \qquad \text{ if } h_\alpha(a_x) = Q+1 \text{ and } s_\alpha \tau_{p+1} < \tau_p ; \\
&(\tau_1, \ldots, \tau_{p-1}, s_\alpha \tau_{p+1}, \ldots, s_\alpha \tau_x, \tau_{x}, \ldots, \tau_r ; a_0, \ldots, a_{p-1}, a_{p+1}, \ldots, a_{x-1}, a, a_x, \ldots a_r), \\
&\; \; \qquad \text{ if } h_\alpha(a_x) > Q+1 \text{ and } s_\alpha \tau_{p+1} =\tau_p ; \\
&(\tau_1, \ldots, \tau_{p}, s_\alpha \tau_{p+1}, \ldots, s_\alpha \tau_x, \tau_{x}, \ldots, \tau_r ; a_0, \ldots, a_{x-1},a, a_{x}, \ldots, a_r), \\
&\; \; \qquad \text{ if } h_\alpha(a_x) > Q+1 \text{ and } s_\alpha \tau_{p+1} <\tau_p ;
\end{align*}
where $a_{x-1} < a< a_x$ is such that $h_\alpha(a)=Q+1$. If $P=0$ then $f_\alpha(\pi)=0$.
\item If $Q < 0$ then $e_\alpha(\pi)$ is the LS-path
\begin{align*}
&(\tau_1, \ldots, \tau_{y}, s_\alpha \tau_{y+1}, \ldots, s_\alpha \tau_q, \tau_{q+2}, \ldots, \tau_r ; a_0, \ldots, a_{q}, a_{q+2}, \ldots, a_r), \\
&\; \; \qquad \text{ if } h_\alpha(a_y) = Q+1 \text{ and } s_\alpha \tau_{q} =\tau_{q+1} ; \\
&(\tau_1, \ldots, \tau_{y}, s_\alpha \tau_{y+1}, \ldots, s_\alpha \tau_q, \tau_{q+1}, \ldots, \tau_r ; a_0, \ldots, a_r), \\
&\; \; \qquad \text{ if } h_\alpha(a_y) = Q+1 \text{ and } s_\alpha \tau_{q} > \tau_{q+1} ; \\
&(\tau_1, \ldots, \tau_{y+1}, s_\alpha \tau_{y+1}, \ldots, s_\alpha \tau_q, \tau_{q+2}, \ldots, \tau_r ; a_0, \ldots, a_{y},a, a_{y+1}, \ldots, a_{q}, a_{q+2}, \ldots a_r), \\
&\; \; \qquad \text{ if } h_\alpha(a_y) > Q+1 \text{ and } s_\alpha \tau_{q} =\tau_{q+1} ; \\
&(\tau_1, \ldots, \tau_{y+1}, s_\alpha \tau_{y+1}, \ldots, s_\alpha \tau_q, \tau_{q+1}, \ldots, \tau_r ; a_0, \ldots, a_{y},a, a_{y+1}, \ldots, a_r), \\
&\; \; \qquad \text{ if } h_\alpha(a_y) > Q+1 \text{ and } s_\alpha \tau_{q} >\tau_{q+1} ;
\end{align*}
where $a_{y} < a< a_{y+1}$ is such that $h_\alpha(a)=Q+1$. If $Q=0$ then $e_\alpha(\pi)=0$.
\end{enumerate}
\end{definition}

In \cite{Littelmann1994A-Littlewood} it is also shown that if $f_\alpha(\pi) \neq 0$ then $f_\alpha(\pi)(1) = \pi(1) -\alpha$, and if $e_\alpha(\pi) \neq 0$ then $e_\alpha(\pi)(1) = \pi(1) +\alpha$. Observe that Definition \ref{def:root} shows that if $\pi$ is an LS-path beginning in the direction of $\tau_1(\lambda)$, then $f_\alpha(\pi)$ will be a path beginning in either the direction $\tau_1(\lambda)$ or $s_\alpha \tau_1(\lambda)$. The latter case occurs if and only if $p=0$.  When $p=0$, then $h_\alpha(t)>0$ for all $0<t \leq 1$.  In particular, $h_\alpha(a_1)=\langle a_1 \tau_1(\lambda), \alpha \rangle >0$.  Thus $\langle \tau_1(\lambda), \alpha \rangle> 0$, hence $s_\alpha \tau_1 >  \tau_1$. 

Let $\pi$ be an LS-path beginning in the direction $\tau_1(\lambda)$ and assume $e_\alpha(\pi)=0$.  Furthermore, assume $f_\alpha(\pi) \neq 0$ and $f_\alpha(\pi)$ is an LS-path beginning in the direction $\tau_1(\lambda)$.  Then we claim $f_\alpha^k(\pi)$ is either $0$ or an LS-path beginning in the direction $\tau_1(\lambda)$ for all $k \geq 2$. If the claim is false, then there exists some $k$ such that $f_\alpha^k(\pi)$ begins in the direction $\tau_1(\lambda)$ but $f_\alpha^{k+1}(\pi)$ begins in the direction $s_\alpha\tau_1(\lambda)$.  Thus $p=0$ for $f_\alpha^k(\pi)$, and so $Q=h_{\alpha}(a_p)=h_{\alpha}(0)=0$. Thus $e_\alpha(f_\alpha^k(\pi))=0$, which is a contradiction.

As above, let $\pi$ be an LS-path beginning in the direction $\tau_1(\lambda)$ and assume $e_\alpha(\pi)=0$. Now assume $f_\alpha(\pi) \neq 0$ and $f_\alpha(\pi)$ is an LS-path beginning in the direction $s_\alpha \tau_1(\lambda)$.  Then we claim $f_\alpha^k(\pi)$ is either $0$ or an LS-path beginning in the direction $s_\alpha \tau_1(\lambda)$ for all $k \geq 2$.  To prove this, again suppose the claim is false.  Then as above there exists some $k$ such that $f_\alpha^k(\pi)$ begins in the direction $s_\alpha \tau_1(\lambda)$ but $f_\alpha^{k+1}(\pi)$ begins in the direction $\tau_1(\lambda)$. Again we conclude $Q=0$ for $f_\alpha^k(\pi)$, and thus $e_\alpha(f_\alpha^k(\pi))=0$, which is a contradiction.

The previous two paragraphs show that each LS-path in a given $\alpha$-root string all begin in the same direction, except for possibly the first path in the $\alpha$-root string. 

We now have the tools to prove Proposition \ref{prop:atomaspath}

\begin{proof}[Proof of Proposition \ref{prop:atomaspath}]
Fix $\lambda$ a partition.  We will induct on the length of the permutation $\tau \in S_n/\Stab(\lambda)$.  When $\tau = \epsilon$ is the identity, $\bar{\pi}_\tau(\xx^\lambda)=\xx^\lambda$, and there is only one LS-path $\pi$ of shape $\lambda$ beginning in the direction $\lambda$.  This path is given by $\underline{\tau}: \epsilon$ and $\underline{a}:0 < 1$.  Clearly $\pi(1)=\lambda$.  

Let $s_i$ be a simple reflection such that $\ell(s_i \tau) > \ell(\tau)$ and $s_i \tau \in S_n/\Stab(\lambda)$. Assume the inductive hypothesis that $\bar{\pi}_\tau(\xx^\lambda)= \sum_{\pi \in \widehat{\Pi}_\tau(\lambda)} \xx^{\pi(1)}$. For any fixed $\pi^\prime \in  \widehat{\Pi}_\tau(\lambda)$ let $m$ be the maximal integer such that $f_{\alpha_i}^m(\pi^\prime) \neq 0$. By Definition \ref{def:root}, 
\begin{equation}
f_{\alpha_i}^t \left( \pi^\prime \right) \in \widehat{\Pi}_\tau(\lambda) \cup \widehat{\Pi}_{s_i \tau}(\lambda)
\end{equation}
for all $0 \leq t \leq m$.  

Let $\pi \in  \widehat{\Pi}_{s_i \tau}(\lambda)$.  Then since $h_{\alpha_i}(a_1)=\langle \pi(a_1), \alpha_i \rangle =a_1 \langle s_i \tau(\lambda), \alpha_i \rangle <0$, we have $Q<0$.  Thus $e_{\alpha_i}(\pi) \neq 0$, and  $e_{\alpha_i}(\pi)$  begins in either the direction $\tau(\lambda)$ or the direction $s_i\tau(\lambda)$ by Definition \ref{def:root}.  In the former case, $\pi=f_{\alpha_i}(\pi^\prime)$, where $\pi^\prime=e_{\alpha_i}(\pi) \in  \widehat{\Pi}_{ \tau}(\lambda)$ and $e_{\alpha_i}(\pi^\prime)=0$. If we are in the latter case, $Q<0$ for $\pi^\prime$ and so $e_{\alpha_i}^2(\pi)\neq 0$. Thus $\pi^\prime:=e_{\alpha_i}^2(\pi)$ begins in either the direction $\tau(\lambda)$ or in the direction $s_i\tau(\lambda)$. Repeat this process the maximum number of times, so that we can write $e_{\alpha_i}^t(\pi)=\pi^\prime$ where $\pi^\prime$ begins in the direction $\tau(\lambda)$ and $Q=0$ for $\pi^\prime$.  Thus every path $\pi  \in  \widehat{\Pi}_{s_i \tau}(\lambda)$ can be written as $f_{\alpha_i}^t(\pi^\prime)$ for some $\pi^\prime \in \widehat{\Pi}_{ \tau}(\lambda)$ and some integer $t > 0$, with $Q=0$ for $\pi^\prime$.

Let $\pi \in  \widehat{\Pi}_{s_i \tau}(\lambda)$ and set $\pi(1):=\gamma$. By the last paragraph, $\pi=f_{\alpha_i}^t(\pi^\prime)$ for some $\pi^\prime \in \widehat{\Pi}_{ \tau}(\lambda)$ and some $0<t\leq \langle \pi^\prime(1), \alpha_i \rangle$ and $Q=0$ for $\pi^\prime$. Thus $\pi(1)=\pi^\prime(1)-t\alpha_i$.  But since $\pi^\prime \in \widehat{\Pi}_{ \tau}(\lambda)$, we have $\xx^{\pi^\prime(1)}$ is in the Demazure atom $\bar{\pi}_\tau(\xx^\lambda)$ by the inductive hypothesis.  Furthermore, since $Q=0$ for $\pi^\prime$ and $f_{\alpha_i}(\pi^\prime) \neq 0$, we have for $\pi^\prime$
\begin{equation*}
0<P=h_{\alpha_i}(1)-Q =h_{\alpha_i}(1) = \langle \pi^\prime(1), \alpha_i \rangle
\end{equation*}
Thus if we set $\beta=\pi^\prime(1)$, then $\beta_i > \beta_{i+1}$ and $[\xx^\gamma]\bar{\pi}_i(\xx^\beta)=1$.  Thus condition (1) of Lemma \ref{lem:monomials} is satisfied.  It remains to be shown that condition (2) of Lemma \ref{lem:monomials} is also satisfied.

Now we want to look at all the paths $\pi^\prime \in \widehat{\Pi}_{\tau(\lambda)}$ which, after some number $t$ of applications of $f_{\alpha_i}$ yield $f_{\alpha_i}^t(\pi^\prime)(1)=\gamma$.  By the inductive hypothesis we have a correspondence
\begin{align*}
\{\xx^\beta &\mid \xx^\beta \text{ is in } \bar{\pi}_\tau(\xx^\lambda) \text{ and } [\xx^\gamma]\bar{\pi}_i(\xx^\beta)=1\} \\
&\leftrightarrow  \{\pi \in  \widehat{\Pi}_{s_i \tau}(\lambda) \mid \pi(1)=\gamma\} \cup \{\pi^\prime \in  \widehat{\Pi}_{ \tau}(\lambda) \mid f_{\alpha_i}^t(\pi^\prime)(1)=\gamma \text{ for some $t$}, f_{\alpha_i}^t(\pi^\prime) \in  \widehat{\Pi}_{ \tau}(\lambda) \}.
\end{align*}
Given a path 
\begin{equation}{\label{eq:piin}}
\pi_0^\prime \in \{\pi^\prime \in  \widehat{\Pi}_{ \tau}(\lambda) \mid f_{\alpha_i}^t(\pi^\prime)(1)=\gamma \text{ for some $t$}, f_{\alpha_i}^t(\pi^\prime) \in  \widehat{\Pi}_{ \tau}(\lambda) \}
\end{equation}
we may map it bijectively to a path $\eta \in \widehat{\Pi}_{ \tau}(\lambda)$ such that $\eta(1)=\mu$ for some $\mu$ in the sum $\sum [\xx^\mu]\bar{\pi}_\tau(\xx^\lambda)$ where $\mu$ is as in Lemma \ref{lem:monomials}, and this can be done as follows. The path $\pi_0^\prime$ satisfies $\langle \pi_0^\prime(1), \alpha_i \rangle > 0$ by virtue of the correspondence above.  Let $k$ be maximal such that $e_{\alpha_i}^k(\pi_0^\prime) \neq 0$.  Set $\phi = \langle e_{\alpha_i}^k(\pi_0^\prime)(1) , \alpha_i \rangle $ and let $\eta=f_{\alpha_i}^{\phi-2k}(\pi_0^\prime)$. The path $\eta=f_{\alpha_i}^{\phi-2k}(\pi_0^\prime)$ is non-zero and begins in the direction $\tau(\lambda)$ by the discussion immediately preceding this proof; that is, $\pi_0^\prime=f_{\alpha_i}^t(\pi^\prime)$ for some $t$ and some $\pi^\prime$, and both $\pi^\prime$ and $\pi_0^\prime$ begin in the same direction $\tau(\lambda)$. Then $\eta$ is a path such that $\eta(1)$ is a weight satisfying  $\xx^{\eta(1)}$ is in $\bar{\pi}_\tau(\xx^\lambda)$ and $[\xx^\gamma]\bar{\pi}_i(\xx^{\eta(1)})=-1$. Conversely, given any $\eta$ contributing to the sum $\sum [\xx^\mu]\bar{\pi}_\tau(\xx^\lambda)$ we may map it to an LS-path $\pi^\prime$ as in \ref{eq:piin} as follows.  Let $l$ be maximal such that $f_{\alpha_i}^l(\eta)\neq 0$ and set $\epsilon= \langle f_{\alpha_i}^l(\eta)(1), \alpha_i \rangle$.  Now let $\pi^\prime=e_{\alpha_i}^{\epsilon-2l}(\eta)$.

This shows that the second condition of Lemma \ref{lem:monomials} is met.  Thus $\xx^{\pi(1)}$ is in $\bar{\pi}_{s_i \tau}(\xx^\lambda)$ as needed.

Now let $\xx^\gamma$ be in $\bar{\pi}_{s_i \tau}(\xx^\lambda)$.  Then the conditions of Lemma \ref{lem:monomials} are satisfied.  Thus $[\xx^\gamma]\bar{\pi}_i(\xx^\beta)=1$ for at least one $\beta$ and $\sum [\xx^\beta] \bar{\pi}_\tau(\xx^\lambda) - \sum [\xx^\mu]\bar{\pi}_\tau(\xx^\lambda) = c$ as in Lemma \ref{lem:monomials}.  For each $\beta$ such that $\xx^\beta$ is in $\bar{\pi}_\tau(\xx^\lambda)$ and $[\xx^\gamma]\bar{\pi}_i(\xx^\beta)=1$ by the inductive hypothesis there are $[\xx^\beta] \bar{\pi}_\tau(\xx^\lambda)$ distinct LS-paths $\pi^\prime \in \widehat{\Pi}_{ \tau}(\lambda)$ such that $\pi^\prime(1)=\beta$. Thus we can write $\gamma=f_{\alpha_i}^t(\pi^\prime)(1)$ for one of these paths $\pi^\prime$ and some positive integer $t$, where $0<t\leq \beta_i -\beta_{i+1}$ is determined by the action of $\bar{\pi}_i$ on $\xx^\beta$. It remains to be shown that $f_{\alpha_i}^t(\pi^\prime):=\pi$ is an LS-path beginning in the direction $s_i \tau(\lambda)$.  If this were not the case, then $\pi \in  \widehat{\Pi}_{ \tau}(\lambda)$ and maps uniquely to a path $\eta$ as described above.  This forces $\xx^\gamma$ to not be in $\bar{\pi}_{s_i \tau}(\xx^\lambda)$, which contradicts our first assumption. Thus $\gamma=\pi(1)$ for an LS-path in $\widehat{\Pi}_{s_i \tau}(\lambda)$.
\end{proof}

 \chapter[%
      Permuted Basement Polynomials
   ]{%
       Permuted Basement Nonsymmetric Macdonald Polynomials
   }%
   \label{ch:4ndChapterLabel}
   In this chapter we show that the nonsymmetric functions obtained by permuting the basement in the combinatorial formula for nonsymmetric Macdonald polynomials \cite{Haglund2008A-combinatorial} are eigenfunctions of a family of commuting operators.  We begin in Section \ref{sec:hecke} by reviewing the necessary definitions of Hecke algebras.  In Section \ref{ch4sec:combdef} we review the combinatorial formula for nonsymmetric Macdonald polynomials given in \cite{Haglund2008A-combinatorial}. In Section \ref{sec:cotripandcoinv} we present Proposition \ref{prop:coinv} which is a relation between two combinatorial statistics given in the previous section. Finally, in Section \ref{sec:permbasepoly} we present Proposition \ref{prop:permbaseeigen} which is the main result of this chapter.  

All notations in this chapter conform to those found in \cite{Haglund2008A-combinatorial}.  For this chapter let $\N=\{0,1,2,\ldots\}$ and let $\xx:=(x_1, x_2, \ldots, x_n)$ be a finite set of indeterminates. 

\section{Hecke Algebras}{\label{sec:hecke}}

Nonsymmetric Macdonald polynomials $E_\gamma$ can be defined as eigenfunctions of certain commuting operators in the double affine Hecke algebra $\D(q;t)$ of type $A$ defined by Cherednik in \cite{Cherednik1995Double-affine}.  This algebra can be defined as follows.

\begin{definition}{\label{def:doubleaffine}}
The double affine Hecke algebra $\D := \D(q;t)$ for $n \geq 3$ is the $\Q(q,t)$-algebra with generators $T_0, T_1, T_2, \ldots, T_{n-1}, \pi$ and $X_1^{\pm1}, X_2^{\pm1}, \ldots, X_n^{\pm1}$ and relations 
$$\begin{array}{ll} 	(T_i-t)(T_i+1)=0  				& \textrm{for $0 \leq i \leq n-1$}\\
			T_iT_j=T_jT_i 	& \textrm{for $i-j \not \equiv \pm 1 mod (n)$}\\
			T_iT_jT_i=T_jT_iT_j				& \textrm{for $i-j \equiv \pm 1 mod(n)$}\\
			\pi T_i \pi^{-1} = T_{i+1} 			&\textrm{indices taken $mod(n)$}
\end{array}$$
and 
$$\begin{array}{ll} 	X_iX_j=X_jX_i    				& \textrm{for all $i,j$}\\
				T_i X_j=X_jT_i 					& \textrm{for $j \neq i, i+1$ mod($n$)}\\
				T_iX_iT_i=tX_{i+1}				& \textrm{for $1 \leq i <n$}\\
				\pi X_i \pi^{-1} = X_{i+1}			& \textrm{for $1 \leq i < n$}\\
				\pi qX_n \pi^{-1} = X_1		 	&\\
					T_0 qX_n T_0 = tX_1.

\end{array}$$
\end{definition}

We will make repeated use of the relations in Definition \ref{def:doubleaffine}.  We will also need Cherednik's representation of $\D$ on the vector space $\Q(q,t)[x_1^{\pm 1}, \ldots, x_n^{\pm 1}]$. 
\begin{definition}{\label{def:cherednik}}
Let $f \in \Q(q,t)[x_1^{\pm 1}, \ldots, x_n^{\pm 1}]$. Then 
\begin{enumerate}
\item $X_i^{\pm 1}$ acts by multiplication by $x_i^{\pm 1}$ for $1 \leq i \leq n$,
\item $T_i f = t(s_i \cdot f) + (t-1) \frac{f-(s_i\cdot f)}{1-x_i/x_{i+1}}$ for $0 < i < n$, 
\item $T_0 f = t(s_i \cdot f) + (t-1) \frac{f-(s_i \cdot f)}{1-qx_n/x_1}$, where $s_0 \cdot f(x_1, \ldots, x_n)$$=f(qx_n, x_2, \ldots, x_{n-1}, q^{-1}x_1)$,
\item $\pi f(x_1, \ldots, x_n) = f(x_2, \ldots, x_n, q^{-1} x_1)$.
\end{enumerate}
\end{definition}

\begin{remark}{\label{rem:symmetric}}
We will make use of the fact that a function $f \in \Q(q,t)[x_1^{\pm 1}, \ldots, x_n^{\pm 1}]$ is symmetric in $x_i, x_{i+1}$ if and only if $T_i f = tf$.
\end{remark}

Now define $Y_i = t^{i-1} T_{i-1}^{-1} \cdots T_1^{-1} \pi T_{n-1} \cdots T_i$.  These elements of $\D$ satisfy $Y_iY_j=Y_jY_i$ for all $i$ and $j$, and $Y_i$ acts on $\Q(q,t)[\xx]$ via Cherednik's representation.  If we let $\gamma$ be a weak composition, define $\tilde{\gamma}_i=q^{-\gamma_i} t^{k_i}$ where 
\begin{equation*}
k_i = |\{j=1, \ldots, i-1 \mid \gamma_j > \gamma_i \} | - |\{ j=i+1, \ldots, n \mid \gamma_j \geq \gamma_i\}|.
\end{equation*}
We have the following Theorem from the literature, see \cite{Knop1997Integrality} for example. We can take the follow theorem as our definition of nonsymmetric Macdonald polynomials.  We note that Macdonald \cite{Macdonald1996Affine-Hecke} defines these polynomials in terms of triangularity and orthogonality conditions.

\begin{theorem}
The operators $Y_i$ admit a simultaneous eigenbasis $E_\gamma(x_1, \ldots, x_n; q,t)$, called nonsymmetric Macdonald polynomials, with eigenvalue $\tilde{\gamma}_i$. That is
\begin{equation*}
Y_i E_\gamma(\xx; q,t)=q^{-\gamma_i} t^{k_i} E_\gamma(\xx; q,t).
\end{equation*}

\end{theorem}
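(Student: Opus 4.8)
The plan is to prove the theorem by the standard triangularity argument of Knop--Sahi and Cherednik (see \cite{Knop1997Integrality}), adapted to the conventions of \cite{Haglund2008A-combinatorial}. Three ingredients suffice: (i) the operators $Y_1,\ldots,Y_n$ commute on $\Q(q,t)[\xx]$; (ii) there is a partial order $\preceq$ on the monomial basis $\{\xx^\gamma\}$ such that each $Y_i$ acts on $\xx^\gamma$ as $\tilde{\gamma}_i\,\xx^\gamma$ plus a $\Q(q,t)$-combination of monomials $\xx^\delta$ with $\delta\prec\gamma$; and (iii) the map $\gamma\mapsto(\tilde{\gamma}_1,\ldots,\tilde{\gamma}_n)$ is injective. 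Granting (i)--(iii), the commuting family $\{Y_i\}$ is upper-triangular with respect to a linear refinement of $\preceq$ and has distinct diagonal tuples, so over the field $\Q(q,t)$ there is for each $\gamma$ a unique simultaneous eigenvector $E_\gamma=\xx^\gamma+(\text{lower-order terms})$ with $Y_iE_\gamma=\tilde{\gamma}_iE_\gamma$ for all $i$; since the $E_\gamma$ are unitriangular against $\{\xx^\gamma\}$ they form a basis, which is exactly the assertion.

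Ingredient (i) is essentially free: as already recorded in the excerpt, $Y_iY_j=Y_jY_i$ follows from the defining relations of $\D$ in Definition~\ref{def:doubleaffine}. Concretely, the $Y_i$ generate the commutative ``lattice part'' of the affine Hecke algebra on $T_1,\ldots,T_{n-1},\pi$, and one checks from the braid, quadratic, and $\pi$-conjugation relations the intertwining identities $T_kY_j=Y_jT_k$ for $j\ne k,k+1$ and $T_kY_kT_k=t\,Y_{k+1}$ for $1\le k<n$, which force commutativity. This is pure Hecke-algebra bookkeeping.

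Ingredient (ii) is the heart of the matter. I would compute the action of $Y_i=t^{i-1}T_{i-1}^{-1}\cdots T_1^{-1}\,\pi\,T_{n-1}\cdots T_i$ on $\xx^\gamma$ by feeding the monomial through the word from the right and tracking its top exponent, using Cherednik's explicit formulas in Definition~\ref{def:cherednik} together with $T_k^{-1}=t^{-1}T_k+(t^{-1}-1)$. Each $T_k^{\pm1}$ either fixes the leading monomial or replaces it by its $s_k$-image, in each case up to strictly lower terms, while $\pi$ cyclically permutes exponents and introduces a power of $q$ on the last coordinate. Carrying $\gamma_i$ rightward past $\gamma_{i+1},\ldots,\gamma_n$ via $T_{n-1}\cdots T_i$ contributes a power of $t$ at each step governed by the comparison of $\gamma_j$ with $\gamma_i$ (the cases $\gamma_j\ge\gamma_i$ and $\gamma_j<\gamma_i$ producing opposite powers); the shift $\pi$ contributes $q^{-\gamma_i}$ once $\gamma_i$ reaches the last coordinate; and $T_{i-1}^{-1}\cdots T_1^{-1}$ then compares the cycled exponent against $\gamma_1,\ldots,\gamma_{i-1}$, contributing powers of $t$ by the same rule, with the sign adjusted for the inverses. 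Collecting these powers of $t$ against the overall prefactor $t^{i-1}$ yields precisely $t^{k_i}$ with
\[
k_i = |\{\, j<i \mid \gamma_j > \gamma_i \,\}| - |\{\, j>i \mid \gamma_j \ge \gamma_i \,\}|,
\]
so the diagonal coefficient on $\xx^\gamma$ is $q^{-\gamma_i}t^{k_i}=\tilde{\gamma}_i$. The delicate point is to fix the order $\preceq$ correctly --- the standard choice declares $\delta\preceq\gamma$ iff $\partitionof{\delta}$ is below $\partitionof{\gamma}$ in dominance order, or $\partitionof{\delta}=\partitionof{\gamma}$ and the minimal coset representative of $\delta$ in $S_n/\Stab(\partitionof{\gamma})$ lies below that of $\gamma$ in Bruhat order --- and then to verify that every monomial produced by an ``off-diagonal'' branch of some $T_k$, and by the strictly-lower remainder of each $T_k$, is indeed $\prec\gamma$. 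Showing that each elementary step keeps this order relation stable is the main obstacle I anticipate.

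For ingredient (iii) I would prove the short combinatorial lemma that $\gamma$ is reconstructible from $(\tilde{\gamma}_1,\ldots,\tilde{\gamma}_n)=(q^{-\gamma_i}t^{k_i})_{i=1}^n$: the multiset of $q$-exponents is the multiset of parts of $\gamma$, which recovers $\partitionof{\gamma}$, and among coordinates carrying a fixed part value the exponent $k_i$ is strictly increasing in $i$, which pins down the arrangement; hence the map is injective. Since the ground ring is the field $\Q(q,t)$ the components $\tilde{\gamma}_i$ are genuinely distinct rational functions, so no genericity hypothesis is needed, and the commuting triangular operators $Y_i$ are simultaneously diagonalizable with the $E_\gamma$ of (i)--(ii) as their joint eigenbasis. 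Reading the eigenvalue off the leading term $\xx^\gamma$ gives $Y_iE_\gamma(\xx;q,t)=q^{-\gamma_i}t^{k_i}E_\gamma(\xx;q,t)$, completing the proof.
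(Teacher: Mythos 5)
The paper does not prove this theorem at all: it is stated as a fact imported from the literature (the text explicitly says ``We have the following Theorem from the literature, see \cite{Knop1997Integrality} for example. We can take the follow theorem as our definition of nonsymmetric Macdonald polynomials'' and then cites \cite{Macdonald1996Affine-Hecke} for the triangularity-and-orthogonality construction). So there is no internal proof against which to compare your argument. On its merits, your roadmap is the standard Cherednik--Knop--Macdonald argument and the overall structure is sound: commutativity of the $Y_i$, triangular action on monomials with respect to a suitable order on compositions, and injectivity of $\gamma\mapsto(\tilde\gamma_1,\ldots,\tilde\gamma_n)$. Your reconstruction argument in ingredient (iii) is also correct: $q$-exponents recover $\partitionof{\gamma}$, and for $i<j$ with $\gamma_i=\gamma_j$ the position $j$ itself contributes to the subtracted set in $k_i$ but not to the added set in $k_j$, giving $k_j>k_i$, which pins down the arrangement.

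The genuine gap is the one you flag yourself: ingredient (ii) is not carried out. Asserting that each $T_k^{\pm1}$ and $\pi$ acts as a scalar on the leading monomial plus strictly $\prec$-lower terms \emph{is} the theorem in all but name, and the verification is nontrivial. In particular, $\pi$ cyclically shuffles exponents and changes $\partitionof{\cdot}$ unpredictably, so ``$\pi$ preserves $\prec$'' needs real work; and the Bruhat-order tie-breaker in your proposed $\preceq$ appears to be oriented the wrong way (in Macdonald's convention, among compositions with the same sorted partition, $\delta\prec\gamma$ corresponds to the shortest permutation sorting $\delta$ being \emph{longer} than the one sorting $\gamma$). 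Also note that for the Gaussian-elimination argument you need each $Y_i$ to be simultaneously triangular against a single linear refinement of $\preceq$, not merely triangular one at a time, which should be stated. None of this makes the route wrong --- it is the standard proof --- but as written it is a pointer to the literature rather than a self-contained argument, which is essentially what the paper itself does by citation.
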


\section{Combinatorial Definitions}{\label{ch4sec:combdef}}

Now we will present the definitions necessary to state the combinatorial formula for nonsymmetric Macdonald polynomials given in \cite{Haglund2008A-combinatorial}. 

Let $\gamma=(\gamma_1, \ldots, \gamma_n)$ be a weak composition with $n$ parts.  We will visualize $\gamma$ as a \emph{skyline diagram} or \emph{column diagram}, which is the set
\begin{equation*}
\dg(\gamma)=\{ (i,j) \in \N \times \N \mid 1 \leq i \leq n, 1 \leq j \leq \gamma_i \}
\end{equation*}
where $i$ indexes the columns and $j$ indexes the rows.  Thus, the diagram's coordinates are in Cartesian coordinates.

The \emph{augmented diagram} of $\gamma$, denote $\widehat{\dg}(\gamma)$, will be the diagram obtained by adjoining $n$ extra boxes in row $0$, thus adding a box at the bottom of every column of $\dg(\gamma)$.

\begin{definition}
Given $\gamma$ a weak composition with $n$ parts and a box $u=(i,j) \in \dg(\gamma)$, define
\begin{enumerate}
\item $\leg(u) = \{ (i,j') \in \dg(\gamma) \mid j' > j \}$
\item $\arm^{\text{left}}(u) = \{ (i', j) \in \dg(\gamma) \mid i' < i , \lambda_{i'} \leq \lambda_i \}$
\item $\arm^{\text{right}}(u) = \{ (i', j-1) \in \widehat{\dg}(\gamma) \mid i' > i , \lambda_{i'} < \lambda_i \}$
\item $\arm(u)=\arm^{\text{left}}(u) \sqcup \arm^{\text{right}}(u)$
\item $l(u)= |\leg(u)| = \gamma_i-j$
\item $a(u) = |\arm(u)|$
\end{enumerate}
\end{definition}

\begin{example}
If $\gamma=(3,1,2,4,3,0,4,2,3)$ and $u=(5,2)$ then the cells belonging to $\leg(u)$, $\arm^{\text{left}}(u)$, and  $\arm^{\text{right}}(u)$ are marked by $x$, $y$, $z$ in the following figure:
\begin{picture}(100,120)(-90,-10)
\put(0,50){$\widehat{\dg}(\gamma) = $}
\put(45,75){$\tableau{   &   &   &{ }&   &   &{ }&   &   \\
         { }&   &   &{ }&{x}&   &{ }&   &{ }\\
         {y}&   &{y}&{ }&{u}&   &{ }&{ }&{ }\\
         { }&{ }&{ }&{ }&{ }&   &{ }&{z}&{ } \\
         { }&{ }&{ }&{ }&{ }&{ }&{ }&{ }&{ }}\; $}
\end{picture}

giving $\ell(u)=1$, $a(u)=3$. 
\end{example}

A \emph{filling} of $\gamma$ is an assignment of positive integers to the boxes of $\dg(\gamma)$. We will denote fillings by the associated map $\sigma \colon \dg(\gamma) \to [n]$.  Let $\tau \in S_n$, then the associated \emph{augmented filling with basement $\tau$} is the map $\widehat{\sigma}^\tau \colon \widehat{\dg}(\gamma) \to [n]$ such that $\widehat{\sigma}^\tau$ agrees with $\sigma$ on $\dg(\gamma)$, and row $0$ has $\widehat{\sigma}^\tau((j,0))=\tau(j)$, for $1 \leq j \leq n$.  That is, row $0$ has the permutation $\tau$ written in one-line notation. It is at this point that we depart slightly from the definitions in \cite{Haglund2008A-combinatorial}, where the authors only consider the identity permutation $\tau=\epsilon$ in row $0$.  Instead, we present their definitions for arbitrary $\tau$.

Two cells in $\widehat{\dg}(\gamma)$ are said to be \emph{attacking} if either
\begin{enumerate}
\item they are in the same row, or
\item they are in consecutive rows, and the cell in the higher row is strictly to the left of the cell in the lower row.
\end{enumerate}
A filling $\widehat{\sigma}^\tau$ is called \emph{non-attacking} if $\augfill^\tau(u)\neq \augfill^\tau(v)$ for all pairs of attacking boxes $u,v \in \widehat{\dg}(\gamma)$.

\begin{example}
Below is a non-attacking filling $\widehat{\sigma}^\tau$ of shape $\gamma=(2,1,3,0,0,2)$ with basement entries given by $\tau=s_2s_1s_3$.

\begin{picture}(100,100)(-110,10)
\put(0,50){$\widehat{\dg}(\gamma) = $}
\put(45,75){$\tableau{ & & {2} &&& \\
				 {6} & &{4} & & & {5} \\
				 {3} & {1} & {4} & & & {2} \\
				 {3} & {1} & {4} & {2} & {5} & {6}}$}
\end{picture}

\end{example}

Let $d(u)=(i,j-1)$ be the box directly below $u=(i,j)$.  For any $\gamma$, a \emph{descent} in a filling $\augfill^\tau$ of $\gamma$ is a box $u\in \widehat{\dg}(\gamma)$ such that $d(u) \in \widehat{\dg}(\gamma)$ and $\augfill^\tau(u) > \augfill^\tau(d(u))$.  We define

\begin{equation}
\Des(\augfill^\tau)=\{\text{ descents of } \augfill^\tau \},
\end{equation}
and
\begin{equation}
\maj(\augfill^\tau)=\sum_{u \in \Des(\augfill^\tau)} (l(u)+1).
\end{equation}

The \emph{reading order} of a diagram $\widehat{\dg}(\gamma)$ is the total order of the boxes of $\widehat{\dg}(\gamma)$ obtained by reading the cells row by row from left to right, starting in the top row and working downward.  An \emph{inversion} is a pair of boxes $u, v \in \widehat{\dg}(\gamma)$ such that
\begin{enumerate}
\item $u$ and $v$ are attacking,
\item $u<v$ in reading order, and
\item $\augfill^\tau(u) > \augfill^\tau(v)$.
\end{enumerate}
Notice that the inversions with both $u$ and $v$ in row $0$ are exactly the inversion of the permutation $\tau^{-1}$.  Define

\begin{equation}
\Inv(\augfill^\tau) = \{ \text{ inversions of } \augfill^\tau \},
\end{equation}
\begin{equation}
\inv (\augfill^\tau)= |\Inv(\augfill^\tau)| - |\{ i<j \mid \gamma_i \leq \gamma_j \}| - \sum_{u \in \Des(\augfill^\tau)} a(u),
\end{equation}
and
\begin{equation}
\coinv(\augfill^\tau) = \left(\sum_{u \in \dg(\gamma)} a(u) \right) - \inv(\augfill^\tau).
\end{equation}

The statistics $\inv (\augfill^\tau)$ and $\coinv(\augfill^\tau)$ will henceforth be called the \emph{inversion} and \emph{coinversion statistics}, respectively, to differentiate them from the following notions of inversion and coinversion triples.

A \emph{triple} is three boxes $(u,v,w) \in \widehat{\dg}(\gamma)$ such that $w=d(u)$ and $v \in \arm(u)$.  That is, the boxes have one of the orientations

\[
\begin{array}{ccc} \vspace{6pt}
\text{Type I} & & \text{Type II} \\

\begin{array}{c} \vspace{6pt}
\tableau{   u \\
		w  }
\tableau{ \\
		&   v }  
\end{array}
& \; \text{ or } \; &
\begin{array}{c} \vspace{6pt}
\tableau{   v }
\tableau{ &  u \\
		&  w}  
\end{array}
\end{array}.
\]
The total number of triples in $\augfill^\tau$ is equal to $\sum_{u \in \dg(\gamma)}a(u)$. 

Informally, we say that a triple $(u,v,w)$ in $\augfill^\tau$ is a \emph{coinversion triple} if its entries increase clockwise in Type I or counterclockwise in Type II.  If two entries in the triple are equal, we say the entry read first in reading order is smaller.  We say a triple $(u,v,w)$ in $\augfill^\tau$ is an \emph{inversion triple} if it is not a coinversion triple.

Inversion triples can be defined formally as follows.  Given a filling $\augfill^{\tau}$ and boxes $x,y \in \widehat{\dg}(\gamma)$ with $x<y$ in reading order, define
\begin{equation}
\chi_{xy}(\augfill^{\tau}) = \begin{cases} 1 & \text{ if } \augfill^{\tau}(x) > \augfill^{\tau}(y) \\
						              0 & \text{ else.} \end{cases}
\end{equation}

Let $(u,v,w)$ be a triple.  Then we see
\begin{itemize}
\item $\chi_{uv}(\augfill^{\tau}) =1$ if and only if $(u,v) \in \Inv(\augfill^{\tau})$,
\item $\chi_{vw}(\augfill^{\tau}) =1$ if and only if $(v,w) \in \Inv(\augfill^{\tau})$, and
\item $\chi_{uw}(\augfill^{\tau})=1$ if and only if $u \in \Des(\augfill^{\tau})$.
\end{itemize}
From this it follows that $\chi_{uv}(\augfill^{\tau})+\chi_{vw}(\augfill^{\tau})-\chi_{uw}(\augfill^{\tau}) \in \{0,1\}$. Whenever $\chi_{uv}(\augfill^{\tau})+\chi_{vw}(\augfill^{\tau})-\chi_{uw}(\augfill^{\tau})=1$ we call the triple $(u,v,w)$ an inversion triple.  Similarly, if $\chi_{uv}(\augfill^{\tau})+\chi_{vw}(\augfill^{\tau})-\chi_{uw}(\augfill^{\tau})=0$ the triple $(u,v,w)$ is a coinversion triple. Let $\invtrip(\augfill^\tau)$ and $\cotrip(\augfill^\tau)$ be the number of inversion and coinversion triples, respectively, in $\augfill^\tau$.  

\section{Determining the Coinversion Statistic}{\label{sec:cotripandcoinv}}

Before stating the combinatorial formula for $E_\gamma$, we show how the coinversion statistic relates to the number of coinversion triples.

For integers $a$ and $b$, define
\begin{equation}
\chi(a\leq b) = \begin{cases} 1 & \text{ if } a \leq b \\
					     0 & \text{ else.} \end{cases}
\end{equation}
The authors in \cite{Haglund2008A-combinatorial} show that $\coinv(\augfill^\epsilon)=\cotrip(\augfill^\epsilon)$.  Recall that a pair $(i,j)$ is an \emph{inversion of} $\tau$ if and only if $i<j$ and $\tau(i) > \tau(j)$.  If $(i,j)$ is an inversion of $\tau$, then in a filling $\augfill^\tau$ of $\widehat{\dg}(\gamma)$ we see that $\gamma_i$ and $\gamma_j$ are the heights of the columns above the basement entries $\tau(i)$ and $\tau(j)$, respectively. In this section we prove the following.

\begin{proposition}{\label{prop:coinv}}
With the notation above,
\begin{equation}
\coinv(\augfill^{\tau})=\cotrip(\augfill^{\tau}) + \sum_{\substack{(i,j) \text{ is an }\\ \text{ inversion of } \tau}} \chi(\gamma_i \leq \gamma_j).
\end{equation}
\end{proposition}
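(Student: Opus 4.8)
The strategy is to reduce the identity to a combinatorial head count of attacking pairs of cells among the triples of $\augfill^{\tau}$, after disposing of the formal part. Since every triple of $\augfill^{\tau}$ is either an inversion triple or a coinversion triple and the total number of triples equals $\sum_{u\in\dg(\gamma)}a(u)$, we have $\cotrip(\augfill^{\tau})=\sum_{u}a(u)-\invtrip(\augfill^{\tau})$; together with the definition $\coinv(\augfill^{\tau})=\sum_{u}a(u)-\inv(\augfill^{\tau})$ this gives $\coinv(\augfill^{\tau})-\cotrip(\augfill^{\tau})=\invtrip(\augfill^{\tau})-\inv(\augfill^{\tau})$, so it suffices to prove $\invtrip(\augfill^{\tau})-\inv(\augfill^{\tau})=\sum_{(i,j)}\chi(\gamma_i\le\gamma_j)$, the sum over the inversions $(i,j)$ of $\tau$. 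Writing $\invtrip(\augfill^{\tau})=\sum_{(u,v,w)}\bigl(\chi_{uv}(\augfill^{\tau})+\chi_{vw}(\augfill^{\tau})-\chi_{uw}(\augfill^{\tau})\bigr)$ over all triples, I first note that $\sum_{(u,v,w)}\chi_{uw}(\augfill^{\tau})=\sum_{u\in\Des(\augfill^{\tau})}a(u)$: for each $u\in\dg(\gamma)$ there are exactly $a(u)$ triples with top cell $u$, and $\chi_{uw}(\augfill^{\tau})=1$ in such a triple precisely when $u$ is a descent. This cancels the last summand of $\inv(\augfill^{\tau})$, so the goal becomes
\[
\sum_{(u,v,w)}\bigl(\chi_{uv}(\augfill^{\tau})+\chi_{vw}(\augfill^{\tau})\bigr)-\bigl|\Inv(\augfill^{\tau})\bigr|+\bigl|\{\,i<j:\gamma_i\le\gamma_j\,\}\bigr|=\sum_{(i,j)\ \text{an inversion of}\ \tau}\chi(\gamma_i\le\gamma_j).
\]

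For the left-hand side, observe that for each triple $(u,v,w)$ the pairs $\{u,v\}$ and $\{v,w\}$ are two distinct attacking pairs of cells of $\widehat{\dg}(\gamma)$, and $\chi_{uv}(\augfill^{\tau})$, $\chi_{vw}(\augfill^{\tau})$ record exactly whether these are inversions of $\augfill^{\tau}$. Hence $\sum_{(u,v,w)}\bigl(\chi_{uv}+\chi_{vw}\bigr)=\sum_{P}m(P)\,\chi\!\left(P\in\Inv(\augfill^{\tau})\right)$, summed over all attacking pairs $P$, where $m(P)$ is the number of triples in which $P$ occurs as an $\{u,v\}$- or a $\{v,w\}$-pair. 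The heart of the proof is the claim, proved by running through the finitely many local shapes a triple can take (in the same spirit as the triple analysis in the proof of Theorem~\ref{LRrule}), that (i) $m(P)=1$ for every attacking pair $P$ that is not a pair of two basement cells, and (ii) for a pair $P=\{(i,0),(j,0)\}$ of basement cells with $i<j$ one has $m(P)=1$ when $\gamma_i>\gamma_j$ and $m(P)=0$ when $\gamma_i\le\gamma_j$ — the unique candidate triple realizing such a $P$ being the Type~I triple with top cell $(i,1)$, arm cell $v=(j,0)$ and bottom cell $w=(i,0)$, which exists exactly when $\gamma_j<\gamma_i$.

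Granting the claim, $\sum_{P}m(P)\,\chi(P\in\Inv(\augfill^{\tau}))-|\Inv(\augfill^{\tau})|=\sum_{P}\bigl(m(P)-1\bigr)\chi(P\in\Inv(\augfill^{\tau}))$, and the only nonzero terms come from basement pairs $\{(i,0),(j,0)\}$ with $i<j$ and $\gamma_i\le\gamma_j$, each contributing $-\chi\!\left(\{(i,0),(j,0)\}\in\Inv(\augfill^{\tau})\right)$. Adding $|\{\,i<j:\gamma_i\le\gamma_j\,\}|$ then leaves
\[
\sum_{\substack{i<j\\ \gamma_i\le\gamma_j}}\Bigl(1-\chi\!\left(\{(i,0),(j,0)\}\in\Inv(\augfill^{\tau})\right)\Bigr)
=\sum_{\substack{i<j,\ \gamma_i\le\gamma_j\\ (i,j)\ \text{an inversion of}\ \tau}}1
=\sum_{(i,j)\ \text{an inversion of}\ \tau}\chi(\gamma_i\le\gamma_j),
\]
the middle equality because the basement cells $(i,0)$ and $(j,0)$ carry the entries $\tau(i)$ and $\tau(j)$, so $\{(i,0),(j,0)\}$ (with $i<j$) \emph{fails} to be an inversion of $\augfill^{\tau}$ exactly when $(i,j)$ is an inversion of $\tau$ — this is the observation identifying the inversions of $\augfill^{\tau}$ with both cells in row $0$ with the inversions of $\tau^{-1}$, read off for a pair of columns. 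Specializing $\tau=\epsilon$ makes the right-hand sum empty and recovers the known identity $\coinv(\augfill^{\epsilon})=\cotrip(\augfill^{\epsilon})$, a useful consistency check.

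The main obstacle is the claim on $m(P)$: one must verify, configuration by configuration, that every triple ``uses'' each of its two attacking pairs, that two distinct triples never use a common attacking pair, and — the decisive point — that an attacking pair of two basement cells can be used by a triple only through a Type~I configuration, which forces the left of its two columns to be the taller one. It is exactly this asymmetry in the basement that produces the correction term $\sum_{(i,j)}\chi(\gamma_i\le\gamma_j)$; everything else is routine bookkeeping.
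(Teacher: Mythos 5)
Your proof is correct and takes essentially the same route as the paper's. Reducing $\coinv-\cotrip$ to $\invtrip-\inv$, splitting off $\sum\chi_{uw}=\sum_{u\in\Des}a(u)$, and then accounting for $\sum(\chi_{uv}+\chi_{vw})$ by how many times each attacking pair occurs in a triple is exactly the paper's argument; your "claim'' about $m(P)$ is precisely the paper's Lemma~\ref{lem:intriple} (with the same explicit description of the unique Type~I triple realizing a basement pair $\{(i,0),(j,0)\}$ when $\gamma_i>\gamma_j$), and the formula $\sum_P(m(P)-1)\chi(P\in\Inv(\augfill^\tau))$ is a mildly tidier way of writing the paper's bookkeeping. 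The one place where you should be careful is the middle equality in your final display: the statement that $\{(i,0),(j,0)\}$ (with $i<j$) fails to be an inversion of $\augfill^\tau$ exactly when $(i,j)$ is an inversion of $\tau$ matches what the paper's own proof needs and uses, but note it is the opposite of what the literal left-to-right reading order stated in Section~\ref{ch4sec:combdef} would give, and of the paper's remark that row-$0$ inversions are inversions of $\tau^{-1}$. The reconciliation is that the intended reading of row~$0$ (as in HHL, and as required for $\coinv(\augfill^\epsilon)=\cotrip(\augfill^\epsilon)$ to hold) runs in the opposite direction; under that convention your identification is correct and your $\tau=\epsilon$ consistency check does come out to zero.
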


Before we prove this proposition we need the following lemma from \cite{Haglund2008A-combinatorial} which is readily checked.
\begin{lemma}{\label{lem:intriple}}
Every pair of attacking boxes in $\widehat{\dg}(\gamma)$ occurs as either $\{u,v\}$ or $\{v,w\}$ is a unique triple $(u,v,w)$, except that an attacking pair $\{(i, 0), (j, 0) \}$ in row $0$, with $i < j$ and $\gamma_i \leq \gamma_j$, is not in any triple.
\end{lemma}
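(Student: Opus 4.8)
The plan is to verify the statement by a short case analysis on the geometric type of an attacking pair. The key preliminary observation is that a triple $(u,v,w)$ is completely recoverable from partial data: since $w=d(u)$, knowing $u$ determines $w$, and conversely knowing $w$ determines $u$ as the box immediately above $w$; moreover, among the three pairs $\{u,v\}$, $\{v,w\}$, $\{u,w\}$ in a triple, only $\{u,v\}$ and $\{v,w\}$ are attacking, because $u$ and $w$ occupy the same column (so the higher is directly above the lower, not strictly to its left). Thus to an attacking pair $\{p,q\}$ we must associate at most one triple, obtained by deciding which of $p,q$ plays the role of $u$ (in the Type I case) or of $w$ (in the Type II case) and then checking the membership condition $v\in\arm(u)$.

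First I would treat the same-row case: $p=(i_1,j)$, $q=(i_2,j)$ with $i_1<i_2$. Such a pair can occur only as $\{u,v\}$ of a Type I triple or as $\{v,w\}$ of a Type II triple, since in the other two configurations the two boxes lie in different rows. In the Type I possibility the roles are forced: $u=q$, $v=p$, $w=d(q)=(i_2,j-1)$; this needs $j\ge1$ (so $w\in\widehat{\dg}(\gamma)$) and $v\in\arm^{\text{left}}(u)$, i.e. $\gamma_{i_1}\le\gamma_{i_2}$. In the Type II possibility the roles are again forced: $w=p$, $v=q$, $u=(i_1,j+1)$; this needs $(i_1,j+1)\in\dg(\gamma)$ and $v\in\arm^{\text{right}}(u)$, i.e. $\gamma_{i_2}<\gamma_{i_1}$ (which, together with $\gamma_{i_2}\ge j$, already forces $\gamma_{i_1}\ge j+1$). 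Since $\gamma_{i_1}\le\gamma_{i_2}$ and $\gamma_{i_2}<\gamma_{i_1}$ are complementary, exactly one of the two possibilities occurs when $j\ge1$, giving a unique triple; when $j=0$ the Type I possibility is impossible, so if $\gamma_{i_1}>\gamma_{i_2}$ there is a unique (Type II) triple, while if $\gamma_{i_1}\le\gamma_{i_2}$ there is none — which is precisely the stated exceptional family of row-$0$ pairs.

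Next I would treat the diagonal case: $p=(a,j)$ and $q=(b,j-1)$ in consecutive rows with the higher box $p$ strictly left of the lower box $q$, so $a<b$ and $j\ge1$. Here the pair can occur only as $\{v,w\}$ of a Type I triple or as $\{u,v\}$ of a Type II triple. In the Type I possibility one must take $v=p$, $w=q$, $u=(b,j)$, needing $(b,j)\in\dg(\gamma)$ and $v\in\arm^{\text{left}}(u)$, i.e. $\gamma_a\le\gamma_b$ (which, with $\gamma_a\ge j$, gives $\gamma_b\ge j$). In the Type II possibility one must take $u=p$, $v=q$, $w=d(p)=(a,j-1)$, needing $v\in\arm^{\text{right}}(u)$, i.e. $\gamma_b<\gamma_a$. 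Again the two conditions are complementary, so exactly one triple arises, and there is no exceptional subcase here because a diagonal pair always has $j\ge1$.

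The argument is essentially bookkeeping; the two points needing care are (i) checking that the auxiliary boxes introduced in each subcase — $(i_1,j+1)$ in the same-row Type II possibility, $(b,j)$ in the diagonal Type I possibility, and the downward neighbors — genuinely lie in $\widehat{\dg}(\gamma)$, which in each instance follows from the $\gamma$-inequality defining that subcase, and (ii) confirming that the role assignments are forced, so that the triple, when it exists, is unique. Matching the lone triple-free subcase — a same-row pair in row $0$ with $\gamma_{i_1}\le\gamma_{i_2}$ — against the statement then completes the proof.
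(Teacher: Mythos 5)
Your proof is correct and complete. The thesis itself gives no argument for this lemma (it cites \cite{Haglund2008A-combinatorial} and calls it ``readily checked''), and your case analysis is precisely the routine verification being invoked: an attacking pair is either a same-row pair or a consecutive-row pair with the higher box strictly left; in each case the roles of the two boxes in a prospective triple are forced, the two possible configurations require the complementary conditions $v\in\arm^{\text{left}}(u)$ (i.e.\ $\gamma_{i'}\le\gamma_i$) and $v\in\arm^{\text{right}}(u)$ (i.e.\ $\gamma_{i'}<\gamma_i$), so exactly one triple exists whenever the auxiliary box does, and your checks that the auxiliary boxes $(i_1,j+1)$ and $(b,j)$ lie in $\dg(\gamma)$ under the relevant inequality are the right ones. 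The only pairs escaping the dichotomy are the same-row pairs in row $0$ with the left column weakly shorter, which is exactly the stated exception; you also correctly note that the pair $\{u,w\}$ of a triple is never attacking, so no other roles need to be considered. One cosmetic correction: your labels Type I and Type II are swapped relative to the thesis's figures, where Type I is the configuration with $v\in\arm^{\text{right}}(u)$ (the box $v$ sits in the row of $w$, to its right) and Type II is the one with $v\in\arm^{\text{left}}(u)$. Since you always state the arm condition explicitly, this does not affect the validity of the argument, but the names should be interchanged to match the text.
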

\begin{proof}[Proof of Proposition \ref{prop:coinv}]

The sum over all triples,
\begin{equation}
\sum_{(u,v,w)} (\chi_{uv}(\augfill^{\tau})+\chi_{vw}(\augfill^{ \tau})-\chi_{uw}(\augfill^{ \tau}))
\end{equation}
is equal to the number of inversion triples, $\invtrip(\augfill^\tau)$. We claim that
\begin{equation*}
\sum_{\substack{(u,v,w) \\ \text{a triple} }} (\chi_{uv}(\augfill^{ \tau})+\chi_{vw}(\augfill^{\tau}))=|\Inv(\augfill^{\tau})| - |\{ i<j \mid \gamma_i \leq \gamma_j \}| + \sum_{\substack{(i,j) \text{ is an } \\ \text{ inversion of } \tau}} \chi(\gamma_i \leq \gamma_j).
\end{equation*}

To see this, note that $\sum_{(u,v,w)} (\chi_{uv}(\augfill^{ \tau})+\chi_{vw}(\augfill^{\tau}))$ counts all the inversions in $\augfill^{ \tau}$ except those of the form  $\{(i, 0), (j, 0) \}$, with $\tau(i) < \tau(j)$ and $\gamma_i \leq \gamma_{j}$. Consider an inversion $(i,j)$ of $\tau$ and assume $\gamma_i > \gamma_j$.  Then the boxes in the basement containing $\tau(i)$ and $\tau(j)$ are \emph{not} an inversion of $\augfill^\tau$ because $\tau(i) > \tau(j)$. When $\gamma_i> \gamma_j$ then the pair of attacking boxes $t=(i,0)$ and $s=(j,0)$ is in a unique triple $(r,s,t)$ by Lemma \ref{lem:intriple}, where $r$ is the box $(i,1)$. Since $(s,t)$ is not an inversion of $\augfill^\tau$, then $(r,s) \in \Inv(\augfill^{\tau})$ if and only if $\chi_{rs}(\augfill^{\tau}) + \chi_{st}(\augfill^{\tau})=1$. The inversion $(i,j)$ of $\tau$ is not counted in $\sum_{(i,j)} \chi(\gamma_i \leq \gamma_j)$.

If $\gamma_i \leq \gamma_j$, then the boxes $(i,0)$ and $(j,0)$ are still not an inversion and they do not appear in any triple. The term $ |\{ i<j \mid \gamma_i \leq \gamma_j \}|$ counts all pairs of boxes $(x,y)$ in row $0$ with the corresponding column heights weakly increasing as inversions.  But clearly this term over-counts because each inversion $(i,j)$ of $\tau$ that satisfies $\gamma_i \leq \gamma_j$, which is counted in $ |\{ i<j \mid \gamma_i \leq \gamma_j \}|$, is not an inversion of $\augfill^\tau$. Thus we have
\begin{equation*}
\sum_{\substack{(u,v,w) \\ \text{ a triple }}} (\chi_{uv}(\augfill^{s_i \tau})+\chi_{vw}(\augfill^{s_i \tau}) )=|\Inv(\augfill^{\tau})| - |\{ i<j \mid \gamma_i \leq \gamma_j \}| + \sum_{\substack{(i,j) \text{ is an }\\ \text{ inversion of } \tau}} \chi(\gamma_i \leq \gamma_j).
\end{equation*}

Finally, we see that $\sum_{(u,v,w) }\chi_{uw}(\augfill^{ \tau})$ is equal to $\sum_{u \in \Des(\augfill^\tau)} a(u)$.  Thus we have
\begin{align*}
\invtrip(\augfill^\tau)&= |\Inv(\augfill^{\tau})| - |\{ i<j \mid \gamma_i \leq \gamma_j \}| + \sum_{\substack{(i,j) \text{ is an } \\ \text{ inversion of } \tau}} \chi(\gamma_i \leq \gamma_j)-\sum_{u \in \Des(\augfill^\tau)} a(u) \\ 
				&= \inv (\augfill^\tau) + \sum_{\substack{(i,j) \text{ is an }\\ \text{ inversion of } \tau}} \chi(\gamma_i \leq \gamma_j).
\end{align*}

It follows that
\begin{align*}
\coinv(\augfill^\tau) &= \left(\sum_{u \in \dg(\gamma)} a(u) \right) - \inv(\augfill^\tau) \\
&= \left(\sum_{u \in \dg(\gamma)} a(u) \right) -\left( \invtrip(\augfill^\tau) -  \sum_{\substack{(i,j) \text{ is an }\\ \text{ inversion of } \tau}} \chi(\gamma_i \leq \gamma_j) \right) \\
&= \cotrip(\augfill^\tau) + \sum_{\substack{(i,j) \text{ is an }\\ \text{ inversion of } \tau}} \chi(\gamma_i \leq \gamma_j).
\end{align*}

\end{proof}

\section{Permuted Basement Nonsymmetric Macdonald Polynomials}{\label{sec:permbasepoly}}

In this section we first state the combinatorial formula for nonsymmetric Macdonald polynomials $E_\gamma$ given in \cite{Haglund2008A-combinatorial}.  Then we will use the relation in Proposition \ref{prop:coinv} to state how the nonsymmetric Macdonald polynomials transform under the action of the generator $T_i \in \D$; this result was first communicated in \cite{Haglund2010The-action}. We then use this result to show that the nonsymmetric polynomials obtained by permuting the basement in the combinatorial formula for $E_\gamma$ are eigenfunctions of a family of commuting operators.

In \cite{Haglund2008A-combinatorial} the authors prove the following.

\begin{theorem}(\cite{Haglund2008A-combinatorial}){\label{thm:nsmac}}
The nonsymmetric Macdonald polynomials $E_\gamma(x_1, \ldots, x_n; q,t) $ are given by
\begin{equation}
E_\gamma(\xx; q,t) = \sum_{\substack{\sigma \colon \gamma \to [n] \\ \text{non-attacking}}} x^\sigma q^{\maj(\augfill^\epsilon)} t^{\coinv(\augfill^\epsilon)} \prod_{\substack{u \in \dg(\gamma) \\ \augfill^\epsilon(u) \neq \augfill^\epsilon(d(u))}} \frac{1-t}{1-q^{l(u)+1}t^{a(u)+1}}
\end{equation}
where $\epsilon$ is the identity permutation and $x^\sigma=\prod_{u \in \dg(\gamma) } x_{\sigma(u)}$ is the weight of the filling $\sigma$.
\end{theorem}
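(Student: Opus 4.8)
The statement to be proved is the Haglund--Haiman--Loehr combinatorial formula of Theorem~\ref{thm:nsmac}, and the natural route is the one carried out in \cite{Haglund2008A-combinatorial}: show that the right-hand side, which I abbreviate
\[
C_\gamma(\xx;q,t) := \sum_{\substack{\sigma\colon\gamma\to[n]\\ \text{non-attacking}}} x^\sigma\, q^{\maj(\augfill^\epsilon)} t^{\coinv(\augfill^\epsilon)} \prod_{\substack{u\in\dg(\gamma)\\ \augfill^\epsilon(u)\neq\augfill^\epsilon(d(u))}} \frac{1-t}{1-q^{l(u)+1}t^{a(u)+1}},
\]
satisfies the recursive characterization of nonsymmetric Macdonald polynomials due to Knop and Sahi (\cite{Knop1997Integrality}, \cite{Sahi1996Interpolation}), which is itself a specialization of Cherednik's intertwiner relations (\cite{Cherednik1997Intertwining}). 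Concretely, the $E_\gamma$ are the unique family in $\Q(q,t)[\xx]$ determined by: (i) $E_{(0,\dots,0)}=1$; (ii) the cyclic shift relation $E_{(\gamma_2,\dots,\gamma_n,\gamma_1+1)}(x_1,\dots,x_n)=q^{\gamma_1}x_1\,E_\gamma(x_2,\dots,x_n,q^{-1}x_1)$; and (iii) for each $i$, a relation expressing $E_{s_i\gamma}$ (when $\gamma_i<\gamma_{i+1}$), together with the symmetry $T_iE_\gamma=tE_\gamma$ (when $\gamma_i=\gamma_{i+1}$), in terms of the Hecke generator $T_i$ acting via Cherednik's representation (Definition~\ref{def:cherednik}). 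Since these relations build every $E_\gamma$ from $E_{(0,\dots,0)}$, it suffices to verify that $C_\gamma$ obeys the same three relations.

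The first two are essentially bookkeeping. For (i), the only non-attacking filling of $\dg((0,\dots,0))$ is the empty one, whose augmented diagram is just the basement; $\maj$, $\coinv$, and the product over descents are trivial, so $C_{(0,\dots,0)}=1$. For (ii), I would exhibit an explicit bijection between non-attacking fillings $\sigma$ of $\gamma$ and non-attacking fillings $\sigma'$ of $\gamma'=(\gamma_2,\dots,\gamma_n,\gamma_1+1)$: cyclically relabel the columns, move the old first column to the last position, and adjoin one new cell at the top of that column (its entry forced by the non-attacking condition relative to the rotated basement). One then checks cell by cell that $\arm$ and $\leg$ of every surviving cell are unchanged, that at most one new descent with controlled leg length appears, and that the reading-order changes to $\maj$ and $\coinv$ together with the new cell's monomial contribution produce exactly the prefactor $q^{\gamma_1}x_1$ and the substitution $x_i\mapsto x_{i+1}$, $x_n\mapsto q^{-1}x_1$. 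The product over descents transforms correctly because the multiset of $(\arm,\leg)$ pairs at descent cells is preserved.

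The genuinely hard step is (iii): understanding the action of $T_i$ (as in Definition~\ref{def:cherednik}) on the sum $C_\gamma$. When $\gamma_i=\gamma_{i+1}$ one must show $C_\gamma$ is symmetric in $x_i,x_{i+1}$; swapping columns $i$ and $i+1$ of a filling is the obvious candidate involution, and the delicate point is that $\arm(u)$ is defined through the \emph{relative heights} of columns, so interchanging two columns of equal height leaves every arm set (hence $\coinv$ and the denominators) intact, and the descent data is likewise unchanged. When $\gamma_i<\gamma_{i+1}$ one must match $T_iC_\gamma$ against $C_{s_i\gamma}$ plus a correction proportional to $C_\gamma$; this is done by partitioning the non-attacking fillings of $\gamma$ and of $s_i\gamma$ into small classes on which the difference operator $\frac{f-s_i f}{1-x_i/x_{i+1}}$ acts predictably, and tracking how the column exchange --- now of \emph{unequal} heights --- alters $\arm^{\text{left}}$, $\arm^{\text{right}}$, $\maj$, and the denominators $1-q^{l(u)+1}t^{a(u)+1}$. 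The main obstacle is precisely this statistic bookkeeping: the arm statistic's dependence on column heights means the exchange is not weight-preserving on the nose, and one must show the discrepancies assemble into exactly the rational coefficients dictated by the intertwiner relation, with no spurious poles coming from the denominators. Once (i)--(iii) are verified, uniqueness in the Knop--Sahi characterization forces $C_\gamma=E_\gamma$, completing the proof.
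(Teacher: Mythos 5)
The paper does not prove Theorem~\ref{thm:nsmac}: it is stated as a cited result from \cite{Haglund2008A-combinatorial}, so there is no internal argument to compare against. Your sketch correctly identifies the strategy of that reference --- verify that the combinatorial sum satisfies the Knop--Sahi/Cherednik recursive characterization (the base case, the cyclic-shift relation, and the $T_i$ intertwiner relation), then invoke uniqueness. The outline is sound at the level of detail given; if you intended to reprove the theorem rather than cite it, the real work is in step (iii), and you are right that the bookkeeping there (how $\arm$, $\maj$, and the denominators $1-q^{l(u)+1}t^{a(u)+1}$ transform under swapping columns of unequal height, and how the resulting rational functions recombine under the divided-difference part of $T_i$) is the substance of the HHL proof. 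One small caution in the $\gamma_i=\gamma_{i+1}$ case: with the identity basement, the candidate involution is not a bare column swap but must also act on the filling values so as to preserve the non-attacking condition against the permuted basement, after which the statistics are preserved as you describe. None of this amounts to a gap relative to the paper, since the paper is simply quoting the theorem.
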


Note that in Theorem \ref{thm:nsmac} the statistic $\coinv(\augfill^\epsilon)$ can be replaced by $\cotrip(\augfill^\epsilon)$, and in fact this is what the authors in \cite{Haglund2008A-combinatorial} use to prove Theorem  \ref{thm:nsmac}.

The definitions needed to describe the combinatorial formula above (non-attacking fillings, $\maj$, $\coinv$, and so on) are still valid when working with an arbitrary basement $\tau$.  Thus we define \emph{permuted basement nonsymmetric polynomials} as follows.

\begin{definition}
Let $\tau \in S_n$. The permuted basement nonsymmetric polynomials $E_{\gamma, \tau}(x_1, x_2, \ldots, x_n; q,t)$ are given by
\begin{equation}
E_{\gamma,\tau}(\xx; q,t) = \sum_{\substack{\sigma \colon \gamma \to [n] \\ \text{non-attacking}}} x^\sigma q^{\maj(\augfill^\tau)} t^{\cotrip(\augfill^\tau)} \prod_{\substack{u \in \dg(\gamma) \\ \augfill^\tau(u) \neq \augfill^\tau(d(u))}} \frac{1-t}{1-q^{l(u)+1}t^{a(u)+1}}.
\end{equation}
\end{definition}

We note that $E_{\gamma, \tau}$ is defined using the number of coinversion triples $\cotrip$ instead of the coinversion statistic $\coinv$. This is so that the study of permuted basement nonsymmetric polynomials in this dissertation conform to \cite{Haglund2010The-action} and a forthcoming paper \cite{Haglund2011Properties}.

We wish to establish two results in the remainder of this chapter. The first is to record the relationship between $E_{\gamma, \tau}$ and $E_{\gamma, \omega}$ for $\gamma > \omega$ in weak Bruhat order. This was first done in \cite{Haglund2010The-action}.  The second is to show the functions $E_{\gamma,\tau}$ are eigenfunctions for a family of commuting operators in $\D$. One lemma needed to establish the first result is the following.

\begin{lemma}{\label{lem:jims}}
For $0 < i < n$ and any $F,G \in \Q(q,t)[x_1, \ldots, x_n]$ the following are equivalent.
\begin{enumerate}
\item $T_i F =G $,
\item $F+G$ and $tx_{i+1}F + x_i G$ are both symmetric in $x_i, x_{i+1}$.
\end{enumerate}
\end{lemma}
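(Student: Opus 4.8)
The plan is to deduce the equivalence entirely from the defining relations of $\D$ in Definition~\ref{def:doubleaffine} together with Remark~\ref{rem:symmetric}, which says that $h$ is symmetric in $x_i,x_{i+1}$ if and only if $T_i h = th$. Two auxiliary commutation identities will do all the work. From $(T_i-t)(T_i+1)=0$, i.e.\ $T_i^2=(t-1)T_i+t$, one gets $T_i^{-1}=t^{-1}(T_i-(t-1))$; combining this with $T_iX_iT_i=tX_{i+1}$ yields
\[
T_iX_i = X_{i+1}T_i-(t-1)X_{i+1}, \qquad T_iX_{i+1}=X_iT_i+(t-1)X_{i+1},
\]
the second coming from $X_{i+1}=t^{-1}T_iX_iT_i$ and the quadratic relation. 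I will also repeatedly use $T_i(1+T_i)=T_i+T_i^2=t(1+T_i)$.

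For $(1)\Rightarrow(2)$: assume $T_iF=G$. Then $F+G=(1+T_i)F$, so $T_i(F+G)=t(1+T_i)F=t(F+G)$ and $F+G$ is symmetric by Remark~\ref{rem:symmetric}. Next, applying $T_iX_iT_i=tX_{i+1}$ to $F$ gives $T_i(x_iG)=tx_{i+1}F$ (using $T_iF=G$), hence $tx_{i+1}F+x_iG=(1+T_i)(x_iG)$, and the identity $T_i(1+T_i)=t(1+T_i)$ shows this too is fixed up to the scalar $t$ by $T_i$, hence symmetric.

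For $(2)\Rightarrow(1)$: write $H=F+G$ and $K=tx_{i+1}F+x_iG$, both assumed symmetric in $x_i,x_{i+1}$. Purely algebraically, $x_iH-K=(x_i-tx_{i+1})F$ and $K-tx_{i+1}H=(x_i-tx_{i+1})G$. I will apply $T_i$ to the first identity: the right side becomes $T_i(x_iF)-t\,T_i(x_{i+1}F)$, which the two commutation identities rewrite in terms of $T_iF$ and $x_{i+1}F$; the left side becomes $T_i(x_iH)-T_iK=x_{i+1}H-tK$ using $T_iH=tH$, $T_iK=tK$ and $T_i(x_iH)=x_{i+1}T_iH-(t-1)x_{i+1}H=x_{i+1}H$. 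Collecting coefficients and substituting $H=F+G$, $K=tx_{i+1}F+x_iG$ where needed, all terms involving $F$ alone cancel and one is left with $(x_{i+1}-tx_i)\,T_iF=(x_{i+1}-tx_i)\,G$. Since $x_{i+1}-tx_i$ is a nonzerodivisor in $\Q(q,t)[x_1,\dots,x_n]$, this forces $T_iF=G$.

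The routine but error-prone part is the coefficient bookkeeping in $(2)\Rightarrow(1)$; the subtlety to watch is that $T_i$ does not commute with multiplication by $x_i$ or $x_{i+1}$, so the commutation identities must be invoked in the correct order, and one should also note that $T_i$ preserves $\Q(q,t)[x_1,\dots,x_n]$ (so all expressions stay polynomial) and that the final cancellation of $x_{i+1}-tx_i$ is legitimate because the ring is a domain. Alternatively, one could verify $(2)\Rightarrow(1)$ by direct substitution into the explicit formula for $T_i$ in Definition~\ref{def:cherednik}, after solving the linear system $F+G=H$, $tx_{i+1}F+x_iG=K$ for $F$ and $G$; this is self-contained but heavier, and I expect the operator-relation route above to be shorter.
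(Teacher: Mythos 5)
Your proof is correct, and the $(2)\Rightarrow(1)$ direction is a genuinely different argument from the paper's. The paper proves $(2)\Rightarrow(1)$ by substituting the explicit Demazure--Lusztig formula $T_iF = t(s_iF) + (t-1)x_{i+1}\frac{F-s_iF}{x_{i+1}-x_i}$ from Definition~\ref{def:cherednik}, rewriting $tx_{i+1}F$ and $s_iF+s_iG$ via the two symmetry hypotheses, and simplifying the resulting rational expression directly to $G$. You instead stay at the level of the algebraic relations of $\D$: you package the hypotheses as $x_iH-K=(x_i-tx_{i+1})F$ with $H,K$ fixed by $T_i$ up to the scalar $t$, push $T_i$ through using the commutation identities derived from $T_iX_iT_i=tX_{i+1}$ and the quadratic relation, and cancel the nonzerodivisor $x_{i+1}-tx_i$. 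I checked the bookkeeping you left implicit: the right side becomes $(x_{i+1}-tx_i)T_iF-(t^2-1)x_{i+1}F$, the left side $x_{i+1}H-tK = -(t^2-1)x_{i+1}F+(x_{i+1}-tx_i)G$, and the $F$-terms cancel exactly as claimed. Your route has the advantage of treating both directions uniformly via the relations plus Remark~\ref{rem:symmetric}, and it avoids any rational-function manipulation; the only place it leaves the abstract algebra is the final cancellation, which requires viewing $F,G$ as elements of the integral domain $\Q(q,t)[x_1,\dots,x_n]$, a point you correctly flag. For $(1)\Rightarrow(2)$ both you and the paper factor through $(1+T_i)$ and $T_i(1+T_i)=t(1+T_i)$; your observation that $tx_{i+1}F+x_iG=(1+T_i)(x_iG)$ via $T_i(x_iT_iF)=tx_{i+1}F$ is a cleaner route to the same conclusion than the paper's expansion using $x_iT_i=(T_i+1-t)x_{i+1}$.
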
  
\begin{proof}
From the defining quadratic relation $(T_i -t)(T_i+1)=0$ in $\D$ we see $T_i^2=(t-1)T_i+t$.  We will also make use of the defining relation $T_i X_i T_i =t X_{i+1}$ and  Remark \ref{rem:symmetric}.  

Assume $T_iF=G$.  Then we compute
\begin{align*}
T_i(F+G) &= T_i(F+T_iF) = T_i(1+T_i)F \\
&= t(1+T_i)F = t(F+G).
\end{align*}
Thus $F+G$ is symmetric in $x_i, x_{i+1}$.

We also see
\begin{align*}
T_i(tx_{i+1}F+x_iG) &= T_i(tx_{i+1} F + x_i (T_i F)) = T_i\left(tx_{i+1} F + (T_i+1-t)(x_{i+1}F) \right) \\
&= T_i ( t+T_i +1 -t)x_{i+1} F = t(T_i+1)x_{i+1}F = t(T_i x_{i+1} F + x_{i+1} F) \\
&= t\left( x_i T_i F + (t-1)x_{i+1} F + x_{i+1} F\right) = t(tx_{i+1}F+x_iG).
\end{align*}

For the converse, assume $F+G$ and $tx_{i+1}F + x_i G$ are both symmetric in $x_i, x_{i+1}$. Thus we have
\begin{align*}
F+G &= (s_i F)+(s_i G) \text{ and } \\
tx_{i+1}F+x_iG &= tx_i(s_i F) + x_{i+1}(s_i G).
\end{align*}
So we can compute
\begin{align*}
T_iF &= t(s_iF)+(t-1)x_{i+1}\frac{F-s_iF}{x_{i+1}-x_i} \\
&= t(s_iF) +\frac{tx_{i+1}F-tx_{i+1}s_iF}{x_{i+1}-x_i} -\frac{x_{i+1}F-x_{i+1}s_iF}{x_{i+1}-x_i} \\
&= t(s_iF) +\frac{tx_is_iF+x_{i+1}s_iG-x_iG-tx_{i+1}s_iF}{x_{i+1}-x_i}+\frac{x_{i+1}s_iF-x_{i+1}F}{x_{i+1}-x_i} \\
&= t(s_iF)+\frac{(x_i-x_{i+1})ts_iF}{x_{i+1}-x_i}+\frac{x_{i+1}s_iG-x_iG}{x_{i+1}-x_i}+\frac{x_{i+1}s_iF-x_{i+1}F}{x_{i+1}-x_i} \\
&= \frac{x_{i+1}s_iG-x_iG+x_{i+1}s_iF-x_{i+1}F}{x_{i+1}-x_i}\\
&= \frac{x_{i+1}(s_iG+s_iF)-x_iG -x_{i+1}F}{x_{i+1}-x_i} \\
&= G.
\end{align*}
\end{proof}

Lemma \ref{lem:jims} can be used to prove the following proposition, which was first communicated to this author in \cite{Haglund2010The-action}. Recall, the polynomials $E_{\gamma, \tau}$ are defined using $\cotrip$ instead of $\coinv$.

\begin{proposition}(\cite{Haglund2010The-action}){\label{prop:theaction}}
Let $\tau\in S_n$ and $i$ be such that $0 < i < n$ and  $\ell(s_i \tau) > \ell(\tau)$.  In the augmented filling with basement given by $\tau$, let $a$ be the height of the column above the entry $i$ in row $0$ and let $b$ be the height of the column above the entry $i+1$ in row $0$. Then
\begin{equation}
T_i  E_{\gamma, \tau} = t^{\chi (a \leq b)} E_{\gamma, s_i \tau}.
\end{equation}
\end{proposition}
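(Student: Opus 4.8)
The plan is to verify condition (2) of Lemma~\ref{lem:jims} with $F = E_{\gamma,\tau}$ and $G = t^{\chi(a\leq b)}E_{\gamma,s_i\tau}$; by that lemma this is equivalent to $T_i E_{\gamma,\tau} = t^{\chi(a\leq b)}E_{\gamma,s_i\tau}$. So I would need to show that the two polynomials $F+G$ and $tx_{i+1}F + x_i G$ are symmetric in $x_i,x_{i+1}$. The natural way to do this is combinatorially: give a weight-preserving, statistic-preserving sign-reversing-type involution on the non-attacking fillings that index the sums, matching fillings contributing to $F$ with fillings contributing to $G$, pairing off those terms whose $x_i$-exponent and $x_{i+1}$-exponent differ and fixing the rest, in such a way that interchanging $x_i \leftrightarrow x_{i+1}$ permutes the terms.

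First I would set up the combinatorial comparison between fillings of $\widehat{\dg}(\gamma)$ with basement $\tau$ and with basement $s_i\tau$. Since $\ell(s_i\tau) > \ell(\tau)$, the entries $i$ and $i+1$ appear in positions of row $0$ that are ``in order'' for $\tau$ and swapped for $s_i\tau$; let $a,b$ be the heights of the two columns above those basement entries as in the statement. The key structural observation (this is where Proposition~\ref{prop:coinv} enters, via $\coinv = \cotrip + \sum_{(p,q)\ \text{inv of}\ \tau}\chi(\gamma_p\le\gamma_q)$) is that passing from basement $\tau$ to basement $s_i\tau$ changes $\cotrip$ in a controlled way: it either leaves it unchanged or shifts it by one depending on the relation between the entries sitting above $i$ and $i+1$, and on whether $a\le b$ or $a>b$. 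I would isolate the fillings into classes according to the contents of the two columns over the basement entries $i,i+1$, and within each class exhibit the bijection to the corresponding class of $s_i\tau$-fillings, tracking how $\maj$, $\cotrip$, the arm/leg product, and the monomial $x^\sigma$ transform. In the "aligned" case (where swapping the two columns' entries does not create or destroy attacking violations or descents) the bijection just swaps the two columns and multiplies the monomial by the appropriate power of $x_i/x_{i+1}$; the harder classes are those where the swap would produce an attacking pair, and there I would need to argue those fillings are inert (the offending configuration is forced, so they either do not occur or occur symmetrically).

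The main obstacle I anticipate is precisely bookkeeping the change in $\cotrip$ and $\maj$ under the column swap when $a \neq b$: the triples straddling columns $i$ and $i+1$, and the descents in those columns, must be shown to contribute exactly the correction $t^{\chi(a\le b)}$ and nothing more, uniformly across the sum. I would handle this by the same kind of triple-by-triple analysis used in the proof of Proposition~\ref{prop:coinv} and in the proof of Theorem~\ref{LRrule}'s triple conditions: classify each triple $(u,v,w)$ with $v$ in column $i$ or $i+1$ by type (I or II) and by which of $\chi_{uv},\chi_{vw},\chi_{uw}$ are $1$, and check that the column swap toggles inversion-triple status in a way that telescopes. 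Once the bijection and the statistic transformations are pinned down, the symmetry of $F+G$ follows because swapping $x_i\leftrightarrow x_{i+1}$ in $F+G$ matches the $\tau$-term of $F$ with the swapped $s_i\tau$-term of $G$ and vice versa, and similarly $tx_{i+1}F + x_iG$ is visibly fixed because the extra monomial factors $tx_{i+1}$ and $x_i$ are exactly what is needed to absorb the discrepancy in the $x_i,x_{i+1}$ exponents between a filling and its image. Applying Lemma~\ref{lem:jims} then closes the argument.
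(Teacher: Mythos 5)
The dissertation does not actually contain a proof of Proposition~\ref{prop:theaction}. The result is attributed to the unpublished communication \cite{Haglund2010The-action}, and the only indication of method is the single preceding sentence that ``Lemma~\ref{lem:jims} can be used to prove the following proposition''; after the statement the text moves directly to Remark~\ref{rem:howtouse}. So there is no written argument to compare against, only a pointer.

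Your proposal is consistent with that pointer: setting $F = E_{\gamma,\tau}$ and $G = t^{\chi(a\leq b)}E_{\gamma,s_i\tau}$ and appealing to Lemma~\ref{lem:jims} to reduce the claim to symmetry of $F+G$ and $tx_{i+1}F+x_iG$ in $x_i, x_{i+1}$ is exactly what the dissertation signals. But what you have written is a plan, and the step you correctly flag as the ``main obstacle'' --- showing, configuration by configuration, how $\cotrip$, $\maj$, and the non-attacking and arm/leg structure transform when the contents of the columns over basement entries $i$ and $i+1$ are exchanged --- is precisely the entire content of the proof and is left undone. Two additional cautions for when you try to carry it out. First, ``sign-reversing involution'' is the wrong device: for symmetry in $x_i,x_{i+1}$ you want a coefficient-preserving bijection exchanging the $x_i$- and $x_{i+1}$-exponents, not a cancellation argument. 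Second, Proposition~\ref{prop:coinv} does not directly control what you need: it compares $\coinv$ and $\cotrip$ for a single fixed basement, and the summand $\sum_{(p,q)\ \mathrm{inv}\ \tau}\chi(\gamma_p\le\gamma_q)$ is a global constant in $\tau$ and $\gamma$. What Proposition~\ref{prop:theaction} really needs is a local comparison of $\cotrip(\augfill^{\tau})$ with $\cotrip(\augfill^{s_i\tau})$ for a fixed filling $\sigma$ above row $0$; the asymmetry between $\arm^{\text{left}}$ and $\arm^{\text{right}}$ then makes triples straddling these two columns change type in a way that depends on whether $a\le b$, which is the actual source of the $\chi(a\le b)$ correction. (Remark~\ref{rem:howtouse} is the converse bookkeeping: it \emph{uses} Proposition~\ref{prop:theaction} iteratively and recovers the constant of Proposition~\ref{prop:coinv}, not the other way around.) So the proposal identifies the right entry point but stops where the real work begins, and the intermediate lemma you lean on is not quite the right tool for the local statistic comparison.
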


\begin{remark}{\label{rem:howtouse}}
Let $\tau \in S_n$ be reduced.  Using Proposition \ref{prop:theaction} inductively we see
\begin{equation}
T_\tau E_{\gamma, \epsilon}=t^c E_{\gamma, \tau}
\end{equation}
where $c$ is the constant $c=\sum \chi(\gamma_i \leq \gamma_j)$, where the sum is over all inversions $(i,j)$ of $\tau$.

\end{remark}

The remainder of this chapter will be used to show the operators $Y_i^\tau$, whose definition follows, are the family of commuting operators in the double affine Hecke algebra for which $E_{\gamma, \tau}$ are eigenfunctions.

\begin{definition}
Let $Y_i^\tau=t^{\varepsilon_i^\tau}T_{i-1}^{\varepsilon_{i-1}}\cdots T_1^{\varepsilon_1} \pi T_{n-1}^{\varepsilon_{n-1}}\cdots T_i^{\varepsilon_i}$ with $\varepsilon_j \in \{-1, 1\}$ and $\varepsilon_i^\tau=|\{j | 1 \leq j \leq n-1 \text{ such that } \varepsilon_j =-1\}|$.  Define $\varepsilon_j$ via:
\begin{enumerate}
\item For indices $j$ with $i \leq j < n$: $\varepsilon_j=-1$ if $(i,j+1)$ form an inversion in $\tau$, and $\varepsilon_j=1$ else.
\item For indices $j$ with $1 \leq j < i$: $\varepsilon_j=1$ if $(j,i)$ form an inversion in $\tau$, and $\varepsilon_j=-1$ else.
\end{enumerate}
\end{definition}

The following lemma gives the relation between the DAHA generator $Y_i$ and the operator $Y_i^\tau$ in $\widehat{H_n}(q,t)$.
\begin{lemma}{\label{lem:TsandYs}}
For all $1 \leq i \leq n$ and $\tau$ reduced we have 
\begin{equation}\label{eq:tandy}
Y_i^\tau = T_\tau Y_{\tau^{-1}(i)}T_\tau^{-1}.
\end{equation}
\end{lemma}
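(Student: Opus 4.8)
The plan is to prove the identity $Y_i^\tau = T_\tau Y_{\tau^{-1}(i)} T_\tau^{-1}$ by induction on the length $\ell(\tau)$ of the reduced permutation $\tau$. The base case $\tau = \epsilon$ is immediate: here $\tau^{-1}(i) = i$, every pair $(a,b)$ forms no inversion, so by the defining recipe each $\varepsilon_j = 1$ for $i \le j < n$ and each $\varepsilon_j = -1$ for $1 \le j < i$; hence $Y_i^\epsilon = t^{i-1} T_{i-1}^{-1}\cdots T_1^{-1}\pi T_{n-1}\cdots T_i = Y_i$, and the right-hand side is $T_\epsilon Y_i T_\epsilon^{-1} = Y_i$. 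For the inductive step, I would write $\tau = s_k \tau'$ with $\ell(\tau') = \ell(\tau) - 1$, so $T_\tau = T_k T_{\tau'}$, and assume the identity holds for $\tau'$. Then $T_\tau Y_{\tau^{-1}(i)} T_\tau^{-1} = T_k \left( T_{\tau'} Y_{(\tau')^{-1}(s_k(i))} T_{\tau'}^{-1}\right) T_k^{-1} = T_k\, Y_{s_k(i)}^{\tau'}\, T_k^{-1}$, using $\tau^{-1}(i) = (\tau')^{-1}(s_k(i))$. So the whole lemma reduces to the single-reflection statement
\begin{equation}\label{eq:singlerefl}
Y_i^{s_k \tau'} = T_k\, Y_{s_k(i)}^{\tau'}\, T_k^{-1},
\end{equation}
which I would verify by a direct computation inside $\D(q,t)$ using the braid relations, the quadratic relation $(T_k - t)(T_k + 1) = 0$ (equivalently $T_k^{-1} = t^{-1}T_k + t^{-1} - 1$), and the relation $\pi T_j \pi^{-1} = T_{j+1}$.

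The heart of the argument is a careful case analysis of \eqref{eq:singlerefl} according to the position of $k$ relative to $i$ and $s_k(i)$. The key combinatorial fact is how the inversion sets change under left multiplication by $s_k$: a pair is an inversion of $s_k\tau'$ iff the corresponding pair (with values possibly swapped at positions $k, k+1$) is an inversion of $\tau'$, so the sign pattern $(\varepsilon_j)$ for $Y_i^{s_k\tau'}$ differs from that for $Y_{s_k(i)}^{\tau'}$ in a controlled way — essentially only in the entries $\varepsilon_{k-1}, \varepsilon_k$ (and the index $i$ shifts to $s_k(i)$). I would organize the cases as: (i) $k \notin \{i-1, i\}$ and $k \ne $ the special positions, where $T_k$ commutes past everything except possibly one factor and the result is essentially the braid/commutation relation $T_k T_j = T_j T_k$ for $|k - j| > 1$; (ii) $k = i$ or $k = i - 1$, where one must track how conjugation by $T_k$ interacts with the "turn" at position $i$ in the word defining $Y$; and (iii) the interplay with $\pi$, handled via $\pi T_j \pi^{-1} = T_{j+1}$ and $\pi X_j \pi^{-1} = X_{j+1}$. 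In each case the bookkeeping of the power of $t$ (via $\varepsilon_i^\tau$, the count of $-1$'s among the $\varepsilon_j$) must match up; this is where the definition of $Y_i^\tau$ with its prefactor $t^{\varepsilon_i^\tau}$ was designed precisely so the identity is clean.

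I expect the main obstacle to be case (ii): verifying that conjugating the "hinge" of the word $T_{i-1}^{\varepsilon_{i-1}}\cdots T_1^{\varepsilon_1}\,\pi\,T_{n-1}^{\varepsilon_{n-1}}\cdots T_i^{\varepsilon_i}$ by $T_k$ when $k$ is adjacent to $i$ produces exactly the word for $Y_i^{s_k\tau'}$ with the correctly updated exponents, including the subtle point that the exponent $\varepsilon_k$ may flip sign (since $(j,i)$ or $(i,j+1)$ becomes/ceases to be an inversion). The cleanest route is probably to first prove the "commuting" statement $Y_i^\tau Y_j^\tau = Y_j^\tau Y_i^\tau$ as a byproduct (it follows from $Y_a Y_b = Y_b Y_a$ and \eqref{eq:tandy}), and to reduce the adjacent cases to the single known relation $T_k X_k T_k = t X_{k+1}$ applied after rewriting $Y$ in a more symmetric normal form; but I would first attempt the brute-force braid-relation manipulation, since $n \ge 3$ and all relations needed are explicitly listed in Definition \ref{def:doubleaffine}. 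Once \eqref{eq:singlerefl} is established in all cases, the induction closes immediately and the lemma follows.
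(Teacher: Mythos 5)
Your proposal takes essentially the same route as the paper's proof: induction on $\ell(\tau)$, the factorization $\tau = s_k\tau'$ reducing (via the inductive hypothesis and $\tau^{-1}(i) = (\tau')^{-1}(s_k(i))$) to the single-reflection identity $Y_i^{s_k\tau'} = T_k\, Y_{s_k(i)}^{\tau'}\, T_k^{-1}$, and a case analysis by the position of $k$ relative to $i$ (the paper uses exactly the four cases $k<i-1$, $k>i$, $k=i$, $k=i-1$, verifying each by explicit braid-relation manipulations and tracking the $\varepsilon$-exponents and the prefactor power of $t$). Your sketch correctly identifies the key combinatorial point — how left multiplication by $s_k$ perturbs the inversion set and hence the sign pattern — and correctly flags the adjacent cases $k\in\{i-1,i\}$ as the delicate ones; the only caveat is that the "cleanest route" you float at the end (deriving commutativity $Y_i^\tau Y_j^\tau = Y_j^\tau Y_i^\tau$ as a byproduct of the very identity being proved and then feeding it back in) would be circular as a proof strategy, so the brute-force braid computation you say you would attempt first is indeed the right and, as the paper shows, entirely sufficient path.
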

\begin{proof}
We induct on the length of $\tau$.  If $\tau=\epsilon$ is the identity then $T_\tau=1$ and then (\ref{eq:tandy}) is $Y_i^\epsilon=Y_i$.  If $\tau=s_j$ has length one, then we will prove $Y_i^{s_j} = T_j Y_{s_j(i)}T_j^{-1}$ in the following four cases.

Assume $j < i-1$.  Then
\begin{align*}
T_j Y_{s_j(i)}T_j^{-1} &= T_j Y_i T_j^{-1} \\
&= T_j \left( t^{i-1} T_{i-1}^{-1} \cdots T_1^{-1} \pi T_{n-1}\cdots T_i \right)T_j^{-1} \\
&= t^{i-1}T_{i-1}^{-1} \cdots T_j T_{j+1}^{-1}T_j^{-1} \cdots T_1^{-1} \pi T_{n-1} \cdots T_i T_j^{-1} \\
&= t^{i-1} T_{i-1}^{-1} \cdots T_{j+1}^{-1}T_j^{-1} T_{j+1} \cdots T_1^{-1} \pi T_{n-1} \cdots T_i T_j^{-1} \\
&= t^{i-1} T_{i-1}^{-1} \cdots T_1^{-1} T_{j+1} \pi T_{n-1} \cdots T_i T_j^{-1} \\
&= t^{i-1} T_{i-1}^{-1} \cdots T_1^{-1} \pi T_j T_{n-1} \cdots T_i T_j^{-1} \\
&= t^{i-1} T_{i-1}^{-1} \cdots T_1^{-1} \pi T_{n-1}\cdots T_i \\
&= Y_i^{s_j}.
\end{align*}

Similarly, if $j >i$ then 
\begin{align*}
T_j Y_{s_j(i)}T_j^{-1} &= T_j Y_i T_j^{-1} \\
&= T_j \left( t^{i-1} T_{i-1}^{-1} \cdots T_1^{-1} \pi T_{n-1}\cdots T_i \right) T_j^{-1} \\
&= t^{i-1} T_{i-1}^{-1} \cdots T_1^{-1} T_j \pi T_{n-1} \cdots T_i T_j^{-1} \\
&= t^{i-1} T_{i-1}^{-1} \cdots T_1^{-1} \pi T_{j-1} T_{n-1} \cdots T_i T_j^{-1} \\
&=t^{i-1} T_{i-1}^{-1} \cdots T_1^{-1} \pi T_{n-1}\cdots T_{j-1}T_jT_{j-1} \cdots T_i T_j^{-1} \\
&= t^{i-1} T_{i-1}^{-1} \cdots T_1^{-1} \pi T_{n-1}\cdots T_{j}T_{j-1}T_{j} \cdots T_i T_j^{-1} \\
&= Y_i^{s_j}.
\end{align*}

If $j=i$ then
\begin{align*}
T_j Y_{s_j(i)}T_j^{-1} &= T_i Y_{i+1} T_i^{-1} \\
&=T_i \left( t^i T_i^{-1} \cdots T_1^{-1} \pi T_{n-1} \cdots T_{i+1}\right) T_i^{-1} \\
&= t^i T_{i-1}^{-1} \cdots T_1^{-1} \pi T_{n-1} \cdots T_{i+1} T_i^{-1} \\
&= Y_i^{s_i}=Y_i^{s_j}.
\end{align*}

Similarly, if $j=i-1$ then 
\begin{align*}
T_j Y_{s_j(i)}T_j^{-1} &= T_{i-1} Y_{i-1} T_{i-1}^{-1} \\
&= T_{i-1} \left( t^{i-2} T_{i-2}^{-1} \cdots T_1^{-1} \pi T_{n-1} \cdots T_{i-1} \right) T_{i-1}^{-1} \\
&= t^{i-2} T_{i-1} T_{i-2}^{-1} \cdots T_1^{-1} \pi T_{n-1} \cdots T_i \\
&= Y_i^{s_{i-1}}=Y_i^{s_j}.
\end{align*}

Now assume $Y_i^\tau=T_\tau Y_{\tau^{-1}(i)} T_\tau^{-1}$ for all $\tau$ of length at most $L$. Assume that $\ell(s_j \tau)=L+1$. Using the inductive hypothesis we see $Y_{s_j(i)}^\tau=T_\tau Y_{\tau^{-1}(s_j(i))} T_\tau^{-1}$, and so
\begin{equation*}
T_j Y_{s_j(i)}^\tau T_j^{-1} = T_j T_\tau Y_{\tau^{-1}(s_j(i))} T_\tau^{-1} T_j^{-1} = T_{s_j\tau} Y_{\tau^{-1}s_j(i)} T_{s_j\tau}^{-1}.
\end{equation*}
Again we proceed by cases.

Assume $j <i-1$.  Then 
\begin{align*}
T_j Y_{s_j(i)}^\tau T_j^{-1} &= T_j \left( t^{\varepsilon_i^\tau} T_{i-1}^{\varepsilon_{i-1}} \cdots T_1^{\varepsilon_1} \pi T_{n-1}^{\varepsilon_{n-1}} \cdots T_i^{\varepsilon_i} \right) T_j^{-1} \\
&= t^{\varepsilon_i^\tau} T_{i-1}^{\varepsilon_{i-1}} \cdots T_j T_{j+1}^{\varepsilon_{j+1}}T_{j}^{\varepsilon_{j}} \cdots T_1^{\varepsilon_1} \pi T_{n-1}^{\varepsilon_{n-1}} \cdots T_i^{\varepsilon_i} T_j^{-1} .
\end{align*}
Since $(j,j+1)$ is not an inversion in $\tau$ and $j < i-1$, we see that $T_j T_{j+1}^{\varepsilon_{j+1}}T_{j}^{\varepsilon_{j}}$ is equal to
\begin{itemize}
\item $T_j T_{j+1}^{-1} T_j^{-1} \Leftrightarrow \tau= \tau(1) \cdots  j \cdots j+1 \cdots i \cdots \tau(n)$, or
\item $T_j T_{j+1} T_j^{-1} \Leftrightarrow \tau= \tau(1) \cdots  j \cdots i \cdots j+1 \cdots \tau(n)$, or
\item $T_j T_{j+1} T_j \Leftrightarrow \tau= \tau(1) \cdots  i \cdots j \cdots j+1 \cdots \tau(n)$.   
\end{itemize}
In each case we can apply a braid relation yielding $T_{j+1}^{\varepsilon_{j+1}^\prime} T_j^{\varepsilon_j^\prime} T_{j+1}$.  In the first and third cases $\varepsilon_{j+1}^\prime=\varepsilon_{j+1}$ and $\varepsilon_j^\prime=\varepsilon_j$. In the second case $\varepsilon_{j+1}^\prime = -\varepsilon_{j+1}$ and $\varepsilon_j^\prime = -\varepsilon_j$. Since the total number of indices $k$ for which $\varepsilon_k=-1$ is unchanged, we have $\varepsilon_i^\tau=\varepsilon_i^{s_j \tau}$. In all cases, after the braid relation, we have
\begin{align*}
T_j Y_{s_j(i)}^\tau T_j^{-1} &= t^{\varepsilon_i^\tau} T_{i-1}^{\varepsilon_{i-1}} \cdots T_1^{\varepsilon_1} T_{j+1} \pi T_{n-1}^{\varepsilon_{n-1}} \cdots T_i^{\varepsilon_i} T_j^{-1} \\ 
&= t^{\varepsilon_i^\tau} T_{i-1}^{\varepsilon_{i-1}} \cdots T_1^{\varepsilon_1} \pi T_j T_{n-1}^{\varepsilon_{n-1}} \cdots T_i^{\varepsilon_i} T_j^{-1} \\ 
&= Y_i^{s_j \tau}.
\end{align*}

Next, assume $j>i$.  Then similarly
\begin{align*}
T_j Y_{s_j(i)}^\tau T_j^{-1} &= T_j \left(t^{\varepsilon_i^\tau} T_{i-1}^{\varepsilon_{i-1}} \cdots T_1^{\varepsilon_1} \pi T_{n-1}^{\varepsilon_{n-1}} \cdots T_i^{\varepsilon_i} \right)T_j^{-1} \\
&=t^{\varepsilon_i^\tau}T_{i-1}^{\varepsilon_{i-1}}  \cdots T_1^{\varepsilon_1} \pi T_{n-1}^{\varepsilon_{n-1}} \cdots T_{j-1} T_{j}^{\varepsilon_{j}}T_{j-1}^{\varepsilon_{j-1}}\cdots T_i^{\varepsilon_i} T_j^{-1} .
\end{align*}
Since $(j,j+1)$ is not an inversion in $\tau$ and $j >i$, we see that $T_{j-1} T_{j}^{\varepsilon_{j}}T_{j-1}^{\varepsilon_{j-1}}$ is equal to
\begin{itemize}
\item $T_{j-1} T_{j}^{-1} T_{j-1}^{-1} \Leftrightarrow \tau= \tau(1) \cdots  j \cdots j+1 \cdots i \cdots \tau(n)$, or
\item $T_{j-1} T_{j} T_{j-1}^{-1} \Leftrightarrow \tau= \tau(1) \cdots  j \cdots i \cdots j+1 \cdots \tau(n)$, or
\item $T_{j-1} T_j T_{j-1} \Leftrightarrow \tau= \tau(1) \cdots  i \cdots j \cdots j+1 \cdots \tau(n)$.   
\end{itemize}
Just as before, we can apply a braid relation yielding $T_j^{\varepsilon_j^\prime} T_{j-1}^{\varepsilon_{j-1}^\prime} T_j$.  In the first and third cases $\varepsilon_{j-1}^\prime=\varepsilon_{j-1}$ and $\varepsilon_j^\prime=\varepsilon_j$. In the second case $\varepsilon_{j-1}^\prime=-\varepsilon_{j-1}$ and $\varepsilon_j^\prime = -\varepsilon_j$.  Again, the total number of indices $k$ for which $\varepsilon_k=-1$ is unchanged, so $\varepsilon_i^\tau=\varepsilon_i^{s_j \tau}$. In all cases, after the braid relation, we have
\begin{align*}
T_j Y_{s_j(i)}^\tau T_j^{-1} &= t^{\varepsilon_i^\tau}T_{i-1}^{\varepsilon_{i-1}} \cdots T_1^{\varepsilon_1} T_{j+1} \pi T_{n-1}^{\varepsilon_{n-1}} \cdots T_i^{\varepsilon_i} T_j^{-1} \\ 
&= t^{\varepsilon_i^\tau} T_{i-1}^{\varepsilon_{i-1}} \cdots T_1^{\varepsilon_1} \pi T_j T_{n-1}^{\varepsilon_{n-1}} \cdots T_i^{\varepsilon_i} T_j^{-1} \\ 
&= Y_i^{s_j \tau}.
\end{align*}

Now assume $j=i$. Because $(i,i+1)$ is not an inversion in $\tau$, we may write
\begin{align*}
T_iY_{i+1}^\tau T_i^{-1} &= T_i \left( t^{\varepsilon_{i+1}^\tau} T_{i}^{-1} \cdots T_1^{\varepsilon_1} \pi T_{n-1}^{\varepsilon_{n-1}} \cdots T_{i+1}^{\varepsilon_{i+1}} \right) T_i^{-1} \\
&=  t^{\varepsilon_{i+1}^\tau} T_{i-1}^{\varepsilon_{i-1}} \cdots T_1^{\varepsilon_1} \pi T_{n-1}^{\varepsilon_{n-1}} \cdots T_{i+1}^{\varepsilon_{i+1}} T_i^{-1} \\
&=Y_i^{s_i \tau}
\end{align*}
because $\varepsilon_{i+1}^\tau =  \varepsilon_{i}^{s_i \tau}$.

Similarly, assume $j=i-1$.  Because $j=i-1$ and $i$ are not an inversion in $\tau$ we may write
\begin{align*}
T_{i-1} Y_{i-1}^\tau T_{i-1}^{-1} &= T_{i-1}\left( t^{\varepsilon_{i-1}^\tau} T_{i-2}^{\varepsilon_{i-2}} \cdots T_1^{\varepsilon_1} \pi T_{n-1}^{\varepsilon_{n-1}} \cdots T_{i-1} \right) T_{i-1}^{-1} \\
&=  t^{\varepsilon_{i-1}^\tau} T_{i-1} T_{i-2}^{\varepsilon_{i-2}} \cdots T_1^{\varepsilon_1} \pi T_{n-1}^{\varepsilon_{n-1}} \cdots T_i^{\varepsilon_i} \\
&=Y_i^{s_i \tau}
\end{align*}
because $\varepsilon_{i-1}^\tau=\varepsilon_{i}^{s_i \tau}$.

Thus we have shown $Y_i^\tau = T_\tau Y_{\tau^{-1}(i)}T_\tau^{-1}$ for $\tau \in S_n$.

\end{proof}

Using Proposition \ref{prop:theaction}, Remark \ref{rem:howtouse}, and Lemma \ref{lem:TsandYs} we can give the main result of this chapter.

\begin{proposition}{\label{prop:permbaseeigen}}
The functions $E_{\gamma, \tau}$ are simultaneous eigenfunctions of the operators $Y_i^\tau$.  
\end{proposition}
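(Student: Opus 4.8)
The plan is to deduce Proposition \ref{prop:permbaseeigen} directly from the three ingredients assembled in the preceding pages: Proposition \ref{prop:theaction} (the action of $T_i$ on $E_{\gamma,\tau}$), Remark \ref{rem:howtouse} (its iterated form $T_\tau E_{\gamma,\epsilon} = t^c E_{\gamma,\tau}$), and Lemma \ref{lem:TsandYs} (the identity $Y_i^\tau = T_\tau Y_{\tau^{-1}(i)} T_\tau^{-1}$). The key observation is that $E_{\gamma,\tau}$ is, up to an invertible scalar in $\Q(q,t)$, the image of the honest nonsymmetric Macdonald polynomial $E_{\gamma,\epsilon} = E_\gamma$ under the operator $T_\tau$, and $E_\gamma$ is already known to be a $Y_i$-eigenfunction.

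First I would fix a reduced word for $\tau \in S_n$ and invoke Remark \ref{rem:howtouse} to write $E_{\gamma,\tau} = t^{-c} T_\tau E_{\gamma,\epsilon}$, where $c = \sum_{(i,j) \text{ inv of }\tau} \chi(\gamma_i \le \gamma_j)$ is a fixed nonnegative integer; since $t^{-c}$ is a unit in $\Q(q,t)$, being a $Y_i^\tau$-eigenfunction is unaffected by this scalar. Next I would apply $Y_i^\tau$ and use Lemma \ref{lem:TsandYs}:
\begin{align*}
Y_i^\tau E_{\gamma,\tau} &= t^{-c}\, Y_i^\tau T_\tau E_{\gamma,\epsilon} = t^{-c}\, T_\tau Y_{\tau^{-1}(i)} T_\tau^{-1} T_\tau E_{\gamma,\epsilon} = t^{-c}\, T_\tau Y_{\tau^{-1}(i)} E_{\gamma,\epsilon}.
\end{align*}
Now $E_{\gamma,\epsilon} = E_\gamma$ is a simultaneous eigenfunction of all the $Y_j$ by the theorem in Section \ref{sec:hecke}, with $Y_{\tau^{-1}(i)} E_\gamma = \widetilde{\gamma}_{\tau^{-1}(i)} E_\gamma$ for the explicit eigenvalue $\widetilde{\gamma}_{\tau^{-1}(i)} = q^{-\gamma_{\tau^{-1}(i)}} t^{k_{\tau^{-1}(i)}}$. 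Substituting this in gives
\begin{align*}
Y_i^\tau E_{\gamma,\tau} = t^{-c}\, \widetilde{\gamma}_{\tau^{-1}(i)}\, T_\tau E_{\gamma,\epsilon} = \widetilde{\gamma}_{\tau^{-1}(i)}\, \bigl( t^{-c} T_\tau E_{\gamma,\epsilon}\bigr) = \widetilde{\gamma}_{\tau^{-1}(i)}\, E_{\gamma,\tau},
\end{align*}
which is exactly the assertion that $E_{\gamma,\tau}$ is a simultaneous eigenfunction of the $Y_i^\tau$ (and it even identifies the eigenvalues as a permutation of those of $E_\gamma$). Commutativity of the $Y_i^\tau$ among themselves, if one wants to record it, follows from $Y_i^\tau = T_\tau Y_{\tau^{-1}(i)} T_\tau^{-1}$ together with the commutativity of the $Y_j$.

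The only genuine subtlety — and the step I would be most careful about — is the interface between the combinatorially defined $E_{\gamma,\tau}$ and the Hecke-algebra side: Remark \ref{rem:howtouse} requires $\tau$ to be expressed by a reduced word, and Lemma \ref{lem:TsandYs} likewise is proved for reduced $\tau$, so I would state the argument for an arbitrary but fixed reduced decomposition and note that well-definedness of $T_\tau$ (from the braid relations in $\D$) makes the choice immaterial; I would also double-check that the scalar $t^{-c}$ produced by iterating Proposition \ref{prop:theaction} is consistent with the $t^{\varepsilon_i^\tau}$ normalization built into the definition of $Y_i^\tau$, though since we only need eigenfunction-ness and not a normalization of eigenvalues, any such discrepancy is harmless. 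This is a short conjunction of already-established facts rather than a computation, so the write-up should be only a few lines.
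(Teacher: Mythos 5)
Your proof is correct and takes essentially the same route as the paper: write $E_{\gamma,\tau}=t^{-c}T_\tau E_{\gamma,\epsilon}$ via Remark \ref{rem:howtouse}, conjugate using Lemma \ref{lem:TsandYs}, and invoke the $Y_j$-eigenfunction property of $E_\gamma$ to obtain the eigenvalue $\widetilde{\gamma}_{\tau^{-1}(i)}$. The paper's proof is exactly this three-line computation; your added remarks on reduced words and the harmlessness of the scalar $t^{-c}$ are sound but not needed beyond what the cited lemmas already guarantee.
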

\begin{proof}
We compute
\begin{align*}
Y_i^\tau E_{\gamma, \tau} &= Y_i^\tau (t^{-c}T_\tau E_{\gamma, \epsilon}) \\
&=  t^{-c}T_\tau Y_{\tau^{-1}(i)}T_\tau^{-1} (T_\tau E_{\gamma, \epsilon}) \\
&=\tilde{\gamma}_{\tau^{-1}(i)} E_{\gamma, \tau}
\end{align*}
where $\tilde{\gamma}_{\tau^{-1}(i)}$ is the eigenvalue of $Y_{\tau^{-1}(i)}$ acting on the nonsymmetric Macdonald polynomial $E_\gamma$, and $c$ is the constant $c=\sum \chi(\gamma_i \leq \gamma_j)$, where the sum is over all inversions $(i,j)$ of $\tau$.

\end{proof}

   \backmatter
   
   \bibliographystyle{plain}

\begin{thebibliography}{10}

\bibitem{Aguiar2006Combinatorial}
M.~Aguiar, N.~Bergeron, and F.~Sottile.
\newblock Combinatorial {H}opf algebras and generalized {D}ehn-{S}ommerville
  relations.
\newblock {\em Compos. Math.}, 142(1):1--30, 2006.

\bibitem{Aval2007Keys-and-alter}
J.-C. Aval.
\newblock Keys and alternating sign matrices.
\newblock {\em S\'em. Lothar. Combin.}, 59:Art. B59f, 13, 2007/10.

\bibitem{Baratta2009Pieri-type}
W.~Baratta.
\newblock Pieri-type formulas for nonsymmetric {M}acdonald polynomials.
\newblock {\em Int. Math. Res. Not. IMRN}, (15):2829--2854, 2009.

\bibitem{Berenstein1992Triple}
A.~D. Berenstein and A.~V. Zelevinsky.
\newblock Triple multiplicities for {${\rm sl}(r+1)$} and the spectrum of the
  exterior algebra of the adjoint representation.
\newblock {\em J. Algebraic Combin.}, 1(1):7--22, 1992.

\bibitem{Bergeron_Reutenauer}
F.~Bergeron and C.~Reutenauer.
\newblock The coinvariant space for quasisymmetric polynomials.
\newblock Unpublished manuscript.

\bibitem{Bergeron2000Noncommutative}
N.~Bergeron, S.~Mykytiuk, F.~Sottile, and S.~van Willigenburg.
\newblock Noncommutative {P}ieri operators on posets.
\newblock {\em J. Combin. Theory Ser. A}, 91(1-2):84--110, 2000.

\bibitem{Bessenrodt2011Skew-quasischur}
C.~Bessenrodt, K.~Luoto, and S.~van Willigenburg.
\newblock Skew quasisymmetric {S}chur functions and noncommutative {S}chur
  functions.
\newblock {\em Adv. Math.}, 226(5):4492--4532, 2011.

\bibitem{Buch2000The-saturation}
A.~S. Buch.
\newblock The saturation conjecture (after {A}.\ {K}nutson and {T}.\ {T}ao).
\newblock {\em Enseign. Math. (2)}, 46(1-2):43--60, 2000.
\newblock With an appendix by William Fulton.

\bibitem{Cherednik1994Induced}
I.~Cherednik.
\newblock Induced representations of double affine {H}ecke algebras and
  applications.
\newblock {\em Math. Res. Lett.}, 1(3):319--337, 1994.

\bibitem{Cherednik1995Double-affine}
I.~Cherednik.
\newblock Double affine {H}ecke algebras and {M}acdonald's conjectures.
\newblock {\em Ann. of Math. (2)}, 141(1):191--216, 1995.

\bibitem{Cherednik1995Nonsymmetric}
I.~Cherednik.
\newblock Nonsymmetric {M}acdonald polynomials.
\newblock {\em Internat. Math. Res. Notices}, (10):483--515, 1995.

\bibitem{Cherednik1997Intertwining}
I.~Cherednik.
\newblock Intertwining operators of double affine {H}ecke algebras.
\newblock {\em Selecta Math. (N.S.)}, 3(4):459--495, 1997.

\bibitem{Cherednik2005Double-affine}
I.~Cherednik.
\newblock {\em Double affine {H}ecke algebras}, volume 319 of {\em London
  Mathematical Society Lecture Note Series}.
\newblock Cambridge University Press, Cambridge, 2005.

\bibitem{De-Loera2004Vertices-of}
J.~A. De~Loera and T.~B. McAllister.
\newblock Vertices of {G}elfand-{T}setlin polytopes.
\newblock {\em Discrete Comput. Geom.}, 32(4):459--470, 2004.

\bibitem{Fulton1997Young-tableaux}
W.~Fulton.
\newblock {\em Young Tableaux}, volume~35 of {\em London Mathematical Society
  Student Texts}.
\newblock Cambridge University Press, Cambridge, 1997.

\bibitem{Gelfand-Finite}
I.~M. Gel{\cprime}fand and M.~L. Cetlin.
\newblock Finite-dimensional representations of the group of unimodular
  matrices.
\newblock {\em Doklady Akad. Nauk SSSR (N.S.)}, 71:825--828, 1950.

\bibitem{Gessel_Multipartite}
I.~M. Gessel.
\newblock Multipartite {$P$}-partitions and inner products of skew {S}chur
  functions.
\newblock In {\em Combinatorics and algebra ({B}oulder, {C}olo., 1983)},
  volume~34 of {\em Contemp. Math.}, pages 289--317. Amer. Math. Soc.,
  Providence, RI, 1984.

\bibitem{Haglund2010The-action}
J.~Haglund.
\newblock The action of {$T_i$} on nonsymmetric {M}acdonald polynomials.
\newblock {\em Private Communication}, 2010.

\bibitem{Haglund2005A-combinatorial}
J.~Haglund, M.~Haiman, and N.~Loehr.
\newblock A combinatorial formula for {M}acdonald polynomials.
\newblock {\em J. Amer. Math. Soc.}, 18(3):735--761 (electronic), 2005.

\bibitem{Haglund2008A-combinatorial}
J.~Haglund, M.~Haiman, and N.~Loehr.
\newblock A combinatorial formula for nonsymmetric {M}acdonald polynomials.
\newblock {\em Amer. J. Math.}, 130(2):359--383, 2008.

\bibitem{Haglund2008Quasisymmetric}
J.~Haglund, K.~Luoto, S.~Mason, and S.~van Willigenburg.
\newblock Quasisymmetric {S}chur functions.
\newblock {\em J. Combin. Theory Ser. A}, 118(2):463--490, 2011.

\bibitem{Haglund2009Refinements}
J.~Haglund, K.~Luoto, S.~Mason, and S.~van Willigenburg.
\newblock Refinements of the {L}ittlewood-{R}ichardson rule.
\newblock {\em Trans. Amer. Math. Soc.}, 363(3):1665--1686, 2011.

\bibitem{Haglund2011Properties}
J.~Haglund, S.~Mason, and J.~Remmel.
\newblock {P}roperties of the nonsymmetric {R}obinson-{S}chensted-{K}nuth
  algorithm.
\newblock Pre-print, to appear, 2011.

\bibitem{Hivert2000Hecke-algebras}
F.~Hivert.
\newblock Hecke algebras, difference operators, and quasi-symmetric functions.
\newblock {\em Adv. Math.}, 155(2):181--238, 2000.

\bibitem{Humphreys1990Reflection}
J.~E. Humphreys.
\newblock {\em Reflection groups and {C}oxeter groups}, volume~29 of {\em
  Cambridge Studies in Advanced Mathematics}.
\newblock Cambridge University Press, Cambridge, 1990.

\bibitem{Ion2003Nonsymmetric}
B.~Ion.
\newblock Nonsymmetric {M}acdonald polynomials and {D}emazure characters.
\newblock {\em Duke Math. J.}, 116(2):299--318, 2003.

\bibitem{Kirillov1997Lectures-on}
A.~A. Kirillov, Jr.
\newblock Lectures on affine {H}ecke algebras and {M}acdonald's conjectures.
\newblock {\em Bull. Amer. Math. Soc. (N.S.)}, 34(3):251--292, 1997.

\bibitem{Kiritchenko2010Gelfand}
V.~Kiritchenko.
\newblock Gelfand-{Z}etlin polytopes and flag varieties.
\newblock {\em Int. Math. Res. Not. IMRN}, (13):2512--2531, 2010.

\bibitem{Knop1997Integrality}
F.~Knop.
\newblock Integrality of two variable {K}ostka functions.
\newblock {\em J. Reine Angew. Math.}, 482:177--189, 1997.

\bibitem{Knutson1999The-honeycomb}
A.~Knutson and T.~Tao.
\newblock The honeycomb model of {${\rm GL}_n({\bf C})$} tensor products. {I}.
  {P}roof of the saturation conjecture.
\newblock {\em J. Amer. Math. Soc.}, 12(4):1055--1090, 1999.

\bibitem{Kogan2005Toric}
M.~Kogan and E.~Miller.
\newblock Toric degeneration of {S}chubert varieties and {G}elfand-{T}setlin
  polytopes.
\newblock {\em Adv. Math.}, 193(1):1--17, 2005.

\bibitem{Lascoux1990Keys}
A.~Lascoux and M.-P. Sch{\"u}tzenberger.
\newblock Keys \& standard bases.
\newblock In {\em Invariant theory and tableaux ({M}inneapolis, {MN}, 1988)},
  volume~19 of {\em IMA Vol. Math. Appl.}, pages 125--144. Springer, New York,
  1990.

\bibitem{Lauve2010QSym}
A.~Lauve and S.~K. Mason.
\newblock Q{S}ym over {S}ym has a stable basis.
\newblock {\em J. Combin. Theory Ser. A}, 118(5):1661--1673, 2011.

\bibitem{Lenart2004A-unified}
C.~Lenart.
\newblock A unified approach to combinatorial formulas for {S}chubert
  polynomials.
\newblock {\em J. Algebraic Combin.}, 20(3):263--299, 2004.

\bibitem{Lenart2007On-the}
C.~Lenart.
\newblock On the combinatorics of crystal graphs. {I}. {L}usztig's involution.
\newblock {\em Adv. Math.}, 211(1):204--243, 2007.

\bibitem{Littelmann1994A-Littlewood}
P.~Littelmann.
\newblock A {L}ittlewood-{R}ichardson rule for symmetrizable {K}ac-{M}oody
  algebras.
\newblock {\em Invent. Math.}, 116(1-3):329--346, 1994.

\bibitem{Littelmann1995Paths-and-root}
P.~Littelmann.
\newblock Paths and root operators in representation theory.
\newblock {\em Ann. of Math. (2)}, 142(3):499--525, 1995.

\bibitem{Macdonald1995Symmetric}
I.~G. Macdonald.
\newblock {\em Symmetric functions and {H}all polynomials}.
\newblock Oxford Mathematical Monographs. The Clarendon Press Oxford University
  Press, New York, second edition, 1995.

\bibitem{Macdonald1996Affine-Hecke}
I.~G. Macdonald.
\newblock Affine {H}ecke algebras and orthogonal polynomials.
\newblock {\em Ast\'erisque}, (237):Exp.\ No.\ 797, 4, 189--207, 1996.
\newblock S{\'e}minaire Bourbaki, Vol. 1994/95.

\bibitem{Marshall1999Symmetric}
D.~Marshall.
\newblock Symmetric and nonsymmetric {M}acdonald polynomials.
\newblock {\em Ann. Comb.}, 3(2-4):385--415, 1999.
\newblock On combinatorics and statistical mechanics.

\bibitem{Mason2006A-decomposition}
S.~Mason.
\newblock A decomposition of {S}chur functions and an analogue of the
  {R}obinson-{S}chensted-{K}nuth algorithm.
\newblock {\em S\'em. Lothar. Combin.}, 57:Art. B57e, 24, 2006/08.

\bibitem{Mason2009An-explicit}
S.~Mason.
\newblock An explicit construction of type {A} {D}emazure atoms.
\newblock {\em J. Algebraic Combin.}, 29(3):295--313, 2009.

\bibitem{Mason2011qsym-emails}
S.~Mason.
\newblock Quasisymmetric {S}chur functions.
\newblock Private Communication, 2011.

\bibitem{Mason2010A-Dual-Basis}
S.~Mason and J.~Remmel.
\newblock {R}ow-strict quasisymmetric {S}chur functions.
\newblock arXiv:1110.4014v1, 2011.

\bibitem{Opdam1995Harmonic}
E.~M. Opdam.
\newblock Harmonic analysis for certain representations of graded {H}ecke
  algebras.
\newblock {\em Acta Math.}, 175(1):75--121, 1995.

\bibitem{Ram2011A-combinatorial}
A.~Ram and M.~Yip.
\newblock A combinatorial formula for {M}acdonald polynomials.
\newblock {\em Adv. Math.}, 226(1):309--331, 2011.

\bibitem{Reiner1995Key-polynomials}
V.~Reiner and M.~Shimozono.
\newblock Key polynomials and a flagged {L}ittlewood-{R}ichardson rule.
\newblock {\em J. Combin. Theory Ser. A}, 70(1):107--143, 1995.

\bibitem{Sagan2001The-symmetric}
B.~E. Sagan.
\newblock {\em The Symmetric Group}, volume 203 of {\em Graduate Texts in
  Mathematics}.
\newblock Springer-Verlag, New York, second edition, 2001.

\bibitem{Sahi1996Interpolation}
S.~Sahi.
\newblock Interpolation, integrality, and a generalization of {M}acdonald's
  polynomials.
\newblock {\em Internat. Math. Res. Notices}, (10):457--471, 1996.

\bibitem{Sanderson2000On-the}
Y.~B. Sanderson.
\newblock On the connection between {M}acdonald polynomials and {D}emazure
  characters.
\newblock {\em J. Algebraic Combin.}, 11(3):269--275, 2000.

\bibitem{Stanley1972Ordered}
R.~P. Stanley.
\newblock {\em Ordered structures and partitions}.
\newblock American Mathematical Society, Providence, R.I., 1972.
\newblock Memoirs of the American Mathematical Society, No. 119.

\bibitem{Stanley1999EC2}
R.~P. Stanley.
\newblock {\em Enumerative combinatorics. {V}ol. 2}, volume~62 of {\em
  Cambridge Studies in Advanced Mathematics}.
\newblock Cambridge University Press, Cambridge, 1999.

\bibitem{Strand1974Gelfand}
A.~E. Strand.
\newblock Gel\cprime fand-pattern technique applied to the physically important
  {${\rm SU}(3)\otimes {\rm SU}(2)\subset {\rm SU}(6)$} decomposition of {${\rm
  SU}(6)$}.
\newblock {\em J. Mathematical Phys.}, 15:1197--1201, 1974.

\bibitem{Temme1991The-structure}
F.~P. Temme.
\newblock The structure and {G}el\cprime fand patterns inherent in the
  {H}eisenberg supergenerator algebra of dual {L}iouville-space {$\vert
  kq\{\tilde{K}.\}(k_1-k_n)\colon [\tilde{\lambda}],{\mathscr{S}}_n\rangle\!
  \rangle$} tensors and the {C}ayley algebra of scalar invariants over a
  {$(k_1-k_n)$} field.
\newblock {\em J. Math. Phys.}, 32(6):1638--1650, 1991.

\bibitem{Warren2009Some-examples}
J.~Warren and P.~Windridge.
\newblock Some examples of dynamics for {G}elfand-{T}setlin patterns.
\newblock {\em Electron. J. Probab.}, 14:no. 59, 1745--1769, 2009.

\bibitem{Yip2010A-Littlewood}
M.~Yip.
\newblock A {L}ittlewood-{R}ichardson rule for {M}acdonald polynomials.
\newblock arXiv: 1010.0722, 2010.

\end{thebibliography}
   
   \def\cprime{$'$}

\end{document}